\newcommand{\cmark}{\ding{51}}%
\newcommand{\xmark}{\ding{55}}
\newcommand{\resample}[2]{{}^{#2}\mkern -0.5mu {#1}}
\newtheorem{theorem}{Theorem}[section]
\newtheorem{corollary}[theorem]{Corollary}
\newtheorem{proposition}[theorem]{Proposition}
\newtheorem{lemma}[theorem]{Lemma}
\numberwithin{equation}{section}
\theoremstyle{definition}
\newtheorem{definition}[theorem]{Definition}
\newenvironment{example}
  {\pushQED{\qed}\examplex}
  {\popQED\endexamplex}
\theoremstyle{remark}
\newtheorem{remark}[theorem]{Remark}
\newtheorem{remarks}[theorem]{Remarks}
\newtheorem*{remark*}{Remark}
\newcommand{\1}[1]{{\mathbbm{1}\mkern -1.5mu}{\{#1\}}}
\newcommand{\ind}[1]{{\mathbbm{1}\mkern -1.5mu}_{\{#1\}}}
\newcommand{\R}{{\mathbb R}}
\newcommand{\N}{{\mathbb N}}
\newcommand{\ZP}{{\mathbb Z}_+}
\newcommand{\RP}{{\mathbb R}_+}
\newcommand{\Sp}[1]{{\mathbb S}^{#1}}
\newcommand{\rhoH}{\rho_{\mathrm{H}}}
\newcommand{\rhoone}{\rho_{1}}
\newcommand{\eps}{\varepsilon}
\DeclareMathOperator*{\Exp}{\mathbb{E}}
\let\Pr\relax
\DeclareMathOperator*{\Pr}{\mathbb{P}}
\DeclareMathOperator*{\Var}{\mathbb{V}ar}
\DeclareMathOperator*{\Cov}{\mathbb{C}ov}
\DeclareMathOperator*{\argmax}{arg \mkern 1mu \max}
\DeclareMathOperator*{\diam}{diam} 
\DeclareMathOperator*{\cl}{cl} 
\DeclareMathOperator*{\perim}{perim}
\DeclareMathOperator*{\hull}{hull}
\newcommand{\tra}{{\scalebox{0.6}{$\top$}}}
\newcommand{\per}{{\mkern -1mu \scalebox{0.5}{$\perp$}}}
\newcommand{\spara}[1]{\sigma^2_{#1}}
\newcommand{\re}{{\mathrm{e}}}
\newcommand{\ud}{{\mathrm d}}
\newcommand{\cA}{{\mathcal A}}
\newcommand{\cB}{{\mathcal B}}
\newcommand{\cC}{{\mathcal C}}
\newcommand{\cD}{{\mathcal D}}
\newcommand{\cE}{{\mathcal E}}
\newcommand{\cF}{{\mathcal F}}
\newcommand{\cG}{{\mathcal G}}
\newcommand{\cH}{{\mathcal H}}
\newcommand{\cI}{{\mathcal I}}
\newcommand{\cJ}{{\mathcal J}}
\newcommand{\cK}{{\mathcal K}}
\newcommand{\cN}{{\mathcal N}}
\newcommand{\cR}{{\mathcal R}}
\newcommand{\cS}{{\mathcal S}}
\newcommand{\cU}{{\mathcal U}}
\newcommand{\cV}{{\mathcal V}}
\newcommand{\tZ}{{\widetilde{Z}}}
\newcommand{\as}{\ \text{a.s.}}
\newcommand{\toP}[1]{\xrightarrow[#1 \to \infty]{\mathrm{prob.}}}
\newcommand{\tod}[1]{\xrightarrow[#1 \to \infty]{\mathrm{dist.}}}
\newcommand{\toL}[2]{\xrightarrow[#1 \to \infty]{L^{{#2}}}}
\newcommand{\overbar}[1]{\mkern 1.5mu\overline{\mkern-1.5mu#1\mkern-1.5mu}\mkern 1.5mu}
\newcommand{\ba}{{\boldsymbol{a}}}
\newcommand{\be}{{\boldsymbol{\rm e}}}
\newcommand{\0}{{\boldsymbol{0}}}
\newcommand{\bmu}{{\boldsymbol{\mu}}}
\newcommand{\hbmu}{\skew{1.5}\widehat{\boldsymbol{\mu}}}
\newcommand{\hbmuperp}{\skew{1.5}\widehat{\boldsymbol{\mu}}_\per}
\newcommand{\oM}{\skew{2}\overbar{M}}
\newcommand{\oMperp}{\oM^\perp}
\newcommand{\eqd}{\overset{\mathrm{d}}{=}}
\def\namedlabel#1#2{\begingroup
    (#2)%
    \def\@currentlabel{#2}%
    \phantomsection\label{#1}\endgroup
}
\newlist{myenumi}{enumerate}{10}
\setlist[myenumi]{leftmargin=0pt, labelindent=\parindent, listparindent=\parindent, labelwidth=0pt, itemindent=!, itemsep=1pt, parsep=4pt}
\newlist{thmenumi}{enumerate}{10}
\setlist[thmenumi]{leftmargin=0pt, labelindent=\parindent, listparindent=\parindent, labelwidth=0pt, itemindent=!}
\title[Fluctuations for convex hulls of multiple random walks]{Fluctuations for diameter and perimeter of convex hulls of multiple random walks} 
\date{\today}
\begin{document}

\author[W.~Cygan]{Wojciech Cygan}
\address{Institute of Mathematics, University of Wrocław}
\email{wojciech.cygan@uwr.edu.pl}

\author[T.~Kralj]{Tomislav Kralj}
\address{University of Zagreb Faculty of Science, Department of Mathematics}
\email{tomislav.kralj@math.pmf.unizg.hr}

\author[N.~Sandri\'c]{Nikola Sandri\'c}
\address{University of Zagreb Faculty of Science, Department of Mathematics}
\email{nikola.sandric@math.pmf.unizg.hr}

\author[S.~\v Sebek]{Stjepan \v Sebek}
\address{University of Zagreb Faculty of Electrical Engineering and Computing}
\email{stjepan.sebek@fer.unizg.hr}

\author[A.~Wade]{\\ Andrew Wade}
\address{Department of Mathematical Sciences, Durham University}
\email{andrew.wade@durham.ac.uk}

\author[M.D.~Wong]{Mo~Dick Wong}
\address{Department of Mathematics, The University of Hong Kong}
\email{mdwong@hku.hk}

\begin{abstract}
We study the diameter and perimeter of the convex hull generated by finitely many independent planar random walks whose increments have finite second moments. The large-time fluctuations are governed by the geometry of the polygon formed by the drift vectors. 
We develop an $L^2$-approximation framework, based on Wald-type maximal central limit theorems, which reduces the asymptotic analysis of the hull to a finite collection of endpoint, maximal-projection, and Brownian support-function terms. 
For the diameter, we obtain general max-type limit theorems, Gaussian in the case of a unique extremal diametrical pair and typically non-Gaussian when several extremal pairs compete. 
For the perimeter, we prove a general distributional limit: non-zero extremal drifts contribute maxima of Gaussian projections, while zero-drift extremal walks contribute Brownian support-function terms. The results recover the previously known Gaussian regimes (the case of one or two walks) and identify the non-Gaussian limits in the degenerate and boundary cases left open (even for two walks). 
We also give $L^2$ approximations of the convex hull by simpler random sets, under Hausdorff and $\ell_1$ metrics on compact convex sets.
Our proofs work under the optimal finite second moment assumption.
\end{abstract}

\maketitle

 \noindent
 {\em Key words:} Convex hull, random walk, perimeter, diameter, central limit theorem, non-Gaussian limits, variance asymptotics, Wald's central limit theorem.
 \medskip

 \noindent
 {\em AMS Subject Classification:} 
 60G50 (Primary); 60D05, 60F15, 60J65, 52A22 (Secondary).

\tableofcontents

\section{Introduction}
\label{sec:introduction}

\subsection{Convex hulls of planar random walks}
\label{sec:convex-hulls}

This paper  develops an $L^2$-approximation framework  to study the fluctuations of  geometric quantities associated with multiple random walks.
Given the $n$-step trajectories of $N \geq 1$ independent random walks in $\R^2$ whose increments have finite second moments, denote by $D_n$ and $L_n$ the diameter and perimeter, respectively, of the convex hull $\cH_n$ generated by the union of their trajectories. 
The polygon $\cC_\bmu$ generated by the drift vectors of the random walks governs the first-order asymptotic behaviour of these quantities, via a  geometrical law of large numbers that says $n^{-1} D_n \to \diam \cC_\bmu$ and $n^{-1} L_n \to \perim \cC_\bmu$. We show that the polygon $\cC_\bmu$ also influences the fluctuations of these quantities in a delicate way. In addition to  explicit limit theorems for   arbitrary numbers and configurations of random walks, we also give companion results on optimal approximation of the convex hull by simpler stochastic objects in appropriate metrics. 
The framework is motivated by, and may be viewed as a multidimensional geometric analogue of, an $L^2$
 version of Wald's maximal central limit theorem for one-dimensional random walk.

For the diameter, 
we establish, for arbitrary $N\geq 1$, an explicit distributional limit  
\begin{equation}
\label{eq:diameter-limit}
\frac{D_n-n \diam \cC_\bmu}{\sqrt n}
\tod{n} \Delta;
\end{equation}
see Theorem~\ref{thm:many-walks-diameter} below for a formal statement. 
The  limit $\Delta$ in~\eqref{eq:diameter-limit} is generically Gaussian, but becomes non-Gaussian whenever several random walks are competing to determine the diameter. The accompanying $L^2$ approximation, Theorem~\ref{thm:diameter-approximation-general}, identifies exactly which endpoints and zero-drift trajectories can affect the diameter at this scale.

For the perimeter, our main result (Theorem~\ref{thm:perim_main_theorem} below) gives, for arbitrary $N \geq 1$, an explicit distributional limit, as an independent sum,
\begin{equation}
\label{eq:perimeter-limit}
\frac{L_n-n \perim \cC_\bmu}{\sqrt n}
\tod{n}
 \Pi^+ +  \Pi^0,
\end{equation}
where $\Pi^+$ is an explicit Gaussian functional determined by the non-zero extremal drift vectors, and $\Pi^0$ is an explicit Brownian support-function functional associated with zero-drift walks when the origin is an extremal point of the drift polygon.

In~\eqref{eq:diameter-limit} and~\eqref{eq:perimeter-limit}, the geometry of the drift polygon $\cC_\bmu$ determines both the type of the limiting distribution and its precise form, which also involves the increment covariance matrices of the walks. In generic cases the limits are Gaussian, recovering earlier results on $N=1$ or $N=2$ walks, such as~\cite{wx-drift,mcrw,ikss}; in boundary or degenerate configurations, non-Gaussian limits appear naturally, and here even for $N=2$ our results provide novel non-Gaussian limits.
Figure~\ref{fig0} shows some examples of convex hulls for $N \in \{2, 3\}$ walks.


\begin{figure}
	\scalebox{0.48}{\begin{minipage}{0.28\textwidth}
			\centering
			\includegraphics[width=0.6\textwidth]{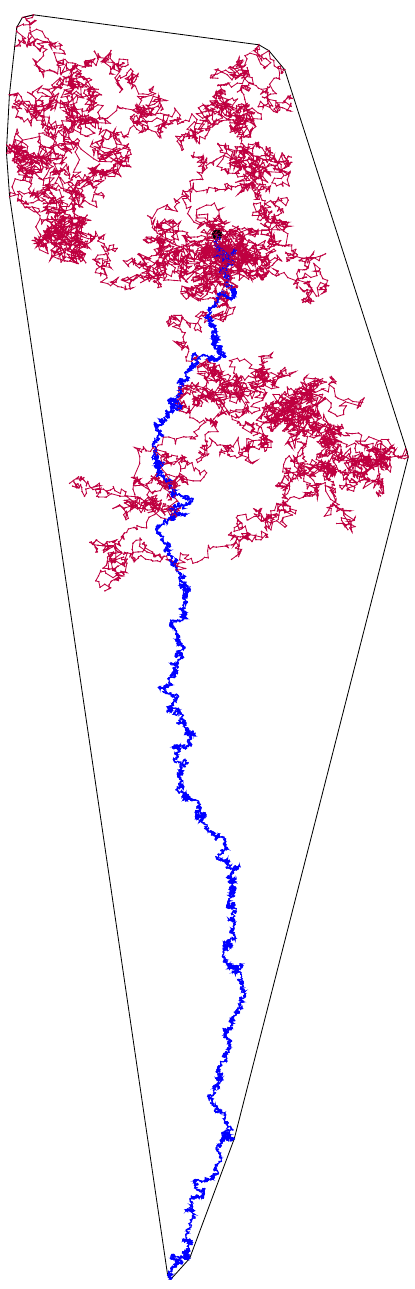} 
		\end{minipage} \hfill
		\begin{minipage}{0.66\textwidth}
			\centering
			\includegraphics[width=0.95\textwidth]{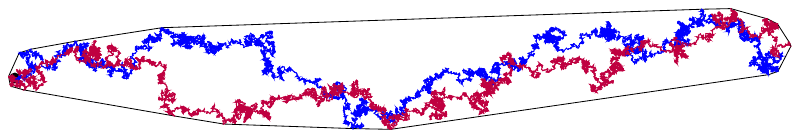}
			
			\vspace{1cm}
			
			\includegraphics[angle=90,width=0.5\textwidth]{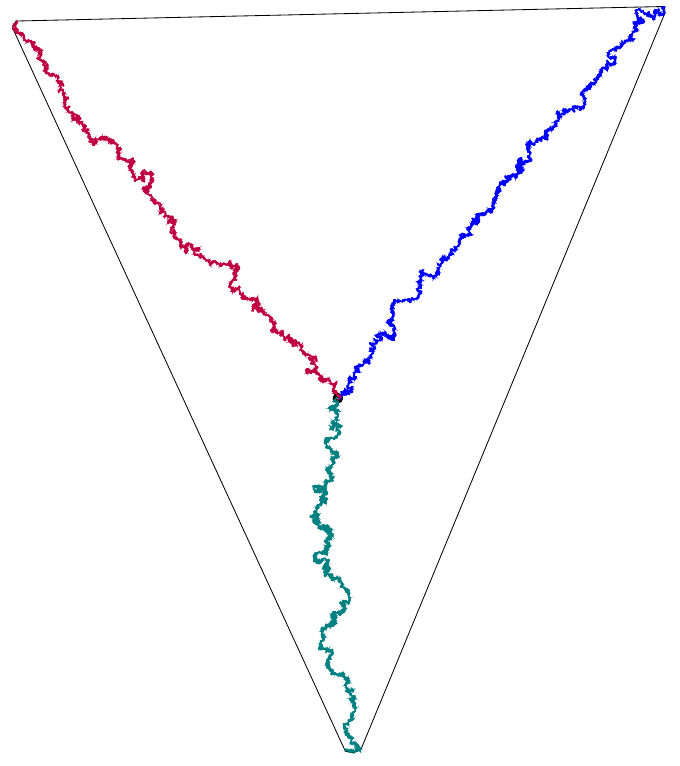}
	\end{minipage}}
	\caption{Schematic of random walk configurations. \emph{Left.} One walk with zero drift and one non-zero drift: perimeter (Example~\ref{ex:perim_ice-cream}) and diameter (Corollary~\ref{cor:diam_two_walks}\ref{cor:diam_two_walks-iv}) have non-Gaussian limits involving functionals of Brownian motions. \emph{Top right.} Two walks with the same non-zero drift: the perimeter has a non-Gaussian limit (Example~\ref{ex:perim_two-the-same}) while the diameter limit is the maximum of independent Gaussians (Theorem~\ref{thm:many-walks-diameter}). \emph{Bottom right.} Three non-zero drifts forming an equilateral triangle: the perimeter has a Gaussian limit described in Corollary~\ref{cor:zero-in-interior-clt}, while the diameter limit is described by  Theorem~\ref{thm:many-walks-diameter}. }
	\label{fig0}
\end{figure}

We give formal statements of~\eqref{eq:diameter-limit}--\eqref{eq:perimeter-limit} later, after introducing appropriate notation. To set the scene, in~\S\ref{sec:convex-hulls-background} we describe known results for a  single random walk. The  narrative of this paper sets these existing results, and our new contributions, into an $L^2$-approximation framework cast as a multidimensional analogue of Wald's maximal central limit theorem, and we introduce this perspective in~\S\ref{sec:wald-framework}. Notation and background for convex hulls of multiple random walks is in~\S\ref{sec:known-gaussian}, and the outline of our contributions and the organization of the bulk of the paper is in~\S\ref{sec:results}. In~\S\ref{sec:two-walk-intro} we focus on  $N=2$ where our results complete the picture from earlier literature~\cite{ikss,tomislav}. First we comment briefly on motivation and related literature.

The study of convex hulls of stochastic processes is motivated by understanding extremal geometry of processes,
by seeking multidimensional extensions of one-dimensional fluctuation or record-value theory, and by applications in
ecology (see the next paragraph) or by statistical set estimation. 
Early work on the convex hull of planar Brownian motion goes back to L\'evy~\cite{levy} and, for random walk, to Spitzer \& Widom~\cite{spitzer-widom}; we refer to~\cite{majumdar-survey} for a survey, and additional background and motivation. 
In the last decade or so, activity has increased significantly; for a sample  of classical and more modern work, we refer to~\cite{levy,snyder-steele,khoshnevisan,av,akmv,css,bgm,klm,kvz,vz,wx-drift}.

In ecology, an animal's, or group of animals',  territory or \emph{home range}~\cite[p.~26]{seton} is an important indicator of how that animal or group interacts with the environment, resources, and other animals, and is a central consideration in conservation efforts. A standard way to estimate the home range from location data is by the convex hull of the data points. Early variants of this approach were used for rabbits~\cite{dalke-sime}, rodents~\cite{blair} and  birds~\cite{odum}; we refer for developments and recent perspectives to~\cite{fieberg,nilsen,moorcroft,worton}. Random walks have remained popular models of  animal movement since Karl Pearson's work in the early 1900s: see e.g.~\cite{morales,codling}. With such applications in mind, convex hulls of multiple random walks or Brownian motions have attracted attention recently and there are several active strands of research; see, e.g.~\cite{rfmc,dewenter,majumdar-survey,ikss,rfz,rf}.

\subsection{Limit theorems for one random walk}
\label{sec:convex-hulls-background}

The context for our contributions needs us to describe existing results for a single random walk. 
Let $Z, Z_1, Z_2, \ldots$ be i.i.d.~$\R^2$-valued random variables, with $\Exp [ \| Z\|^2] < \infty$,
where $\|  \cdot  \|$ denotes the Euclidean norm. Then the 
increment mean vector and increment covariance matrix we denote by
\begin{equation}
	\label{eq:mu-sigma}
	\bmu := \Exp [Z], \quad \text{and}\quad \Sigma := \Exp [ ( Z -\bmu) (Z-\bmu)^\tra ];
\end{equation}
we view vectors in $\R^2$ as column vectors when they appear in formulas, but sometimes write them in-line as row vectors for notational convenience.  
Define the associated random walk $(S_n)_{n \in \ZP}$ by $S_0 := \0$ (the origin) and $S_n := \sum_{i=1}^n Z_i$ for $n \in \N$. (Here $\N := \{1,2,3,\ldots\}$ and $\ZP := \{ 0 \} \cup \N$.) 
The \emph{convex hull} of the first $n$ steps of the random walk is $\cH_n := \hull \{ S_0, \ldots, S_n \}$, the smallest convex set to contain $S_0, \ldots, S_n$. For each $n$, the set $\cH_n$ is a (random) non-empty closed convex
subset of $\R^2$, with polygonal boundary, 
and we denote by $D_n := \diam \cH_n$ and $L_n := \perim \cH_n$ its diameter and perimeter length, respectively.

The random set $\cH_n$ and its associated random variables (like $D_n, L_n$) have been
studied for several decades, and many of their properties are now well understood;
see e.g.~\cite{spitzer-widom,snyder-steele,majumdar-survey,av,vz,akmv} for a selection of the literature.
It is a consequence of the functional form of the strong law
of large numbers (SLLN) that $\lim_{n \to \infty} n^{-1} \cH_n = \hull \{ \0 , \bmu\}$ (line segment), a.s.,
in the space $\cK_0$ of compact, convex subsets of $\R^2$ containing~$\0$, endowed with the Hausdorff metric~$\rhoH$. Hence, by continuous mapping, $\lim_{n\to \infty} n^{-1} D_n = \| \bmu \|$ and $\lim_{n\to \infty} n^{-1} L_n = 2\| \bmu \|$, a.s. See~\cite[\S3.2--\S3.3]{james} and~\cite{mcrw,lmw} for these, and related, results, and further literature. 

In the case $\bmu = \0$ (\emph{zero drift}) it has been shown~\cite{wx-scaling,mcrw}, using
Donsker's theorem, that $n^{-1/2} D_n$ and $n^{-1/2} L_n$ have $n\to \infty$ distributional 
limits corresponding to  diameter and perimeter of the unit-time trajectory of planar Brownian motion (up to a linear transformation to account for the covariance $\Sigma$). These limits are non-Gaussian, since they are a.s.~positive and have positive variances. The perhaps more subtle case is when $\bmu \neq \0$ (\emph{non-zero drift}),
in which   Gaussian limit results are known, for \emph{centred} and scaled quantities. Writing  
\begin{equation}
	\label{eq:spara-def}
	\hbmu := \bmu / \| \bmu \|, \quad \text{and}\quad
	\spara{} := \Var ( \hbmu^\tra Z ) = \hbmu^\tra \Sigma \hbmu, 
\end{equation}
it holds for the 
diameter~\cite[Corollary~1.10]{mcrw} and for the 
perimeter~\cite{wx-drift} that 
\begin{equation}
	\label{eq:known-D-L-CLT} \frac{D_n -   n \| \bmu \|}{\sqrt{n}}  \tod{n} \cN (0, \spara{}), 
	\quad \text{and}\quad  \frac{L_n - 2 n \| \bmu \|}{\sqrt{n}}  \tod{n} \cN (0, 4\spara{}).
\end{equation}
The degenerate case~$\spara{} =0$ is permitted, in which case~\eqref{eq:known-D-L-CLT} 
says  that $n^{-1/2}( D_n -  n \| \bmu \| )$ and $n^{-1/2}( L_n - 2 n \| \bmu \| )$ both converge in probability to~$0$ as $n \to \infty$; finer results exist in this special case, 
which, under some mild additional moments hypotheses,  say   that fluctuations for $D_n$ are on scale~$O(1)$~\cite[Theorem~1.11]{mcrw} and for $L_n$
are on scale $\sqrt{ \log n }$~\cite{akmv}.

\subsection{An \texorpdfstring{$L^2$}{L-2}-approximation framework}
\label{sec:wald-framework}
The Gaussian limit results~\eqref{eq:known-D-L-CLT} 
are proved in~\cite{wx-drift} and~\cite{mcrw} using a martingale-difference idea
that yields $L^2$ approximations with interesting geometrical interpretations. For $\bmu \neq \0$, the $L^2$ approximation results for 
diameter~\cite[Theorem~1.9]{mcrw} 
and perimeter (from \cite{wx-drift}, reformulated in~\cite[Theorem~1.7]{mcrw}) are
\begin{equation}
	\label{eq:known-D-L-approx}  \frac{D_n - \hbmu^\tra S_n}{\sqrt{n}} \toL{n}{2} 0, 
	\quad \text{and}\quad 
	\frac{L_n - 2 \hbmu^\tra S_n}{\sqrt{n}} \toL{n}{2} 0. \end{equation}
The $L^2$ approximations~\eqref{eq:known-D-L-approx}, together with the classical central limit theorem (CLT) for $\hbmu^\tra S_n$, directly yield~\eqref{eq:known-D-L-CLT}, and, indeed, a joint convergence result for $(D_n,L_n)$. The  approximation also yields the variance asymptotics $n^{-1} \Var D_n \to \spara{}$ and
$n^{-1} \Var L_n \to 4\spara{}$ 
as $n \to \infty$.

A relatively easy result  
(Lemma~\ref{lem:norm-projection} below) says that $n^{-1/2} ( \| S_n \| - \hbmu^\tra S_n ) \to 0$ in $L^2$, which means that~\eqref{eq:known-D-L-approx}
can be given the striking geometrical interpretations
\begin{equation}
	\label{eq:line-segment-limits}
	\frac{D_n - \diam \ell_n}{\sqrt{n}} \toL{n}{2} 0,\quad \text{and}\quad \frac{ L_n - \perim \ell_n }{\sqrt{n}} \toL{n}{2} 0, \end{equation}
where $\ell_n := \hull \{ \0, S_n\}$ is the line segment between the two endpoints of the random walk. Re-casting the results of~\cite{wx-drift,mcrw} in the form~\eqref{eq:line-segment-limits} emphasizes that the line-segment approximation goes much farther than the SLLN when $\bmu \neq \0$. 

There is a subtlety in the convergence in~\eqref{eq:line-segment-limits}. Indeed, despite~\eqref{eq:line-segment-limits},  sets  $\ell_n$ and $\cH_n$ differ with $\sqrt{n}$-scale fluctuations, i.e., $n^{-1/2} \rhoH ( \cH_n , \ell_n )$ will not vanish in probability. To see this, fix a unit vector $\hbmuperp$ orthogonal to $\hbmu$. The continuous functional $F_\per (K) := \sup_{x \in K} | \hbmuperp^\tra x |$ on $\cK_0$ has $F_\per (\ell_n ) = |\hbmuperp^\tra S_n|$
while $F_\per ( \cH_n ) = \max_{0 \leq i \leq n} | \hbmuperp^\tra S_i |$, and so Donsker's theorem shows that~$n^{-1/2} ( F_\per (\cH_n) - F_\per (\ell_n) )$ has a non-trivial limit distribution. 
On the other hand, there is a metric~$\rhoone$ (see \S\ref{sec:hausdorff}) for which $n^{-1/2} \rhoone (\cH_n , \ell_n) \to 0$ (see~Theorem~\ref{thm:perimeter-deviation-metric} in~\S\ref{sec:perimeter_deviation_metric} below), which implies the perimeter part of~\eqref{eq:line-segment-limits}, but not the diameter part.
We make some further observations on set approximation in~\S\ref{sec:hausdorff} below, in the context of multiple random walks.

\subsection{Multiple random walks}
\label{sec:known-gaussian}

Consider $N \in \N$ independent
random walks, $S^{(1)}, \ldots, S^{(N)}$,
where for each $k \in \{1,\ldots,N\}$, 
$S^{(k)} := ( S^{(k)}_n )_{n\in\ZP}$, 
for all $n \in \ZP$, 
$S^{(k)}_n := \sum_{i=1}^n Z_{k,i}$, and, for each $k$, the increments $Z_{k,1}, Z_{k,2},\ldots$ are i.i.d.
Analogously to~\eqref{eq:mu-sigma}, for the case of multiple walks we will adopt the following assumptions and notation.

\begin{description}[topsep=3pt,itemsep=3pt]
	\item
	[\namedlabel{ass:many-walks}{M}]
	Suppose that $\sup_{1 \leq k \leq N} \Exp [ \| Z_{k,1} \|^2 ] < \infty$, and denote
	$\bmu_k := \Exp [Z_{k,1}] \in \R^2$ (mean drift) and
	$\Sigma_k := \Exp [ ( Z_{k,1} -\bmu_k) (Z_{k,1} -\bmu_k)^\tra ]$ (increment covariance matrix) associated with each walk~$k \in \{1,\ldots,N\}$. Also define 
	\[
	\cC_\bmu := \hull \{ \0, \bmu_1, \ldots \bmu_N \}.
	\]
\end{description}

Consider the convex hull generated by the $N$ random walks defined by 
\begin{equation}
	\label{eq:multiple-hull}
	\cH_n := \hull \bigl\{ S^{(k)}_{i} : 1 \leq k \leq N, \, 0 \leq i \leq n \bigr\},
\end{equation}
and its diameter and perimeter
\begin{equation}
	\label{eq:multiple-hull-diam-perim}
	D_n := \diam \cH_n,\quad \text{and}\quad  L_n := \perim \cH_n.
\end{equation}
The polygon~$\cC_\bmu$ defined in~\eqref{ass:many-walks} 
governs
the first-order shape of $\cH_n$ through the SLLN 
\[ \lim_{n \to \infty} \rhoH (  n^{-1} \cH_n , \cC_\bmu ) = 0, \as, \]
where $\rhoH$ is the Hausdorff metric on compact, convex subsets of $\R^2$ containing~$\0$ (see~Theorem~2.1 of~\cite{ikss} for~$N=2$ and~Theorem~2.2.1 of~\cite{tomislav} for the general case). 
Hence follow strong laws 
$\lim_{n \to \infty} n^{-1} D_n = \diam  \cC_\bmu $ and
$\lim_{n \to \infty} n^{-1} L_n = \perim \cC_\bmu $, a.s. 
Fluctuations are now more complicated than for a single walk, and this is our interest in the present paper.

The two-walk case has its own useful geometry and was the starting point for several of the questions addressed here. We return to it in \S\ref{sec:two-walk-intro}, after summarizing the main results.

\subsection{Overview of  results}
\label{sec:results}

We summarize the main contributions of the paper and indicate some of the ideas underlying the proofs.

\subsubsection*{$L^2$ approximation and the Wald CLT} We present an $L^2$-approximation framework in which the existing $N=1$ CLT results~\eqref{eq:known-D-L-approx} for $L_n$ and $D_n$ are interpreted as higher-dimensional analogues of \emph{Wald's maximal CLT}~\cite{wald1947}; see \S\ref{sec:wald}--\S\ref{sec:diam-one-walk}. This framework is then extended to convex hulls generated by multiple random walks.
	The starting point is the one-dimensional Wald-type estimate in \S\ref{sec:wald}: for a random walk with positive drift, the running maximum differs from the endpoint by $o(\sqrt n)$ in $L^2$, while for negative drift the maximum itself is $o(\sqrt n)$. This is the probabilistic input behind
    later reductions where we approximate the convex hull by the convex hull of a smaller set in which certain random walks can be replaced by their endpoints. A first geometrical application of this idea is to the diameter of one random walk, in~\S\ref{sec:diam-one-walk}.
	
\subsubsection*{Set approximation for the convex hull.} As part of the approximation framework, we exhibit  approximations of the convex hulls themselves, in appropriate metrics. In particular, Theorem~\ref{thm:general-hausdorff} gives an essentially optimal $L^2$ approximation for $\cH_n$ in Hausdorff metric; see \S\ref{sec:hausdorff}.
This result also shows why the Hausdorff metric is sometimes too strong for perimeter-type approximations: transverse excursions can survive on the $\sqrt n$ scale even when they do not affect the perimeter or diameter at that order, as explained below~\eqref{eq:line-segment-limits}.
  We show that   an alternative metric on convex compact sets (the $\rho_1$ metric described in \S\ref{sec:perimeter_deviation_metric} below) gives a finer $L^2$ set approximation result which is optimal from the point of view of the perimeter: this is Theorem~\ref{thm:perimeter-deviation-metric} below.

\subsubsection*{Approximation and limit theory for the diameter} For the diameter, our main general approximation result is Theorem~\ref{thm:diameter-approximation-general}. It gives an essentially complete $L^2$ approximation for all $N \geq 1$, from which are derived the corresponding distributional limits, of the form of~\eqref{eq:diameter-limit}, given in  Theorem~\ref{thm:many-walks-diameter} below.  The limits are Gaussian for generic configurations of the drift polygon, but max-type non-Gaussian limits appear whenever more than one random walk, or any zero-drift random walk, contributes to the limiting fluctuations. The approximation result, extending Wald's one-dimensional limit theorem, shows that for the diameter one can reduce the complexity of the convex hull, retaining only endpoints of walks corresponding to diametrical chords of $\cC_\bmu$, and trajectories of zero-drift walks if the origin contributed to a diametrical chord. The resulting reduced object is then amenable to fluctuation analysis by classical limit theorems.

	
\subsubsection*{Approximation and limit theory for the perimeter} For the perimeter, the main contribution is Theorem~\ref{thm:perim_main_theorem}, which  gives a general distributional limit for $L_n$ for arbitrary $N \geq 1$ of the form~\eqref{eq:perimeter-limit}.
The proof uses the Cauchy  formula of integral geometry to express perimeter
as an integral over maxima of one-dimensional projected random walks, and then again uses $L^2$ approximation ideas related to the Wald theorem (replacing maxima over random walk trajectories by  endpoints) to make a series of complexity reductions. It is shown that the reduced convex hull needs to retain only the endpoints of random walks corresponding to extreme points of $\cC_\bmu$, as well as trajectories of zero-drift random walks if $\0$ is an extreme point of $\cC_\bmu$. 
The   proof of Theorem~\ref{thm:perim_main_theorem} in \S\ref{sec:perim-proofs} is the longest part of the paper, 
organized as a series of approximation steps, and considerable work is needed to cast the limit objects in an accessible form.
    
	
\subsubsection*{Partial Cauchy formulae} The perimeter limit 
in~\eqref{eq:perim-main-limit} has a geometric interpretation, given in Appendix~\ref{sec:semi-cauchy}. There we show that the support-function integrals appearing in $\Pi^+ + \Pi^0$ in~\eqref{eq:perimeter-limit} can be read as partial Cauchy formulae over the normal cones of the vertices of the drift polygon. Thus each term in the limit corresponds to a boundary arc of the appropriate Gaussian or Brownian convex body, with terms also coming from the endpoints of the arc.
	
\subsubsection*{Examples for two walks.} 
Part of our original motivation was to settle the case $N=2$ for both diameter and perimeter. As mentioned, for generic cases Gaussian limits were known~\cite{ikss,tomislav}. Our main limit theorems (Theorem~\ref{thm:many-walks-diameter} and Theorem~\ref{thm:perim_main_theorem}) contain these known results, but also establish non-Gaussian limit theorems for the cases not covered in~\cite{ikss,tomislav}. The $N=2$ examples give a flavour of the behaviour for general $N \geq 2$, and we present them in some detail in~\S\ref{sec:max-Gaussian-examples} (for diameter) and \S\ref{subsec:perim-applic} (for perimeter). An explanation of how these results complement the existing literature is in \S\ref{sec:two-walk-intro}, and the two-walk picture is also summarized in Table~\ref{table1}.

	
\subsubsection*{Variance asymptotics.} 
Our distributional convergence results are accompanied by variance asymptotics, which either follow from the $L^2$ approximations that we develop or by generic  uniform square-integrability estimates.

\begin{remarks}
\label{rems:intro}
\begin{myenumi}[label=(\alph*)]
\item\label{rems:intro-a}
	Throughout, we work under the minimal moment hypotheses on the increments for which the main results can hold, namely finiteness of second moments, as in~\eqref{ass:many-walks}. Stronger assumptions, such as a uniform $p$th moment bound for some $p>2$, would shorten some technical arguments but would forfeit optimality of the hypotheses.
    \item\label{rems:intro-b}
     We restrict attention to the planar case, where 
       tools such as Cauchy's perimeter formula are available and give a particularly clean description of the perimeter fluctuations. Extension to higher dimensions is of interest, but would demand additional technical work, especially for the perimeter (surface measure, in higher dimensions).
	 \item\label{rems:intro-c}
    Although we assume the $N$ random walks are \emph{independent},
	the methods of this paper would admit, with appropriate modifications, relaxing this to an assumption that the joint increments $(Z_{1,i}, \ldots,
	Z_{N,i})_{i \in \N}$ are i.i.d.~$\R^{2N}$-valued random vectors, permitting dependence among different walks. 
	Our $L^2$ approximation framework extends, and resulting distributional limit results would be modified, e.g., with correlations in Gaussian or Brownian limits for individual walks.
    \end{myenumi}
\end{remarks}

The starting point for the framework sketched above goes back to one dimensional random walk. The Wald CLT says that for a one-dimensional random walk whose increments have positive mean and finite variance, the running maximum satisfies the same CLT as does the endpoint of the walk. We will find it useful to formulate an $L^2$ approximation of the type represented in~\eqref{eq:known-D-L-approx} above from which the Wald result can be easily deduced: this is the aim of \S\ref{sec:wald}.  This is also the simplest setting in which to present some proof elements that 
will be repeated in more involved settings later in the paper. First, in~\S\ref{sec:two-walk-intro}, we spell out how our results in the case $N=2$ complete the picture in the literature, which previously considered only the cases corresponding to Gaussian limits. 

\subsection{The case of two random walks}
\label{sec:two-walk-intro}

Our general results cover $N$ multiple random walks for any $N \geq 2$.
Since part of our original motivation was to complete the picture for even $N=2$ walks, we describe that case in more detail here. 
The first work on fluctuations for $D_n$ and $L_n$ given by~\eqref{eq:multiple-hull-diam-perim}
for the convex hull~\eqref{eq:multiple-hull} of $N=2$ random walks was~\cite{ikss} (see also~\cite{tomislav}).
In~\cite{ikss} it was shown 
that similar $L^2$ approximations to~\eqref{eq:known-D-L-approx}, and hence CLTs,
hold for most cases of the relative orientation and magnitudes of the drifts~$\bmu_1$, $\bmu_2$ of the two random walks. The cases omitted from~\cite{ikss} for the perimeter are a subset of those omitted for the diameter, and all are special for one or other form of degeneracy, for example, because one of the drifts is zero, or both drifts are the same, or at least of the same magnitude. The two $N=2$ examples in   Figure~\ref{fig0} are representative configurations of the exceptional types.

A consequence of our general results is to settle the remaining two-walk cases, which all turn out to have \emph{non-Gaussian} limits, complementing the Gaussian cases given in~\cite{ikss}. Define $\cD (\bmu_1, \bmu_2 ) := \{  \| \bmu_1\| , \| \bmu_2 \|, \| \bmu_1 - \bmu_2 \| \}$, the set of lengths of edges of the (possibly degenerate) triangle $\cC_\bmu$ generated by the two drift vectors. In~\cite{ikss}, Gaussian limits were obtained for the diameter and perimeter in most of the $\bmu_1, \bmu_2$ parameter space, as described by the following two conditions on the triangle of drifts $\cC_\bmu$.

\begin{description}[topsep=3pt,itemsep=3pt]
	\item
	[\namedlabel{ass:no-zero}{${\Delta}_{0}$}]
	We have $0 \notin \cD (\bmu_1, \bmu_2 )$.
	\item
	[\namedlabel{ass:unique-max}{${\Delta}_{1}$}]
	There is a unique maximal edge for $\cC_\bmu$.
\end{description}

Note that (i) \eqref{ass:unique-max} implies~\eqref{ass:no-zero}; and (ii) sufficient for~\eqref{ass:no-zero} is that the triangle $\cC_\bmu$ has positive two-dimensional area, but~\eqref{ass:no-zero} permits ``degenerate'' triangles in which $0 < \bmu_1 =\lambda \bmu_2$ for some $\lambda \in \R$, $\lambda \neq 1$.

\begin{table}
	\setcellgapes{7pt}
	\makegapedcells
	\small
	\scalebox{0.89}{
		\begin{tabularx}{1.08\textwidth}{|l|X|X|c|c|}
			\hline
			{\bf\normalsize Conditions on $\bmu_1, \bmu_2$} 
			& {\bf\normalsize Diameter limit} 
			& {\bf\normalsize Perimeter limit} 
			& \eqref{ass:no-zero} 
			& \eqref{ass:unique-max}  \\
			\hline
			
			\eqref{ass:no-zero} \& \eqref{ass:unique-max}
			& G: see~\cite{ikss}; also  Theorem~\ref{thm:many-walks-diameter}
			& G: see~\cite{ikss}; also  Theorem~\ref{thm:perim_main_theorem}
			& \cmark
			& \cmark  \\
			\hline
			
			$\bmu_1 = \bmu_2 = \0$
			& nG: BF($4$); see Appendix~\ref{sec:donsker}
			& nG: BF($4$); see Appendix~\ref{sec:donsker}
			& \xmark
			& \xmark   \\
			\hline
			
			$\| \bmu_1 \| >0 = \| \bmu_2 \|$
			& nG: G\,$+$\,BF($1$); see Corollary~\ref{cor:diam_two_walks}
			& nG: G\,$+$\,BF($2$); see Theorem~\ref{thm:perim_main_theorem}
			& \xmark
			& \xmark \\
			\hline
			
			$\| \bmu_1 \| = \| \bmu_2 \| > \| \bmu_1-\bmu_2 \| >0$
			& nG: G\,$\vee$\,G; see Corollary~\ref{cor:diam_two_walks}
			& G: see~\cite{ikss}; also Theorem~\ref{thm:perim_main_theorem}
			& \cmark
			& \xmark
			\\
			\hline
			
			$\bmu_1=\bmu_2\neq \0$
			& nG: G\,$\vee$\,G; see Corollary~\ref{cor:diam_two_walks}
			& nG: see Example~\ref{ex:perim_two-the-same}
			& \xmark
			& \xmark
			\\
			\hline
			
			$\| \bmu_1 \| = \| \bmu_1-\bmu_2 \| > \| \bmu_2 \| >0$
			& nG: G\,$\vee$\,G (cor.); see Corollary~\ref{cor:diam_two_walks}
			& G:  see~\cite{ikss}; also  Theorem~\ref{thm:perim_main_theorem}
			& \cmark
			& \xmark \\
			\hline
			
			$\| \bmu_1 \| = \| \bmu_2 \| = \| \bmu_1-\bmu_2 \| >0$
			& nG: G\,$\vee$\,G\,$\vee$\,G (cor.); see Corollary~\ref{cor:diam_two_walks}
			& G:  see~\cite{ikss}; also Theorem~\ref{thm:perim_main_theorem}
			& \cmark
			& \xmark \\
			\hline
	\end{tabularx}}
\smallskip
	\caption{\label{table1}
		Schematic of results for diameter and perimeter of two random walks. ``G'' stands for Gaussian, ``nG'' for non-Gaussian.
		The table describes loosely  each non-Gaussian limit,
		possibly as a sum (``$+$'') or maximum (``$\vee$'') of more elementary random variables, where  components, in increasing order of complexity,
		are ``G'' (Gaussian, again), ``BF($m$)'' (some elementary geometrical functional
		of an $m$-dimensional Brownian motion).
		Components in a sum or maximum are independent, unless denoted ``cor.'' (for ``correlated''). 
		Each abbreviation may represent a different object of the class
		it describes on every appearance in the table; for example,
		in the  row of results for case $\bmu_1 =\bmu_2 =\0$, which are straightforward applications of Donsker's
		theorem recorded in Appendix~\ref{sec:donsker}, the first ``BF($4$)'' 
		is a diameter of the convex hull of two independent planar Brownian motions,
		while the second ``BF(4)'' is a perimeter of the same type.}
\end{table}

Table~\ref{table1} gives a schematic summary of the two-walk picture. In particular, the diameter results are organized by which edge or edges of the drift triangle can realize the maximal length, whereas the perimeter results are governed by the support directions of the drift polygon.

Write $S^{(1,2)}_n := S^{(1)}_{n} - S^{(2)}_{n}$, $\bmu_{1,2} :=  \bmu_1 - \bmu_2$ and $\hbmu_{1,2} = \bmu_{1,2} / \| \bmu_{1,2} \|$ whenever $\bmu_{1,2} \neq \0$.
It is shown in~\cite{ikss} that, for the perimeter,
provided~\eqref{ass:no-zero} holds,
\begin{equation}
	\label{eq:ikss-perimeter}
	\frac{ L_n - ( \hbmu_1^\tra S^{(1)}_{n}   + \hbmu_2^\tra S^{(2)}_{n}   +  \hbmu_{1,2}^\tra S^{(1,2)}_n  )  }{\sqrt{n}} \toL{n}{2} 0,
\end{equation}
and, for the diameter,
provided~\eqref{ass:unique-max} holds,
\begin{equation}
	\label{eq:ikss-diameter}
	\frac{ D_n -   \hbmu^\tra_\star S^{(\star)}_{n}   }{\sqrt{n}} \toL{n}{2} 0,
\end{equation}
where $\hbmu_\star$ is the unit vector corresponding to whichever of $\bmu_1$, $\bmu_2$, $\bmu_{1,2}$ provides the unique maximum
side of $\cC_\bmu$ as specified by~\eqref{ass:unique-max}, and $S^{(\star)}_{n}$ the corresponding walk.

\begin{example}[Perimeter, collinear drifts]
	\label{ex:colinear}
	Suppose that $\0 \neq \bmu_1 =\lambda \bmu_2$ for some $\lambda \in \R$. Then~\eqref{ass:no-zero} holds provided $\lambda \neq 1$, and it follows from~\eqref{eq:ikss-perimeter} that
	\[ \frac{ L_n - 2\hbmu_\star^\tra S^{(\star)}_{n}     }{\sqrt{n}} \toL{n}{2} 0,   \]
	where $\star$ indicates $1$ for $\lambda >1$,
	$2$ for $\lambda \in (0,1)$, and $1,2$ for $\lambda <0$.  
	Consequently, for $\lambda \neq 1$, $n^{-1/2} ( L_n - \Exp [L_n] )$
	converges to a Gaussian limit. 
	We show  that when $\lambda =1$ the CLT fails and the limit is a non-Gaussian, see Example~\ref{ex:perim_two-the-same} below.
\end{example}


\section{One-dimensional results related to the Wald limit theorem}
\label{sec:wald}

Let $X, X_1, X_2, \ldots$ be i.i.d.\ $\R$-valued
random variables with $\Exp [ X^2 ]< \infty$; denote their mean by $\mu := \Exp [X ] \in \R$ and variance by $\sigma^2 := \Var (X) \in \RP := [0, \infty)$. Define the
associated random walk\footnote{We alert the reader that throughout \S\ref{sec:wald} we use $S$ to denote a random walk on $\R$, unlike in the bulk of the paper, where (as in \S\ref{sec:convex-hulls-background}) it denotes a random walk on $\R^2$.} $S = (S_n)_{n \in \ZP}$ by $S_0 :=0$ and
$S_n := \sum_{k=1}^n X_k$ for $n \in \N$. 
We use the notation 
\begin{equation}
    \label{eq:max-min}
M_n := \max_{0 \le k \le n} S_k, \quad \text{and}\quad m_n := \min_{0 \le k \le n} S_k,
\end{equation}
for the associated partial maxima and minima.

A classical maximal CLT, due to Wald~\cite{wald1947}, says that, provided $\mu >0$, 
\begin{equation}
\label{eq:wald-clt-classic}
     \frac{M_n - n\mu }{\sqrt{n}} \tod{n} \cN (0, \sigma^2),
\end{equation}
where $\cN (0, \sigma^2)$ is the normal distribution with mean~$0$
and variance~$\sigma^2$; see~\cite[\S2.12, \S4.4]{gut-stopped} for a modern presentation.
Note that~\eqref{eq:wald-clt-classic} is precisely the same CLT that~$S$ itself satisfies, and that $\mu >0$ is necessary. 
The focus of this section is the following $L^2$-approximation
result between $M_n$ and $S_n$, which complements the Wald CLT: indeed~\eqref{eq:wald-clt-classic}
follows directly from part~\ref{thm:WaldCLT-ii} of the result.
We will see later that Theorem~\ref{thm:WaldCLT} serves as a one-dimensional prototype
for  approximations for convex-hull statistics  that are our main objects of interest.

\begin{theorem}
    \label{thm:WaldCLT}
    Suppose that $\Exp [ X^2] < \infty$ and $\mu >0$. 
The following statements hold.
\begin{thmenumi}[label=(\roman*)]
\item 
    \label{thm:WaldCLT-i}
    $\displaystyle n^{-1/2} m_n \toL{n}{2} 0$.
\item 
    \label{thm:WaldCLT-ii}
    $\displaystyle n^{-1/2} (M_n - S_n) \toL{n}{2} 0$. 
\end{thmenumi}
\end{theorem}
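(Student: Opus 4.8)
\emph{Strategy and the reduction of \ref{thm:WaldCLT-ii} to \ref{thm:WaldCLT-i}.} The plan is to prove part~\ref{thm:WaldCLT-i} and deduce part~\ref{thm:WaldCLT-ii} from it by a time-reversal (duality) identity, so that all the genuine work is a uniform-integrability estimate for the running minimum. Throughout write $\bar m_n := -m_n = \max_{0 \le k \le n}(-S_k) \ge 0$ (non-negative since $S_0 = 0$), which is non-decreasing in~$n$. For the reduction, fix $n$ and set $\hat X_i := X_{n+1-i}$ for $1 \le i \le n$, with partial sums $\hat S_j := \sum_{i=1}^j \hat X_i = S_n - S_{n-j}$; re-indexing $j = n-k$,
\[
M_n - S_n \;=\; \max_{0 \le k \le n}(S_k - S_n) \;=\; -\min_{0 \le j \le n}\bigl( S_n - S_{n-j} \bigr) \;=\; -\min_{0 \le j \le n}\hat S_j .
\]
Since $(\hat X_1,\ldots,\hat X_n)$ and $(X_1,\ldots,X_n)$ have the same law, $(\hat S_0,\ldots,\hat S_n) \eqd (S_0,\ldots,S_n)$, so $M_n - S_n \eqd -\min_{0 \le j \le n} S_j = \bar m_n$. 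Hence $\Exp[(M_n-S_n)^2] = \Exp[\bar m_n^2]$ for all $n$, and \ref{thm:WaldCLT-ii} is equivalent to \ref{thm:WaldCLT-i}.

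\emph{Almost-sure convergence.} Since $\mu > 0$, the SLLN gives $S_n \to +\infty$ a.s., hence $\inf_{n \ge 0} S_n > -\infty$ a.s., so $\bar m_n$ increases a.s.\ to the finite limit $\sup_{n \ge 0}(-S_n)$. In particular $n^{-1/2}\bar m_n \to 0$ a.s.

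\emph{Uniform integrability and conclusion.} To upgrade the a.s.\ limit to $L^2$ convergence it suffices to show that $\{ n^{-1}\bar m_n^2 : n \ge 1\}$ is uniformly integrable. Let $W_k := S_k - k\mu$, a mean-zero random walk with $\Exp[W_k^2] = k\sigma^2$. Because $\mu > 0$, for $0 \le k \le n$ we have $-S_k = -W_k - k\mu \le |W_k|$, hence $0 \le \bar m_n \le W_n^{*} := \max_{0 \le k \le n}|W_k|$, so it is enough to prove that $\{ n^{-1}(W_n^{*})^2 : n \ge 1\}$ is uniformly integrable. Doob's $L^2$ maximal inequality gives $\Exp[(W_n^{*})^2] \le 4\Exp[W_n^2] = 4n\sigma^2$, so the family is bounded in $L^1$; since we assume only $\Exp[X^2] < \infty$ this does not by itself give uniform integrability, and one argues instead by a Lindeberg-type truncation at level $\sqrt{n}$: writing $X_i - \mu = (X_i-\mu)\ind{|X_i-\mu| \le \sqrt{n}} + (X_i-\mu)\ind{|X_i-\mu| > \sqrt{n}}$ and centring, the bounded part contributes a running maximum whose square divided by~$n$ is bounded in $L^2$ (via the $L^4$ maximal inequality and the elementary fourth-moment bound for sums of i.i.d.\ centred variables of size $O(\sqrt n)$), hence uniformly integrable, while the tail part contributes a running maximum whose square divided by~$n$ has expectation at most $4\Exp\bigl[(X-\mu)^2 \ind{|X-\mu| > \sqrt{n}}\bigr] \to 0$, hence also uniformly integrable; since $(a+b)^2 \le 2a^2 + 2b^2$, the claim follows. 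This is a special case of the square-integrability result established in Appendix~\ref{sec:square-ui}. Combining $n^{-1/2}\bar m_n \to 0$ a.s.\ with uniform integrability of $\{ n^{-1}\bar m_n^2\}$ yields $n^{-1}\Exp[\bar m_n^2] \to 0$, i.e.\ $n^{-1/2}m_n \to 0$ in $L^2$; this is \ref{thm:WaldCLT-i}, and \ref{thm:WaldCLT-ii} follows by the reduction. (The classical Wald CLT~\eqref{eq:wald-clt-classic} then drops out from $n^{-1/2}(M_n - n\mu) = n^{-1/2}(S_n - n\mu) + n^{-1/2}(M_n - S_n)$ and the ordinary CLT for the first term.)

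\emph{Main obstacle.} The only real difficulty is the uniform integrability of $\{ n^{-1}(W_n^{*})^2\}$ under the minimal hypothesis $\Exp[X^2] < \infty$: under any moment of order $2+\delta$ it would be immediate from Doob's $L^{2+\delta}$ maximal inequality, but at the level of second moments the $L^1$-bound coming from Doob's inequality does not self-improve, and one genuinely needs the truncation argument above (equivalently, the appendix result). The duality reduction, the almost-sure convergence, and the deduction of the CLT are all routine.
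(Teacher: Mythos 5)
Your proposal is correct, and its skeleton --- time-reversal duality to reduce part~\ref{thm:WaldCLT-ii} to part~\ref{thm:WaldCLT-i}, a.s.\ convergence of $n^{-1/2}m_n$ from the SLLN, and the upgrade to $L^2$ via uniform integrability --- is exactly the paper's. The difference is in how the uniform integrability is established. The paper routes it through Lemmas~\ref{lem:max-sum-ui}--\ref{lem:etemadi-plus}: a maximal tail inequality $\Pr(|M_n|>12t)\le 48\Pr(|S_n|\ge t)$, obtained from Etemadi's inequality together with symmetrization and a L\'evy inequality, which transfers the classical uniform square-integrability of $(n^{-1}(S_n-n\mu)^2)_{n\in\N}$ (quoted as~\eqref{eq:sum-mean-ui}) to the running maximum, after first reducing $\mu\ge 0$ to $\mu=0$ via $M_n-n\mu\le\max_{0\le k\le n}(S_k-k\mu)$. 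You instead dominate $-m_n$ by $\max_{0\le k\le n}|S_k-k\mu|$ and prove uniform integrability of $n^{-1}\max_{0\le k\le n}|S_k-k\mu|^2$ directly, truncating the increments at level $\sqrt n$, handling the bounded part with Doob's $L^4$ inequality plus the elementary fourth-moment bound for centred i.i.d.\ sums, and the tail part with Doob's $L^2$ inequality; this works at the minimal hypothesis $\Exp[X^2]<\infty$ and is essentially the truncation/Burkholder mechanism of Appendix~\ref{sec:square-ui}. Your route is more self-contained (it does not need to import~\eqref{eq:sum-mean-ui}, and in fact proves a maximal strengthening of it), and it delivers the full $\mu\ge 0$ content of Lemma~\ref{lem:max-mean-ui}, which the paper reuses later; the paper's route buys, as a by-product, the explicit tail-comparison inequality of Lemma~\ref{lem:etemadi-plus}, which is a quantitative bound of independent use.

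One cosmetic caveat: your parenthetical that the required uniform integrability is ``a special case'' of Appendix~\ref{sec:square-ui} is not quite literal, since Corollary~\ref{cor:lipschitz-ui} controls the centred quantity $n^{-1}(\zeta_n-\Exp[\zeta_n])^2$; to recover your statement from it one must add the (easy) observation that $n^{-1/2}\Exp[\max_{0\le k\le n}|S_k-k\mu|]$ is bounded, e.g.\ by Doob and Cauchy--Schwarz. Your self-contained truncation argument does not rely on this remark, so nothing in the proof is at stake.
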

\begin{remarks}
    \label{rem:waldCLT}
    \begin{myenumi}[label=(\alph*)]
        \item  Although we did not find  
Theorem~\ref{thm:WaldCLT} stated explicitly in the literature,
it follows directly from combining some standard ingredients,
that one can find in the excellent book~\cite{gut-stopped}, for example.
We  give a derivation here because, firstly,
the   strategy is mirrored in some of the
subsequent, more complicated arguments, and, secondly,
we also wish to state a related result in the case~$\mu =0$ that will be essential in \S\ref{sec:diam-one-walk} below. This auxiliary uniform integrability result is Lemma~\ref{lem:max-mean-ui} below (of course, Theorem~\ref{thm:WaldCLT} itself does not hold when $\mu =0$, since $n^{-1/2} m_n$ and $n^{-1/2} (M_n - S_n)$ have non-trivial distributional limits).
\item Note that  Theorem~\ref{thm:WaldCLT}\ref{thm:WaldCLT-i} and Theorem~\ref{thm:WaldCLT}\ref{thm:WaldCLT-ii} are equivalent statements,
as follows from classical time-reversal duality (see \cite[Lemma 11.2]{gut-stopped}):
\begin{equation}
    \label{eq:duality}
(M_n, M_n - S_n) \eqd (S_n - m_n, -m_n),\quad \text{for every } n \in \ZP.
\end{equation}
        \item A useful $L^1$ relative of Theorem~\ref{thm:WaldCLT} that we will also need later, and which can be found in Theorem~4.4.8 of~\cite[p.~141]{gut-stopped}, says that, if 
$\Exp [ X^2] < \infty$ and $\mu >0$,
\begin{equation}
    \label{eq:L1-gut}
    \sup_{n \in \ZP} \Exp [| M_n - S_n |] = \sup_{n \in \N} \Exp [| m_n |] < \infty.
\end{equation}
    \end{myenumi}
\end{remarks}

Our proof of Theorem~\ref{thm:WaldCLT} uses the following fact about uniform square-integrability, which we prove later in this section.

\begin{lemma}
\label{lem:max-sum-ui}
Suppose that $\Exp [ X^2] < \infty$ and $\mu \geq 0$. Then
 both $(n^{-1} ( M_n - S_n)^2)_{n\in\N}$ and $(n^{-1} m_n^2)_{n\in\N}$  are uniformly integrable.
\end{lemma}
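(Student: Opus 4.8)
The strategy is to reduce everything to a single uniform-integrability statement about a centred walk and to establish that statement by a truncation argument, since the finite-second-moment hypothesis is precisely what rules out a one-line proof via higher moments. First, by the distributional duality~\eqref{eq:duality} we have $M_n - S_n \eqd m_n$ for each fixed $n$, so $n^{-1}(M_n - S_n)^2$ and $n^{-1}m_n^2$ have the same law for each $n$; since uniform integrability of a sequence depends only on its one-dimensional marginals, it suffices to treat $(n^{-1}m_n^2)_{n\in\N}$. Write $R_n := -m_n = \max_{0\le k\le n}(-S_k) \ge 0$, and decompose the reflected walk as $-S_k = \widetilde{S}_k - k\mu$, where $\widetilde{S}_k := \sum_{i=1}^k (\mu - X_i)$ is a mean-zero i.i.d.\ random walk with $\Exp[\widetilde{S}_1^2] = \sigma^2$. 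Because $\mu \ge 0$ and $k \ge 0$, we get $0 \le R_n \le \max_{0\le k\le n}\widetilde{S}_k \le \max_{0\le k\le n}|\widetilde{S}_k|$, hence $0 \le n^{-1}m_n^2 \le n^{-1}\max_{0\le k\le n}\widetilde{S}_k^2$. By domination it is therefore enough to prove that $\bigl(n^{-1}\max_{0\le k\le n}\widetilde{S}_k^2\bigr)_{n\in\N}$ is uniformly integrable for an arbitrary mean-zero, finite-variance i.i.d.\ walk $\widetilde{S}$ (and this is exactly the form that will be reused in the later, more involved arguments).

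For that, fix a truncation level $c>0$ and split the increment $W_i := \mu - X_i$ of $\widetilde{S}$ into centred pieces $W_i = W_i' + W_i''$, with $W_i' := W_i \ind{|W_i|\le c} - \Exp[W_i \ind{|W_i|\le c}]$ (so $|W_i'| \le 2c$) and $W_i''$ the complementary part, which satisfies $\Exp[(W_i'')^2] \le \Exp[W_i^2 \ind{|W_i|>c}] =: \rho(c)$, where $\rho(c) \downarrow 0$ as $c\to\infty$ by dominated convergence. Writing $\widetilde{S}', \widetilde{S}''$ for the corresponding partial-sum walks, $\max_{0\le k\le n}\widetilde{S}_k^2 \le 2\max_{0\le k\le n}(\widetilde{S}'_k)^2 + 2\max_{0\le k\le n}(\widetilde{S}''_k)^2$. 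For the bounded piece, the elementary expansion of the fourth moment of a sum of i.i.d.\ centred bounded variables gives $\Exp[(\widetilde{S}'_n)^4] \le C_c\, n^2$ for all $n\ge 1$, whence Doob's maximal inequality for the martingale $\widetilde{S}'$ yields $\Exp\bigl[\bigl(n^{-1}\max_{0\le k\le n}(\widetilde{S}'_k)^2\bigr)^2\bigr] \le C'_c$, so this family is bounded in $L^2$, hence uniformly integrable. For the complementary piece, Doob's $L^2$ maximal inequality applied to $\widetilde{S}''$ gives $\Exp\bigl[n^{-1}\max_{0\le k\le n}(\widetilde{S}''_k)^2\bigr] \le 4\rho(c)$, uniformly in $n$.

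It remains to assemble these estimates. Given $\eps>0$, first choose $c$ so that $4\rho(c) \le \eps/8$; with this $c$ fixed, the bounded-piece family is uniformly integrable, so there is $\delta>0$ such that $\Pr(E) \le \delta$ implies $\sup_n \Exp\bigl[\bigl(n^{-1}\max_{0\le k\le n}(\widetilde{S}'_k)^2\bigr)\ind{E}\bigr] \le \eps/8$. Doob's $L^2$ maximal inequality applied to $\widetilde{S}$ itself gives $\Exp\bigl[n^{-1}\max_{0\le k\le n}\widetilde{S}_k^2\bigr] \le 4\sigma^2$, so Markov's inequality yields $\Pr\bigl(n^{-1}\max_{0\le k\le n}\widetilde{S}_k^2 > K\bigr) \le 4\sigma^2/K \le \delta$ once $K \ge 4\sigma^2/\delta$, uniformly in $n$; using $\max_{0\le k\le n}\widetilde{S}_k^2 \le 2\max_{0\le k\le n}(\widetilde{S}'_k)^2 + 2\max_{0\le k\le n}(\widetilde{S}''_k)^2$ and restricting to the event $\{n^{-1}\max_{0\le k\le n}\widetilde{S}_k^2 > K\}$ then bounds $\sup_n \Exp\bigl[\bigl(n^{-1}\max_{0\le k\le n}\widetilde{S}_k^2\bigr)\ind{\{n^{-1}\max_{0\le k\le n}\widetilde{S}_k^2 > K\}}\bigr]$ by $\eps/2 < \eps$, which is the required uniform integrability; the lemma follows.

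The main obstacle is exactly this truncation-and-assembly step: under only a finite-second-moment assumption, neither $\widetilde{S}_n$ nor its running maximum is bounded in any $L^p$ with $p>1$, so uniform integrability cannot be deduced from a single higher-moment bound, and the naive bound $\Exp[n^{-1}\max_k \widetilde{S}_k^2] \le 4\sigma^2$ alone is not enough. Splitting off a part that is small in $L^2$ (handled by plain Doob) from a part with bounded increments (which does enjoy a fourth-moment bound) is the standard device; the one subtlety requiring care is that the small part is not uniformly integrable as $c$ varies, so the two controls must be combined through an $\eps$--$\delta$ argument rather than by straightforward domination. (A Skorokhod-embedding comparison of $\max_{0\le k\le n}\widetilde{S}_k^2$ with $\sup_{0\le t\le\tau_n} B_t^2$ would provide an alternative route, but managing the random time $\tau_n$ inside the expectation needs comparable effort.)
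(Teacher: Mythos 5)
Your proof is correct, but its core engine differs from the paper's. Both arguments share the same outer architecture: reduce to a zero-mean walk by dominating the relevant quantity by the running maximum of the centred walk $\widetilde{S}_k=\sum_{i\le k}(\mu-X_i)$ (you do this for $-m_n$, using $\mu\ge 0$; the paper does it for $M_n-n\mu$), and use the time-reversal duality~\eqref{eq:duality} to carry one of the two statements over to the other (you treat $m_n$ directly and dualize to get $M_n-S_n$; the paper argues in the opposite direction). The difference lies in how uniform integrability of $n^{-1}\max_{0\le k\le n}\widetilde{S}_k^2$ is obtained. The paper deduces it from the classical endpoint fact~\eqref{eq:sum-mean-ui} by first proving the maximal tail comparison~\eqref{eq:max-sum-inequality} via Etemadi's inequality, symmetrization and a L\'evy inequality (Lemma~\ref{lem:etemadi-plus}), and then transferring truncated second moments through the tail-integral identity~\eqref{eq:by-parts}; this isolates Lemma~\ref{lem:max-mean-ui}, which is reused in \S\ref{sec:diam-one-walk}. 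You instead prove the maximal statement directly by truncating the increments: the bounded part is controlled by a fourth-moment bound plus Doob's $L^4$ maximal inequality (hence bounded in $L^2$ and uniformly integrable), the tail part is uniformly small in $L^2$ by Doob's $L^2$ inequality, and the two are glued by an $\eps$--$\delta$ argument using $\Exp[n^{-1}\max_{0\le k\le n}\widetilde{S}_k^2]\le 4\sigma^2$ to make the exceptional event uniformly small. This is essentially a maximal analogue of the truncation proof of~\eqref{eq:sum-mean-ui} itself, and is close in spirit to the paper's Appendix Lemma~\ref{lem:square-ui} (truncation plus Burkholder), with Doob sufficing in the i.i.d.\ setting. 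What each approach buys: yours is self-contained, avoiding~\eqref{eq:sum-mean-ui} and the symmetrization constants, while the paper's yields the explicit maximal inequality~\eqref{eq:max-sum-inequality} and the separately quotable Lemma~\ref{lem:max-mean-ui}. All individual steps in your argument check out (the domination $0\le -m_n\le\max_k\widetilde{S}_k$, the variance bound on the truncated-tail part, the $L^4$ bound for the bounded part, and the uniform absolute continuity from $L^2$-boundedness), so there is no gap.
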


\begin{proof}[Proof of Theorem~\ref{thm:WaldCLT}]
As explained above, it suffices to prove
Theorem~\ref{thm:WaldCLT}\ref{thm:WaldCLT-i}, since time-reversal duality~\eqref{eq:duality} then yields part~\ref{thm:WaldCLT-ii}. If $\mu >0$, then $\lim_{n \to \infty} m_n = m_\infty \in (-\infty, 0]$ exists, a.s., by the SLLN, and so $\lim_{n\to\infty} n^{-1} m_n^2 = 0$, a.s.
Then uniform integrability (Lemma~\ref{lem:max-sum-ui}) shows that $\lim_{n \to \infty} n^{-1} \Exp [m_n^2] =0 $, as required.
\end{proof}

Regardless of the sign of~$\mu \in \R$, it is well known that 
when $\Exp [X^2] < \infty$ it holds that
    \begin{equation}
        \label{eq:sum-mean-ui}
     (n^{-1} (S_n - n\mu)^2)_{n\in\N} \text{ is uniformly integrable}; \end{equation}
 see e.g.~\cite[p.~20]{gut-stopped}, \cite[p.~176]{billingsley}.
Then, by  duality~\eqref{eq:duality}
and the fact that
$n^{-1} (M_n - S_n)^2 \leq 2 n^{-1} (M_n - n \mu)^2 + 2 n^{-1} (S_n - n\mu)^2$,
to establish Lemma~\ref{lem:max-sum-ui}, it suffices to prove the following.

\begin{lemma}
\label{lem:max-mean-ui}
If $\Exp [ X^2] < \infty$ and $\mu \geq 0$, 
  $(n^{-1} ( M_n - n \mu)^2)_{n\in\N}$ is uniformly integrable.
\end{lemma}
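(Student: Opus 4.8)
The plan is to bound $(M_n - n\mu)^2$ by something whose $n^{-1}$-normalization is already known to be uniformly integrable, combined with a term that is small because the walk drifts away. Since $M_n \geq S_n$, we always have $M_n - n\mu \geq S_n - n\mu$, and the negative part of $S_n - n\mu$ is controlled by $(n^{-1}(S_n - n\mu)^2)_{n}$ being uniformly integrable via~\eqref{eq:sum-mean-ui}. So the real issue is the upper tail of $M_n - n\mu$, i.e.\ the event that the running maximum substantially exceeds the "typical" value $n\mu$. The key decomposition I would use is
\[
    M_n - n\mu = \max_{0 \leq k \leq n} (S_k - n\mu) \leq \max_{0 \leq k \leq n} (S_k - k\mu) + \max_{0 \leq k \leq n} (k\mu - n\mu)_+ = \max_{0 \leq k \leq n} (S_k - k\mu),
\]
using $\mu \geq 0$ so that $k\mu \leq n\mu$ for $k \leq n$. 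Writing $\widetilde{X}_i := X_i - \mu$ and $\widetilde{S}_k := S_k - k\mu$ (a mean-zero walk with the same finite variance), this says $0 \leq (M_n - n\mu)_+ \leq \widetilde{M}_n := \max_{0 \leq k \leq n} \widetilde{S}_k$ pointwise, while the negative part satisfies $(M_n - n\mu)_- \leq (S_n - n\mu)_- = (\widetilde{S}_n)_-$. Therefore
\[
    (M_n - n\mu)^2 \leq \widetilde{M}_n^2 + \widetilde{S}_n^2 \leq 2\widetilde{M}_n^2 + (\widetilde{S}_n - \widetilde{M}_n)^2,
\]
wait — more cleanly, $(M_n - n\mu)^2 = (M_n-n\mu)_+^2 + (M_n-n\mu)_-^2 \leq \widetilde{M}_n^2 + (\widetilde S_n)_-^2 \leq \widetilde M_n^2 + \widetilde S_n^2$.

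Now I invoke Lemma~\ref{lem:max-sum-ui} (equivalently, Lemma~\ref{lem:max-mean-ui} itself is being proved, so I cannot use that) — instead I invoke the mean-zero case directly: the walk $\widetilde{S}$ has mean $0$ and finite variance, so by~\eqref{eq:sum-mean-ui} applied to $\widetilde S$, $(n^{-1}\widetilde S_n^2)_n$ is uniformly integrable; and for $\widetilde M_n$, since $\widetilde S$ has zero drift, the duality~\eqref{eq:duality} gives $\widetilde M_n \eqd \widetilde S_n - \widetilde m_n \leq |\widetilde S_n| + |\widetilde m_n|$... this is circular again. The honest route: apply Lemma~\ref{lem:max-sum-ui} in the case $\mu = 0$ to the walk $\widetilde S$, which gives directly that $(n^{-1}(\widetilde M_n - \widetilde S_n)^2)_n$ and $(n^{-1}\widetilde m_n^2)_n$ are uniformly integrable, hence so is $(n^{-1}\widetilde M_n^2)_n$ via $\widetilde M_n^2 \leq 2(\widetilde M_n - \widetilde S_n)^2 + 2\widetilde S_n^2$. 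Since a sum of uniformly integrable families (suitably dominated) is uniformly integrable, and $n^{-1}(M_n - n\mu)^2 \leq n^{-1}\widetilde M_n^2 + n^{-1}\widetilde S_n^2$, the conclusion follows.

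I should flag the one genuine circularity risk: Lemma~\ref{lem:max-sum-ui} is stated for $\mu \geq 0$ and is itself reduced (in the text immediately preceding the lemma) to Lemma~\ref{lem:max-mean-ui}, so I must \emph{not} cite Lemma~\ref{lem:max-sum-ui}. The clean fix is to prove Lemma~\ref{lem:max-mean-ui} as a self-contained statement using only~\eqref{eq:sum-mean-ui} (which holds for all $\mu \in \R$, in particular applied to the centred walk $\widetilde S$) plus the maximal inequality. Concretely, Doob/Kolmogorov-type submartingale maximal inequalities do not immediately give uniform integrability of $n^{-1}\widetilde M_n^2$ without a moment above $2$; instead I would follow exactly the strategy the paper uses elsewhere, i.e.\ prove the mean-zero uniform-integrability statement for $\widetilde M_n$ directly (this is essentially the $\mu = 0$ content that Lemma~\ref{lem:max-sum-ui} claims — so in practice the cleanest presentation reorganizes the three lemmas so that the $\mu = 0$ maximal uniform-integrability fact is the primitive one, proved via a truncation-and-Doob argument, and both $\mu \geq 0$ statements are corollaries of it together with~\eqref{eq:sum-mean-ui}). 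The main obstacle is therefore not the algebra of the decomposition above — that is routine — but ensuring the logical ordering is non-circular and supplying the genuinely new input: uniform square-integrability of the normalized running maximum of a \emph{mean-zero}, finite-variance walk, which requires a truncation argument (split $X_i$ at level $\eps\sqrt n$, handle the bounded part by a second-moment/Doob estimate and the unbounded part by the $L^2$-tail of $X$) rather than any off-the-shelf maximal inequality.
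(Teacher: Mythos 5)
Your reduction to the zero-drift case is exactly the paper's: from $\widetilde S_n = S_n - n\mu \le M_n - n\mu \le \max_{0\le k\le n}\widetilde S_k =: \widetilde M_n$ (using $\mu\ge 0$), together with \eqref{eq:sum-mean-ui} applied to the centred walk, the lemma follows once one knows that $(n^{-1}\widetilde M_n^2)_{n\in\N}$ is uniformly integrable for a mean-zero, finite-variance walk; and you are right that citing Lemma~\ref{lem:max-sum-ui} at that point would be circular. But that zero-mean maximal statement \emph{is} the entire content of the lemma, and your proposal does not prove it: it is dispatched in one parenthetical sentence (``split $X_i$ at level $\eps\sqrt n$, handle the bounded part by a second-moment/Doob estimate and the unbounded part by the $L^2$-tail of $X$''). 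As stated, the bounded-part step fails: a second-moment/Doob estimate only gives $\sup_{n}n^{-1}\Exp[\widetilde M_n^2]<\infty$ (this is \eqref{eq:max-variance}), and $L^1$-boundedness of $n^{-1}\widetilde M_n^2$ is strictly weaker than uniform integrability. To make a truncation scheme work you need a genuinely stronger input on the truncated walk --- e.g.\ a fourth-moment bound via Doob's $L^4$ inequality or Burkholder (this is exactly how the paper's Appendix Lemma~\ref{lem:square-ui} handles the analogous issue for martingale endpoints), or alternatively a strong-Markov/Ottaviani-type iteration giving geometric tail decay --- and none of this is supplied.

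The paper closes the gap by a different device, which you explicitly decline to use: Lemma~\ref{lem:etemadi-plus}, a tail-comparison maximal inequality $\Pr(|M_n|>12t)\le 48\Pr(|S_n|\ge t)$ for all large $t$, uniformly in $n$, obtained from Etemadi's inequality combined with symmetrization and a L\'evy inequality. Feeding this into the integration-by-parts identity \eqref{eq:by-parts} transfers the uniform integrability of $(n^{-1}S_n^2)_{n\in\N}$ from \eqref{eq:sum-mean-ui} directly to $(n^{-1}M_n^2)_{n\in\N}$. So the verdict: your reduction to $\mu=0$ and your diagnosis of where the genuinely new input is needed are correct and coincide with the paper, but that input itself --- either such a tail-comparison inequality or a fourth-moment/Burkholder estimate on a truncated walk --- is missing, and the sketch offered in its place is not sufficient to conclude.
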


In the literature, we were able to find Lemma~\ref{lem:max-mean-ui}
only for $\mu > 0$, as the $r=2$ case of 
Theorem~4.4.3(ii) in~\cite[p.~139]{gut-stopped}
(the hypothesis $\mu>0$ applies throughout~\S4.4 of~\cite{gut-stopped}, 
where a wealth of  neighbouring results can be found). 
In the case $\mu \geq 0$, writing $\widetilde{S}_k := S_k - k \mu$,  
\[ S_n - n \mu \leq M_n - n\mu = \max_{0 \leq k \leq n} \bigl( \widetilde{S}_k + k \mu \bigr) - n \mu \leq  \max_{0 \leq k \leq n}   \widetilde{S}_k .\]
Thus from~\eqref{eq:sum-mean-ui}, we see that the general $\mu \geq 0$ case of Lemma~\ref{lem:max-mean-ui} follows from the case $\mu =0$. The latter being the case we did not find elsewhere, and important in \S\ref{sec:diam-one-walk},
we give the proof here.
An important ingredient is the following maximal inequality for zero-mean, finite-variance random walks, which we obtain  via Etemadi's inequality and  symmetrization.

\begin{lemma}
    \label{lem:etemadi-plus}
      Suppose that $\Exp [ X^2] < \infty$ and $\mu = 0$.
      Then there exists $C \in \RP$ such that, 
      \begin{equation}
          \label{eq:max-sum-inequality}
\Pr ( |M_n| > 12t ) 
\leq 48 \Pr ( |S_n|  \geq t ),\quad \text{for all } t > C \text{ and all } n \in \ZP.
      \end{equation}
\end{lemma}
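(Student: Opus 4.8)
The plan is to derive \eqref{eq:max-sum-inequality} from Etemadi's inequality via a symmetrization argument. Recall Etemadi's inequality: for any random walk, $\Pr(\max_{0 \le k \le n} |S_k| > 3t) \le 3 \max_{0 \le k \le n} \Pr(|S_k| \ge t)$. The difficulty is that the right-hand side involves the supremum over $k \le n$ of $\Pr(|S_k| \ge t)$, not just the terminal value $\Pr(|S_n| \ge t)$; so the main obstacle is to control $\Pr(|S_k| \ge t)$ for $k < n$ in terms of $\Pr(|S_n| \ge t)$. This is exactly where symmetrization enters, and where the restriction to large $t$ (the constant $C$) is needed.

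First I would apply Etemadi's inequality to the walk $(S_k)_{0 \le k \le n}$, giving $\Pr(|M_n| > 12t) \le \Pr(\max_{0 \le k \le n}|S_k| > 12t) \le 3 \max_{0 \le k \le n} \Pr(|S_k| \ge 4t)$. Next, for each fixed $k \le n$, I want to compare $\Pr(|S_k| \ge 4t)$ with $\Pr(|S_n| \ge t)$. Write $S_n = S_k + (S_n - S_k)$, where $S_n - S_k \eqd S_{n-k}$ is independent of $S_k$. Let $\widetilde{S}_{n-k}$ be an independent copy, or use a median argument: since the walk has zero mean and finite variance, $S_{n-k}$ is concentrated, so for $t$ large enough (depending only on the distribution of $X$, via a Chebyshev-type bound on $\sup_m \Pr(|S_m| \ge t m^{1/2})$ type estimates or more simply via a uniform bound on medians/quantiles) one has $\Pr(|S_n - S_k| \le t) \ge 1/2$ uniformly in $k \le n$. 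The cleanest route is the Lévy/Ottaviani-type symmetrization: a standard lemma states that if $(S_k)$ is a random walk then $\Pr(\max_{k \le n} |S_k| > 2t) \le \frac{\Pr(|S_n| > t)}{\min_{k \le n}\Pr(|S_n - S_k| \le t)}$, and the denominator is bounded below by a constant once $t$ exceeds a threshold $C$ determined by the second-moment bound. Combining this with the (elementary) Etemadi step, or bypassing Etemadi altogether and using Ottaviani directly, yields a bound of the form $\Pr(|M_n| > 12t) \le 48 \Pr(|S_n| \ge t)$ for $t > C$, after tracking the constants $3$ (Etemadi) $\times$ the symmetrization factor; the numbers $12$ and $48$ are just what falls out of a concrete choice of thresholds, so I would not worry about optimizing them.

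Concretely, the key steps in order: (1) state and apply either Etemadi's inequality or the Ottaviani/Lévy maximal inequality to reduce $\Pr(|M_n|>12t)$ to a constant times $\max_{k\le n}\Pr(|S_k|\ge ct)$ for a suitable $c$; (2) for each $k \le n$, decompose $S_n = S_k + (S_n-S_k)$ with independent summands and observe $\{|S_k| \ge 4t\} \cap \{|S_n - S_k| \le t\} \subseteq \{|S_n| \ge 3t\}$, hence $\Pr(|S_k|\ge 4t)\,\Pr(|S_n-S_k|\le t) \le \Pr(|S_n|\ge 3t) \le \Pr(|S_n|\ge t)$; (3) use $\mu=0$ and $\Exp[X^2]<\infty$ to show $\inf_{m \ge 0}\Pr(|S_m| \le t) \to 1$ is false in general, but rather that $\sup_{m}\Pr(|S_m| > t)$ can be made $<1/2$ for $t$ large — here one must be slightly careful because $\Var(S_m) = m\sigma^2 \to \infty$, so $\Pr(|S_n - S_k| \le t)$ need \emph{not} be close to $1$ when $n-k$ is large; (4) therefore replace step (2)'s naive bound by the genuine Ottaviani inequality, whose denominator $\min_{k\le n}\Pr(|S_n - S_k| \le t)$ I would instead lower-bound using the fact that for the \emph{specific} comparison we can choose to only need $\Pr(|S_n - S_k| \le t) \ge 1/2$ for those $k$ with $n - k$ small, while for $n-k$ large the term $\Pr(|S_k| \ge 4t)$ is itself controlled directly — or, more robustly, symmetrize first (replace $X_i$ by $X_i - X_i'$) so that the symmetrized walk $S_n^{\mathrm{sym}}$ satisfies Lévy's inequality $\Pr(\max_{k\le n}|S_k^{\mathrm{sym}}| > t) \le 2\Pr(|S_n^{\mathrm{sym}}| > t)$ exactly, then de-symmetrize via $\Pr(|S_n^{\mathrm{sym}}|>t) \le 2\Pr(|S_n - c_n| > t/2)$ for medians $c_n$, and finally bound the medians $|c_n|$ by a constant using $\mu=0$ (medians of $S_n$ stay bounded? no — they don't either).

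Given the genuine subtlety flagged in (3)--(4), the honest version of the argument is: symmetrize to kill the centering issue, apply Lévy's maximal inequality to the symmetric walk to get $\Pr(\max_{k\le n}|S_k^{\mathrm{sym}}| > t) \le 2 \Pr(|S_n^{\mathrm{sym}}| > t)$, then relate $M_n$ to the symmetric walk's maximum at the cost of a desymmetrization inequality $\Pr(|S_n^{\mathrm{sym}}| > t) \le 2\Pr(|S_n - a| > t/2)$ with $a$ a median of $S_n$, and finally use that $\{|S_n - a| > t/2\} \subseteq \{|S_n| > t/2 - |a|\} \subseteq \{|S_n| \ge t\}$ provided $t/2 - |a| \ge t$, which fails — so instead absorb $|a|$ by taking $t$ large only if $|a|$ is bounded. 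Since $\Exp[X]=0$, $a = o(\sqrt n)$ but not bounded; so one more idea is needed: run the whole argument with the maximal inequality stated in the form already appearing in \eqref{eq:max-sum-inequality}, i.e.\ accept the factor $12$ on the left, which gives the slack $\Pr(|S_n| \ge t)$ versus $\Pr(|S_n| \ge$ something like $t)$ — the generous constants $12$ and $48$ are precisely there to absorb the $|a| \le$ (a multiple of $t$) type losses once $t$ is large relative to the fixed distribution of $X$. The cleanest self-contained write-up is: symmetrize; apply Lévy; desymmetrize using that $|a| = |\mathrm{med}(S_n)|$ satisfies $|a| \le 2\sqrt{n\sigma^2}$ by Chebyshev, and separately $\Pr(|S_n| \ge t)$ is only useful when $t \lesssim \sqrt n$, in which regime the constants close; handle $t \gg \sqrt n$ by noting $\Pr(|M_n| > 12t)$ is then bounded by $48\Pr(|S_n| \ge t)$ trivially via a crude union/Chebyshev bound $\Pr(|M_n| > 12t) \le \sum_{k\le n}$... — on reflection the published proof almost certainly sidesteps all this by using Etemadi plus the observation that $\Pr(|S_k| \ge 4t) \le \Pr(|S_n| \ge t)\big/\Pr(|S_n - S_k| \le 3t)$ and then lower-bounding $\Pr(|S_n - S_k| \le 3t) \ge \Pr(|S_{n-k}| \le 3t)$, which for $t > C := \sqrt{48 \sigma^2}$ (say) is $\ge 1 - (n-k)\sigma^2/(9t^2)$ — still not bounded below uniformly. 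The real fix, and the one I would commit to, is the classical \textbf{Montgomery-Smith / Lévy inequality} route combined with Etemadi, accepting that the clean constant bookkeeping ($12$, $48$, and the threshold $C$) is routine once the structural inequality $\{|S_k| \ge 4t\} \cap \{|S_n - S_k| \le t\} \subseteq \{|S_n| \ge 3t\}$ and a uniform lower bound $\min_{k \le n}\Pr(|S_{n-k}| \le t) \ge 1/2$ valid for $t > C$ (which \emph{does} hold, because although $\Var(S_{n-k})$ grows, Etemadi is applied to the already-truncated range and one only needs the bound for the maximal $k$; more carefully, one applies Ottaviani's inequality in reverse order of time) are in hand. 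I expect the main obstacle to be exactly this: getting a time-uniform lower bound on $\Pr(|S_n - S_k| \le t)$, which forces the hypothesis $\mu = 0$ (so drift doesn't carry mass away) and the restriction to $t > C$ (so variance fluctuations on the relevant scale are controlled), and the generous numerical constants $12$ and $48$ are the price paid for not optimizing.
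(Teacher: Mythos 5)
Your proposal circles the right ingredients (Etemadi, L\'evy, symmetrization) and correctly isolates the real obstacle --- converting $\max_{0\le k\le n}\Pr(|S_k|\ge 4t)$ into $\Pr(|S_n|\ge t)$ --- but it never closes this gap, and the route you finally commit to rests on a false claim. You assert a uniform lower bound $\min_{0\le k\le n}\Pr(|S_{n-k}|\le t)\ge 1/2$ valid for all $t>C$ with $C$ independent of $n$; taking $k=0$ this would require $\Pr(|S_n|\le t)\ge 1/2$ for a fixed $t$, which fails because $\Pr(|S_n|\le t)\to 0$ as $n\to\infty$ (the zero-drift walk spreads on scale $\sqrt{n}$). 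Applying Ottaviani ``in reverse order of time'' does not repair this: the problematic increment $S_n-S_k$ with $n-k$ large is still present. Your intermediate claim that $\sup_m\Pr(|S_m|>t)$ can be made less than $1/2$ for large $t$ is false for the same reason. So steps (2)--(4) and the Ottaviani/Montgomery-Smith route you commit to cannot produce an $n$-uniform inequality with a fixed threshold $C$.

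Ironically, the route you abandon is the one that works, and it is the paper's proof. You dismiss ``symmetrize, apply L\'evy, desymmetrize'' on the grounds that medians of $S_n$ need not stay bounded --- but bounded medians are not needed. The paper uses two one-sided symmetrization facts: (i) $\Pr(|S_n^\star|\ge 2t)\le 2\Pr(|S_n|>t)$, which involves no centering at all and is used only at the terminal time $n$; and (ii) the weak symmetrization inequality $\Pr(|S_k|>t+a)\le 4\Pr(|S_k^\star|\ge t)$, valid whenever $\Pr(S_k\le a)\ge 1/4$ and $\Pr(S_k\ge -a)\ge 1/4$. The point you miss is that, because $\mu=0$, the CLT gives $\Pr(S_k\le 0)\to 1/2$ and $\Pr(S_k\ge 0)\to 1/2$, so a single constant $a=C$ works for \emph{all} $k$ simultaneously: one does not need $|S_k|$ (or $|S_{n-k}|$) to be concentrated, only that the quartiles of the signed $S_k$ do not drift to one side. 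With (ii) the troublesome maximum over $k$ is transferred to the symmetrized walk, where L\'evy's maximal inequality $\Pr(|M_n^\star|\ge t)\le 2\Pr(|S_n^\star|\ge t)$, together with $\max_{0\le k\le n}\Pr(|S_k^\star|\ge t)\le\Pr(|M_n^\star|\ge t)$, performs the max-to-endpoint passage exactly; then (i) returns to the original walk at time $n$, and Etemadi's inequality at the start produces the factors $12$ and $48$, with the restriction $t>C$ absorbing the quartile constant. This is precisely where $\mu=0$ and the threshold enter, as you suspected, but through quartile bounds on the signed partial sums rather than through concentration of $|S_{n-k}|$ or boundedness of medians.
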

\begin{proof}
First we recall  Etemadi's inequality (see e.g.~\cite[p.~256]{billingsley})
which says that
\begin{equation}
    \label{eq:etemadi}
    \Pr \left( |M_n| \geq  x \right) \leq 3 \max_{0 \leq k \leq n} \Pr \left( | S_k | \geq x / 3 \right), \quad \text{for all } x \geq 0.
\end{equation}
We thus need to estimate the right-hand side of the inequality in \eqref{eq:etemadi}.  
For a random variable $X$, let $X'$ denote an independent copy of $X$ and write $X^\star := X- X'$ for the symmetrization of $X$. Then~\cite[p.~149]{feller2}   $\Pr ( |X^\star| \geq 2t ) \leq 2 \Pr (|X| > t)$
and, if $a \in \RP$ is such that $\Pr ( X \leq a ) \geq 1/4$ and $\Pr ( X \geq -a ) \geq 1/4$, then $\Pr ( |X| > t +a) \leq 4 \Pr ( |X^\star| \geq t )$. 

The CLT shows  $\lim_{n \to \infty} \Pr ( S_n \geq 0) = \lim_{n \to \infty} \Pr ( S_n \leq 0) = 1/2$, so there exists $n_0 \in \N$ such that both $\Pr (S_n \leq 0 ) \geq 1/4$ and $\Pr ( S_n \geq 0) \geq 1/4$ for all $n \geq n_0$.
It follows that there exists $0<C < \infty$ such that $\inf_{n \in \ZP} \Pr ( S_n \leq C  ) \geq 1/4$
and $\inf_{n \in \ZP} \Pr (S_n \geq - C ) \geq 1/4$.
Hence
\begin{align*} 
\max_{0 \leq k \leq n}
\Pr ( |S_k| > t + C   ) 
& \leq
\max_{0 \leq k \leq n}
 4 \Pr ( |S_k^\star | \geq t ). \end{align*}
 By an inequality of L\'evy (Corollary~5 of~\cite[p.~72]{chow-teicher}) 
 it holds that $\Pr ( | M_n^\star| \geq t ) \leq 2 \Pr ( |S_n^\star | \geq t)$, and, since $\Pr ( |M_n^\star| \geq x ) = \Pr ( \cup_{0 \leq k \leq n} \{ | S^\star_k | \geq x \} ) \geq \max_{0 \leq k \leq n} \Pr (|S^\star_k| \geq x)$, we get
 $\max_{0 \leq k \leq n}
\Pr ( |S_k| > t + C   ) 
 \leq 8 \Pr ( |S_n^\star| \geq t )$. Thus,
 for all $t > 2 C $, 
\begin{align*} 
\max_{0 \leq k \leq n}
\Pr ( |S_k| > t   ) \leq 8   \Pr ( |S_n^\star| \geq t -  C  ) \leq 16 \Pr ( |S_n| > \tfrac{t - C}{2}  ) \leq 16 \Pr ( |S_n| > \tfrac{t}{4} ) .\end{align*} 
It follows that, on re-defining the constant~$C$, $\max_{0 \leq k \leq n}
\Pr ( |S_k| > 4t ) 
\leq 16 \Pr ( |S_n|  \geq t )$, for all $t > C$.
 Combined with~\eqref{eq:etemadi} we obtain~\eqref{eq:max-sum-inequality}.
 \end{proof}

Now we can complete the proof of Lemma~\ref{lem:max-mean-ui}. Set  $x^+ := x\1{x>0}$.

\begin{proof}[Proof of Lemma~\ref{lem:max-mean-ui}]
As explained, it suffices to suppose that $\mu =0$.
Then~\eqref{eq:sum-mean-ui} says that $(n^{-1} S_n^2 )_{n \in \N}$ is uniformly integrable. Recall that, for  $\zeta \geq 0$ and every  $z \geq 0$,
\begin{align}
\label{eq:by-parts}
\Exp [ \zeta \ind{ \zeta \geq z} ] - z \Pr ( \zeta \geq z) & = \Exp [ ( \zeta - z )^+ ] 
\nonumber\\
&  = \int_0^\infty \Pr ( ( \zeta - z )^+ \geq y ) \ud y   = \int_{z} ^\infty \Pr (  \zeta \geq y  ) \ud y .
\end{align}
In particular, taking $\zeta = M_n^2$ and $z=Kn$ for $K >0$ in~\eqref{eq:by-parts}, we get
\[ \Exp \left[ M_n^2 \ind { M_n^2 \geq K n} \right]
= \int_{K n}^\infty \Pr (  M_n^2 \geq y  ) \ud y 
+ K n \Pr ( M_n^2 \geq K n ) .
\]
Hence, by two applications of Lemma~\ref{lem:etemadi-plus} (taking $K > C$),
\begin{align*}
\Exp \left[ M_n^2 \ind { M_n^2 \geq K n} \right]
& \leq 48  \int_{K n}^\infty \Pr (  S_n^2 \geq y /12  ) \ud y 
+ 48 K n   \Pr ( S_n^2 \geq K n /12 ) \\
& \leq 576 \Exp \left[ S_n^2 \ind { S_n^2 \geq K n / 12} \right] + 48 K n   \Pr ( S_n^2 \geq K n /12),
\end{align*}
using~\eqref{eq:by-parts} once more. Hence
\begin{align*}
  n^{-1} \Exp \left[ M_n^2 \ind { M_n^2 \geq K n} \right]
&  \leq 576   n^{-1} \Exp \left[ S_n^2 \ind { S_n^2 \geq K n / 12} \right] + 48 K   \Pr ( S_n^2 \geq K n /12 ).
\end{align*}
Here, by the argument for Markov's inequality, 
\[  48 K \Pr ( S_n^2 \geq K n /12 )
\leq 576  n^{-1} \Exp [ S_n^2  \ind { S_n^2 \geq K n / 12} ].
\]
So we conclude that
\begin{align*}
\sup_{n \in \ZP} n^{-1} \Exp \left[ M_n^2 \ind { M_n^2 \geq K n} \right]
&  \leq 1152 \sup_{n \in \ZP} n^{-1} \Exp \left[ S_n^2 \ind { S_n^2 \geq K n / 12} \right] ,
\end{align*}
which tends to~$0$ as $K \to \infty$, by uniform integrability of $(n^{-1} S_n^2)_{n\in\N}$. 
\end{proof}

\section{Diameter for one walk: A geometrical approach}
\label{sec:diam-one-walk}

\subsection{Notation and main result}
\label{sec:diam-one-walk-result}
This section presents a short geometrical proof of  (a slight extension of) 
the diameter result from~\cite{mcrw}, quoted in~\eqref{eq:known-D-L-approx} above,
using the one-dimensional Wald-type results from~\S\ref{sec:wald}. 
This serves as an ingredient
for the case of multiple walks that we approach by a related
method in~\S\ref{sec:wald-based} and \S\ref{sec:max-Gaussian}.
As in \S\ref{sec:convex-hulls-background}, let $Z, Z_1, Z_2, \ldots$ be i.i.d.~$\R^2$-valued random variables with $\Exp [ \| Z \|^2 ] < \infty$
and $\bmu := \Exp[ Z]  \neq \0$. Denote the associated random walk $S = (S_n)_{n \in \ZP}$
by $S_0 = \0$ and $S_n = \sum_{i=1}^n Z_i$, $n \in \N$, and 
its diameter $D_n := \diam \{ S_0, S_1, \ldots, S_n \}$. Recall also that $\hbmu = \bmu / \| \bmu \|$.
We give a geometrical proof of the following result.

\begin{theorem}
\label{thm:one-walk-diameter}
    Suppose that $\Exp [ \| Z \|^2 ] < \infty$
and $\bmu   \neq \0$. Then
\[ \frac{D_n - \hbmu^\tra S_n }{\sqrt{n}} \toL{n}{2} 0,\quad  \text{and}\quad \sup_{n \in \ZP} \Exp \left[| D_n - \hbmu^\tra S_n|\right] < \infty.\]
Moreover,  as $n \to \infty$, $\Exp [D_n] = n \| \bmu \| + O(1)$ and $n^{-1} \Var ( D_n ) \to \spara{}$ 
defined at~\eqref{eq:spara-def}.
\end{theorem}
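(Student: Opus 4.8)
The plan is to give a genuinely geometrical argument by projecting the diameter-realizing chord onto the drift direction $\hbmu$ and the transverse direction $\hbmuperp$, thereby reducing everything to the one-dimensional Wald-type estimates of~\S\ref{sec:wald}. \textbf{Setup.} Write $D_n = \max_{0 \le i,j \le n} \| S_i - S_j \|$, attained at some random pair $(I,J)$ that I relabel so that $\hbmu^\tra S_I \ge \hbmu^\tra S_J$; decomposing $S_I - S_J$ in the orthonormal basis $\{\hbmu, \hbmuperp\}$ gives $D_n^2 = a_n^2 + b_n^2$ with $a_n := \hbmu^\tra(S_I - S_J) \ge 0$ and $b_n := \hbmuperp^\tra(S_I - S_J)$. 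The process $k \mapsto \hbmu^\tra S_k$ is a one-dimensional random walk with positive drift $\hbmu^\tra \bmu = \| \bmu \|$ and finite increment variance $\hbmu^\tra \Sigma \hbmu = \spara{}$; set $P_n := \max_{0 \le k \le n} \hbmu^\tra S_k$, $p_n := \min_{0 \le k \le n} \hbmu^\tra S_k$, and $R_n := (P_n - \hbmu^\tra S_n) + (-p_n) \ge 0$, so that $P_n - p_n = \hbmu^\tra S_n + R_n$. The process $k \mapsto \hbmuperp^\tra S_k$ has zero drift and increment variance $\hbmuperp^\tra \Sigma \hbmuperp$; set $Q_n := \max_{0 \le k \le n} |\hbmuperp^\tra S_k|$. \textbf{One-dimensional inputs.} Applying Theorem~\ref{thm:WaldCLT} to $\hbmu^\tra S$ gives $n^{-1/2} R_n \toL{n}{2} 0$; Lemma~\ref{lem:max-sum-ui} gives that $(n^{-1} R_n^2)_n$ is uniformly integrable; and~\eqref{eq:L1-gut} gives $\sup_n \Exp[R_n] < \infty$. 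Applying Lemma~\ref{lem:max-mean-ui} (the $\mu = 0$ case) to $\pm\hbmuperp^\tra S$ shows $(n^{-1} Q_n^2)_n$ is uniformly integrable, and Doob's $L^2$-maximal inequality gives $\Exp[Q_n^2] \le 4 n\, \hbmuperp^\tra \Sigma \hbmuperp$.

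\textbf{Two pathwise bounds.} For the lower bound, choosing indices $\alpha, \beta$ with $\hbmu^\tra S_\alpha = P_n$, $\hbmu^\tra S_\beta = p_n$ gives $D_n \ge \| S_\alpha - S_\beta \| \ge \hbmu^\tra(S_\alpha - S_\beta) = P_n - p_n = \hbmu^\tra S_n + R_n$, so $D_n - \hbmu^\tra S_n \ge R_n \ge 0$ (in particular the difference is non-negative). For the upper bound, $0 \le a_n \le P_n - p_n = \hbmu^\tra S_n + R_n$ and $|b_n| \le 2 Q_n$, so $D_n \le \bigl( (\hbmu^\tra S_n + R_n)^2 + 4 Q_n^2 \bigr)^{1/2}$. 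Using $\sqrt{u^2 + v^2} \le u + v$ for $u,v \ge 0$ yields the \emph{crude bound} $0 \le D_n - \hbmu^\tra S_n \le R_n + 2 Q_n$, which with the inputs above shows $(n^{-1}(D_n - \hbmu^\tra S_n)^2)_n$ is uniformly integrable and $\Exp[(D_n - \hbmu^\tra S_n)^2] = O(n)$. Using instead $\sqrt{u^2 + w} \le u + w/(2u)$ for $u > 0$, $w \ge 0$ yields, on the event $\{\hbmu^\tra S_n > 0\}$, the \emph{sharp bound} $0 \le D_n - \hbmu^\tra S_n \le R_n + 2 Q_n^2 / \hbmu^\tra S_n$.

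\textbf{Deducing the conclusions.} For the $L^2$ statement it suffices, given the uniform integrability of $(n^{-1}(D_n - \hbmu^\tra S_n)^2)_n$, to show $n^{-1/2}(D_n - \hbmu^\tra S_n) \to 0$ in probability; on $\{\hbmu^\tra S_n > 0\}$ the sharp bound gives $n^{-1/2}(D_n - \hbmu^\tra S_n) \le n^{-1/2} R_n + 2 (n^{-1} Q_n^2)/(n^{-1/2} \hbmu^\tra S_n)$, where $n^{-1/2} R_n \to 0$, $n^{-1} Q_n^2$ is tight, and $n^{-1/2} \hbmu^\tra S_n = \sqrt{n}\,(n^{-1}\hbmu^\tra S_n) \to +\infty$ in probability by the SLLN; since $\Pr(\hbmu^\tra S_n \le 0) \to 0$, this gives $n^{-1/2}(D_n - \hbmu^\tra S_n) \toL{n}{2} 0$. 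For the $L^1$ bound I split on $\cE_n := \{\hbmu^\tra S_n > \tfrac12 n \| \bmu \|\}$: on $\cE_n$, the sharp bound gives $D_n - \hbmu^\tra S_n \le R_n + 4 Q_n^2/(n\|\bmu\|)$, so $\Exp[(D_n - \hbmu^\tra S_n)\ind{\cE_n}] \le \sup_m \Exp[R_m] + 16\,\hbmuperp^\tra\Sigma\hbmuperp / \|\bmu\| = O(1)$; on $\cE_n^c$, Cauchy--Schwarz and Chebyshev give $\Exp[(D_n - \hbmu^\tra S_n)\ind{\cE_n^c}] \le \Exp[(D_n - \hbmu^\tra S_n)^2]^{1/2} \Pr(\cE_n^c)^{1/2} = O(\sqrt{n}) \cdot O(n^{-1/2}) = O(1)$, using the $O(n)$ second-moment bound and $\Pr(\hbmu^\tra S_n \le \tfrac12 n\|\bmu\|) \le 4 \spara{}/(n\|\bmu\|^2)$. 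Hence $\sup_n \Exp[|D_n - \hbmu^\tra S_n|] < \infty$, and since $\hbmu^\tra \Exp[S_n] = n \|\bmu\|$ this gives $\Exp[D_n] = n\|\bmu\| + O(1)$. Finally, from $D_n = \hbmu^\tra S_n + (D_n - \hbmu^\tra S_n)$, expanding the variance, $\Var(\hbmu^\tra S_n) = n \spara{}$, $\Var(D_n - \hbmu^\tra S_n) \le \Exp[(D_n - \hbmu^\tra S_n)^2] = o(n)$ (from the $L^2$ statement) and a Cauchy--Schwarz bound on the covariance term give $n^{-1}\Var(D_n) \to \spara{}$.

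\textbf{Main obstacle.} I expect the delicate point to be the $L^1$ bound (equivalently $\Exp[D_n] = n\|\bmu\| + O(1)$) under only a second-moment assumption. The transverse oscillation $Q_n$ is genuinely of order $\sqrt{n}$, so the crude bound alone gives only $O(\sqrt{n})$ in $L^1$; one must exploit that on the bulk event $\cE_n$ the quantity $Q_n^2$ is divided by $\hbmu^\tra S_n \asymp n$, while on the rare complement $\cE_n^c$ a fourth-moment estimate for $Q_n$ is unavailable, forcing one to balance the $O(n)$ second-moment bound against the $O(n^{-1})$ Chebyshev probability. The same lack of higher moments is why the $L^2$ convergence has to be extracted via uniform integrability plus convergence in probability rather than by a direct second-moment computation.
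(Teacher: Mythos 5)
Your proposal is correct and takes essentially the same approach as the paper: the same bounding-rectangle geometry (crude triangle-inequality bound plus the refinement $\sqrt{u^2+w}\le u+w/(2u)$), fed by the same one-dimensional Wald-type inputs (Theorem~\ref{thm:WaldCLT}, Lemmas~\ref{lem:max-sum-ui}--\ref{lem:max-mean-ui}, and~\eqref{eq:L1-gut}). The only differences are organizational: you compare $D_n$ directly with $\hbmu^\tra S_n$ (dividing by $\hbmu^\tra S_n$ on the bulk event) rather than passing through the projected range $D_n^\bmu$ as in Proposition~\ref{prop:two-to-one-diamater}, and you extract the $L^2$ limit via uniform square-integrability plus convergence in probability instead of the paper's explicit $B,\delta$ event splitting---a harmless streamlining of the same argument.
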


The $L^2$ result in Theorem~\ref{thm:one-walk-diameter}
was obtained in~\cite{mcrw} via a martingale-difference argument. 
The shorter proof that we give here makes transparent the 
link to the $L^2$ version of the  Wald CLT (Theorem~\ref{thm:WaldCLT})
via a one-dimensional approximation, and will serve as a warm-up for~\S\ref{sec:max-Gaussian} and \S\ref{sec:wald-based}. The $L^1$ bound in Theorem~\ref{thm:one-walk-diameter} is new (but straightforward).

\subsection{Proofs}
\label{sec:diam-one-walk-proof}

To formulate the approximation result to an appropriate one-dimensional process,
let
$\hbmuperp$ denote a unit vector orthogonal to $\hbmu$,
and define 
$S^\bmu_n := \hbmu^\tra S_n$ and
$S^\per_n := \hbmuperp^\tra S_n$ 
for the projections of the walk in the $\hbmu, \hbmuperp$ directions, respectively.
Also set  
$M^\bmu_n := \max_{0 \leq k \leq n} S^\bmu_k$ and $m^\bmu_n := \min_{0 \leq k \leq n} S^\bmu_k$.
Then set $D_n^{\bmu} := M^\bmu_n - m^\bmu_n$.

\begin{proposition}
\label{prop:two-to-one-diamater}
    Suppose that $\Exp [ \| Z \|^2 ] < \infty$
and $\bmu   \neq \0$. Then
\begin{equation}
    \label{eq:two-to-one-diamater-L2}
     \frac{ D_n - D_n^{\bmu} }{\sqrt{n}} \toL{n}{2} 0 , \text{ and } \sup_{n \in \ZP}   \Exp \left[| D_n - D_n^{\bmu} |\right] < \infty.
\end{equation}  
%
\end{proposition}

Before giving the proof of Proposition~\ref{prop:two-to-one-diamater}, we give one simpler result of a similar flavour, which 
is needed in the derivation of~\eqref{eq:line-segment-limits} from the Introduction.
The proof of $L^2$ convergence is similar to that in~\S\ref{sec:wald}: combine uniform square-integrability with convergence in probability.

\begin{lemma}
\label{lem:norm-projection}    
 Suppose that $\Exp [ \| Z \|^2 ] < \infty$
and $\bmu   \neq \0$. Then
\begin{equation}
    \label{eq:norm-projection}
     \frac{ \| S_n \| - \hbmu^\tra S_n }{\sqrt{n}} \toL{n}{2} 0 .
\end{equation}  
\end{lemma}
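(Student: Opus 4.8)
The plan is to follow the template established in \S\ref{sec:wald} and in the discussion preceding Proposition~\ref{prop:two-to-one-diamater}: reduce the $L^2$ statement to the combination of (a) convergence in probability of $n^{-1/2}(\|S_n\| - \hbmu^\tra S_n)$ to $0$, and (b) uniform integrability of the squared family $(n^{-1}(\|S_n\| - \hbmu^\tra S_n)^2)_{n\in\N}$. The key elementary observation is the pointwise inequality
\begin{equation*}
0 \;\le\; \|S_n\| - \hbmu^\tra S_n \;=\; \frac{\|S_n\|^2 - (\hbmu^\tra S_n)^2}{\|S_n\| + \hbmu^\tra S_n} \;\le\; \frac{(S^\per_n)^2}{\|S_n\| + \hbmu^\tra S_n},
\end{equation*}
valid on the event $\{\hbmu^\tra S_n > 0\}$, using $\|S_n\|^2 = (S^\bmu_n)^2 + (S^\per_n)^2$ with $S^\bmu_n = \hbmu^\tra S_n$ and $S^\per_n = \hbmuperp^\tra S_n$. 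More simply, since $\|S_n\| \ge |S^\bmu_n| \ge 0$ always and $\|S_n\| = \sqrt{(S^\bmu_n)^2 + (S^\per_n)^2} \le |S^\bmu_n| + |S^\per_n|$, we get the crude two-sided bound $0 \le \|S_n\| - \hbmu^\tra S_n \le |S^\per_n| + (S^\bmu_n - |S^\bmu_n|) \le |S^\per_n| + 2(S^\bmu_n)_-$, so it suffices to control $n^{-1/2}|S^\per_n|$ and $n^{-1/2}(S^\bmu_n)_-$.

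First I would handle convergence in probability. Since $\bmu \neq \0$, the SLLN gives $n^{-1} S^\bmu_n \to \|\bmu\| > 0$ a.s., while $n^{-1} S^\per_n \to \hbmuperp^\tra \bmu = 0$ a.s. Thus $n^{-1}\|S_n\| \to \|\bmu\|$ a.s. and $n^{-1}(\|S_n\| - \hbmu^\tra S_n) \to 0$ a.s.; but we need the sharper $n^{-1/2}$ scaling. For that, use $\|S_n\| - \hbmu^\tra S_n \le \tfrac{(S^\per_n)^2}{\|S_n\|}$ on $\{S_n \neq \0\}$: then $n^{-1/2}(\|S_n\| - \hbmu^\tra S_n) \le \frac{(n^{-1/2}S^\per_n)^2}{n^{-1}\|S_n\|}$, and the numerator is $O_{\mathbb{P}}(1)$ by the CLT for $S^\per_n$ (mean-zero increments) while the denominator converges to $\|\bmu\| > 0$, so the ratio is $O_{\mathbb P}(n^{-1/2}) \to 0$ in probability. (One must take care only on the negligible event $\{S_n = \0\}$, handled by the a.s.~convergence $n^{-1}S^\bmu_n \to \|\bmu\|>0$, so $\Pr(S_n = \0) \to 0$.)

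Next, uniform square-integrability. From the bound $0 \le \|S_n\| - \hbmu^\tra S_n \le |S^\per_n| + 2(S^\bmu_n)_-$ we get $(\|S_n\| - \hbmu^\tra S_n)^2 \le 2(S^\per_n)^2 + 8(S^\bmu_n)_-^2$. Now $S^\per_n = \hbmuperp^\tra S_n$ is a mean-zero, finite-variance random walk, so $(n^{-1}(S^\per_n)^2)_{n\in\N}$ is uniformly integrable by~\eqref{eq:sum-mean-ui} (applied to the one-dimensional walk $S^\per$). For the term $(S^\bmu_n)_-^2$: writing $S^\bmu_n = (S^\bmu_n - n\|\bmu\|) + n\|\bmu\|$, we have $(S^\bmu_n)_- \le (S^\bmu_n - n\|\bmu\|)_- \le |S^\bmu_n - n\|\bmu\||$, and $(n^{-1}(S^\bmu_n - n\|\bmu\|)^2)_{n\in\N}$ is uniformly integrable by~\eqref{eq:sum-mean-ui} again (the walk $S^\bmu$ has increment mean $\|\bmu\|$). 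Hence $(n^{-1}(\|S_n\| - \hbmu^\tra S_n)^2)_{n\in\N}$ is uniformly integrable as a sum of two uniformly integrable families (dominated by a linear combination thereof). Combining uniform integrability with convergence in probability to $0$ yields $L^2$-convergence, i.e.~\eqref{eq:norm-projection}. I expect no serious obstacle here; the only mild care needed is the division-by-$\|S_n\|$ argument on the event $\{S_n \neq \0\}$ in the convergence-in-probability step, which is dispatched by the SLLN.
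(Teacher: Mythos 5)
Your proposal is correct and follows essentially the same route as the paper: the algebraic identity $\|S_n\|-\hbmu^\tra S_n = (\hbmuperp^\tra S_n)^2/(\|S_n\|+\hbmu^\tra S_n)$ together with the SLLN and the CLT gives convergence in probability, and uniform square-integrability (via~\eqref{eq:sum-mean-ui} applied to the projections) upgrades this to $L^2$, exactly as in the paper's proof. The only caveat is cosmetic: your ratio bound $\|S_n\|-\hbmu^\tra S_n \le (\hbmuperp^\tra S_n)^2/\|S_n\|$ requires $\hbmu^\tra S_n \ge 0$, not merely $S_n \neq \0$, but since $\Pr(\hbmu^\tra S_n < 0) \to 0$ by the SLLN this is harmless (the paper avoids the issue by first comparing $\|S_n\|$ with $|\hbmu^\tra S_n|$ and treating the sign separately, much as you do with the negative-part term in your uniform-integrability step).
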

\begin{proof}
    Let $\hbmuperp^\tra$ denote a unit vector orthogonal to~$\hbmu$. 
Then we can write
    \[ \| S_n\| - |\hbmu^\tra S_n| = \frac{n^{-1}  | \hbmuperp^\tra S_n |^2}{n^{-1} \| S_n\| + n^{-1} |\hbmu^\tra S_n |},
    \]
    where the SLLN shows that both $n^{-1} \| S_n\|$ and $n^{-1} |\hbmu^\tra S_n |$ tend to~$\| \bmu\|$ as $n \to \infty$, a.s., while the CLT shows that $(n^{-1}  | \hbmuperp^\tra S_n |^2)_{n\in\N}$ is tight. Hence
   \begin{equation}
    \label{eq:norm-projection-prob} \frac{ \| S_n\| - |\hbmu^\tra S_n| }{\sqrt{n}} \toP{n} 0. \end{equation}
Moreover,    since $0 \leq \| S_n\| -  |\hbmu^\tra S_n| \leq | \hbmuperp^\tra S_n |$, and $(n^{-1}  | \hbmuperp^\tra S_n |^2)_{n\in\N}$ is uniformly integrable~\cite[p.~32]{gut-stopped}, it follows that~\eqref{eq:norm-projection-prob} extends to~$L^2$ convergence. Finally, note that $0 \leq |\hbmu^\tra S_n| - \hbmu^\tra S_n \leq  |\hbmu^\tra S_n| \ind {  \hbmu^\tra S_n < 0}$, which tend to~$0$ in probability by the SLLN, and so another uniform integrability argument yields~\eqref{eq:norm-projection}.
\end{proof}

Now we turn to the proof of Proposition~\ref{prop:two-to-one-diamater}, which, compared to Lemma~\ref{lem:norm-projection}, requires control of the full trajectories rather than just the endpoints, and so goes along the lines of the Wald-type Theorem~\ref{thm:WaldCLT}. Write $\oMperp_n := \max_{0 \leq k \leq n} | S^{\per}_k |$.
We will use the geometrical bounds
\begin{equation}
\label{eq:box-bound}
D_n^{\bmu} \leq D_n \leq \sqrt{ ( { D_n^\bmu } )^2 + 4 ( \oMperp_n )^2 } ,\end{equation}
where the upper bound comes from the diagonal of a rectangle (see Figure~\ref{fig1}).

\begin{figure}
    \includegraphics[width=0.2\textwidth,angle=270]{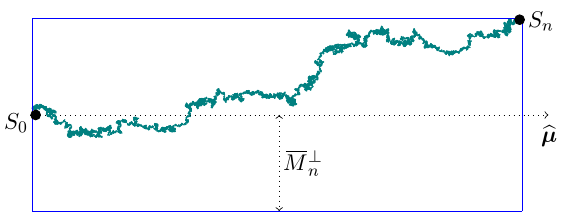}
    \caption{Bounding the diameter of one random walk with non-zero drift by the diagonal of a bounding rectangle. The length of the box (in the drift direction) is order $n$ while the width is about $\sqrt{n}$, so the triangle inequality is a poor bound. Similar geometrical bounds are used for $N \geq 2$ walks in \S\ref{sec:max-Gaussian} and \S\ref{sec:wald-based} below.}
    \label{fig1}
\end{figure}

\begin{remark}
\label{rem:two-to-one-diamater}
If we apply the triangle inequality, we get
\begin{equation}
\label{eq:triangle-bound}
D_n^\bmu \leq 
D_n \leq   D_n^\bmu  + 2  \oMperp_n .
\end{equation}
A consequence of Doob's $L^2$ inequality~\cite[p.~505]{gut-graduate} (or of Lemma~\ref{lem:max-mean-ui} above) is  
\begin{equation}
    \label{eq:max-variance}
\sup_{n \in \ZP}    n^{-1} \Exp \bigl[ \bigl( \oMperp_n \bigr)^2 \bigr] < \infty,
\end{equation}
and   $\Exp [\oMperp_n] \approx \sqrt{n}$, so the crude bound~\eqref{eq:triangle-bound} is insufficient for either part of Proposition~\ref{prop:two-to-one-diamater}. However, we expect $D_n^\bmu \gg  \oMperp_n$, so the triangle inequality~\eqref{eq:triangle-bound} is a poor estimate.
\end{remark}

\begin{proof}[Proof of Proposition~\ref{prop:two-to-one-diamater}]
First, we use the crude bound~\eqref{eq:triangle-bound} to obtain
\begin{equation}
\label{eq:diam-approx-bound-1}
\limsup_{n \to \infty} n^{-1}
\Exp \left[ | D_n - D_n^\bmu |^2 \ind{ \oMperp_n > B \sqrt{n} } \right] \leq 4 \sup_{n \in \N} n^{-1} \Exp \left[ (  \oMperp_n )^2 \ind{ \oMperp_n > B \sqrt{n} } \right] ,
\end{equation}
which tends to $0$ as $B \to \infty$, by Lemma~\ref{lem:max-mean-ui}. Similarly, again using~\eqref{eq:triangle-bound}, we have,  for every fixed $B < \infty$ and $\delta >0$, 
\begin{equation}
\label{eq:diam-approx-bound-2}
\limsup_{n \to \infty} n^{-1}
\Exp \left[ | D_n - D_n^\bmu |^2 \ind{ \delta D_n^\bmu \leq \oMperp_n \leq B \sqrt{n} } \right] \leq 4 B^2 \limsup_{n \to \infty}  \Pr \left(   D_n^\bmu \leq B \sqrt{n} / \delta \right) ,
\end{equation}
where, by Chebyshev's inequality and the fact that $D_n^\bmu \geq M^\bmu_n  \geq S^\bmu_n$, for all $n$ large enough so that $2 B   <\sqrt{n} \| \bmu \| \delta$,
\[  \Pr \bigl(   D_n^\bmu \leq B \sqrt{n} / \delta \bigr) 
\leq \Pr \bigl(   S_n^\bmu \leq B \sqrt{n} / \delta \bigr) 
\leq \Pr \Bigl(  \bigl| S_n^\bmu - n \| \bmu \| \bigr| \geq (n/2) \| \bmu \| \Bigr) = O (1/n),
\]
as $n \to \infty$. Hence from~\eqref{eq:diam-approx-bound-2} we obtain,
 for every fixed $B < \infty$ and $\delta >0$, 
\begin{equation}
\label{eq:diam-approx-bound-2b}
\lim_{n \to \infty} n^{-1}
\Exp \left[ | D_n - D_n^\bmu |^2 \ind{ \delta D_n^\bmu \leq \oMperp_n \leq B \sqrt{n} } \right]  = 0.
\end{equation}
By~\eqref{eq:diam-approx-bound-1}--\eqref{eq:diam-approx-bound-2b}, for every $\eps>0$ and $\delta >0$,
we may choose $B = B(\eps, \delta)$ large enough so that
\begin{equation}
\label{eq:diam-approx-bound-3}
\limsup_{n \to \infty}  n^{-1}
\Exp \left[ | D_n - D_n^\bmu |^2  \right]  \leq \eps 
+ \limsup_{n \to \infty}  n^{-1}
\Exp \left[ | D_n - D_n^\bmu |^2 \ind{    \oMperp_n < \delta D_n^\bmu  } \right] .
\end{equation}
The rightmost expression in~\eqref{eq:diam-approx-bound-3} is where we must improve on the crude triangle-inequality bound~\eqref{eq:triangle-bound} (see Remark~\ref{rem:two-to-one-diamater}). Indeed, observe from~\eqref{eq:box-bound} that 
\begin{align}
\label{eq:box-bound2}
    D_n   \leq \frac{( D_n^\bmu )^2 + 4 ( \oMperp_n )^2}{  \sqrt{ ( { D_n^\bmu } )^2 + 4 ( \oMperp_n )^2 } } 
     \leq  D_n^\bmu  +  \frac{4 ( \oMperp_n )^2}{ { D_n^\bmu }  } .
 \end{align}
Consequently, $0 \leq D_n - D_n^\bmu \leq 4 \delta \oMperp_n$, on $\{ \oMperp_n < \delta D_n^\bmu  \}$. 
It follows that
\begin{equation}
\label{eq:diam-approx-bound-4}
\limsup_{n \to \infty}  n^{-1}
\Exp \left[ | D_n - D_n^\bmu |^2 \ind{    \oMperp_n < \delta D_n^\bmu  } \right] 
\leq 16 \delta^2 \sup_{n \in \N}  n^{-1}
\Exp \left[  ( \oMperp_n )^2  \right] ,
\end{equation}
which tends to~$0$ as $\delta \to 0$, by~\eqref{eq:max-variance}. Combining~\eqref{eq:diam-approx-bound-3} and~\eqref{eq:diam-approx-bound-4}, we conclude that for every $\eps>0$, we can choose $\delta = \delta (\eps)$ small enough and then $B = B (\eps , \delta)$ large enough so that
\[ \limsup_{n \to \infty}  n^{-1}
\Exp \left[ | D_n - D_n^\bmu |^2  \right]  \leq 2 \eps. \]
Since $\eps>0$ was arbitrary, this completes the proof of the $L^2$ part of~\eqref{eq:two-to-one-diamater-L2}.

The proof  the $L^1$ part of~\eqref{eq:two-to-one-diamater-L2} is similar, but simpler.  First note that, by~\eqref{eq:triangle-bound},
\[ 
 \Exp \left[ | D_n -D_n^\bmu | \ind { \oMperp_n >  n} \right] \leq
  \frac{2}{n} \Exp \left[ ( \oMperp_n )^2 \ind { \oMperp_n > n} \right]
  \leq
  \frac{2}{n} \Exp \left[ ( \oMperp_n )^2 \right],
\]
which is bounded uniformly over $n \in \N$, by~\eqref{eq:max-variance}. Next, by Chebyshev's inequality, $\Pr ( D^\bmu_n \leq \| \bmu \| n/2 ) \leq C/n$ for some $C< \infty$, depending only on $\| \bmu \|$, and all $n \in \N$. 
Hence, by~\eqref{eq:triangle-bound},
\[ \Exp \left[ | D_n -D_n^\bmu | \ind { D_n^\bmu \leq \| \bmu \| n/2, \, \oMperp_n \leq n } \right] \leq 2  n \Pr ( D^\bmu_n \leq \| \bmu \| n/2 ) 
\leq 2C .
 \]
Moreover,
by~\eqref{eq:box-bound2},
\[ \Exp \left[ | D_n -D_n^\bmu | \ind { D_n^\bmu > \| \bmu \| n/2} \right] 
\leq 
\frac{8}{n \| \bmu\|} \Exp [ (\oMperp_n)^2 ] 
. \]
Then another application of~\eqref{eq:max-variance} completes the proof.
\end{proof}

\begin{proof}[Proof of Theorem~\ref{thm:one-walk-diameter}]
Recall that $D_n^\bmu = M_n^\bmu - m_n^\bmu$. Here, Theorem~\ref{thm:WaldCLT} shows that $(n^{-1/2} m_n^\bmu)_{n\in\N}$ and $(n^{-1/2} (M_n^\bmu - S_n^\bmu))_{n\in\N}$ both tend to~$0$ in $L^2$, and hence $n^{-1/2} ( D_n^\bmu - \hbmu^\tra S_n) \to 0$, as $n\to\infty$, in $L^2$ also. Combined with the $L^2$ part of Proposition~\ref{prop:two-to-one-diamater}, this gives the $L^2$ convergence part of the theorem. Moreover, from~\eqref{eq:L1-gut} we have $\sup_n \Exp [| M_n^\bmu - S_n^\bmu |] < \infty$ and $\sup_n \Exp [| m_n^\bmu |] < \infty$, from which it follows that
$\sup_n \Exp[ | D_n^\bmu - \hbmu^\tra S_n |] < \infty$. Now use the $L^1$ part of Proposition~\ref{prop:two-to-one-diamater}.
\end{proof}

\section{Hausdorff approximation of the convex hull}
\label{sec:hausdorff}

\subsection{Notation and main result}
\label{sec:hausdorff-result}

As in~\S\ref{sec:known-gaussian}, consider $N$ random walks, $S^{(1)} ,\ldots, S^{(N)}$, 
satisfying~\eqref{ass:many-walks}, where walk $S^{(k)}$ has i.i.d.~increments with mean $\bmu_k$ and covariance 
matrix $\Sigma_k$,
with joint convex hull $\cH_n$ given by~\eqref{eq:multiple-hull}
and diameter $D_n$ and perimeter $L_n$ given by~\eqref{eq:multiple-hull-diam-perim}.
Recall that the polygon $\cC_\bmu = \hull \{ \0, \bmu_1, \ldots \bmu_N \}$ 
is the law-of-large-numbers limit of $n^{-1} \cH_n$. 
The aim of this section is to extend the framework of \S\ref{sec:wald}--\S\ref{sec:diam-one-walk} to encompass $L^2$ approximation results for \emph{multiple} random walks.
The main result of the present section is Theorem~\ref{thm:general-hausdorff}, which gives an $L^2$ approximation, in Hausdorff distance, of $\cH_n$ by a simpler (smaller) set. This will also serve as an intermediate result on our way to limit theorems for the diameter (in \S\ref{sec:max-Gaussian} below).

Denote by $\cJ$ the set of all $i \in \{0,1,\ldots,N\}$ such that the point at $\bmu_i$
is on the boundary~$\partial \cC_\bmu$ of~$\cC_\bmu$; 
here we set $\bmu_0 := \0$. Note that $\cC_\bmu = \hull \{ \bmu_k : k \in \cJ \}$. 
We call $\{ \bmu_k : k \in \cJ\}$
the set of \emph{boundary drifts} of $\cC_\bmu$,
and we call walks $S^{(k)}$ with $0 < k \in \cJ$  \emph{boundary walks}. 
Roughly speaking, the boundary walks are the potentially significant ones for the macroscopic  asymptotics of~$\cH_n$.

For $x,y \in \R^2$, $x \neq y$,
write $(x,y) := \{ x + \theta (y-x) : \theta \in (0,1)\}$
for the 
line segment between $x$ and $y$, excluding the endpoints. Each face of $\cC_\bmu$ is $(\bmu_i, \bmu_j)$ for some $i , j \in \cJ$ (recall $\bmu_0 = \0$). 
If $\0 \in \cJ$, then 
there are at most two faces of $\cC_\bmu$ incident to $\0$, 
although there may be several $\bmu_k$ for which the line segment $(\0,\bmu_k)$ is contained
in such a face.  
Partition $\cJ$ into
 $\cJ^0 := \{ k \in \cJ :  \bmu_k = \0\}$,
 $\cJ^\text{f} := \{ k \in \cJ : \bmu_k \neq \0, \, (\0,\bmu_k) \text{ is contained in a face} \}$
 and $\cJ^+ := \{ k \in \cJ : \bmu_k \neq \0, \, (\0,\bmu_k) \text{ is not contained in a face}\}$.
 Note that if $\0$ is in the interior of $\cC_\bmu$ (so $0 \notin \cJ)$, then $\cJ^0 = \cJ^\text{f} = \emptyset$, while if $0 \in \cJ$, then $0 \in \cJ^0$. On the other hand,
 if $\cC_\bmu$ has empty interior (i.e., is a line segment), then $\cJ^+ = \emptyset$.
 Let
 \begin{align*}
 \cG^0_n & :=  \bigl\{ S^{(k)}_i : 0 < k \in \cJ^0, \, i \in \{0,1,\ldots,n\} \bigr\} , \\
 \cG_n^\text{f}  & :=  \bigl\{ S^{(k)}_i : k \in \cJ^\text{f}, \, i \in \{0,1,\ldots,n\} \bigr\}, \\
 \cG^+_n & := \bigl\{ S^{(k)}_n : k \in \cJ^+ \bigr\} ;\end{align*}
 note that $\cG^0_n$ and $\cG^\text{f}_n$ contain \emph{trajectories} of relevant random walks, 
 and $\0 \in \cG^\text{f}_n$ whenever $\cJ^{f} \neq \emptyset$, 
 while $\cG^+_n$ contains only \emph{endpoints} of non-zero-drift walks.
For $n \in \ZP$, define
\begin{equation}
\label{eq:g-def}
\cG_n := \hull \bigl( \cG_n^0 \cup \cG_n^\text{f} \cup \cG_n^+ \bigr) \subseteq \cH_n.\end{equation}

The main result of this section is  Theorem~\ref{thm:general-hausdorff} on Hausdorff approximation, below.
While we anticipate that this result will be useful in other contexts, in the present
paper it acts as an intermediate result in our study of the diameter 
for multiple random walks.
\begin{theorem}
\label{thm:general-hausdorff}
    Suppose that \eqref{ass:many-walks} holds.
Then, with $\cG_n$ defined at~\eqref{eq:g-def}, 
    \[ \frac{ \rhoH \bigl( \cH_n, \cG_n )}{\sqrt{n}} \toL{n}{2} 0 .\]
\end{theorem}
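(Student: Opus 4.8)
The scheme is, in spirit, a planar version of the ``box versus diagonal'' argument behind the one-walk diameter bound (Theorem~\ref{thm:one-walk-diameter}): a point $S^{(k)}_i$ of $\cH_n$ that lies \emph{deep} inside the limit polygon $n\cC_\bmu$ will automatically belong to $\cG_n$, and the only points to worry about lie in a \emph{shallow} layer of time-width $O(\sqrt n)$, over which crude estimates suffice after dividing by $n$. First I would reduce: since $\cG_n\subseteq\cH_n=\hull\{S^{(k)}_i:1\le k\le N,\,0\le i\le n\}$ and $x\mapsto\rhoE(x,\cG_n)$ is convex, $\rhoH(\cH_n,\cG_n)=\max_k\max_{0\le i\le n}\rhoE(S^{(k)}_i,\cG_n)$; the trajectories of walks with $0<k\in I^0_\bmu$ and with $k\in I^\text{f}_\bmu$ lie in $\cG_n$ by definition, so it is enough to prove $n^{-1}\Exp\big[(\max_{0\le i\le n}\rhoE(S^{(k)}_i,\cG_n))^2\big]\to0$ for each fixed $k\in I^+_\bmu$ and each fixed non-extremal $k$. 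Write $\tilde S^{(k)}_i:=S^{(k)}_i-i\bmu_k$ and $V_n:=\max_k\max_{0\le i\le n}\|\tilde S^{(k)}_i\|$; Doob's $L^2$ maximal inequality gives $\Exp[V_n^2]=O(n)$, and $(n^{-1}V_n^2)_n$ is uniformly integrable by the result of Appendix~\ref{sec:square-ui}.

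\textbf{Deep points.} The geometric heart is the inclusion $\cG_n\supseteq A_n:=\{x\in n\cC_\bmu:\rhoE(x,\partial(n\cC_\bmu))>V_n\}$. Each vertex of $n\cC_\bmu$ is $n\bmu_j$ with $j\in I_\bmu$, and in every case some generator of $\cG_n$ lies within $V_n$ of it: $S^{(j)}_n$ (which belongs to $\cE^+_n$ or to $\cE^\text{f}_n$) when $j>0$, and $\0=S^{(a)}_0\in\cE^\text{f}_n$ for a face walk $a$ when $\bmu_j=\0\in\partial\cC_\bmu$. Combining the support-function identity $\rhoE(x,\partial K)=\inf_{\|u\|=1}(h_K(u)-u^\tra x)$ valid for $x\in\Int K$ with $\rhoH(\hull\{v'_j\},\hull\{v_j\})\le\max_j\|v'_j-v_j\|$ then gives $A_n\subseteq\hull(\text{perturbed vertices})\subseteq\cG_n$. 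Since $\rhoE(\cdot,\partial\cC_\bmu)$ is concave on $\cC_\bmu$ and vanishes at $\0$ and at each boundary drift $\bmu_k$ with $(\0,\bmu_k)\not\subseteq\Int\cC_\bmu$, one obtains a configuration constant $c_k>0$ with $\rhoE(i\bmu_k,\partial(n\cC_\bmu))\ge c_k\min(i,n-i)$ for $k\in I^+_\bmu$ and $\ge c_k\,i$ for non-extremal $k$. Consequently, as soon as $c_k\min(i,n-i)\ge 2V_n$ (respectively $c_k i\ge 2V_n$) we have $S^{(k)}_i=i\bmu_k+\tilde S^{(k)}_i\in A_n\subseteq\cG_n$ and $\rhoE(S^{(k)}_i,\cG_n)=0$; the remaining indices $i$ form a layer of length $O(V_n/c_k)=O(\sqrt n)$ near $0$, and, when $k\in I^+_\bmu$, also near $n$.

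\textbf{Shallow points.} On the event $\{V_n\le B\sqrt n\}$ and for $i$ in this layer I would argue as follows. If $k\in I^+_\bmu$, then $\0\in\cG_n$ (directly if $\0\in\partial\cC_\bmu$, and otherwise $\0\in A_n$ for $n$ large) and $S^{(k)}_n\in\cG_n$, so $[\0,S^{(k)}_n]\subseteq\cG_n$, and comparing $S^{(k)}_i$ with the point $\tfrac{i}{n}S^{(k)}_n$ on this segment gives $\rhoE(S^{(k)}_i,\cG_n)\le\|\tilde S^{(k)}_i-\tfrac{i}{n}\tilde S^{(k)}_n\|$; over the layer $\min(i,n-i)=O(\sqrt n)$, so one of $\tfrac{i}{n},\tfrac{n-i}{n}$ is $O(n^{-1/2})$ and this quantity has $L^2$-norm $O(n^{1/4})$ by Doob. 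If $k$ is non-extremal and $\0\in\Int\cC_\bmu$, then the small-$i$ points are themselves deep (their distance to $\partial(n\cC_\bmu)$ is of order $n$) and lie in $A_n$, so there is nothing to prove; if instead $\0\in\partial\cC_\bmu$, write $\bmu_k=\sum_{j\in I_\bmu}\lambda_j\bmu_j$ as a convex combination, whence $i\bmu_k=\sum_j(\tfrac{i}{n}\lambda_j)(n\bmu_j)+(1-\tfrac{i}{n})\0$ is a convex combination in which $n\bmu_j$ carries weight $\le\tfrac{i}{n}$, and replacing each $n\bmu_j$ by the nearby generator of $\cG_n$ from the previous step yields $q\in\cG_n$ with $\|q-i\bmu_k\|\le\tfrac{i}{n}V_n$, so $\rhoE(S^{(k)}_i,\cG_n)\le\|\tilde S^{(k)}_i\|+\tfrac{i}{n}V_n$; over the layer $i=O(\sqrt n)$, so $\tfrac{i}{n}V_n=O(1)$ and $\max_i\|\tilde S^{(k)}_i\|$ has $L^2$-norm $O(n^{1/4})$. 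In all cases $n^{-1}\Exp[(\max_{i\text{ in layer}}\rhoE(S^{(k)}_i,\cG_n))^2\,\ind{V_n\le B\sqrt n}]\to0$ as $n\to\infty$ for each fixed $B$, and letting $B\to\infty$ while using the uniform square-integrability of $n^{-1}V_n^2$ and of $n^{-1}\max_{k,i}\|S^{(k)}_i\|^2$ to kill the event $\{V_n>B\sqrt n\}$ completes the proof.

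\textbf{Main obstacle.} I expect the delicate part to be the convex-geometry bookkeeping underpinning the ``deep points'' step: establishing $\cG_n\supseteq A_n$, identifying the correct generator near every vertex of $n\cC_\bmu$, and in particular proving the depth bound $\rhoE(i\bmu_k,\partial(n\cC_\bmu))\ge c_k\min(i,n-i)$ with uniformity across all degenerate drift configurations ($\cC_\bmu$ of empty interior; $\0$ a vertex of $\cC_\bmu$ or in the relative interior of an edge; several collinear or coincident boundary drifts, so that the faces and the index sets $I^0_\bmu,I^\text{f}_\bmu,I^+_\bmu$ interact in subtle ways). Once those deterministic facts are in place, the probabilistic part is a routine combination of Doob's inequality and uniform integrability.
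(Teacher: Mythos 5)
Your proposal is essentially correct, and it takes a genuinely different route from the paper's. The paper argues in two soft steps: first, uniform square-integrability of $n^{-1}\rhoH(\cH_n,\cG_n)^2$ via the bounding rectangles of Definition~\ref{def:rectangle} and comparisons with the endpoint polygons $\cU_n,\cV_n$ (Lemma~\ref{lem:H-V-UI}); second, convergence in probability, obtained from the SLLN together with a wedge argument showing that, with probability $1-o(1)$, at most $n^{\eps}$ points of each relevant walk fall outside $\cG_n$. You instead run a single quantitative $L^2$ estimate built on deterministic convex geometry: the inner-parallel-body inclusion $A_n\subseteq\cG_n$ (separation via the support-function identity, after perturbing each vertex $n\bmu_j$ to a nearby generator of $\cG_n$), concavity of the distance-to-boundary along the rays $t\mapsto t\bmu_k$ to get the depth bound $c_k\min(i,n-i)$, and explicit surrogate points of $\cG_n$ (the point $\tfrac{i}{n}S^{(k)}_n$ on the segment to $\0$, or your convex-combination point $q$), with Doob's inequality over a layer of $O(\sqrt n)$ indices (for the layer near $n$ this needs a time-reversal of the increments, which is routine). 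I checked the bookkeeping you flagged as delicate — existence of a $\cG_n$-generator within $V_n$ of every vertex of $n\cC_\bmu$, including $\0$ via a face walk whenever $0\in I_\bmu$ and $\cC_\bmu$ has interior, the depth bounds, and the degenerate cases (segment, all drifts zero) where $\cG_n=\cH_n$ — and it goes through. Your route buys a rate (the good-event contribution is $O(n^{1/4})$ in $L^2$), which the paper's argument does not give; the paper's route is shorter and recycles Lemma~\ref{lem:H-V-UI}, which it needs elsewhere anyway.

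The one step that fails as written is your disposal of the complement event $\{V_n>B\sqrt n\}$. The family $(n^{-1}\max_{k,i}\|S^{(k)}_i\|^2)_{n\in\N}$ is not uniformly integrable (it is not even bounded in $L^1$) as soon as some $\bmu_k\neq\0$, since then $\max_{0\le i\le n}\|S^{(k)}_i\|\asymp n$; and if one instead uses the crude bound $\rhoE(S^{(k)}_i,\cG_n)\le O(n)+V_n$, the complement term contains $n\Pr(V_n>B\sqrt n)$, which for fixed $B$ tends to infinity under second moments alone (already $\Pr(\|S^{(k)}_n-n\bmu_k\|>B\sqrt n)$ has a positive limit by the CLT). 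The repair stays inside your own argument: your shallow-point bounds are in fact valid for every index $i$, not just in the layer, and give $\rhoE(S^{(k)}_i,\cG_n)\le 2V_n$ deterministically once the auxiliary inclusions hold ($\0\in\cG_n$ for $k\in I^+_\bmu$, resp.\ the deep-point inclusion for non-extremal $k$ with $\0\in\Int\cC_\bmu$), and these hold whenever $V_n\le c_0 n$ for a configuration constant $c_0>0$; on $\{V_n>c_0 n\}$ the crude linear bound is itself of order $V_n$, because $n\le V_n/c_0$. Hence $\rhoH(\cH_n,\cG_n)\le C V_n$ holds everywhere, and uniform integrability of $(n^{-1}V_n^2)_{n\in\N}$ (Lemma~\ref{lem:max-mean-ui} applied coordinatewise, or Corollary~\ref{cor:lipschitz-ui}) kills the bad event — equivalently, you may simply quote Lemma~\ref{lem:H-V-UI}(vi) for this step, as the paper does. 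With the event split at $V_n\le c_0 n$ instead of $B\sqrt n$ (or with the deterministic bound $\rhoH(\cH_n,\cG_n)\le CV_n$ in hand), your proof is complete.
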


\begin{remarks}
\label{rems:general-hausdorff}
\begin{myenumi}[label=(\alph*)]
\item\label{rems:general-hausdorff-a}
 Theorem~\ref{thm:general-hausdorff} can already lead to new functional approximations. 
 If $F: \cK_0  \to \RP$ is Lipschitz (with respect to the $\rhoH$ metric),
 then Theorem~\ref{thm:general-hausdorff} shows that
 \[ \frac{ F ( \cH_n ) - F(\cG_n) }{\sqrt{n}} \toL{n}{2} 0. \]
We already observed that~\eqref{eq:line-segment-limits} suggests a better approximation for $\cH_n$, i.e., by a simpler (smaller) set than $\cG_n$, for perimeter and diameter functionals, 
 but one cannot expect an approximation to $\cH_n$ in Hausdorff distance by a set essentially smaller than $\cG_n$. 
 To obtain an approximation by a smaller set than  $\cG_n$, and hence better functional approximation, one needs a different metric on convex compact sets: see Theorem~\ref{thm:perimeter-deviation-metric} below for such a result.
\item\label{rems:general-hausdorff-b}
If $\cC_\bmu$ has empty    interior (i.e., is a line segment), then $\cJ^+ = \emptyset$
    and so $\cG_n = \cH_n$, and  Theorem~\ref{thm:general-hausdorff} gives no non-trivial information in that case. This is the case, for example, if  $N=1$ and $\bmu_1 = \bmu \neq \0$, 
    a single random walk with non-zero drift.
\end{myenumi}
\end{remarks}

\subsection{Proofs}
\label{sec:hausdorff-proofs}
The main goal of this section is the proof of Theorem~\ref{thm:general-hausdorff}. The strategy is similar to that in~\S\ref{sec:wald}--\S\ref{sec:diam-one-walk}: we first show that
$(n^{-1/2} \rhoH ( \cH_n, \cG_n))_{n\in\N}$ is uniformly square-integrable, and then we prove convergence in probability. 
For the first part, define 
\[ \cU_n := \hull \bigl( \{ \0 \} \cup \{ S^{(k)}_n : k \in \{1, \ldots, N\} \} \bigr), \]
and \[ \cV_n := \begin{cases} \hull \bigl(  S^{(k)}_n  :  0 < k \in \cJ \bigr) & \text{if } 0 \notin \cJ, \\
\hull \bigl(  \{ \0 \} \cup \{ S^{(k)}_n  :  0 < k \in \cJ  \} \bigr) & \text{if } 0 \in \cJ. \end{cases} 
 \]
Then $\cV_n \subseteq \cU_n$ are polygons using only the endpoints of some or all of the random walks; note $\0 \in \cV_n$ whenever $0 \in \cJ$. 
The following uniform integrability result includes not only the result required for Theorem~\ref{thm:general-hausdorff}, but several related approximations that are needed later.

 \begin{lemma}
\label{lem:H-V-UI}
    Suppose that~\eqref{ass:many-walks} holds. 
    Then each of the following is uniformly integrable:
\begin{alignat*}{3}    
& \text{(i)}~(n^{-1} \rhoH ( \cH_n, \cU_n )^2)_{n\in\N}, 
&& ~~\text{(ii)}~(n^{-1} \rho_H ( \cU_n, n \cC_\bmu)^2)_{n\in\N}, 
&& ~~\text{(iii)}~(n^{-1} \rho_H ( \cV_n, n \cC_\bmu)^2)_{n\in\N}, \\ 
& \text{(iv)}~(n^{-1} \rhoH (\cU_n, \cV_n)^2)_{n\in\N}, 
&& ~~\text{(v)}~(n^{-1} \rhoH ( \cH_n, \cV_n )^2)_{n\in\N}, 
&& ~~\text{(vi)}~(n^{-1} \rhoH ( \cH_n, \cG_n)^2)_{n\in\N}, \end{alignat*} 
and 
     (vii)~$(n^{-1} \rhoH ( \cH_n, n \cC_\bmu )^2)_{n\in\N}$.
   \end{lemma}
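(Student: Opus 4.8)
The plan is to deduce all seven statements from part~(i) together with two elementary bounds on walk endpoints, the only probabilistic inputs being the one-dimensional maximal estimate of Lemma~\ref{lem:max-mean-ui} and the classical fact~\eqref{eq:sum-mean-ui}. Throughout I would write $\widetilde{S}^{(k)}_i := S^{(k)}_i - i\bmu_k$ for the centred version of the $k$th walk, and use two standard facts about the Hausdorff distance of convex compacta $K, K' \in \cK_0$: first, $\rhoH(K, K') = \sup_{\theta \in \Sp{1}} |h_K(\theta) - h_{K'}(\theta)|$, where $h_K(\theta) := \sup_{x \in K} \langle\theta, x\rangle$ is the support function; and, second, when $K' \subseteq K$ the distance $\rhoH(K, K')$ equals the one-sided quantity $\sup_{x \in K} d(x, K')$, which can only decrease if $K'$ is enlarged inside $K$.

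For part~(i) I would start from $h_{\cH_n}(\theta) = \max_{1 \leq k \leq N} \max_{0 \leq i \leq n} \langle\theta, S^{(k)}_i\rangle$ and $h_{\cU_n}(\theta) = \max\{0, \max_{1 \leq k \leq N}\langle\theta, S^{(k)}_n\rangle\}$; fixing $\theta$ and letting $k^\star$ attain the outer maximum for $h_{\cH_n}(\theta)$ gives $h_{\cH_n}(\theta) - h_{\cU_n}(\theta) \leq \max_{0 \leq i \leq n}\langle\theta, S^{(k^\star)}_i\rangle - \max\{0, \langle\theta, S^{(k^\star)}_n\rangle\}$. The key observation is the bridge identity $\langle\theta, S^{(k)}_i\rangle = \tfrac{i}{n}\langle\theta, S^{(k)}_n\rangle + \langle\theta, S^{(k)}_i - \tfrac{i}{n}S^{(k)}_n\rangle$: since $\tfrac{i}{n}\langle\theta, S^{(k)}_n\rangle \leq \max\{0, \langle\theta, S^{(k)}_n\rangle\}$ for $i \in \{0,\ldots,n\}$, the preceding display is at most $\max_{0 \leq i \leq n}\|S^{(k^\star)}_i - \tfrac{i}{n}S^{(k^\star)}_n\|$, which no longer depends on $\theta$. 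As $S^{(k)}_i - \tfrac{i}{n}S^{(k)}_n = \widetilde{S}^{(k)}_i - \tfrac{i}{n}\widetilde{S}^{(k)}_n$, I obtain $\rhoH(\cH_n, \cU_n) \leq 2\sum_{k=1}^N \max_{0 \leq i \leq n}\|\widetilde{S}^{(k)}_i\|$. Since each of the two coordinates of the centred walk $\widetilde{S}^{(k)}$ is a centred, finite-variance one-dimensional random walk, Lemma~\ref{lem:max-mean-ui} with $\mu=0$ (applied to each coordinate and to its negative, and bounding $\|\,\cdot\,\|^2$ by the sum of squared coordinates) shows that $(n^{-1}(\max_{0 \leq i \leq n}\|\widetilde{S}^{(k)}_i\|)^2)_{n \in \N}$ is uniformly integrable, hence so is $(n^{-1}\rhoH(\cH_n, \cU_n)^2)_{n \in \N}$, uniform integrability being stable under finite sums and domination.

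For~(ii) and~(iii) I would use the pairing bound $\rhoH(\hull\{a_j : j \in J\}, \hull\{b_j : j \in J\}) \leq \max_{j \in J}\|a_j - b_j\|$, immediate from $|\max_j\langle\theta, a_j\rangle - \max_j\langle\theta, b_j\rangle| \leq \max_j\|a_j - b_j\|$ for unit $\theta$: matching $S^{(k)}_n$ with $n\bmu_k$ and $\0$ with $\0$ gives $\rhoH(\cU_n, n\cC_\bmu) \leq \max_{1 \leq k \leq N}\|\widetilde{S}^{(k)}_n\|$ and $\rhoH(\cV_n, n\cC_\bmu) \leq \max_{0 < k \in I_\bmu}\|\widetilde{S}^{(k)}_n\|$, both uniformly integrable after scaling by $n^{-1}$ by~\eqref{eq:sum-mean-ui} applied coordinate-wise. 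Then~(iv) follows from~(ii),~(iii) and $\rhoH(\cU_n, \cV_n) \leq \rhoH(\cU_n, n\cC_\bmu) + \rhoH(n\cC_\bmu, \cV_n)$; likewise~(v) from~(i) and~(iv), and~(vii) from~(v) and~(iii), each time via $(a+b)^2 \leq 2a^2 + 2b^2$. Finally $\cV_n \subseteq \cG_n \subseteq \cH_n$ --- the first inclusion because every vertex of $\cV_n$ lies in $\cE^0_n \cup \cE^\text{f}_n \cup \cE^+_n$, the second by~\eqref{eq:g-def} --- so monotonicity of the one-sided Hausdorff distance gives $\rhoH(\cH_n, \cG_n) \leq \rhoH(\cH_n, \cV_n)$, and~(vi) follows from~(v).

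I expect the only genuinely non-routine step to be part~(i): the idea is that the ``trajectory minus endpoint'' discrepancy of $\cH_n$ in any fixed direction is exactly a random-walk bridge increment, so that the supremum over directions collapses to a finite sum of maxima of centred one-dimensional walks, to which the truncated maximal estimate of Lemma~\ref{lem:max-mean-ui} applies directly. Everything afterwards is bookkeeping with triangle inequalities among the nested polygons $\cV_n \subseteq \cG_n \subseteq \cH_n$ and $\cV_n \subseteq \cU_n \subseteq \cH_n$ and their common rescaled limit $n\cC_\bmu$, together with the stability of uniform integrability under finite sums.
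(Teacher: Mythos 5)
Your argument is correct, and parts (ii)--(vii) coincide with the paper's proof: the same pairing bound (via \eqref{eq:rhoH-hull} and the support-function inequality) gives (ii) and (iii) from \eqref{eq:sum-mean-ui}, then (iv), (v), (vii) follow by the triangle inequality, and (vi) uses exactly the inclusion $\cV_n \subseteq \cG_n \subseteq \cH_n$ and monotonicity, as in the paper. Where you genuinely differ is part (i). The paper encloses each trajectory in the drift-aligned bounding rectangle $R_n^{(k)}$ of Definition~\ref{def:rectangle} and bounds $\rhoH(\cH_n,\cU_n)$ by the explicit quantity $\oM^\star_n$, which combines the orthogonal maxima $\oM_n^{(k)}$ with the drift-direction quantities $M_n^{(k)}-\hbmu_k^\tra S_n^{(k)}$ and $m_n^{(k)}$, and then invokes Lemma~\ref{lem:max-sum-ui}. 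Your route instead works with support functions and the bridge identity $\langle\theta, S^{(k)}_i\rangle = \tfrac{i}{n}\langle\theta, S^{(k)}_n\rangle + \langle\theta, \widetilde{S}^{(k)}_i - \tfrac{i}{n}\widetilde{S}^{(k)}_n\rangle$, collapsing the supremum over directions to $\rhoH(\cH_n,\cU_n)\leq 2\sum_{k}\max_{0\leq i\leq n}\|\widetilde{S}^{(k)}_i\|$, so that only the $\mu=0$ case of Lemma~\ref{lem:max-mean-ui}, applied coordinatewise to the centred walks, is needed. This is cleaner and entirely sufficient for the uniform integrability at the $\sqrt{n}$ scale claimed here; what it gives up is the finer directional information in the paper's rectangle bound (the drift-direction terms in $\oM^\star_n$ are tight, $O(1)$, for walks with non-zero drift), which the paper reuses later, e.g.\ in the proof of Theorem~\ref{thm:general-hausdorff} and Lemma~\ref{lem:early-late-approx}, whereas your bound is uniformly of order $\sqrt{n}$ in all directions.
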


\begin{definition}[Bounding rectangles]
\label{def:rectangle}
For every $k \in \{1,\ldots,N\}$, let $\hbmu_k^\per$ denote a unit vector orthogonal to $\hbmu_k$, and define
\begin{equation}
\label{eq:rectangle-quantities}
\oM_n^{(k)} := \max_{0 \leq i \leq n} \bigl| (\hbmu_k^\per)^\tra S^{(k)}_i \bigr|, ~  M^{(k)}_n := 
\max_{0 \leq i \leq n}  \hbmu_k^\tra S^{(k)}_i, ~ \text{and} ~ 
m^{(k)}_n := -\min_{0 \leq i \leq n}  \hbmu_k^\tra S^{(k)}_i.
\end{equation}
The trajectory $\{ S_i^{(k)} : 0 \leq i \leq n\}$
is contained in a rectangle 
\[
	R_n^{(k)} := \{ a \hbmu_k + b \hbmu_k^\perp : -m_n^{(k)} \le a \le M_n^{(k)}, \ |b| \le \overline{M}_n^{(k)} \},
\]
that is, a rectangle with major axis from $-m_n^{(k)} \hbmu_k$ to $M_n^{(k)}  \hbmu_k$  and of  width $2 \oM_n^{(k)}$. 
\end{definition}

In the following proof, and later on, we will make use of the fact that if $A, B$ are non-empty, compact subsets of $\R^2$, then
\begin{equation}
    \label{eq:rhoH-hull}
    \rhoH ( \hull A, \hull B) \leq \rhoH (A, B). 
\end{equation}
 
 \begin{proof}[Proof of Lemma~\ref{lem:H-V-UI}]
We first prove (i), i.e., we show that $(n^{-1} \rhoH ( \cH_n, \cU_n)^2)_{n\in\N}$ is uniformly integrable. 
We start by showing that for every $x \in R_n^{(k)}$, we have
		\begin{align}\label{eq:rect-pre-bound}
		\operatorname{dist}\bigl(x, \mathrm{hull}\{ \0, S_n^{(k)} \}\bigr)
		\le 2\oM_n^{(k)}  + \bigl( M_n^{(k)} - \hbmu_k^\top S_n^{(k)} \bigr) + m_n^{(k)},\quad \text{a.s.}
		\end{align}
		Indeed, write $u := \hbmu_k$ and $v := \hbmu_k^\perp$. Then, any $x \in R_n^{(k)}$ can be written as
		$x = a u + b v$ with $-m_n^{(k)} \le a \le M_n^{(k)}$ and $|b| \le \overline{M}_n^{(k)}$. We also write
		$S_n^{(k)} = \alpha u + \beta v,$, where $\alpha = u^\top S_n^{(k)}$ and $\beta = v^\top S_n^{(k)}$.
		Consider the distance from $x$ to the segment $\mathrm{hull}\{\0,S_n^{(k)}\} = \{ t S_n^{(k)} : t \in [0,1]\}$. The transverse components satisfy $|b| \le \overline{M}_n^{(k)}$ and $|\beta| \le \overline{M}_n^{(k)}$, so the total transverse discrepancy is at most $2\overline{M}_n^{(k)}$.
		Further, if $a > \alpha$, then the projection of $x$ in direction $u$ exceeds that of $S_n^{(k)}$. This excess is bounded by
		\[
		a - \alpha \le M_n^{(k)} - \alpha = M_n^{(k)} - \hat{\mu}_k^\top S_n^{(k)}.
		\]
		If $a < 0$, then $x$ lies behind the origin in direction $u$, and this contributes at most $	|a| \le m_n^{(k)}$. This verifies~\eqref{eq:rect-pre-bound}.

Let $\cR_n := \hull ( \cup_{1 \leq k \leq N} R^{(k)}_n )$ denote the convex hull of all the bounding rectangles described in Definition~\ref{def:rectangle}. Then $\cH_n \subseteq \cR_n$ and, by \eqref{eq:rect-pre-bound}, every $x \in  \cup_{1 \leq k \leq N} R^{(k)}_n$ is within distance  
\[ \oM^\star_n := \sum_{1 \leq k \leq N}\left\{ 2 \oM_n^{(k)} + \bigl( M^{(k)}_n - \hbmu_k^\tra S^{(k)}_n \bigr) 
+   m_n^{(k)}\right\} 
\]
of one of the line segments from $\cup_{1 \leq k \leq N} \hull \{ \0, S_n^{(k)} \}$. 
Thus $$\rhoH ( \cup_{1 \leq k \leq N} R^{(k)}_n , \cup_{1 \leq k \leq N} \hull \{ \0 , S^{(k)}_n \} ) \leq \oM^\star_n,\quad \text{a.s.}$$ Since $\cU_n \subseteq \cH_n \subseteq \cR_n$, it follows that, for all $n \in \ZP$, by~\eqref{eq:rhoH-hull}, 
\begin{equation}
    \label{eq:multiple-box-bound}
 \rhoH ( \cH_n , \cU_n ) \leq \rhoH ( \cR_n, \cU_n ) \leq 
\oM^\star_n,\quad \as
\end{equation}
Combining~\eqref{eq:multiple-box-bound} and Lemma~\ref{lem:max-sum-ui} shows $(n^{-1} \rhoH ( \cH_n, \cU_n)^2)_{n\in\N}$ is uniformly integrable.

For part (ii), observe that by~\eqref{eq:rhoH-hull}, 
 \[\rhoH ( \cU_n, n \cC_\bmu ) \leq 
 \rhoH (  \{ \0 , S_n^{(1)}, \ldots, S_n^{(N)} \}  ,   \{ \0 , n \bmu_1 ,\ldots, n \bmu_N\} ) 
 \leq \max_{1 \leq k \leq N}
 \| S_n^{(k)} - n \bmu_k \| .\]
 By the fact~\eqref{eq:sum-mean-ui} applied to each coordinate,
 we have that $(n^{-1}  \| S_n^{(k)} - n \bmu_k \|^2)_{n\in\N}$ is uniformly integrable, for each $k$, and part~(ii) follows. Similarly,
since $\cC_\bmu = \hull \{ \bmu_k : k \in \cJ \}$,
 \[\rhoH ( \cV_n, n \cC_\bmu ) \leq 
 \rhoH ( \{ S_n^{(k)} : k \in \cJ \} ,   \{ n \bmu_k :   k \in \cJ \} )
\leq \max_{k \in \cJ} \| S_n^{(k)} - n \bmu_k \| ,\]
where $S_n^{(0)} := \0$ if $0 \in \cJ$, from which we obtain~(iii). 
 
The triangle inequality together with parts (ii) and (iii) immediately gives~(iv), while parts~(i) and~(iv) give~(v). 
Since $\cV_n \subseteq \cG_n \subseteq \cH_n$,
we have $\rhoH ( \cH_n, \cG_n ) \leq \rhoH ( \cH_n, \cV_n )$,
and so (vi) is immediate from (v). 
 Parts (iii) and (v) together imply part (vii).
 \end{proof}

\begin{proof}[Proof of Theorem~\ref{thm:general-hausdorff}]
In view of Remark~\ref{rems:general-hausdorff}\ref{rems:general-hausdorff-b} and Lemma~\ref{lem:H-V-UI}(vi), it remains to show   
\begin{equation}
    \label{eq:G-H-probab}
\frac{ \rhoH \bigl( \cH_n, \cG_n )}{\sqrt{n}} \toP{n} 0, \end{equation}
and one can suppose that $\cC_\bmu$ has non-empty interior. 
Let $\cJ^\circ := \{0,1,\ldots, N\} \setminus \cJ$ be the set of all indices $k$ such that $\bmu_k$ (including, possibly, $\bmu_0 =\0$) lies in the interior of $\cC_\bmu$.
Then there exists $\delta >0$ such that for all convex, compact $C$ with $\rhoH (C, \cC_\bmu) < \delta$, for every  $k \in \cJ^\circ$, it holds that $\bmu_k \in C$ and $\bmu_k$ is distance at least $\delta$ from $\partial C$ (cf.~\cite[p.~67]{schneider_book}). By the SLLN, 
\begin{equation}
\label{eq:small-ball}
\lim_{n \to \infty} \Pr ( \cap_{1 \leq k \leq N} \{ \| S_n^{(k)} - n \bmu_k \| < \delta n \} ) =
\lim_{n \to \infty} \Pr ( \rhoH ( n^{-1}  \cV_n , \cC_\bmu ) < \delta ) = 1.
\end{equation}
In particular, with probability $1 - o_n(1)$,
for every   $k \in \cJ^\circ$ it holds that 
$n \bmu_k \in \cV_n \subseteq \cG_n$ and $n \bmu_k$ is distance at least $\delta n$ from $\partial  \cG_n$ (for sufficiently large $n$). 

Recall from Definition~\ref{def:rectangle} that
 trajectory $\{ S_i^{(k)} : 0 \leq i \leq n\}$
is contained in a rectangle $R^{(k)}_n$ with major axis from $-m_n^{(k)} \hbmu_k$ to $M_n^{(k)}  \hbmu_k$ of  width $2 \oM_n^{(k)}$, with the notation at~\eqref{eq:rectangle-quantities}.
Suppose that $\0$ and $\bmu_k$ ($k > 0$) are both in the interior of $\cC_\bmu$ (i.e., $0,k \in \cJ^\circ$). 
In both cases $\bmu_k = \0$ and $\bmu_k \neq \0$, 
sequences $(n^{-1/2} m_n^{(k)})_{n\in\N}$, $(n^{-1/2} ( M_n^{(k)} - \hbmu_k^\tra S_n^{(k)} ))_{n\in\N}$, and $(n^{-1/2} \oM_n^{(k)})_{n\in\N}$ are tight, and so, with probability $1-o_n(1)$, the rectangle  $R^{(k)}_n$
is contained in the interior of~$\cG_n$, and the trajectory of walk $S^{(k)}$ plays no part in $\rhoH ( \cH_n, \cG_n)$.

The previous argument shows that it suffices to consider (i) $k \in \cJ$, or (ii) $k \in \cJ^\circ$ and $0 \in \cJ$. In case (ii), necessarily $\bmu_k \neq \0$.
Extend the faces of $\cC_\bmu$ incident to $\0$ to form a wedge with apex at $\0$; the drift $\bmu_k$ is directed into this wedge, and the SLLN implies that, a.s., for all but finitely many $n \in \N$, the infinite trajectory of~$(S^{(k)}_n)_{n \in \ZP}$ remains in this wedge. Consequently, using~\eqref{eq:small-ball} once more, with probability $1-o_n(1)$ there are fewer than $n^{\eps}$ (for any fixed $\eps>0$) points of $S_1^{(k)}, \ldots, S_n^{(k)}$ outside $\cV_n$ (hence also outside $\cG_n$). A similar argument applies in case~(i) provided that $(\0,\bmu_k)$ is not contained in a face, considering the time-reversal of the random walk. 
Thus we conclude, for every $\eps > 0$, 
$\lim_{n \to \infty} \Pr ( \rhoH ( \cH_n , \cG_n ) < n^\eps ) = 1$,
which implies~\eqref{eq:G-H-probab}.
\end{proof}

\section{Fluctuations of the diameter for multiple random walks}
\label{sec:max-Gaussian}

\subsection{Notation and main results}
\label{sec:max-Gaussian-overview}

In this section, we establish $L^2$-approximation (Theorem~\ref{thm:diameter-approximation-general}) and distributional convergence (Theorem~\ref{thm:many-walks-diameter}) results for the diameter $D_n$ of the convex hull of $N \geq 1$ random walks  under hypothesis~\eqref{ass:many-walks}, covering all cases and with quite explicit limit distributions, which are generically Gaussian but non-Gaussian in certain cases. As an illustration, the case $N=2$ is presented in~\S\ref{sec:max-Gaussian-examples}. For the present section, it is convenient  (without loss of generality) to
\begin{equation}
\label{eq:mus-increasing}
\text{assume that $\| \bmu_1 \| \leq \| \bmu_2 \| \leq \cdots \leq \| \bmu_N \|$} ; \end{equation}
in particular, this means that any of the $\bmu$s that are zero  will  appear at the start of the list.

Theorem~\ref{thm:general-hausdorff} approximates $\cH_n$ by $\cG_n$, which, for example, includes all endpoints of those random walks whose drift is on the boundary of $\cC_\bmu$. 
For the diameter, not all such boundary walks are significant,
since $\cC_\bmu$ governs the global shape of $\cH_n$ and, with high probability, only those walks that contribute to diametrical chords of $\cC_\bmu$ will contribute to~$D_n$. Theorem~\ref{thm:diameter-approximation-general} below formalizes this intuition and removes extraneous elements from $\cG_n$ for approximating $D_n$. The theorem has two related, but different statements: first, we exhibit a set $\cA_n$ such that $\diam \cA_n$ approximates $D_n$, and second (stronger), by concretely identifying an approximation for $D_n$ in terms of the underlying walks (direct computation of $\diam \cA_n$ may need other redundant comparisons). We set up the notation for both statements. 

Every chord in $\cC_\bmu$ that attains the diameter has as endpoints a pair of extreme points;
let $\cE$ denote the extreme points of $\cC_\bmu$.
Recall the definition of $\cJ$ from \S\ref{sec:hausdorff-result}.
Let 
\[ J_\bmu := \{ k \in \cJ : \bmu_k \in \cE, \, \| \bmu_k - \bmu_j \| = \diam \cC_\bmu \text{ for some } j \in \cJ \} , \]
the set of indices that are included in at least one diametrical chord of $\cC_\bmu$
(note that $J_\bmu$ always contains at least two elements, and can include $0$ if $\bmu_0 = \0$ is an endpoint of a diametrical chord).
 We partition  $J_\bmu$ into
 $J_\bmu^0 := \{ k \in J_\bmu : \bmu_k = \0\}$ and $J^+_\bmu := \{ k \in J_\bmu : \bmu_k \neq \0\}$.
 Let
 \[ \cS^0_n := \begin{cases}
     \bigl \{ S^{(k)}_i : 0 < k \in J^0_\bmu, \, i \in \{0,1,\ldots,n\} \bigr\},& \text{if } 0 \notin J_\bmu; \\
     \{\0\} \cup \bigl \{ S^{(k)}_i : 0 < k \in J^0_\bmu, \, i \in \{0,1,\ldots,n\} \bigr\},& \text{if } 0 \in J_\bmu,
 \end{cases}   \]
 and
 \[
 \cS^+_n := \{ S^{(k)}_n : k \in J^+_\bmu \}.
 \]
 Note that $\cS^0_n$ contains \emph{trajectories} of relevant zero-drift walks, while $\cS^+_n$ contains only \emph{endpoints} of non-zero-drift walks.
Define
\begin{equation}
    \label{eq:cAn-def}
 \cA_n := \hull \bigl(   \cS_n^0 \cup \cS_n^+ \bigr) .\end{equation}
Note that if there are \emph{no} $0 < k \in J_\bmu$ for which $\bmu_k = \0$, then
$\cS_n^0$ is either empty (if $0 \notin J_\bmu$) or equal to $\{ \0\}$ (if $0 \in J_\bmu$). 

Every possible diametrical chord of $\cC_\bmu$
corresponds to a line segment between $\bmu_i$ and $\bmu_j$ where $i, j$
are two distinct elements from $J_\bmu$. 
Represent these by the set $\cD_\bmu \subseteq J_\bmu^2$  of ordered pairs $( i,j )$, $i < j$, of distinct elements of $J_\bmu$: \[ \cD_\bmu := \{ ( i ,j ) \in J_\bmu^2 :  i <j, \,  \| \bmu_i - \bmu_j \| = \diam  \cC_\bmu \}. \]
Distinguish  
those diametrical chords that use two non-zero $\bmu$s,
\begin{equation}
    \label{eq:D+}
 \cD_\bmu^+ := \{ (i,j ) \in \cD_\bmu  : \| \bmu_i \| \cdot \| \bmu_j \| > 0 \}, 
 \end{equation}
those that use a single non-zero $\bmu_j$ and a zero $\bmu_i$ (recall~\eqref{eq:mus-increasing}),
\begin{equation}
    \label{eq:D-times}
 \cD_\bmu^\times := \{ (i,j ) \in \cD_\bmu  :  \bmu_i = \0 \neq \bmu_j  \}, 
 \end{equation}
and those that use a single non-zero $\bmu_j$ and the origin
\begin{equation}
    \label{eq:D-circ}
 \cD_\bmu^\circ := \{ (i,j ) \in \cD_\bmu  : i = 0, \, \bmu_j \neq \0 \}. \end{equation}
 Note that if $\bmu_i = \0$ for some $i \in \{1,\ldots,N\}$,
then for every $(0,j) \in \cD^\circ_\bmu$ 
(with $j >i$ by~\eqref{eq:mus-increasing})
it holds that $(i,  j) \in \cD^\times_\bmu$ also. Conversely, every $(i  , j) \in \cD_\bmu^\times$ means that $(0,j) \in \cD_\bmu^\circ$ also. For $i,j \in \{1,\ldots,N\}$, write $S^{(i,j)}_n := S^{(i)}_n - S^{(j)}_n$.

\begin{theorem}
\label{thm:diameter-approximation-general}
    Suppose that \eqref{ass:many-walks} holds.
    The following statements hold.
\begin{thmenumi}[label=(\roman*)]
\item
\label{thm:diameter-approximation-general-i}
With~$\cA_n$ as defined at~\eqref{eq:cAn-def}, we have
\[ \frac{ D_n - \diam \cA_n }{\sqrt{n}}\toL{n}{2} 0 . \]
\item
\label{thm:diameter-approximation-general-ii}
Using the notation at~\eqref{eq:D+}, \eqref{eq:D-times}, \eqref{eq:D-circ}, we have
    \[
\frac{{D_n}-\max \left( 
    \max\limits_{(i,j) \in \cD^+_\bmu} \| S_n^{(i,j)} \|,
    \max\limits_{(0,j) \in \cD^\circ_\bmu} \| S_n^{(j)} \| ,
    \max\limits_{(i,j) \in \cD^\times_\bmu} \max_{0 \leq k \leq n} \|   S_n^{(j)} -  S_k^{(i)} \|
             \right)  }{\sqrt{n}} \toL{n}{2} 0.
        \]
\end{thmenumi}
\end{theorem}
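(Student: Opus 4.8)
The plan is to run the two-step template of \S\ref{sec:wald}--\S\ref{sec:diam-one-walk}: first prove that $(n^{-1}(D_n-A_n)^2)_{n\in\N}$ is uniformly integrable, and then that $n^{-1/2}(D_n-A_n)\toP{n}0$, where $A_n$ denotes the maximum in part~\ref{thm:diameter-approximation-general-ii}. Two reductions simplify things. Since $S^{(k)}_n\in\cS_n^+$ for $k\in J^+_\bmu$, the trajectory of walk $i$ lies in $\cS_n^0$ for $i\in J^0_\bmu$, and $\0\in\cS_n^0$ whenever $0\in J_\bmu$, every term of the maximum defining $A_n$ is a distance between two points of $\cA_n$; hence $A_n\le\diam\cA_n\le\diam\cH_n=D_n$, the last step because $\cA_n\subseteq\cH_n$. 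So $D_n\ge A_n$ pointwise, the probability statement reduces to $\Pr(D_n>A_n+\eps\sqrt n)\to0$ for each $\eps>0$, and since $0\le D_n-\diam\cA_n\le D_n-A_n$, part~\ref{thm:diameter-approximation-general-i} follows from part~\ref{thm:diameter-approximation-general-ii}. (One could instead begin from $D_n\approx\diam\cG_n$ via Theorem~\ref{thm:general-hausdorff}, but this is not essential.)

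For uniform integrability, fix any diametrical chord $(i,j)\in\cD_\bmu$ (there is one, as $|J_\bmu|\ge2$) and put $\bmu_0:=\0$, $S^{(0)}_n:=\0$. Whichever of $\cD^+_\bmu,\cD^\circ_\bmu,\cD^\times_\bmu$ contains $(i,j)$, the corresponding term of $A_n$ is at least $\|S^{(j)}_n-S^{(i)}_n\|$ (take $k=0$ in the innermost maximum in the $\cD^\times_\bmu$ case), so using $\|\bmu_i-\bmu_j\|=\diam\cC_\bmu$,
\[ A_n\ge n\diam\cC_\bmu-\Xi_n,\qquad \Xi_n:=\|S^{(i)}_n-n\bmu_i\|+\|S^{(j)}_n-n\bmu_j\|.\]
Together with $D_n=\diam\cH_n\le\diam(n\cC_\bmu)+2\rhoH(\cH_n,n\cC_\bmu)=n\diam\cC_\bmu+2\rhoH(\cH_n,n\cC_\bmu)$ this gives $0\le D_n-A_n\le2\rhoH(\cH_n,n\cC_\bmu)+\Xi_n$ pointwise, and uniform integrability of $(n^{-1}(D_n-A_n)^2)_{n\in\N}$ follows from Lemma~\ref{lem:H-V-UI}(vii) and~\eqref{eq:sum-mean-ui}.

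The real work is the convergence in probability. Fix $\eta_n\to0$ with $\eta_n\sqrt n\to\infty$ and $\eta_n^2\sqrt n\to0$ (e.g.\ $\eta_n=n^{-1/3}$). By the strong laws for $\cH_n$ and for the walks, the event $G_n$ on which $\rhoH(n^{-1}\cH_n,\cC_\bmu)<\eta_n$ and $\max_{1\le k\le N}\max_{0\le m\le n}\|S^{(k)}_m-m\bmu_k\|<\eta_n n$ has probability $1-o(1)$. Write $D_n=\|p-q\|$ with $p=S^{(k)}_a$, $q=S^{(l)}_b$ vertices of $\cH_n$. On $G_n$ each of $n^{-1}p,n^{-1}q$ is within $\eta_n$ of the skeleton $\bigcup_k[\0,\bmu_k]\subseteq\cC_\bmu$, while $n^{-1}\|p-q\|=n^{-1}D_n\ge\diam\cC_\bmu-O(\eta_n)$ (compare $D_n$ with $\|S^{(i')}_n-S^{(j')}_n\|$ for any fixed $(i',j')\in\cD_\bmu$); since $\max\{\|s\bmu_k-t\bmu_l\|:s,t\in[0,1],\,1\le k,l\le N\}=\diam\cC_\bmu$ is attained only when $s,t\in\{0,1\}$ and the corresponding two points realise a diameter of $\cC_\bmu$, and this maximum is \emph{linearly} (not merely continuously) sharp at maximisers involving a nonzero $\bmu$, it follows that on $G_n$ there is a chord $(i,j)\in\cD_\bmu$ with $p$ within $O(\eta_n n)$ of $n\bmu_i$ and $q$ within $O(\eta_n n)$ of $n\bmu_j$, and moreover, if $\bmu_i\neq\0$ then $\bmu_k=\bmu_i$ and $a\ge(1-C\eta_n)n$ (likewise for $q,l,b$). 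Thus $p,q$ lie in ``clusters'' at $n\bmu_i,n\bmu_j$: a cluster at a nonzero $\bmu_i$ consists only of late segments of walks with drift exactly $\bmu_i$ (this support point of $\cC_\bmu$ being the only $\bmu$ there, since $(\bmu_i,\bmu_j)$ is a diameter), while a cluster at $\0$ consists of zero-drift trajectories together with early segments of the remaining walks; in either case the cluster's extent transverse to $\be_0:=\widehat{\bmu_i-\bmu_j}$ is $O(\eta_n n)$, so $|(\be_0^\per)^\tra(p-q)|=O(\eta_n n)$ on $G_n$. On the other hand $\be_0^\tra(p-q)\le\mathrm{width}_{\be_0}(\cH_n):=\max_{x\in\cH_n}\be_0^\tra x-\min_{x\in\cH_n}\be_0^\tra x$, and here Theorem~\ref{thm:WaldCLT} lets us replace, in both extrema, the running maximum $\max_m\be_0^\tra S^{(k)}_m$ of any walk with $\be_0^\tra\bmu_k>0$ by the endpoint $\be_0^\tra S^{(k)}_n$ up to $o_P(\sqrt n)$, while for a zero-drift walk the running extremum over $0\le m\le n$ must be retained --- precisely the inner maximum in the $\cD^\times_\bmu$-term. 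Using that $(\bmu_i,\bmu_j)$ being a diameter pins down the sign of every $\be_0^\tra\bmu_k$ (e.g.\ $\be_0^\tra\bmu_i=0$ would force $\|\bmu_j\|^2=(\diam\cC_\bmu)^2+\|\bmu_i\|^2>(\diam\cC_\bmu)^2$, impossible), one checks in each of the three cases $(i,j)\in\cD^+_\bmu$, $(i,j)\in\cD^\circ_\bmu$ (no zero-drift walk present), $(i,j)\in\cD^\times_\bmu$, that $\mathrm{width}_{\be_0}(\cH_n)$ is at most the relevant $\cD$-term of $A_n$ plus $o_P(\sqrt n)$, hence at most $A_n+o_P(\sqrt n)$. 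Finally the diagonal-of-a-rectangle bound $\|p-q\|=\sqrt{(\be_0^\tra(p-q))^2+((\be_0^\per)^\tra(p-q))^2}\le\be_0^\tra(p-q)+O(\eta_n^2 n)$, valid since $\be_0^\tra(p-q)$ is of order $n$ on $G_n$, together with $\eta_n^2\sqrt n\to0$, delivers $D_n\le A_n+o_P(\sqrt n)$.

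The main obstacle is this last step: localising $p$ and $q$ to clusters at the shrinking scale $\eta_n$, calibrating $\eta_n$ so that $G_n$ has probability $1-o(1)$ while the transverse and diagonal corrections remain $o(\sqrt n)$, making the Wald-type reductions uniform over the finitely many diametrical chords, and handling the degenerate configurations --- collinear drifts, $\0\in\partial\cC_\bmu$, coincident drifts --- where several walks feed one cluster. The payoff is that the argument uses exactly the three ingredients of the single-walk proof (\S\ref{sec:diam-one-walk}): macroscopic localisation, a one-dimensional Wald-type approximation along the extreme direction, and a diagonal bound for a bounding box; and from Theorem~\ref{thm:diameter-approximation-general}\ref{thm:diameter-approximation-general-ii} all the distributional corollaries then follow by reading off joint limits of the $S^{(k)}_n$ (and of the running extrema of the zero-drift walks) via Donsker's theorem and the CLT.
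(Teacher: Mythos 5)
Your proposal is sound and follows the paper's top-level template---uniform square-integrability plus convergence in probability, then the sandwich $A_n\le\diam\cA_n\le D_n$ to get part~\ref{thm:diameter-approximation-general-i} from part~\ref{thm:diameter-approximation-general-ii}---but it executes the probability step by a genuinely different route. Your UI step is essentially the paper's own: the paper also compares $D_n$ with the distance between the endpoints of one fixed diametrical pair and with $n\cC_\bmu$, using Lemma~\ref{lem:H-V-UI}(vii) and~\eqref{eq:sum-mean-ui}. For the convergence in probability, the paper proves a deterministic stability lemma for polygon diameters (Lemma~\ref{lem:near-diameters}), uses it to localise the diameter to time windows of length $n^{(1/2)+\eps}$ at the ends of the relevant walks (Lemma~\ref{lem:early-late-approx}, via the quantities $\Gamma_\bmu(n,m)$ of~\eqref{eq:gamma-def-1}--\eqref{eq:gamma-def-4}), and then compares $\Gamma_\bmu(n,n^{(1/2)+\eps})$ with $\Gamma_\bmu(n,0)$ by rotated bounding rectangles, obtaining the sharper bound $|D_n-\Gamma_\bmu(n,0)|\le n^{(1/4)+\eps}$ with high probability (Proposition~\ref{prop:diam-approx}). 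You instead localise spatially (clusters of radius $O(\eta_n n)$ around $n\bmu_i$, $n\bmu_j$), bound the axial part of $\|p-q\|$ by the width of the whole hull in the chord direction, reduce that width to endpoint terms by Theorem~\ref{thm:WaldCLT} (retaining running extrema only for zero-drift walks, which is exactly what produces the $\cD^\times_\bmu$ terms), and absorb the transverse part by a single diagonal correction $O(\eta_n^2 n)=o(\sqrt n)$. This bypasses the time-window localisation and the $\Gamma(n,m)$-versus-$\Gamma(n,0)$ rectangle comparison, at the price of a coarser error $o_P(\sqrt n)$ in place of $n^{(1/4)+\eps}$---which is all the theorem requires.

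Two points would need to be written out before this is a complete proof. First, your ``linear sharpness'' of $\max\{\|s\bmu_k-t\bmu_l\|:s,t\in[0,1]\}$ at diametrical corners is precisely the content of the paper's Lemma~\ref{lem:near-diameters}; it is true (the derivative of the distance to the opposite vertex, as one moves in from a nonzero maximising vertex towards $\0$, is strictly negative, since otherwise that opposite vertex would lie at distance from $\0$ strictly exceeding $\diam\cC_\bmu$), but it is the one delicate deterministic ingredient, it takes the paper about a page, and it cannot simply be asserted---in particular the uniform constants over the finitely many corners and the exclusion of near-maximisers at non-diametrical pairs need the gap constant argument. Second, the parenthetical claim that the chord ``pins down the sign of every $\be_0^\tra\bmu_k$'' is an overstatement: a nonzero drift whose projection lies strictly between the two extremes can be orthogonal to the chord direction (e.g.\ $\bmu_i=(1,0)$, $\bmu_j=(-1,0)$, $\bmu_k=(0,\tfrac12)$). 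This does not damage your width bound, because such a walk's projection is a zero-mean walk whose extrema are $O_P(\sqrt n)$ and therefore never attain the order-$n$ extremes; what your case check actually needs, and what is true (by the same computation as in your parenthetical), is that ties at the extreme projections can come only from coincident drifts or from zero-drift walks and the origin. With these two items filled in, the argument goes through and yields the theorem.
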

 
For $N=1$ walk with non-zero drift, $\cA_n = \hull \{ \0, S_n \}$, $\cD^+_\bmu = \cD^\times_\bmu = \emptyset$, $\cD^\circ_\bmu = \{(0,1)\}$, 
and Theorem~\ref{thm:diameter-approximation-general}
recovers the diameter result in~\eqref{eq:line-segment-limits} from~\cite{mcrw}. 
For $N=2$ walks, Theorem~\ref{thm:diameter-approximation-general}   recovers the result~\eqref{eq:ikss-diameter} from~\cite{ikss} under hypotheses~\eqref{ass:no-zero} and~\eqref{ass:unique-max}.
However, the main value of Theorem~\ref{thm:diameter-approximation-general}
is that it applies to an arbitrary number $N \geq 1$ of walks, and 
is the geometrical component in our new fluctuation results in Theorem~\ref{thm:many-walks-diameter} below.
To describe the explicit distributional limits we need some more notation;
in~\S\ref{sec:max-Gaussian-examples} we illustrate the  case of $N=2$ walks, where one or both of the hypotheses~\eqref{ass:no-zero}--\eqref{ass:unique-max} introduced in~\ref{sec:two-walk-intro} fail, and the limits are non-Gaussian, in contrast to the Gaussian cases previously observed in the literature~\cite{mcrw,ikss}.

For $i \in \{1,\ldots,N\}$ set $\bmu_{i,j} := \bmu_i - \bmu_j$. 
Let $\Sigma_{i,j} := \Sigma_i + \Sigma_j$ and $\tZ_{k,1} := Z_{k,1} - \bmu_k$; then $\Exp [ (\tZ_{i,1} - \tZ_{j,1} )( \tZ_{i,1} - \tZ_{j,1})^\tra ] = \Sigma_{i,j}$. We also define, 
analogously to~\eqref{eq:spara-def},  for $i, j \in\{1,\ldots,N\}$, 
\begin{equation}
\label{eq:spara-def-two}
\spara{i,j} := \hbmu_{i,j}^\tra \Sigma_{i,j} \hbmu_{i,j} \quad \text{and} \quad \spara{0,j} :=
\spara{j} := \hbmu_j^\tra \Sigma_j \hbmu_j.\end{equation} 
We are going to describe the limit distribution for the centred and scaled diameter in terms of a particular multivariate Gaussian collection $\zeta$ and a non-Gaussian collection $\xi$, where $\zeta$ and $\xi$ are independent, and consist of
\begin{equation}
\label{eq:zeta-xi-notation}
    \zeta = (\zeta_e : e \in \cD_\bmu^+ \cup \cD_\bmu^\circ), \text{ and } \xi = (\xi_e : e \in \cD^\times_\bmu).
\end{equation}
The marginals of $\zeta$ are, for each   $e \in \cD_\bmu^+ \cup \cD_\bmu^\circ$
and with $\spara{e}$ given by~\eqref{eq:spara-def-two}, 
\begin{equation}\label{eq:dist_of_zeta_e}
    \zeta_e \sim \cN \bigl( 0 , \spara{e} \bigr),
\end{equation}
and $\zeta_{e_1}$ and $\zeta_{e_2}$ are independent unless $e_1$ and $e_2$ share a common non-zero index. For such cases, we introduce the notation $e_1 \wedge e_2$ for the common index, and $e_1 \oplus e_2$ to be equal to $+1$ if the common index is in the same position in $e_1$ and $e_2$, and $-1$ otherwise. Then we have
\begin{equation}
    \label{eq:cov_of_zeta_e}
\Cov(\zeta_{e_1}, \zeta_{e_2}) = (e_1 \oplus e_2) \hbmu^\tra_{e_1} \Sigma_{e_1 \wedge e_2} \hbmu_{e_2}.
\end{equation}
The distribution of $\zeta$ is specified by the variances~\eqref{eq:dist_of_zeta_e} and covariances~\eqref{eq:cov_of_zeta_e}. On the other hand, the components of $\xi$ are all described in terms of a collection of independent planar Brownian motions $\{(W^{(k)}_t)_{t\in\RP} : 0 < k \in J_\bmu^0\}$ via
$\xi_{i,j} := - \inf_{0 \leq t \leq 1} \hbmu_j^\tra \Sigma^{1/2}_i W^{(i)}_t$.  The following theorem, the formal version of~\eqref{eq:diameter-limit},
 is the main result of this section.

\begin{theorem}
\label{thm:many-walks-diameter}
    Suppose that \eqref{ass:many-walks} holds and that $\bmu_k \neq \0$ for at least one~$k$. Then
    \[
\frac{{D_n}-\max \left( 
    \max\limits_{(i,j) \in \cD^+_\bmu} \hbmu^\tra_{i,j} S_n^{(i,j)} ,
    \max\limits_{(0,j) \in \cD^\circ_\bmu} \hbmu^\tra_{j} S_n^{(j)} ,
    \max\limits_{(i,j) \in \cD^\times_\bmu} \Bigl( \hbmu^\tra_{j} S_n^{(j)} 
    + \max\limits_{0 \leq k \leq n} ( - \hbmu^\tra_{j} S_k^{(i)} ) \Bigr)
             \right)  }{\sqrt{n}} \toL{n}{2} 0.
        \]
        Moreover, with $\zeta$ and $\xi$ the random elements from~\eqref{eq:zeta-xi-notation}, 
\[ \frac{D_n - n \diam \cC_\bmu }{\sqrt{n}} \tod{n} \Delta := \max \left( \max_{(i,j) \in \cD^+_\bmu}  \zeta_{i,j} , \max_{(0,j) \in \cD^\circ_\bmu} \zeta_{0,j} ,
\max_{(i,j) \in \cD^\times_\bmu} \left( \zeta_{0,j} + \xi_{i,j} \right) \right), \]
and $\lim_{n \to \infty} n^{-1}  \Var D_n = \Var \Delta$.
\end{theorem}
\begin{remarks}
    \label{rems:two-walks-diameter}
    \begin{myenumi}[label=(\alph*)]
\item\label{rems:two-walks-diameter-i} 
Correlations of $\xi_{i,j}$ and $\xi_{i,k}$ can be computed using~\cite{rs}.
\item \label{rems:two-walks-diameter-ii}
The case when $\bmu_k = \0$ for all $k$ can be settled by Donsker's theorem: see~Appendix~\ref{sec:donsker}. 
\end{myenumi}
\end{remarks}

If the $N$-walk generalization of~\eqref{ass:unique-max} holds, the following CLT generalizes the $N \in \{1,2\}$ cases~\cite{mcrw,ikss};
as indicated in~\cite[p.~108]{ikss},   the methods of~\cite{mcrw,ikss} adapt to this case also.

\begin{corollary}
\label{cor:unique-diameter}
    Suppose that $J_\bmu = \{ k_1, k_2\}$ with at least one of $\bmu_{k_1}, \bmu_{k_2}$ not $\0$ (i.e., $C_\bmu$ has a unique pair of distinct vertices that attain the diameter). Then, with $\spara{k_1, k_2}$   defined in \eqref{eq:spara-def-two},
    \[
    \frac{D_n -  n \diam \cC_\bmu}{\sqrt{n}} \tod{n} \cN(0,  \spara{k_1, k_2}), \text{ and } \lim_{n \to \infty} n^{-1} \Var (D_n) = \spara{k_1, k_2}. 
    \]
    \end{corollary}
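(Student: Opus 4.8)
The plan is to read Corollary~\ref{cor:unique-diameter} off from Theorem~\ref{thm:two-walks-diameter}: the hypothesis $J_\bmu=\{k_1,k_2\}$ says that $\cC_\bmu$ has exactly one diametrical chord, so the max-type limit of Theorem~\ref{thm:two-walks-diameter} collapses to a single Gaussian marginal, and the only thing to check is into which of the families $\cD^+_\bmu,\cD^\circ_\bmu,\cD^\times_\bmu$ that chord falls.

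First I would record that ``$\cC_\bmu$ has a unique pair of vertices that attain the diameter'' presupposes $\diam\cC_\bmu>0$ --- otherwise $\cC_\bmu=\{\0\}$ has no vertices, and in any case $\spara{k_1,k_2}$ from~\eqref{eq:spara-def-two} would be undefined through $\hbmu$ --- so in particular $\bmu_k\neq\0$ for some $k$ and Theorem~\ref{thm:two-walks-diameter} applies. Relabelling so that $k_1<k_2$, the set $\cD_\bmu$ of ordered diametrical index-pairs is the singleton $\{(k_1,k_2)\}$. For the case split I would use the ordering~\eqref{eq:mus-increasing}: if $\bmu_{k_1}\neq\0$, then $\bmu_{k_2}\neq\0$ as well, $0\notin J_\bmu$, and hence $(k_1,k_2)\in\cD^+_\bmu$ with $\cD^\circ_\bmu=\cD^\times_\bmu=\emptyset$; if instead $\bmu_{k_1}=\0$, then $\|\bmu_{k_1}-\bmu_{k_2}\|=\|\bmu_{k_2}\|=\diam\cC_\bmu>0$, so $\bmu_{k_2}\neq\0$ and $\|\bmu_0-\bmu_{k_2}\|=\diam\cC_\bmu$ puts $0\in J_\bmu$; since $|J_\bmu|=2$ this forces $k_1=0$, giving $(k_1,k_2)=(0,k_2)\in\cD^\circ_\bmu$, while $\cD^\times_\bmu=\emptyset$ because its only candidate $(0,k_2)$ has first coordinate the origin rather than a genuine zero-drift walk index $\ge1$. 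Writing $e^\star$ for the unique element of $\cD^+_\bmu\cup\cD^\circ_\bmu$, one then has $\spara{e^\star}=\spara{k_1,k_2}$ by~\eqref{eq:spara-def-two} (with $\spara{0,j}=\spara{j}$ when $k_1=0$).

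Feeding this into the distributional statement of Theorem~\ref{thm:two-walks-diameter}, the maximum there is over the singleton $\{e^\star\}$, so
\[ \frac{D_n-\Exp[D_n]}{\sqrt n}\tod{n}\zeta_{e^\star}\sim\cN\bigl(0,4\spara{e^\star}\bigr)=\cN\bigl(0,4\spara{k_1,k_2}\bigr) \]
by~\eqref{eq:dist_of_zeta_e}; this is the distributional half of the corollary. For the variance asymptotics I would invoke the $L^2$-approximation half of Theorem~\ref{thm:two-walks-diameter} together with the uniform square-integrability of $(n^{-1}D_n^2)_{n\in\N}$ furnished by Appendix~\ref{sec:square-ui} --- this is precisely the mechanism underlying Remark~\ref{rems:two-walks-diameter}\ref{rems:two-walks-diameter-i} --- which upgrades the distributional convergence to convergence of second moments and hence gives $n^{-1}\Var(D_n)\to\Var(\zeta_{e^\star})=4\spara{k_1,k_2}$.

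I do not anticipate a genuine obstacle: the analytic content is entirely contained in Theorem~\ref{thm:two-walks-diameter} and the square-integrability result of the appendix, and the only step requiring (mild) care is the bookkeeping of the second paragraph --- confirming that the zero-drift subcase is attached to $\cD^\circ_\bmu$ and not to $\cD^\times_\bmu$, and that the ``unique diameter'' hypothesis genuinely leaves no other diametrical chord, boundary drift, or zero-drift trajectory in play.
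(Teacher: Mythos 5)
Your proposal is correct and follows essentially the same route as the paper: Corollary~\ref{cor:unique-diameter} is stated there as an immediate specialization of Theorem~\ref{thm:two-walks-diameter}, with the max over $\cD_\bmu$ collapsing to the single pair $(k_1,k_2)$ (lying in $\cD^+_\bmu$ or, when a zero drift is involved, necessarily in $\cD^\circ_\bmu$ with $k_1=0$, exactly as you argue), and with the variance asymptotics coming from the $L^2$-approximation plus uniform square-integrability, as in Remark~\ref{rems:two-walks-diameter}\ref{rems:two-walks-diameter-i}. Your bookkeeping of the degenerate and zero-drift cases matches the paper's intended reading, so there is nothing further to add.
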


\subsection{Applications for two random walks}
\label{sec:max-Gaussian-examples}

The following immediate consequence of Theorem~\ref{thm:many-walks-diameter} completes the picture for $N = 2$ walks filling in the cases not covered in~\cite{ikss}.
\begin{corollary}\label{cor:diam_two_walks}
\begin{thmenumi}[label=(\roman*)]
     \item
     \label{cor:diam_two_walks-i}
     (Isosceles drifts I.)
If $\| \bmu_1 \| = \| \bmu_2 \| > \| \bmu_1 - \bmu_2 \| \geq 0$, then
       \[ \frac{D_n - n \diam \cC_\bmu}{\sqrt{n}} \tod{n} \max (\zeta_{0, 1}, \zeta_{0, 2}) ,\]
    where $\zeta_{0, 1}$ and $\zeta_{0, 2}$ are independent Gaussians with distribution given in \eqref{eq:dist_of_zeta_e}.
    \item 
         \label{cor:diam_two_walks-ii}
    (Isosceles drifts II.) Suppose that $\| \bmu_1 \| = \| \bmu_1 - \bmu_2 \| > \| \bmu_2 \| >0$. Then
    \[ \frac{D_n - n \diam \cC_\bmu}{\sqrt{n}} \tod{n} \max (\zeta_{0, 1}, \zeta_{1,2}) ,\]
    where $\zeta_{0, 1}$ and $\zeta_{1,2}$ are correlated Gaussians with marginals~\eqref{eq:dist_of_zeta_e} and correlation~\eqref{eq:cov_of_zeta_e}.
    \item
         \label{cor:diam_two_walks-iii}
    (Equilateral drifts.) Suppose that $\| \bmu_1 \| = \| \bmu_2 \| = \| \bmu_1 - \bmu_2 \| > 0$. Then
    \[ \frac{D_n - n \diam \cC_\bmu}{\sqrt{n}} \tod{n} \max (\zeta_{0,1}, \zeta_{0,2}, \zeta_{1,2}),\]
    where $\zeta_{0,1}$, $\zeta_{0,2}$ and $\zeta_{1,2}$ are correlated Gaussians with marginals~\eqref{eq:dist_of_zeta_e} and correlations~\eqref{eq:cov_of_zeta_e}.
    \item
         \label{cor:diam_two_walks-iv}
    (Ice-cream.) Suppose that $\|\bmu_1\| = 0 < \|\bmu_2\|$. Then
    \[
    \frac{D_n - n \diam \cC_\bmu}{\sqrt{n}} \tod{n} \zeta_{0, 2} + \xi_{1, 2},
    \]
    where $\zeta_{0, 2}$ and $\xi_{1, 2}$ are independent; $\zeta_{0, 2}$ is Gaussian, and $\xi_{1, 2}$ is non-Gaussian.
\end{thmenumi}       
\end{corollary}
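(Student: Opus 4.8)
The plan is to derive all four parts directly from Theorem~\ref{thm:two-walks-diameter}: in each configuration the hypothesis that some $\bmu_k \neq \0$ holds, so the theorem applies, and what remains is purely combinatorial --- to identify the triangle $\cC_\bmu = \hull\{\0,\bmu_1,\bmu_2\}$, its diameter, the index sets $I_\bmu$, $J_\bmu$, and the partition of $\cD_\bmu$ into $\cD_\bmu^+$, $\cD_\bmu^\circ$, $\cD_\bmu^\times$ --- and then to read off and simplify the resulting max-type limit.

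First I would record the geometry. Writing $\bmu_0 := \0$, the side lengths of $\cC_\bmu$ lie in $\{\|\bmu_1\|,\|\bmu_2\|,\|\bmu_{1,2}\|\}$, its diameter is the largest of these, and a pair $(i,j)$ with $0 \le i < j \le 2$ is a diametrical chord precisely when $\|\bmu_i-\bmu_j\|$ attains that maximum. Running through the four configurations: in case~(i) we have $\diam\cC_\bmu = \|\bmu_1\| = \|\bmu_2\|$, attained only by $(0,1)$ and $(0,2)$, so $\cD_\bmu^+ = \cD_\bmu^\times = \emptyset$ and $\cD_\bmu^\circ = \{(0,1),(0,2)\}$ (this holds also in the degenerate sub-case $\bmu_1=\bmu_2$, where $\cC_\bmu$ is a segment); in case~(ii) the tie is between $\|\bmu_1\| = \|\bmu_{1,2}\|$, attained by $(0,1)$ and $(1,2)$, so $\cD_\bmu^\circ = \{(0,1)\}$, $\cD_\bmu^+ = \{(1,2)\}$, $\cD_\bmu^\times = \emptyset$; in case~(iii) all three sides tie, so $\cD_\bmu^\circ = \{(0,1),(0,2)\}$, $\cD_\bmu^+ = \{(1,2)\}$, $\cD_\bmu^\times = \emptyset$; and in case~(iv) the degeneracy $\bmu_1 = \0 \neq \bmu_2$ makes $\cC_\bmu$ the segment $[\0,\bmu_2]$, with $\diam\cC_\bmu = \|\bmu_2\|$ attained by $(0,2)$ and $(1,2)$, so $\cD_\bmu^+ = \emptyset$, $\cD_\bmu^\circ = \{(0,2)\}$, $\cD_\bmu^\times = \{(1,2)\}$. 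Substituting into the distributional limit of Theorem~\ref{thm:two-walks-diameter} yields, in order, $\max(\zeta_{0,1},\zeta_{0,2})$, $\max(\zeta_{0,1},\zeta_{1,2})$, $\max(\zeta_{0,1},\zeta_{0,2},\zeta_{1,2})$, and $\max(\zeta_{0,2},\,\zeta_{0,2}+\xi_{1,2})$.

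It then remains to pin down the dependence structure using~\eqref{eq:cov_of_zeta_e}, which says $\zeta_{e_1}$ and $\zeta_{e_2}$ are independent unless the chords $e_1,e_2$ share a common nonzero index. In~(i) the chords $(0,1)$ and $(0,2)$ share no nonzero index, so $\zeta_{0,1},\zeta_{0,2}$ are independent; in~(ii) the chords $(0,1)$ and $(1,2)$ share the (nonzero) index $1$, so $\zeta_{0,1},\zeta_{1,2}$ are correlated with covariance given by~\eqref{eq:cov_of_zeta_e}; in~(iii) the pair $(0,1),(0,2)$ is independent while $\zeta_{1,2}$ is correlated with each of the other two, so the triple is a correlated Gaussian collection; and in~(iv) $\zeta_{0,2}$ and $\xi_{1,2}$ are independent because the collections $\zeta$ and $\xi$ are independent by construction. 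Finally, for~(iv), since $\xi_{1,2} = -\inf_{0\le t\le1}\hbmu_2^\tra\Sigma_1^{1/2}W^{(1)}_t \ge 0$ (the integrand process vanishes at $t=0$), the maximum $\max(\zeta_{0,2},\zeta_{0,2}+\xi_{1,2})$ collapses to $\zeta_{0,2}+\xi_{1,2}$, the sum of an independent Gaussian and a non-Gaussian, nonnegative functional of Brownian motion, which is the asserted form.

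No genuinely analytic step is involved here --- Theorem~\ref{thm:two-walks-diameter} supplies both the $L^2$ approximation and the limit --- so the only thing needing care is the bookkeeping in the degenerate configurations (case~(i) with $\bmu_1=\bmu_2$, and case~(iv)), where $\cC_\bmu$ is a line segment: one must remember that both $\bmu_0=\0$ and a zero drift $\bmu_k=\0$ with $k\ge1$ lie on $\partial\cC_\bmu$, and that in the partition of $\cD_\bmu$ the origin-index $0$ is routed into $\cD_\bmu^\circ$ whereas a genuine zero-drift walk is routed into $\cD_\bmu^\times$ --- precisely the distinction that produces the ``ice-cream'' limit in~(iv) rather than a Gaussian one.
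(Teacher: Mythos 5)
Your deduction is correct and is essentially the paper's own route: Corollary~\ref{cor:diam_two_walks} is obtained there exactly as you do, by specializing Theorem~\ref{thm:two-walks-diameter} to $N=2$, identifying $\cD_\bmu^+$, $\cD_\bmu^\circ$, $\cD_\bmu^\times$ in each drift configuration, and reading off the dependence structure from~\eqref{eq:dist_of_zeta_e}--\eqref{eq:cov_of_zeta_e}. Your bookkeeping in the degenerate cases (including routing the zero-drift walk into $\cD_\bmu^\times$ in case~(iv) and collapsing $\max(\zeta_{0,2},\zeta_{0,2}+\xi_{1,2})$ via $\xi_{1,2}\ge 0$) matches the intended reading of the paper's definitions.
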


\subsection{Proofs}
Theorem~\ref{thm:general-hausdorff} shows that $D_n = \diam \cH_n$ is well approximated by  $\diam \cG_n$. 
To obtain Theorem~\ref{thm:diameter-approximation-general}, from which
we also deduce Theorem~\ref{thm:many-walks-diameter},
we show that $\diam \cG_n$ is well approximated by 
the diameter of a generically smaller set, and, moreover, that we can explicitly identify the
collection of pairwise distances between random walk locations that one must maximize to get the approximate diameter.
Introduce notation
\begin{align}
\Gamma_\bmu^+ (n , m) & := 
    \max\limits_{(i,j) \in \cD^+_\bmu} \max_{n - m \leq \ell \leq n} \| S_\ell^{(i,j)} \|, 
    \label{eq:gamma-def-1}
    \\
    \Gamma_\bmu^\circ (n, m) & := 
    \max\limits_{(0,j) \in \cD^\circ_\bmu} \max_{\ell \in \ZP : \min ( \ell, n -\ell) \leq m} \| S_m^{(j)} \| , 
        \label{eq:gamma-def-2}\\
\Gamma_\bmu^\times (n,m) & := 
    \max\limits_{(i,j) \in \cD^\times_\bmu} \max_{0 \leq k \leq n}  \max_{n - m \leq \ell \leq n} \|   S_\ell^{(j)} -  S_k^{(i)} \|, 
        \label{eq:gamma-def-3}\\
    \Gamma_\bmu (n , m) & := \max ( \Gamma_\bmu^+ (n , m), \Gamma_\bmu^\circ (n , m), \Gamma_\bmu^\times (n ,m) ).    \label{eq:gamma-def-4}
\end{align}
With the notation~\eqref{eq:gamma-def-1}--\eqref{eq:gamma-def-4}, the statement of Theorem~\ref{thm:diameter-approximation-general}\ref{thm:diameter-approximation-general-ii} is that $n^{-1/2} (D_n - \Gamma_\bmu (n,0)) \to 0$ in $L^2$.
The main ingredient in the proof of Theorem~\ref{thm:diameter-approximation-general} is the following.

\begin{proposition}
\label{prop:diam-approx}
     Suppose that \eqref{ass:many-walks} holds. Then, for every $\eps>0$, $\lim_{n \to \infty} \Pr ( | D_n -  \diam \cA_n | \leq n^{(1/4)+\eps} )
     = \lim_{n \to \infty} \Pr ( | D_n -  \Gamma_\bmu (n,0) | \leq n^{(1/4)+\eps} )= 1$.
\end{proposition}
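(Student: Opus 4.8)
The plan is to first reduce the proposition to a single one-sided high-probability bound. Since the diameter of the convex hull of a finite set equals the largest distance between two of its points, $D_n=\max_{1\le k,l\le N}\ \max_{0\le a,b\le n}\|S^{(k)}_a-S^{(l)}_b\|$. Moreover every pair of points realising one of the quantities $\|S^{(i,j)}_n\|$ ($(i,j)\in\cD^+_\bmu$), $\|S^{(j)}_n\|$ ($(0,j)\in\cD^\circ_\bmu$), $\max_{0\le k\le n}\|S^{(j)}_n-S^{(i)}_k\|$ ($(i,j)\in\cD^\times_\bmu$) lies in $\cA_n$, so $\Gamma_\bmu(n,0)\le\diam\cA_n\le D_n$ deterministically, and the proposition follows once one shows $\Pr\bigl(D_n\le\Gamma_\bmu(n,0)+n^{(1/4)+\eps}\bigr)\to1$. (We may assume $\diam\cC_\bmu>0$, the contrary case being degenerate.)

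\textbf{Good event.} I would condition on an event of probability $\to1$ on which: (i) $\|S^{(k)}_a-a\bmu_k\|\le R_n$ and $\|(S^{(k)}_n-S^{(k)}_a)-(n-a)\bmu_k\|\le R_n$ for all $k$ and all $0\le a\le n$, where $R_n:=n^{1/2}\log n$ (Doob's $L^2$ maximal inequality applied to each coordinate of the centred walk, and to its time-reversal, plus Markov); (ii) for every $k$ with $\bmu_k\ne\0$ and each of the finitely many unit vectors $u$ with $u^\tra\bmu_k>0$ arising below (the $\hbmu_l$ and $\hbmu_{l,l'}$), the overshoots $\max_a u^\tra S^{(k)}_a-u^\tra S^{(k)}_n$, $-\min_a u^\tra S^{(k)}_a$ and $u^\tra S^{(k)}_n-\min_a u^\tra S^{(k)}_a$ are all $\le n^{\eps/8}$ --- these are $O_p(1)$ by Theorem~\ref{thm:WaldCLT} and its time-reversed/$m=0$ relatives, applied to the one-dimensional walk $(u^\tra S^{(k)}_a)_{a}$ of positive mean $u^\tra\bmu_k$; (iii) $\hbmu_k^\tra S^{(k)}_n\ge\frac12 n\|\bmu_k\|$ whenever $\bmu_k\ne\0$.

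\textbf{Case analysis over pairs $(k,l)$.} For each of the $O(1)$ pairs I bound $M_{k,l}:=\max_{a,b}\|S^{(k)}_a-S^{(l)}_b\|$ according to $g_{k,l}:=\max(\|\bmu_k-\bmu_l\|,\|\bmu_k\|,\|\bmu_l\|)$. The map $(s,t)\mapsto\|s\bmu_k-t\bmu_l\|$ is convex on $[0,1]^2$, so attains its maximum $g_{k,l}$ at a vertex; and since $(a/n)\bmu_k,(b/n)\bmu_l\in\cC_\bmu$ one has $g_{k,l}\le\diam\cC_\bmu$, with equality iff the maximising vertex $(s^\star,t^\star)\in\{(1,0),(0,1),(1,1)\}$ corresponds to a diametrical chord of $\cC_\bmu$. \emph{Irrelevant pairs} ($g_{k,l}<\diam\cC_\bmu$): the crude bound $M_{k,l}\le n\,g_{k,l}+2R_n\le n(\diam\cC_\bmu-c_0)+2R_n<\Gamma_\bmu(n,0)$ holds on the good event for large $n$, where $c_0>0$ is the gap to $\diam\cC_\bmu$ among such pairs and $\Gamma_\bmu(n,0)\ge n\diam\cC_\bmu-O(R_n)$ from any single diametrical-chord term; such pairs never realise the maximum. \emph{Relevant pairs} ($g_{k,l}=\diam\cC_\bmu$): a short triangle computation (e.g.\ $\|\bmu_k-\bmu_l\|=\diam\cC_\bmu$ with $\bmu_k,\bmu_l\ne\0$ forces $\hbmu_{k,l}^\tra\bmu_k>0>\hbmu_{k,l}^\tra\bmu_l$ strictly) shows that, up to explicitly-resolvable ties, the maximising vertex is unique, $\|s\bmu_k-t\bmu_l\|$ drops off it linearly nearby and by a uniform positive margin away from it; with (i), this confines the near-maximal $(a,b)$ to a window of size $O(R_n)$ around the expected optimal times (e.g.\ $(n,n)$ when $(s^\star,t^\star)=(1,1)$). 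Inside that window I decompose $S^{(k)}_a-S^{(l)}_b$ along the chord direction $\hbmu_\star$ and orthogonally: on the good event the chord component equals the matching $\Gamma$-ingredient --- $\hbmu_{k,l}^\tra S^{(k,l)}_n$, or $\hbmu_j^\tra S^{(j)}_n$, or $\hbmu_j^\tra S^{(j)}_n-\hbmu_j^\tra S^{(i)}_b$ --- up to the $O(n^{\eps/8})$ overshoots and is $\Omega(n)$, while the orthogonal component is $O(R_n)$ (its drift part is orthogonal to the chord, hence $O(R_n)$, plus a window increment of size $O(R_n)$). The diagonal-of-a-thin-rectangle bound $\sqrt{A^2+B^2}\le A+B^2/(2A)$ with $A=\Omega(n)$, $B=O(n^{1/2}\log n)$ then gives $\|S^{(k)}_a-S^{(l)}_b\|\le(\text{chord }\Gamma\text{-ingredient})+O((\log n)^2)\le\Gamma_\bmu(n,0)+n^{(1/4)+\eps}$. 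When one drift vanishes (the $\cD^\times$ type), only the nonzero-drift walk's index localises --- which is exactly why $\Gamma^\times_\bmu(n,0)$ keeps the maximum over the other index --- and one runs the same estimate for each fixed value of that index, uniformly; the single-walk case $k=l$, $\|\bmu_k\|=\diam\cC_\bmu$, is the $(s^\star,t^\star)=(1,0)$ instance and is consistent with Proposition~\ref{prop:two-to-one-diamater}. Taking the maximum over the $O(1)$ pairs yields $D_n\le\Gamma_\bmu(n,0)+n^{(1/4)+\eps}$ on the good event.

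\textbf{Main obstacle.} The delicate part is the geometry of the relevant pairs: for every two-drift configuration, pinning down the optimal vertex of $[0,1]^2$ (and resolving ties when several diametrical chords pass through a shared vertex), proving the quantitative peak-localisation, and --- the real subtlety --- checking in the localised window that the surviving chord projection $A$ is of order $n$ rather than merely positive, and that $A$ and the orthogonal deviation $B$ assemble into precisely the random quantity appearing in the relevant component of $\Gamma_\bmu(n,0)$, with the required uniformity over the free time-parameter in the $\cD^\times$ case. The probabilistic inputs (Doob's inequality, the Wald-type $O_p(1)$ overshoot bounds, the SLLN) are routine and already available from \S\ref{sec:wald}.
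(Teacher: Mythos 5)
Your proposal is correct in substance but organised along a genuinely different route from the paper's proof, so a comparison is worthwhile. The paper works in two stages: a deterministic polygon lemma (Lemma~\ref{lem:near-diameters}), combined with the high-probability containment of $n^{-1}\cH_n$ in a small neighbourhood of $\cC_\bmu$, yields Lemma~\ref{lem:early-late-approx}, i.e.\ that with probability $1-o(1)$ one has $D_n=\Gamma_\bmu(n,n^{(1/2)+\eps})$ exactly; then the claim~\eqref{eq:gamma-approx-claim}, proved with modified bounding rectangles, shrinks the time window from $n^{(1/2)+\eps}$ to $0$, and the sandwich $\Gamma_\bmu(n,0)\le\diam\cA_n\le D_n$ (which you also use) finishes. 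You never introduce $\Gamma_\bmu(n,m)$ for $m>0$: instead you expand $D_n$ as a maximum over pairs of walks and pairs of times, use convexity of $(s,t)\mapsto\|s\bmu_k-t\bmu_l\|$ on $[0,1]^2$ to discard irrelevant pairs and localise near-maximal times to $O(n^{1/2}\log n)$ windows, and inside each window run the chord/orthogonal decomposition with Wald-type overshoot control and the thin-rectangle inequality directly against the matching ingredient of $\Gamma_\bmu(n,0)$. Your per-pair vertex analysis plays exactly the role of the paper's Lemma~\ref{lem:near-diameters}, and your sign checks ($\hbmu_{k,l}^\tra\bmu_k>0>\hbmu_{k,l}^\tra\bmu_l$ when $\|\bmu_k-\bmu_l\|=\diam\cC_\bmu$, and $\hbmu_k^\tra\bmu_l>0$ when $\|\bmu_k\|=\diam\cC_\bmu$, $\bmu_l\neq\0$) are correct and are what guarantee that every relevant vertex of the time square is matched by an entry of $\cD^+_\bmu\cup\cD^\circ_\bmu\cup\cD^\times_\bmu$ (since diameters of $\cC_\bmu$ are attained at extreme points, the relevant indices automatically lie in $J_\bmu$). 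What the paper's organisation buys is that ties between diametrical chords sharing a vertex are handled once and for all on the polygon, whereas you must resolve them pair by pair on $[0,1]^2$; what your organisation buys is a one-pass argument that in fact delivers a sharper error, of order $n^{\eps/8}+(\log n)^2$, than the stated $n^{(1/4)+\eps}$.

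One slip to fix: the third quantity in your good event (ii), $u^\tra S^{(k)}_n-\min_a u^\tra S^{(k)}_a$, is of order $n$ (it dominates $u^\tra S^{(k)}_n$ itself), so it cannot be forced below $n^{\eps/8}$ and the event as written would have probability tending to $0$. Fortunately it is never used in your case analysis: the only overshoots you need are $\max_a u^\tra S^{(k)}_a-u^\tra S^{(k)}_n$ and $-\min_a u^\tra S^{(k)}_a$ (for the finitely many directions $u$ with $u^\tra\bmu_k>0$), both tight by Theorem~\ref{thm:WaldCLT} and the duality~\eqref{eq:duality}; simply delete the third item. Your reduction to $\diam\cC_\bmu>0$ is consistent with the paper's implicit reading, since the all-zero-drift case is treated separately in Appendix~\ref{sec:donsker}.
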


The first step in proving Proposition~\ref{prop:diam-approx} is a deterministic lemma, which  says that for a  convex set generated by finitely many points, points of the set whose distance is close to the diameter must be correspondingly close to vertices that attain the diameter. 

\begin{lemma}
\label{lem:near-diameters}
Let $U = \{ u_1, \ldots, u_m \}$, $m \geq 3$, be the (distinct) vertices of a polygon,
which we denote by $\cC = \hull U$. Let $A := \{ (i,j) : 1 \leq i < j \leq m, \, \| u_i - u_j \| = \diam \cC \}$ be the set of all pairs of vertices (ordered by index) that attain the diameter. Then there exist $B, \delta_0 \in (0,\infty)$ such that for all   $\delta \in (0,\delta_0)$, and every $x, y \in \cC$ with $\| x - y \| >    \diam \cC - \delta $, there exist $(i,j) \in A$ such that either
    $\max ( \| x - u_i \|, \| y - u_j\| ) < B  \delta$
    or $\max ( \| y - u_i \|, \| x - u_j\| ) < B  \delta$. 
\end{lemma}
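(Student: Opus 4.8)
The plan is to establish a quantitative stability estimate for the function $\Phi(x,y) := d - \|x-y\|$ on $\cC\times\cC$, where $d := \diam\cC$: it is nonnegative, vanishes precisely on the finite set $E := \{(u_i,u_j),(u_j,u_i) : (i,j)\in A\}$, and, crucially, admits a \emph{linear} lower bound of the form $\Phi(x,y) \ge c\,(\|x-u_i\|+\|y-u_j\|)$ on a fixed neighbourhood of each $(u_i,u_j)\in E$. Taking $\lambda=1$ (the case of $\lambda\cC$ follows by rescaling $x,y$), I would first record the elementary fact that $\|x-y\| \le d$ for $x,y\in\cC$, with equality forcing both $x$ and $y$ to be extreme points: if $x = \tfrac12(x_1+x_2)$ with $x_1\ne x_2$ in $\cC$, then $\|x-y\| \le \tfrac12\|x_1-y\| + \tfrac12\|x_2-y\|$, and equality together with $\|x-y\|=d$ would require $x_1-y$ and $x_2-y$ to be vectors of equal length $d$ pointing the same direction, i.e.\ $x_1=x_2$. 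Hence the zero set of $\Phi$ is exactly $E$.

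The heart of the matter is a pointedness estimate at the diametral vertices. Fix $(i,j)\in A$ and set $\hat e := (u_i - u_j)/d$. I claim $u_i$ is the \emph{unique} maximiser of $v\mapsto \hat e^\tra v$ over $\cC$: if another vertex $u_k$ also attained the maximum, then $\hat e^\tra(u_i-u_k)=0$, so by Pythagoras $\|u_k-u_j\|^2 = \|u_k-u_i\|^2 + \|u_i-u_j\|^2 > d^2$, contradicting $\diam\cC = d$. Because $\cC$ is a polygon and $u_i$ is a genuine vertex, uniqueness of the maximiser is equivalent to $\hat e$ lying in the interior of the normal cone at $u_i$, i.e.\ $\hat e^\tra v < 0$ for every nonzero $v$ in the (closed, polyhedral) tangent cone $T$ of $\cC$ at $u_i$; compactness of $\{v\in T : \|v\|=1\}$ then yields $c^{(i)}>0$ with $\hat e^\tra(x-u_i) \le -c^{(i)}\|x-u_i\|$ for all $x\in\cC$ close enough to $u_i$ (using that $\cC$ coincides with $u_i+T$ near $u_i$). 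Applying the same reasoning at $u_j$ with $-\hat e$ gives $c^{(j)}>0$ with $\hat e^\tra(y-u_j) \ge c^{(j)}\|y-u_j\|$ for $y\in\cC$ close to $u_j$.

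Given this, for $x$ near $u_i$ and $y$ near $u_j$ put $v := x-u_i$, $w := y-u_j$, so that $\|x-y\|^2 = d^2 + 2d\,\hat e^\tra(v-w) + \|v-w\|^2$; the concavity bound $\sqrt{s}\le d + (s-d^2)/(2d)$ (applied at $s=\|x-y\|^2$) gives $\|x-y\| \le d + \hat e^\tra(v-w) + \|v-w\|^2/(2d)$, so, writing $c_{ij} := \min(c^{(i)},c^{(j)})$ and $\rho := \|v\|+\|w\| \ge \|v-w\|$,
\[ \Phi(x,y) \;\ge\; -\hat e^\tra(v-w) - \frac{\|v-w\|^2}{2d} \;\ge\; c_{ij}\,\rho - \frac{\rho^2}{2d}. \]
On a fixed neighbourhood $U_{ij}$ of $(u_i,u_j)$ chosen small enough that the two pointedness estimates apply and $\rho \le c_{ij}d$ there, this yields $\Phi(x,y) \ge \tfrac12 c_{ij}\,(\|x-u_i\|+\|y-u_j\|)$, and symmetrically on a neighbourhood of $(u_j,u_i)$. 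Let $\cN$ be the union over $(i,j)\in A$ of these neighbourhoods: it is open in $\cC^2$ and contains $E$, so $\Phi$ is continuous and strictly positive on the compact set $\cC^2\setminus\cN$, hence bounded below there by some $\delta_0>0$. Then for $\delta\in(0,\delta_0)$, any $(x,y)$ with $\|x-y\| > d-\delta$ lies in some $U_{ij}$ (or its mirror), and the local linear bound forces $\|x-u_i\|,\|y-u_j\| < 2\delta/c_{ij}$ (respectively $\|y-u_i\|,\|x-u_j\| < 2\delta/c_{ij}$); taking $B := 2/\min_{(i,j)\in A}c_{ij}$ completes the proof.

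The main obstacle is securing the \emph{linear} rate in the local estimate: for a general convex body the best one could hope for is a Łojasiewicz-type bound $\Phi \gtrsim (\mathrm{dist})^\alpha$ with possibly $\alpha>1$ (caused by flat boundary pieces near a diametral vertex), and it is precisely the combination of the polygonal structure with the \emph{uniqueness} of the diametral maximiser — established by the Pythagoras argument — that rules this out and pins down $\alpha=1$. The remaining ingredients (the second-order expansion of $\|\cdot\|$ and the compactness patching) are routine.
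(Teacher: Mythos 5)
Your proof is correct, and it takes a genuinely different route from the paper. The paper argues by corner-slicing: it forms the inner polygon $\cC(\delta)$ obtained by cutting each diametral vertex at distance $\delta$ along its two incident faces, shows via a law-of-cosines estimate (with the angle at a farthest vertex bounded away from $\pi/2$) that every $x \in \cC(\delta)$, $y \in \cC$ satisfies $\|x-y\| < \diam \cC - a\delta$, and then combines this with the separation $\eps_0$ between diametral and non-diametral vertex pairs; the degenerate (collinear) case is treated separately. You instead prove a local linear error bound for $\Phi(x,y) = \diam\cC - \|x-y\|$ around each diametral pair: the Pythagoras argument showing that $u_i$ is the \emph{unique} maximiser of $v \mapsto \hat e^\tra v$ (with $\hat e$ the diametral direction) gives strict negativity of $\hat e$ on the tangent cone, hence a linear decay rate $c^{(i)}$ by compactness of the unit directions, and the tangent-line bound on the norm converts this into $\Phi \gtrsim \|x-u_i\|+\|y-u_j\|$ near $(u_i,u_j)$; a compactness patching then replaces both the $\eps_0$ argument and the slicing construction. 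Both proofs ultimately rest on the same pointedness phenomenon at diametral vertices, but yours is cleaner and more robust: it is dimension-free (it works verbatim for polytopes in $\R^d$) and absorbs the line-segment case without a separate argument, whereas the paper's version is more hands-on and yields constants expressed directly in terms of face lengths and vertex angles. Two small points you should make explicit: (i) that $u_i$ actually \emph{is} a maximiser of $\hat e^\tra v$ over $\cC$ (immediate from $\hat e^\tra(v-u_j) \le \|v-u_j\| \le \diam\cC$ with equality at $v=u_i$), since your Pythagoras step only addresses uniqueness; and (ii) that the neighbourhoods $U_{ij}$ are chosen relatively open in $\cC^2$, so that $\cC^2\setminus\cN$ is compact and the infimum defining $\delta_0$ is positive. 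Both are routine. (The ``$\lambda\cC$'' in the statement is a leftover scaling artefact; your reduction to $\lambda=1$ matches what the paper itself proves.)
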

\begin{proof}
Let $V \subset U$ denote the vertices of $\cC$ that are represented in $A$,
i.e., every $(i,j) \in A$ has $u_i = v$ and $u_j = v'$ for some $v, v' \in V$.
By definition of $A$,
there is a constant $\eps_0 = \eps_0 (\cC)$ such that $\| u_i - u_j \| < \diam \cC - \eps_0$
for every $1 \leq i < j \leq m$ for which $(i,j) \notin A$.

First suppose that the polygon $\cC$ has non-empty interior. 
Let $\delta_1$ be the positive constant equal to 
one half the minimal face
length of $\cC$, and let $\delta \in (0,\delta_1)$.
Then to each $v \in V$ associate the points $\ell_\delta (v)$ and $r_\delta (v)$,
the two boundary points of $\cC$ at distance $\delta$ from $v$, one on each of the two faces that meet at $v$. The convex set 
\[\cC_\delta := \hull \bigl( (  U \setminus V ) \cup \{ \ell_\delta (v), r_\delta (v) : v \in V \} \bigr) \]
has $\cC_\delta \subset \cC$ and has the corners around each $v\in V$ `sliced off'. 

Fix $u \in U$. Then, by convexity, 
$\sup_{x \in \cC} \| u - x\|$ is attained by each $x \in V_u$ for some non-empty $V_u \subseteq U \setminus \{u\}$ (see~Theorem~5.6 of~\cite[p.~76]{gruber}).  
Moreover, for all $x \in \cC$ and any $v \in V_u$,
\[ \| u - x \|^2 = \| u - v\|^2 + \| x - v\| \bigl( \| x-v \| - 2 \| u-v \| \cos \theta (x,u,v) \bigr) ,\]
where $\theta (x,u,v)$ is the angle between $u-v$ and $x-v$; note 
$\theta_0 := \sup_{u \in U,\, v \in V_u,\, x \in \cC}  |\theta (x,u,v) | < \frac{\pi}{2}$. Also $\| u - v\| \geq \delta_2 := \inf_{u \in U,\, v \in V_u} \| u - v\|$. Thus for $\| x - v \| \leq \delta_3 :=\delta_2 \cos \theta_0$, we have $\| u - x \|^2 - \| u - v \|^2 \leq - \delta_3 \| x - v \|$. Hence
\begin{equation}
\label{eq:geometry} \| u - x \| - \| u - v \| = \frac{\| u - x \|^2 - \| u - v\|^2}{\| u -x \| + \| u - v \|} \leq - \alpha \| x - v \|, \quad \text{for all } \| x - v \| \leq \delta_3 ,\end{equation}
where $\alpha := \delta_3/(2 \diam \cC) >0$. 

Set $\delta_4 := \min(\delta_1, \delta_3)$ and suppose $\delta \in (0,\delta_4)$. 
We want an upper bound on $\| x - y\|$ over $x \in \cC_\delta$, $y \in \cC$.
For fixed $x$, the bound is achieved by some $u \in U$.
Then $\sup_{x \in \cC_\delta} \| x - u\| = \| v -u \|$
for some $v \in W_\delta$. Either $v \in U \setminus V$, in which case $\| u - v\| \leq \diam \cC - \eps_0$, or else $v \in W_\delta \setminus U$ is one of the vertices from `slicing off' the corners, which by construction is within distance $\delta$ of an element of $V_u$. In that case, by~\eqref{eq:geometry}, 
$\| x - u \| \leq \| u - v \| - \alpha \| x -v \|$ where $\| u -v \| \leq \diam \cC$.
There are constants $\delta_5 >0, \beta>0$ depending only on $\cC$ such that  $\| x - v\| > \beta \delta$ for all $x \in C_\delta$ and all $v \in V$ provided $\delta \in (0,\delta_5)$. Thus taking $\delta_0 \in (0,\min(\delta_4, \delta_5))$ small enough so that $\alpha \beta \delta_0 < \eps_0$, and setting $a:= \alpha \beta >0$, we obtain, for all $\delta \in (0,\delta_0)$,
\begin{equation}
    \label{eq:C-C0}
    \text{for every $x \in \cC_\delta$ and every $y \in \cC$, $\| x - y\| < \diam \cC - a \delta$.
}
\end{equation}

Suppose that $\delta \in (0,  \delta_0)$ and
$x, y \in \cC$ with $\| x - y \| > \diam \cC - a \delta$.
Then~\eqref{eq:C-C0} shows that $x \in \cC \setminus \cC_\delta$, and hence $\| x - u_i \| \leq \delta$ for some $u_i \in V$. 
By the same argument, $y \in \cC \setminus \cC_\delta$,
and $\| y - u_j \| \leq \delta$ for some $u_j \in V$. Moreover, we must have $(i,j) \in A$ or $(j,i) \in A$, since $a \delta < a \delta_0 < \eps_0$. 
This completes the proof in the case where $\cC$ has non-empty interior. Otherwise, $\cC$ is a line segment and the result is easily verified in that case.
\end{proof}

We will use Lemma~\ref{lem:near-diameters} to show that from the trajectories of the random walks retained in~$\cG_n$, relevant for the diameter are only points near the start or end of the trajectory. 

\begin{lemma}
\label{lem:early-late-approx} 
If~\eqref{ass:many-walks} holds then, for every $\eps>0$,
    $ \lim_{n \to \infty} \Pr ( D_n = \Gamma_\bmu (n, n^{(1/2)+\eps}) ) = 
    1$.
\end{lemma}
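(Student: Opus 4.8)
The plan is to establish the two inequalities $\Gamma_\bmu(n,m_n)\le D_n$ and $D_n\le\Gamma_\bmu(n,m_n)$, where $m_n:=\lceil n^{(1/2)+\eps}\rceil$; the first is deterministic while the second holds with probability tending to~$1$, and together they give the lemma (the degenerate case in which every $\bmu_k=\0$, so that $\cC_\bmu$ is a point, being set aside). The first inequality is immediate: each quantity entering~\eqref{eq:gamma-def-1}--\eqref{eq:gamma-def-4} is the distance between two points of the form $S^{(k)}_i$ with $0\le i\le n$ (with $S^{(k)}_0=\0$), all of which lie in $\cH_n$, so that distance is at most $\diam\cH_n=D_n$. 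It therefore remains to prove that $\Pr\bigl(D_n>\Gamma_\bmu(n,m_n)\bigr)\to 0$.

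For that, I would use the fact that $\cH_n$ is a polygon whose vertices are among the points $S^{(k)}_i$, so $D_n=\|S^{(K)}_A-S^{(K')}_B\|$ for some (random) walks $K,K'$ and times $A,B\in\{0,\dots,n\}$, and then show that, with high probability, this attaining pair is one over which the maximum defining $\Gamma_\bmu(n,m_n)$ ranges. Three standard inputs drive this: (i) $\rhoH(\cH_n,n\cC_\bmu)=O_P(\sqrt n)$ from Lemma~\ref{lem:H-V-UI}(vii), which forces $D_n=n\diam\cC_\bmu+O_P(\sqrt n)$; (ii) the maximal inequality $\max_{1\le k\le N}\max_{0\le a\le n}\|S^{(k)}_a-a\bmu_k\|=O_P(\sqrt n)$ (Doob coordinatewise, cf.~\eqref{eq:max-variance}); and (iii) the Wald-type behaviour from \S\ref{sec:wald}, namely that for a unit vector $e$ the running maximum $\max_{0\le a\le n}e^\tra S^{(k)}_a$ equals $e^\tra S^{(k)}_n+o_P(\sqrt n)$ and is attained within $O_P(1)$ of time~$n$ when $e^\tra\bmu_k>0$, is $O_P(1)$ and attained within $O_P(1)$ of time~$0$ when $e^\tra\bmu_k<0$, and is of order $\sqrt n$ when $e^\tra\bmu_k=0$ (in which last case, if $e$ realizes the diameter from the pertinent vertex, one must have $\bmu_k=\0$). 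Given $\eta>0$, after projecting $S^{(K)}_A$ and $S^{(K')}_B$ onto the convex set $n\cC_\bmu$ — which moves each by at most $\rhoH(\cH_n,n\cC_\bmu)$ — and then applying the deterministic Lemma~\ref{lem:near-diameters} with $\cC=\cC_\bmu$, $\lambda=n$ and $\delta$ of order $\sqrt n$ (valid once $\delta<n\delta_0$), I would obtain a constant $C=C(\eta)$ and an event of probability $\ge1-\eta$ for all large~$n$ on which there is a diametral pair $(i,j)\in\cD_\bmu$ (in one ordering) with $\|S^{(K)}_A-n\bmu_i\|\le C\sqrt n$ and $\|S^{(K')}_B-n\bmu_j\|\le C\sqrt n$.

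The remaining step — which I expect to be the main obstacle — is to convert this into the statement that $\{K,K'\}$ is a diametral index pair of the type catalogued by~\eqref{eq:D+}--\eqref{eq:D-circ} and that the attaining pair sits inside the maximum defining $\Gamma_\bmu(n,m_n)$. Combining the last estimate with input~(ii) shows that $\tfrac An\bmu_K$ lies within $O(1/\sqrt n)$ of the vertex $\bmu_i$ of $\cC_\bmu$; since $\tfrac An\bmu_K$ runs along the chord $[\0,\bmu_K]\subseteq\cC_\bmu$, a short convex-geometry argument (a point of $\cC_\bmu$ on such a chord that is close to an extreme point $\bmu_i\ne\0$ must be close to the \emph{far} endpoint $\bmu_K$ of the chord, because $\bmu_i\in\partial\cC_\bmu$) forces, for large~$n$, either $\bmu_i=\0$, or $\bmu_K=\bmu_i\ne\0$ with $n-A=O_P(\sqrt n)\le m_n$. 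In the first case $\|A\bmu_K\|=O_P(\sqrt n)$, whence either $\bmu_K=\0$ — and then $K\in J_\bmu^0$, using $\|\bmu_K-\bmu_j\|=\diam\cC_\bmu$ and $\0\in\partial\cC_\bmu$ — or $A=O_P(\sqrt n)\le m_n$; and I would use input~(iii) (with $e=-\hbmu_j$ and $e=\hbmu_j^\per$) to show that when $\bmu_i=\0$ and some zero-drift walk is extremal, the endpoint near~$\0$ must belong to such a walk, since it penetrates distance of order $\sqrt n$ in the direction $-\hbmu_j$ whereas the early parts of all non-zero-drift extremal walks penetrate only $O_P(1)$. Running through these cases, organised exactly by whether $\{K,K'\}$ is of $\cD_\bmu^+$, $\cD_\bmu^\circ$ or $\cD_\bmu^\times$ type, then shows $D_n=\|S^{(K)}_A-S^{(K')}_B\|\le\Gamma_\bmu(n,m_n)$ on the good event; letting $n\to\infty$ and then $\eta\downarrow0$ finishes. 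The delicate bookkeeping is in the presence of equal or collinear drifts (so that ``$\bmu_K=\bmu_i$'' genuinely identifies an admissible index in $J_\bmu^+$) and around the vertex at the origin.
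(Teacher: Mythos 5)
Your route is the same as the paper's: the deterministic inequality $\Gamma_\bmu(n,m)\le D_n$, Hausdorff control of $\cH_n$ around $n\cC_\bmu$, Lemma~\ref{lem:near-diameters} to force any diameter-attaining pair close to a diametral pair of vertices, and a case split along $\cD^+_\bmu,\cD^\circ_\bmu,\cD^\times_\bmu$; your identification of the walk index and time window (inputs (ii)--(iii) plus the vertex-cone argument) is a sound, indeed more quantitative, version of the paper's appeal to the SLLN. The genuine gap is the step you defer to ``bookkeeping''. Knowing that $S^{(K)}_A$ and $S^{(K')}_B$ lie near opposite diametral vertices, with $\bmu_K,\bmu_{K'}$ the corresponding drifts and $A,B$ in the prescribed windows, does \emph{not} place this pair among the quantities maximized in $\Gamma_\bmu(n,m_n)$: the term~\eqref{eq:gamma-def-1} ranges only over \emph{equal} times $\ell$ on the two walks, whereas for a $\cD^+_\bmu$ pair the attaining times $A$ and $B$ generically differ (the argmax times of the independent projections $\hbmu_{i,j}^\tra S^{(i)}_\cdot$ and $-\hbmu_{i,j}^\tra S^{(j)}_\cdot$ coincide only with asymptotically non-full probability); and the term~\eqref{eq:gamma-def-2} records only distances from $\0$ to points of walk $j$, whereas the point near the origin-vertex can be $S^{(K)}_A$ with $A\ge 1$ for a walk of non-zero drift (possibly $K'=K=j$ itself). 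In fact the asserted exact equality fails with asymptotically positive probability: take $N=1$ with increments supported on the line $\R\hbmu$ and $\Pr(\hbmu^\tra Z<0)>0$; then $D_n=\max_{0\le\ell\le n}\hbmu^\tra S_\ell-\min_{0\le\ell\le n}\hbmu^\tra S_\ell$ while, with probability tending to one, $\Gamma_\bmu(n,n^{(1/2)+\eps})=\max_{0\le\ell\le n}\hbmu^\tra S_\ell$, so $\Pr(D_n=\Gamma_\bmu(n,n^{(1/2)+\eps}))$ converges to the probability that the projected walk never goes negative, which is strictly below one. So no case analysis of the kind you sketch can close this step.

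What your localization does deliver --- and all that is used downstream in Proposition~\ref{prop:diam-approx} and Theorems~\ref{thm:diameter-approximation-general} and~\ref{thm:two-walks-diameter} --- is the approximate statement that, with probability tending to one, $0\le D_n-\Gamma_\bmu(n,n^{(1/2)+\eps})\le n^{(1/4)+\eps}$ (in fact the difference is of order one in probability): in each case the leftover discrepancy is controlled by exactly the Wald/duality-type gaps in your input (iii) (for instance $\max_{a\le n}\hbmu_{i,j}^\tra S^{(i)}_a-\hbmu_{i,j}^\tra S^{(i)}_n$, or $\max_{a}(-\hbmu_j^\tra S^{(K)}_a)$ for a drifted walk $K$, which is tight because $\hbmu_j^\tra\bmu_K>0$ whenever $\0$ and $\bmu_j$ are a diametral pair), together with norm-versus-projection corrections of order one. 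If you reformulate your final step as a bound on $D_n-\Gamma_\bmu(n,m_n)$ rather than as an identification of the attaining pair with a catalogued one, your inputs (i)--(iii) suffice and the argument goes through; as a proof of the literal equality in the lemma it cannot be completed. For comparison, the paper's own proof is terse at precisely this point (it asserts that $\Gamma^+_\bmu(n,n^{(1/2)+2\eps})$ is ``an upper bound for candidate diameters''), so your proposal mirrors its structure; but, judged as a self-contained argument for the stated equality, this last step is a genuine gap.
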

\begin{proof}
Fix $\eps \in (0,1/2)$, and define $\cC_\bmu^{(n)} \in \cK_0$ as an enlargement of $\cC_\bmu$ given by
$\cC_\bmu^{(n)} := \{ y \in \R^2 : \| x - y \| \leq n^{\eps - (1/2)} \text{ for some } x \in \cC_\bmu\}$.
Then $\rhoH ( \cC_\bmu^{(n)}, \cC_\bmu ) \leq n^{\eps -(1/2)}$ and   $0 \leq \diam \cC_\bmu^{(n)} - D_n \leq O ( n^{\eps -(1/2)})$.
The SLLN says that $n^{-1} \cH_n$ converges to $\cC_\bmu$.
Recall from Definition~\ref{def:rectangle} that
trajectory $\{ S_k^{(i)} : 0 \leq k \leq n\}$
is contained in a rectangle $R^{(i)}_n$ with major axis from $-m_n^{(i)} \hbmu_i$ to $M_n^{(i)}  \hbmu_i$ of  width $2 \oM_n^{(i)}$,
where $\oM_n^{(i)}$, $M_n^{(i)}$, and $m_n^{(i)}$ are defined at~\eqref{eq:rectangle-quantities}.
If $\bmu_i \neq \0$, then $-m_n^{(i)}$ converges a.s.~to $\min_{k \in \ZP} \hbmu_i^\tra S^{(i)}_k < \infty$, and so $m_n^{(i)}$ is tight,
as is $M_n^{(i)} - S^{(i)}_n$ by~\eqref{eq:duality}. 
Moreover, for every $\eps>0$, a.s., 
$\oM_n^{(i)} \leq n^{(1/2)+\eps}$ for all but finitely many $n \in \ZP$. 
If $\bmu_i = \0$, then also $m_n^{(i)} \leq n^{(1/2)+\eps}$
and $M_n^{(i)}  \leq n^{(1/2)+\eps}$. In any case, it follows that $\Pr ( \rhoH ( n^{-1} \cH_n , \cC_\bmu ) \leq n^{\eps-(1/2)} ) \to 1$ as $n \to \infty$.
Consequently,  
\begin{equation}
    \label{eq:scale-containment}
\lim_{n \to \infty} \Pr \bigl( n^{-1} \cH_n \subseteq \cC^{(n)}_\bmu \bigr) = 1 , \text{ for any } \eps >0.
 \end{equation}

Suppose $x, y \in n^{-1} \cH_n$ with $\| x -y \| >  \diam \cC_\bmu - n^{\eps-(1/2)}$;
by~\eqref{eq:scale-containment}, with probability $1-o(1)$, $n^{-1} D_n = \| x-y\|$ for such a pair $x,y$. 
Then there exist $x', y' \in \cC_\bmu$ with
$\| x - x'\| \leq n^{\eps-(1/2)}$, $\| y - y' \| \leq n^{\eps -(1/2)}$, and 
$\| x' - y ' \| \geq \diam \cC_\bmu - 3n^{\eps-(1/2)}$.
We may then apply the deterministic geometrical Lemma~\ref{lem:near-diameters} to the polygon $\cC_\bmu$
to conclude that $x, y$ are each within distance $Cn^{\eps -(1/2)}$ of opposite 
vertices corresponding to diametrical chords of $\cC_\bmu$. 

The possible diametrical chords are enumerated by $\cD_\bmu^+$, $\cD_\bmu^\circ$, and $\cD_\bmu^\times$.
Consider $(i,j) \in \cD_\bmu^+$. By definition $\bmu_i, \bmu_j \neq \0$, and the SLLN shows that, with high probability, only the steps  $n - n^{(1/2)+2\eps} \leq \ell \leq n$ (say) steps of the scaled random walk $n^{-1} S_\ell^{(i)}$ stay within distance $n^{\eps-(1/2)}$ of $\bmu_i$; similarly for walk~$j$. Thus
with high probability an upper bound for candidate diameters corresponding to $\cD_\bmu^+$
is given by $\Gamma_\bmu^+ (n, n^{(1/2)+2\eps})$ as defined at~\eqref{eq:gamma-def-1}. 
For $(0,j) \in \cD_\bmu^\circ$ a similar argument shows that an upper bound for candidate diameters corresponding to $\cD_\bmu^\circ$
is given by $\Gamma_\bmu^\circ (n, n^{(1/2)+2\eps})$ as defined at~\eqref{eq:gamma-def-2} (which includes early as well as late steps of the walk $j$, since these early steps can be close to $\0$).
Lastly, an upper bound for candidate diameters corresponding to $\cD_\bmu^\times$
is given by $\Gamma_\bmu^\times (n)$ as defined at~\eqref{eq:gamma-def-3},
since the whole $n$-step trajectory of any zero-drift walk can be close to $\0$.
Thus we get, for every $\eps>0$, $\Pr ( D_n \leq \Gamma_\bmu (n, n^{(1/2)+\eps} ) ) \to 1$ as $n \to \infty$,
which gives the result, since   $D_n \geq  \Gamma_\bmu (n, n^{(1/2)+\eps} )$.
\end{proof}

\begin{proof}[Proof of Proposition~\ref{prop:diam-approx}]
Lemma~\ref{lem:early-late-approx} shows that, with high probability,
$D_n = \Gamma_\bmu (n, n^{(1/2)+\eps})$ defined at~\eqref{eq:gamma-def-4}. 
We claim that, for every $\eps \in (0,1/2)$,
\begin{equation}
    \label{eq:gamma-approx-claim}
\lim_{n \to \infty} \Pr \bigl( \bigl| \Gamma_\bmu (n, n^{(1/2)+\eps}) - \Gamma_\bmu (n, 0) \bigr| \leq n^{(1/4)+\eps} \bigr) = 1.
\end{equation}
Given the claim~\eqref{eq:gamma-approx-claim}, from Lemma~\ref{lem:early-late-approx} we get that $| D_n - \Gamma_\bmu (n,0) | \leq n^{(1/4)+\eps}$ with high probability. Moreover, since $\Gamma_\bmu (n,0) \leq \diam \cA_n \leq D_n$, we also get $| D_n - \diam \cA_n | \leq n^{(1/4)+\eps}$ with high probability,
as required. 
Thus to prove
Proposition~\ref{prop:diam-approx}, it remains to verify~\eqref{eq:gamma-approx-claim}.
This is done using a similar approach to~\S\ref{sec:diam-one-walk}, with a modified
version of the bounding rectangles of Definition~\ref{def:rectangle}.
For example, to bound $| \Gamma_\bmu^+ (n,m) - \Gamma_\bmu^+ (n,0)|$
one can use a rectangle with axis along the line containing $S_n^{(i)}$
and $S_n^{(j)}$, which gives an $O(1)$ in probability bound in the axial direction and, in the orthogonal direction, a bound $O(n^{(1/4)+\eps})$ in probability, since one needs consider only $n^{(1/2)+\eps}$ steps near each endpoint. Together with similar arguments for the other terms, this verifies~\eqref{eq:gamma-approx-claim}.
\end{proof}

\begin{proof}[Proof of Theorem~\ref{thm:diameter-approximation-general}]
We first use a uniform square-integrability idea,  similar
to Lemma~\ref{lem:H-V-UI}. Indeed, take $j,k \in J_\bmu$ with $k \neq j$. Then
$\diam \cC_\bmu = \| \bmu_j - \bmu_k \|$. Also, by~\eqref{eq:sum-mean-ui} applied to each coordinate, 
$(n^{-1} \| S_n^{(k)} - S_n ^{(j)} - n ( \bmu_k - \bmu_j ) \|^2)_{n\in\N}$ is uniformly
integrable, and, by the triangle inequality $(\| a\| - \| b\|)^2 \leq \| a-b\|^2$, so that $(n^{-1} ( \diam  \{ S_n^{(k)}, S_n^{(j)} \} - n \diam \cC_\bmu )^2)_{n\in\N}$ is also uniformly integrable. But since $(n^{-1} \rhoH ( n \cC_\bmu, \cH_n)^2)_{n\in\N}$ is uniformly integrable, by Lemma~\ref{lem:H-V-UI}, and the diameter function is Lipschitz with respect to the Hausdorff metric,
 $(n^{-1} ( D_n - n \diam \cC_\bmu)^2)_{n\in\N}$ is also uniformly integrable, and hence 
so also is $(n^{-1} (  D_n - \diam  \{ S_n^{(k)}, S_n^{(j)} \} )^2)_{n\in\N}$,  by the triangle inequality.
But since
$\{ S_n^{(k)}, S_n^{(j)} \} \subseteq \cA_n \subseteq \cH_n$
and $\Gamma_\bmu (n,0) \geq \| S_n^{(k)} - S_n^{(j)} \|$ (recall the definition of $\Gamma_\bmu (n,0)$ from~\eqref{eq:gamma-def-4}), it follows that
\[ (n^{-1} (  D_n - \diam  \cA_n )^2)_{n\in\N}
\text{ and } 
(n^{-1} (  D_n - \Gamma_\bmu (n,0) )^2)_{n\in\N}
\text{ are both uniformly 
integrable.}\]
Combined with Proposition~\ref{prop:diam-approx}, which shows that, as $n \to \infty$, both $n^{-1/2} ( D_n - \diam \cA_n ) \to 0$ and  $n^{-1/2} ( D_n - \Gamma_\bmu (n,0) ) \to 0$ in probability, 
we verify Theorem~\ref{thm:diameter-approximation-general}\ref{thm:diameter-approximation-general-i}--\ref{thm:diameter-approximation-general-ii}. 
\end{proof}

\begin{proof}[Proof of Theorem~\ref{thm:many-walks-diameter}]
First we deduce the $L^2$ convergence statement in the theorem, involving one-dimensional projections, from that in Theorem~\ref{thm:diameter-approximation-general}\ref{thm:diameter-approximation-general-ii}, involving norms.   Lemma~\ref{lem:norm-projection} (and the fact that the sets $\cD_\bmu^+$ and $\cD^\circ_\bmu$ are finite) implies that
\[ 
\max\left( \frac{\max\limits_{(i,j) \in \cD^+_\bmu} \| S_n^{(i,j)} \| -
\max\limits_{(i,j) \in \cD^+_\bmu} \hbmu_{i,j}^\tra S_n^{(i,j)}}{\sqrt{n}} ,
    \frac{\max\limits_{(0,j) \in \cD^\circ_\bmu} \| S_n^{(j)} \| -\max\limits_{(0,j) \in \cD^\circ_\bmu} \hbmu_j^\tra S_n^{(j)}  }{\sqrt{n}} \right) \toL{n}{2} 0 ,
\]
and so the statement in the theorem will follow from the fact that, for every $(i,j) \in \cD^\times_\bmu$, 
\begin{equation}
    \label{eq:L2-ice-cream-diam}
 \frac{\max_{0 \leq k \leq n} \|   S_n^{(j)} -  S_k^{(i)} \|  - \max_{0 \leq k \leq n} \hbmu_j^\tra ( S^{(j)}_n - S_k^{(i)} )}{\sqrt{n}} \toL{n}{2} 0.
\end{equation}
The claim~\eqref{eq:L2-ice-cream-diam} is proved by a variation on the proof of Proposition~\ref{prop:two-to-one-diamater}.
Indeed, setting $D_n = \max_{0 \leq k \leq n} \|   S_n^{(j)} -  S_k^{(i)} \|$,
$D^\bmu_n = \max_{0 \leq k \leq n} \hbmu_j^\tra ( S_n^{(j)} - S_k^{(i)} )$,
and $\oMperp_n = \max_{0 \leq k \leq n} | \hbmu_\perp^\tra ( S_n^{(j)} - S_k^{(i)} ) |$, for $\hbmu_\perp$ a unit vector orthogonal to $\hbmu_j$, we have that the bounds in~\eqref{eq:box-bound} hold, and the rest of the proof of 
Proposition~\ref{prop:two-to-one-diamater} proceeds similarly with these new definitions. Thus we verify the $L^2$ limit statement. 

It remains to verify the distribution limit statement,
with   $\zeta$ and $\xi$ given at~\eqref{eq:zeta-xi-notation}.
First observe that, by definition, for every $(i,j) \in \cD^+_\bmu$, $\| \bmu_{i,j} \| = \diam \cC_\bmu$, for every $(i,j) \in \cD^\times_\bmu$, $\| \bmu_j \| = \diam \cC_\bmu$, and for every $(0,j) \in \cD^\circ_\bmu$, $\| \bmu_j \| = \diam \cC_\bmu$. Hence
\begin{align*}
    \hbmu_{i,j}^\tra S_n^{(i,j)} & = n \diam \cC_\bmu + \bigl( \hbmu_{i,j}^\tra ( S_n^{(i,j)} - \Exp S_n^{(i,j)} ) \bigr) , ~ (i,j) \in \cD^+_\bmu,\\
    \hbmu_j^\tra S_n^{(j)} & = n \diam \cC_\bmu + \bigl( \hbmu_{j}^\tra ( S_n^{(j)} - \Exp S_n^{(j)} ) \bigr) , ~ (i,j) \in \cD^\times_\bmu,\\
    \hbmu_j^\tra S_n^{(j)} + \max_{0 \leq k \leq n} ( - \hbmu_j^\tra S_k^{(i)} ) & = n \diam \cC_\bmu + \bigl( \hbmu_{j}^\tra ( S_n^{(j)} - \Exp S_n^{(j)} ) \bigr) 
    + \max_{0 \leq k \leq n} ( - \hbmu_j^\tra S_k^{(i)} ), ~ (i,j) \in \cD^\circ_\bmu.
\end{align*}
The result now follows from the $n \to \infty$ joint distributional convergence of the collection (for all $i,j$) of sequences $n^{-1/2} \hbmu^\tra_{i,j} ( S_n^{(i,j)} - \Exp S_n^{(i,j)})$, $n^{-1/2} \hbmu^\tra_j ( S_n^{(j)} - \Exp S_n^{(j)} )$ (by the CLT), and $n^{-1/2} \max_{0 \leq k \leq n} ( - \hbmu_j^\tra S_k^{(i)} )$ (by Donsker's theorem and continuous mapping). Finally, convergence of the variance follows from the $L^2$ approximation and the fact that the variance of the $L^2$ approximant, scaled by $n$, converges to $\Var \Delta$.
\end{proof}

\begin{proof}[Proof of Corollary~\ref{cor:unique-diameter}]
Let $k_1 < k_2$. If $k_1 = 0$, then $\mu_{k_1} = \0$ and $\mu_{k_2} \neq \0$, so $\cD_\bmu^\circ = \{ (k_1, k_2)\}$ and $\cD_\bmu^\times = \cD_\bmu^+ = \emptyset$. If $k_1 >0$ then we cannot have $\mu_{k_1} = \0$ (or else $J_\bmu$ would have to contain $0$ as well as $k_1, k_2$), so 
$\cD_\bmu^+ = \{ (k_1, k_2)\}$ and $\cD_\bmu^\times = \cD_\bmu^\circ = \emptyset$. Thus Theorem~\ref{thm:many-walks-diameter} implies that the standardized diameter converges in distribution to~$\zeta_{k_1,k_2} \sim \cN (0, \sigma_{k_1,k_2}^2)$.
\end{proof}

Finally, we can complete the proof of Corollary~\ref{cor:diam_two_walks} on non-Gaussian limits for the case $N=2$.

\begin{proof}[Proof of  Corollary~\ref{cor:diam_two_walks}.]
The result follows from Theorem~\ref{thm:many-walks-diameter} on noting the values of $\cD_\bmu^\circ, \cD_\bmu^\times, \cD_\bmu^+$ in each case. For case (i) $\cD_\bmu^\circ = \{ (0,1), (0,2)\}$, $\cD_\bmu^+ = \cD_\bmu^\times = \emptyset$; for case (ii) $\cD_\bmu^\circ = \{ (0,1) \}$, $\cD_\bmu^+ = \{ (1,2)\}$, $\cD_\bmu^\times = \emptyset$; for case (iii) $\cD_\bmu^\circ = \{ (0,1), (0,2) \}$, $\cD_\bmu^+ = \{ (1,2)\}$, $\cD_\bmu^\times = \emptyset$; and finally, for case (iv) $\cD_\bmu^\circ = \{ (0,2) \}$, $\cD_\bmu^\times = \{ (1,2)\}$,
$\cD_\bmu^+ = \emptyset$.
    \end{proof}

\section{Fluctuations of the perimeter for multiple random walks}
\label{sec:wald-based}

\subsection{Notation and main result}
In this section, we establish distributional convergence (Theorem~\ref{thm:perim_main_theorem}) results for the perimeter $L_n$ of the convex hull of $N \geq 1$ random walks  under hypothesis~\eqref{ass:many-walks}, covering all cases and with quite explicit limit distributions, which, as for the diameter, are generically Gaussian but non-Gaussian in certain cases.  As an illustration, the case $N=2$ is presented in~\S\ref{subsec:perim-applic}.
Before we formulate the main theorem, we have to introduce some additional notation. 

Recall from \S\ref{sec:diam-one-walk-result} that $\cE$ denotes the extreme points of $\cC_\bmu$.
Let 
\begin{equation}
\label{eq:cI-def}
\cI := \{1 \le k \le N : \bmu_k \in \cE\}
\end{equation}
be the set of indices of random walks with whose drift vectors are extreme points of $\cC_{\bmu}$. Observe that $\cI \neq \cJ$ in general, where $\cJ$ was introduced in Subsection \ref{sec:hausdorff-result}. 
We further split $\cI = \cI^+ \cup \cI^0$, where
$ \cI^+:=\{k \in \cI: \bmu_k\neq \0\}$ and $\cI^0:=\{k\in \cI: \bmu_k=\0\}$.
Define
\[
\Theta_0:=\Big\{\theta\in[0,2\pi]: \max_{k \in \cI^+} \be_\theta^\tra \bmu_k <0\Big\}.
\]
Note that geometrically $\Theta_0$ consists of directions for which the supporting point of \(\cC_\mu\) is the origin.
For each non-zero extreme drift $\bmu\in \cE$ define\footnote{For any $A\subseteq \R^2$, $\mathrm{int\,}A$ denotes the interior of $A$.}
\[
\Theta_\bmu:=\mathrm{int\,}\Big\{\theta\in[0,2\pi):\ \be_\theta^\tra \bmu=\max_{\bmu'\in \cE} \be_\theta^\tra \bmu'\Big\},
\qquad
\cI_\bmu:=\{k\in \cI:\ \bmu_k=\bmu\}.
\]
Note that evidently $[0,2\pi ]\setminus \bigl(\Theta_0 \cup \bigcup_{\bmu \in \cE \setminus\{0\}}\Theta_{\bmu}\bigr)$ is finite and so of Lebesgue measure zero. 

 Let $(X^{(k)})_{k=1}^N$ be a family of independent centred Gaussian vectors in $\R^2$ such that
$X^{(k)}\sim \cN(0,\Sigma_k)$,
where the covariance matrix $\Sigma_k$ comes from \eqref{ass:many-walks}.
Let $(B^{(k)})_{k \in \cI^0}$ be a family of independent planar Brownian motions with covariance matrices $\Sigma_k$, independent of the $X^{(k)}$. The following theorem, the formal version of~\eqref{eq:perimeter-limit},
 is the main result of this section.

\begin{theorem}\label{thm:perim_main_theorem}
   Under assumption \eqref{ass:many-walks} and  in the above notation, it holds
    \begin{equation}
        \label{eq:perim-main-limit}
\frac{L_n-n\perim \cC_\bmu}{\sqrt{n}}
 \tod{n} \Pi^+ + \Pi^0,
\end{equation}
where $\Pi^+$ and $\Pi^0$ are independent random variables given by
\begin{equation}
\label{eq:Pi-defs}
\Pi^+
:=
\sum_{\bmu\in \cE\setminus\{\0\}}
\int_{\Theta_\bmu}
\max_{k\in \cI_\bmu}\big( \be_\theta^\tra X^{(k)} \big)\,\ud\theta, \text{ and }
\Pi^0 := \int_{\Theta_0}
\max_{k\in \cI^0}\sup_{0\le t\le 1}\big(\be_\theta^\tra B^{(k)}_t \big)\,\ud\theta.
\end{equation}
Moreover, $\lim_{n\to \infty} n^{-1} \Var L_n = \Var \Pi^+ + \Var \Pi^0$.
\end{theorem}

\begin{remarks}
\label{rems:perimeter}
\begin{myenumi}[label=(\alph*)]
\item\label{rems:perimeter-a}
The random variable $\Pi^+$ describes the fluctuations contributed by non-zero extreme drifts, whereas $\Pi^0$ corresponds to directions in which the origin is the supporting point of $\cC_\bmu$. Both quantities have geometrical interpretations in terms of \emph{partial Cauchy formulae} that we describe in Appendix~\ref{sec:semi-cauchy}.
\item\label{rems:perimeter-b}
Let $N=1$. If $\bmu_1 = \0$, then $\perim \cC_\bmu =0$, $\Pi^+ = 0$, and $\Pi^0 = \int_0^{2\pi} \sup_{0 \leq t \leq 1} \be_\theta^\tra B^{(1)} (t) \ud \theta$ is the perimeter of planar Brownian motion with covariance matrix $\Sigma_1$, in which case Theorem~\ref{thm:perim_main_theorem} recovers the Donsker-type convergence result from~\cite{wx-scaling}. Otherwise,  $\bmu_1 \neq \0$. Then, $\perim \cC_\bmu = 2 \| \bmu_1\|$, $\Pi^0 = 0$, and $\Pi^+ = \int_{\Theta_{\bmu_1}} \be_\theta^\tra X^{(1)} \ud \theta = 2 \hbmu_1^\tra X^{(1)}$ so that $\Pi^+ \sim \cN ( 0, 4\sigma_1^2)$ with the notation of~\eqref{eq:spara-def-two}. Thus  Theorem~\ref{thm:perim_main_theorem} recovers the $N=1$ result from~\cite{wx-drift} quoted in~\eqref{eq:known-D-L-CLT} above.
\end{myenumi}
\end{remarks}

For $N=2$, Theorem~\ref{thm:perim_main_theorem} completes the picture 
that had been resolved in the Gaussian cases in~\cite{ikss,tomislav}, as we describe in \S\ref{subsec:perim-applic} below.
First we give a corollary that demonstrates that the generic picture is Gaussian.

\begin{corollary}
\label{cor:zero-in-interior-clt}
     Suppose that \eqref{ass:many-walks} holds, that  $\0$ is in the interior of $\cC_\bmu$,
      and that all $\bmu_k$, $k \in \cI$ are distinct. Without loss of generality, relabel the $\bmu_k$s so that $\cI = \{ 1, \ldots, m \}$ enumerated in anticlockwise order around $\partial \cC_\bmu$, and interpret index  $m+1$ as $1$. Then
      \[ \frac{L_n - n \sum_{k=1}^m \| \bmu_{k,k+1} \| }{\sqrt{n}} \tod{n} 
      \cN \left( 0 , \sum_{k=1}^m ( \hbmu_{k,k+1} - \hbmu_{k-1,k} )^\tra \Sigma_k ( \hbmu_{k,k+1} - \hbmu_{k-1,k} ) \right). \]
\end{corollary}
\begin{remark}
\label{rem:perimeter-generic}
    The hypotheses on $\bmu_1, \ldots, \bmu_N$ in Corollary~\ref{cor:zero-in-interior-clt} exclude many cases that are covered by the completely general Theorem~\ref{thm:perim_main_theorem}, but are in some sense generic for large~$N$. Indeed, if, for example, $\bmu_1, \ldots, \bmu_N$ are chosen randomly according to the uniform distribution on the unit circle, then $\0$ is in the interior of $\cC_\bmu$ with probability $1 - N 2^{1-N}$ (a special case solved by D.A.~Darling of a more general problem solved by Schl\"afli and Wendel~\cite{wendel}). 
\end{remark}

\subsection{Application for two random walks}\label{subsec:perim-applic}
In this subsection we apply Theorem \ref{thm:perim_main_theorem} to three specific examples for two independent planar random walks. The first one recovers the result \cite[Theorem 2.6]{ikss}, and the other two examples explain the border cases that were postulated in \cite{ikss} as potentially non-Gaussian, but were not treated there. In one of them we have one zero-drift, and one non-zero drift walk, and in the other one we have two non-zero drift random walks with the same drift vector. It turns out that the limits are indeed non-Gaussian.

\begin{example}[Two walks with linearly independent drifts]
	Assume $N=2$ and that the drift vectors $\bmu_1,\bmu_2\in\mathbb{R}^2$ are linearly independent.
	Then
	$\cC_\bmu=\hull\{\0,\bmu_1,\bmu_2\}$
	is a non-degenerate triangle whose set of extremal points is
	$\cE=\{\0,\bmu_1,\bmu_2\}$.
	In particular, there are no zero-drift walks, so $\cI^0=\emptyset$. For $\theta\in[0,2\pi)$ consider the projections $\be_\theta^\tra \bmu_1$ and $\be_\theta^\tra \bmu_2$.
	There are three open subsets of directions, defined as follows:
	\begin{align*}
		\Theta_{\bmu_1}
		&:=
		\bigl\{\theta\in[0,2\pi):\ \be_\theta^\tra \bmu_1 > \be_\theta^\tra \bmu_2 \ \text{and}\ \be_\theta^\tra \bmu_1>0\bigr\},\\
		\Theta_{\bmu_2}
		&:=
		\bigl\{\theta\in[0,2\pi):\ \be_\theta^\tra \bmu_2> \be_\theta^\tra \bmu_1 \ \text{and}\ \be_\theta^\tra\bmu_2>0\bigr\},\\
		\Theta_0
		&:=
		\bigl\{\theta\in[0,2\pi):\ \max( \be_\theta^\tra \bmu_1, \be_\theta^\tra \bmu_2)<0\bigr\}.
	\end{align*}
	The remaining directions, where $\be_\theta^\tra \bmu_1 = \be_\theta^\tra \bmu_2$ or where the maximum equals $0$,
	form a finite set and are therefore negligible for integration purposes. Geometrically, $\Theta_{\bmu_1}$ and $\Theta_{\bmu_2}$ are the interiors of the normal cones at the vertices
	$\bmu_1$ and $\bmu_2$ of the triangle $\cC_\bmu$, while $\Theta_0$ is the interior of the normal cone at the vertex $0$. Let $L_n$ be the perimeter of the convex hull generated by the full trajectories of both walks.
	Then Theorem \ref{thm:perim_main_theorem} applies with $\Pi_0 = 0$ and
	\[
	\Pi^+
	=
	\int_{\Theta_{\bmu_1}} ( \be_\theta^\tra X^{(1)})\,\ud\theta
	+
	\int_{\Theta_{\bmu_2}} ( \be_\theta^\tra X^{(2)})\,\ud\theta,
	\]
	for $X^{(k)}\sim\mathcal{N}(0,\Sigma_k)$ independent Gaussian vectors, and $\Pi^0 = 0$. Thus the limit reduces to the purely drift-dominated Gaussian term
	\[
	\Pi^+=\ba_1^\tra X^{(1)} + \ba_2^\tra X^{(2)}  ,
	\qquad
	\ba_i:=\int_{\Theta_{\bmu_i}} \be_\theta\,\ud\theta,
	\]
so
	$\Pi^+\sim\cN(0,\ \ba_1^\tra\Sigma_1 \ba_1 + \ba_2^\tra\Sigma_2 \ba_2 )$,
 the Gaussian limit from \cite[Theorem 2.6]{ikss}.
\end{example}

\begin{example}[Two walks, one zero-drift, one non-zero drift]\label{ex:perim_ice-cream}
		Without loss of generality we assume that the non-zero drift vector points in the positive $y$-direction. More precisely, take $\bmu_1 = \0$ and $\bmu_2=\bmu=(0,\|\bmu\|)$. 
	The drift polygon is
	$\cC_\bmu=\hull \{\0,\bmu\}$,
	which is simply a line segment of length $\|\bmu\|$.  In this degenerate case the
	perimeter equals twice the length of the segment, i.e.\ $\perim \cC_\bmu = 2\|\bmu\|$. Since
	$\be_\theta^\tra \bmu  = \|\bmu\|\sin\theta$,
	we have $\Theta_\bmu=(0,\pi)$ and $\Theta_0=(\pi,2\pi)$,
	up to the boundary angles $0$ and $\pi$ where $ \be_\theta^\tra \bmu=0$. Theorem~\ref{thm:perim_main_theorem}  gives
	\[
	\Pi^+ = \int_0^\pi (\be^\tra_\theta X^{(2)} )\,\ud\theta,
	\]
	where $X^{(2)}\sim\mathcal N(0,\Sigma_2)$. Also
	\[
 \int_0^\pi \be_\theta\,\ud\theta
	= \int_0^\pi (\cos\theta,\sin\theta)\,\ud\theta = 2 \hbmu.
	\]
Thus, with $\sigma_2^2$ as defined at~\eqref{eq:spara-def-two},
	$\Pi^+ = 2 \hbmu^\tra X^{(2)}  \sim \cN ( 0, 4 \sigma_2^2 )$.
	For the zero-drift contribution, 
	\[
	\Pi^0 = \int_\pi^{2\pi}
	\sup_{0\le t\le1}\big( \be_\theta^\tra B^{(1)}(t) \big)
	\,\ud\theta,
	\]
	where $B^{(1)}$ is planar Brownian motion with covariance matrix $\Sigma_1$. Denoting by $\mathcal G := \hull (B^{(1)}(t):0\le t\le 1)$,
	and using Lemma \ref{lem:partial_cauchy}, we have
$\Pi^0 = \mathfrak{m}(\mathcal G) - d_\ell - d_r$,
	where $\mathfrak{m}(\mathcal G)$ is the length of the boundary arc of the hull
	between the leftmost and rightmost points (the ``convex minorant'' arc), and $d_\ell$ and $d_r$ are the signed horizontal distances from those
	points to the $x$-axis. If the leftmost or rightmost point is not unique, we can choose arbitrarily. 
    Thus,
	\[
	\frac{L_n-2\|\bmu\|\,n}{\sqrt n}
	\tod{n} 2 \hbmu^\tra X^{(2)}  + \mathfrak{m}(\mathcal G) - d_\ell - d_r,
	\]
	where $2 \hbmu^\tra X^{(2)} \sim \cN (0, 4 \sigma_2^2 )$ is independent of $\mathfrak{m}(\mathcal G) - d_\ell - d_r$.
\end{example}

\begin{example}[Two random walks whose drift vectors are identical and non-zero]\label{ex:perim_two-the-same}
	Without loss of generality, we assume
$\bmu_1 = \bmu_2 = \bmu = (0, \|\bmu\|)$.
	As in the previous example, the drift polygon is a line segment $\cC_\bmu = \hull\{\0,\bmu\}$, of perimeter $2\|\bmu\|$, $\Theta_\bmu=(0,\pi)$ and $\Theta_0=(\pi,2\pi)$. Theorem \ref{thm:perim_main_theorem}  yields $\Pi^0 = 0$ and 
	\begin{align*}
		\Pi^+
		& = \int_0^\pi \max_{1 \leq k \leq 2} (\be_\theta^\tra X^{(k)} ) \,\ud\theta .
	\end{align*}
	Define
$U:= X^{(1)} + X^{(2)}$ and $V:= X^{(1)} - X^{(2)}$.
	Using $\max\{a, b\} = (a+b)/2 + |a-b|/2$ we get
	\begin{equation*}
		\Pi^+ = \frac{1}{2} \int_0^{\pi} \be_\theta^\tra U  \,\ud \theta + \frac{1}{2} \int_0^{\pi} | \be_\theta^\tra V| \,\ud\theta = \hbmu^\tra U + \|V\|,
	\end{equation*}
	where we used that $\int_0^{\pi} \be_\theta\, \ud \theta = (0, 2)$, $\int_0^{\pi}|\be_\theta^\tra v| \ud \theta = 2\|v\|$ for any $v \in \R^2$.
    Notice also that,
	\begin{equation*}
		\Exp [ U U^\tra ] = \Sigma_1 + \Sigma_2, \quad \Exp [ V V^\tra ] = \Sigma_1 + \Sigma_2, \quad \Exp [ U V^\tra ] = \Sigma_1 - \Sigma_2.
	\end{equation*}
	Hence, we have
	\[
	\frac{L_n-2n\|\bmu\|}{\sqrt n}
	\tod{n} \hbmu^\tra U + \|V\|,
	\]
	where
	$U$ and $V$ are independent when $\Sigma_1 = \Sigma_2$. Notice that in the case $\Sigma_1 = \Sigma_2 = I$ (identity), $ \hbmu^\tra U \sim \mathcal{N}(0, 2)$, and recall that $\|V\|$ has a so-called Rayleigh distribution with parameter $\sqrt{2}$.
In particular, $\Var(U_2)=2$ and $\Var(\|V\|)=4-\pi$,
	so that, in that case,  $\Var(\Pi^+)
	=
	6-\pi$. \qedhere
\end{example}

\subsection{Perimeter deviation metric}\label{sec:perimeter_deviation_metric}

Our second main result on perimeter for multiple random walks is a set approximation result that simplifies  further the 
approximating set from Theorem~\ref{thm:general-hausdorff}, where the approximation was in the Hausdorff sense, 
by using 
the $\ell_1$ distance between support functions, which is exactly adapted to perimeter and generates a weaker topology.

Recall that $\cK_0$ denotes the set of compact, convex subsets of $\R^2$ containing~$\0$. The \emph{support function} of $K \in \cK_0$ is $h_K : \Sp{}  \to \RP$ ($\Sp{} := \{ \be \in \R^2 : \| \be \| =1\}$) given by
\begin{equation}
    \label{eq:support-function}
h_K (\be) := \sup_{x \in K} (\be^\tra x), \quad \text{for all } \be \in \Sp{}.
\end{equation}  
For $\theta\in[0,2\pi]$, set $\be_\theta:=(\cos\theta,\sin\theta)$. Cauchy's formula (see \cite{schneider_book}) expresses the perimeter in terms of the support function, more precisely
\begin{equation*}
    \perim(K) = \int_0^{2\pi} h_K(\be_\theta) \ud \theta.
\end{equation*}
The $\ell_1$ metric for the support function, defined for $A,B \in \cK_0$ by
\[ \rhoone ( A, B) := \int_0^{2\pi} | h_A (\be_\theta) - h_B (\be_\theta)| \ud \theta \]
is a natural alternative to $\rhoH$, since, by Cauchy's formula, perimeter is Lipschitz in this metric. Since $| x -y | = 2 \max (x,y) - x -y$, the metric~$\rhoone$ can also be expressed as
\[
    \rhoone (A,B) = 
    2 \perim \hull (A \cup B) - \perim A - \perim B,\quad \text{ for } A,B \in \cK_0,
\]
which  is also known as the \emph{perimeter deviation metric}~\cite{florian}.

Recall the definition of $\cI^+$ and $\cI^0$ from below~\eqref{eq:cI-def}.
Define 
\[ \cF_n := \hull \left( \{ \0 \} \cup \left( \bigcup_{k \in \cI^+} \{ S_n^{(k)} \} \right) \cup  \left( \bigcup_{k \in \cI^0} \{ S_i^{(k)} : 0 \leq i \leq n \} \right)  \right),\]
the convex hull generated by the endpoints of walks whose non-zero drift vectors are extreme points of $\cC_\bmu$, together with the full trajectories of zero-drift walks when $\0$ is an extreme point of $\cC_\bmu$. The following result shows that, at the $\sqrt n$ scale, the full hull $\cH_n$ may be approximated  in the perimeter deviation metric by the reduced hull $\cB_n$.
Note that, compared to the Hausdorff approximant $\cG_n$ as defined at~\eqref{eq:g-def}, $\cB_n$ discards endpoints of walks corresponding to boundary but not extreme points, and trajectories of walks corresponding to faces of $\cC_\bmu$.

\begin{theorem}\label{thm:perimeter-deviation-metric}
Under assumption~\eqref{ass:many-walks},
\[
\frac{\rhoone(\cH_n,\cF_n)}{\sqrt n}\toL{n}{2}0.
\]
\end{theorem}
The proof of Theorem~\ref{thm:perimeter-deviation-metric} is given in \S\ref{sec:perim-proofs} along with the proof of
Theorem~\ref{thm:perim_main_theorem}.

\subsection{Proofs}
\label{sec:perim-proofs}
In the forthcoming subsections we present the proof of Theorem \ref{thm:perim_main_theorem} split into several parts.
Our approach is to iteratively approximate the perimeter $L_n$ of the joint convex hull $\cH_n$ (in $L^2$ sense, and with $\sqrt{n}$ scaling) by simpler quantities, until we reach an object with a transparent probabilistic interpretation from which the distributional limit follows naturally.

\subsubsection*{First approximation}\label{sec:first_aproximation}
For each $k \in [N]$\footnote{We use notation $[N] = \{1, 2, \ldots, N\}$, for any $N\in \mathbb{N}$.}, let
\begin{align*}
    P_n^{(k)} := \begin{cases}
        \{S_n^{(k)}\} & \text{if $\bmu_k \ne \0 $}, \\
        \{S_j^{(k)}\}_{0\leq j \leq n  } & \text{otherwise,}
    \end{cases}
\end{align*}
 the \emph{reduced path} of the $k$-th random walk: we keep the whole trajectory if the drift is zero, but otherwise only record its endpoint. By convention we also set $P_n^{(0)} := \{\0\}$. Let now
\begin{equation*}
    \cH_n^{(2)} := \hull \left( \bigcup_{k=0}^N P_n^{(k)} \right)
\end{equation*}
be the convex hull generated by the reduced paths. The perimeter of $\cH_n^{(2)}$ is denoted by $L_n^{(2)}$. Let $h_n^{(1)}(\theta) = h_{\cH_n}(\be_\theta)$
be the support function of the convex hull $\cH_n$, and let $h_n^{(2)}(\theta)$ be the support function of the convex hull $\cH_n^{(2)}$. In particular, $\0\in \cH_n^{(2)}$, so $h_n^{(2)}(\theta)\ge0$. For convenience, we write $L_n^{(1)} = L_n$. According to Cauchy's formula for the perimeter we have
\begin{align}\label{eq:Cauchy-integral-perimeter}
    L_n^{(j)} = \int_0^{2\pi} h_n^{(j)}(\theta) \ud \theta, \qquad j \in\{1,2\}.
\end{align}
Our goal now is to show that in $L^2$ sense, and with $\sqrt{n}$ scale, the perimeter of $\cH_n$ can be replaced with the perimeter of the convex hull of the ``reduced path'' processes, i.e.\ that
\begin{equation*}
    \frac{L_n^{(1)} - L_n^{(2)}}{\sqrt{n}} \toL{n}{2} 0.
\end{equation*}
Define
\[
    r_n(\theta):=\frac{h_n^{(1)}(\theta)-h_n^{(2)}(\theta)}{\sqrt n},
\]
and let
$\Theta_{\perp}:=\{\theta\in[0,2\pi):\exists k \text{ with } \bmu_k\neq 0 \text{ and } \be_{\theta}^\tra \bmu_k   = 0\}$.
Notice that $\Theta_{\perp}$ is finite (hence of Lebesgue measure zero).
We will first show that
\begin{equation}\label{eq:1st-appr-r_n-1}
    \sup_{\theta\in[0,2\pi]}\ \sup_{n\in\N}\ \Exp\big[r_n(\theta)^2\big] < \infty,
\end{equation}
and then that for every $\theta\in[0,2\pi)\setminus\Theta_{\perp}$ we have
\begin{equation}\label{eq:1st-appr-r_n-2}
    r_n(\theta)\toL{n}{2}0.
\end{equation}
Consequently, by the dominated convergence theorem we conclude that
\begin{equation}\label{eq:first-approx-conv}
\frac{L_n^{(1)}-L_n^{(2)}}{\sqrt n}
=
\int_0^{2\pi} r_n(\theta)\,\ud\theta
\ \toL{n}{2}\ 0.
\end{equation}
For $\theta\in[0,2\pi]$ and $j\in\ZP$, set 
\[
S^{(k)}_j(\theta):=\be_\theta^\tra S^{(k)}_j.
\]
Fix $n\in\N$ and $\theta\in[0,2\pi]$. For each $k$, write
\begin{equation}\label{eq:def_of_dk}
    d_k(\theta):=\be_\theta^\tra \bmu_k,
\qquad
M^{(k)}_n(\theta):=\max_{0\le j\le n} S^{(k)}_j(\theta).
\end{equation}
Then
$h_n^{(1)}(\theta)=\max_{1\le k\le N} M^{(k)}_n(\theta)$. 
Since $\cH_n^{(2)}$ contains the full trajectory of every zero-drift walk
and also contains the origin, we have deterministically
\[
0\le h_n^{(1)}(\theta)-h_n^{(2)}(\theta)
\le
\max_{k:\,\bmu_k\neq\0}
\Big(M^{(k)}_n(\theta)-\max\{0,S^{(k)}_n(\theta)\}\Big).
\]
Define, for $\bmu_k\neq\0$,
$E^{(k)}_n(\theta)
:=M^{(k)}_n(\theta)-\max\{0,S^{(k)}_n(\theta)\}  \ge  0$.
Then
\[
r_n(\theta)^2
\le
\frac{1}{n}\sum_{k:\,\bmu_k\neq\0}\big(E^{(k)}_n(\theta)\big)^2,
\]
so to prove~\eqref{eq:1st-appr-r_n-1} it suffices to show that for each fixed $k$,
\begin{equation}\label{eq:Ek-L2-goal}
\sup_{\theta\in[0,2\pi]}\ \sup_{n\in\N}\ \frac{1}{n}
\Exp\big[(E^{(k)}_n(\theta))^2\big]<\infty.
\end{equation}
Define the centred process
\[
Y^{(k)}_j(\theta):=S^{(k)}_j(\theta)-jd_k(\theta).
\]
Then $\{Y^{(k)}_j(\theta)\}_{j\in\ZP}$ is a martingale with $Y^{(k)}_0(\theta)=0$. We treat $E^{(k)}_n(\theta)$ by splitting on the sign of $S^{(k)}_n(\theta)$.
If $S^{(k)}_n(\theta)\ge0$, then
 $E^{(k)}_n(\theta)=M^{(k)}_n(\theta)-S^{(k)}_n(\theta)$. We have
\[
M^{(k)}_n(\theta)-S^{(k)}_n(\theta)
=\max_{0\le j\le n}\big( Y^{(k)}_j(\theta)-Y^{(k)}_n(\theta) + (j-n)d_k(\theta)\big).
\]
Now, if $d_k(\theta)\ge0$, then $(j-n)d_k(\theta)\le0$ for all $j\le n$. Hence,
\[
M^{(k)}_n(\theta)-S^{(k)}_n(\theta)
\le \max_{0\le j\le n}(Y^{(k)}_j(\theta)-Y^{(k)}_n(\theta))
\le \max_{0\le j\le n}|Y^{(k)}_j(\theta)|+|Y^{(k)}_n(\theta)|
\le 2\max_{0\le j\le n}|Y^{(k)}_j(\theta)|.
\]
On the other hand, if $d_k(\theta)<0$,  
since $S^{(k)}_n(\theta)=Y^{(k)}_n(\theta)+nd_k(\theta)\ge0$, we have $Y^{(k)}_n(\theta)\ge -nd_k(\theta)$. Thus for all $0\le j\le n$,
\[
(j-n)d_k(\theta)\le -nd_k(\theta) \le Y^{(k)}_n(\theta),
\]
and therefore $Y^{(k)}_j(\theta)-Y^{(k)}_n(\theta) +(j-n)d_k(\theta) \le Y^{(k)}_j(\theta)$. Hence
\[
M^{(k)}_n(\theta)-S^{(k)}_n(\theta) \le \max_{0\le j\le n}Y^{(k)}_j(\theta) \le \max_{0\le j\le n}|Y^{(k)}_j(\theta)|.
\]
Combining the two subcases, we obtain the uniform bound
\[
(M^{(k)}_n(\theta)-S^{(k)}_n(\theta))\mathbf 1_{\{S^{(k)}_n(\theta)\ge0\}}
\le 2\max_{0\le j\le n}|Y^{(k)}_j(\theta)|.
\]
Assume now that  $S^{(k)}_n(\theta)<0$.
Then $E^{(k)}_n(\theta)=M^{(k)}_n(\theta)$. If $d_k(\theta)\le0$, then $S^{(k)}_j(\theta)=Y^{(k)}_j(\theta)+jd_k(\theta)\le Y^{(k)}_j(\theta)$ for all $j\ge0$,
so $M^{(k)}_n(\theta)\le  \max_{0\le j\le n}|Y^{(k)}_j(\theta)|$.
If $d_k(\theta)>0$, then $S^{(k)}_n(\theta)<0$ implies $Y^{(k)}_n(\theta)<-nd_k(\theta)$. Hence, $nd_k(\theta)\le |Y^{(k)}_n(\theta)|\le\max_{0\le j\le n}|Y^{(k)}_j(\theta)|$,
and therefore
\[
M^{(k)}_n(\theta)=\max_{0\le j\le n}(Y^{(k)}_j(\theta)+jd_k(\theta))
\le \max_{0\le j\le n}|Y^{(k)}_j(\theta)|+nd_k(\theta)
\le 2\max_{0\le j\le n}|Y^{(k)}_j(\theta)|.
\]
Thus in all cases,
\[
E^{(k)}_n(\theta)\le 2\max_{0\le j\le n}|Y^{(k)}_j(\theta)|.
\]
By Doob's  maximal inequality we now conclude
\[
\Exp\Big[\max_{0\le j\le n}|Y^{(k)}_j(\theta)|^2\Big]
\le 4\,\Exp[Y^{(k)}_n(\theta)^2]
=4n\,\Var ( \be_\theta^\tra Z_{k,1} )
\le 4n\,\Exp[\|Z_{k,1}\|^2].
\]
Hence
\[
\frac{1}{n}\Exp\big[E^{(k)}_n(\theta)^2\big]
\le 16\,\Exp[\|Z_{k,1}\|^2],
\]
uniformly in $\theta$ and $n$. This proves \eqref{eq:Ek-L2-goal}, and 
\eqref{eq:1st-appr-r_n-1} follows.

Now we turn to showing that for every $\theta\in[0,2\pi)\setminus\Theta_{\perp}$ 
  \eqref{eq:1st-appr-r_n-2} holds true. 
Fix $\theta\notin\Theta_{\perp}$ and set 
\[
d_*(\theta):=\max_{1\le k\le N} d_k(\theta).
\]
Assume first that $d_*(\theta)<0$.
Then every projected walk $\{S_n^{(k)}(\theta)\}_{n\in\ZP}$ has negative drift $d_k(\theta)<0$.
By Theorem \ref{thm:WaldCLT}\ref{thm:WaldCLT-i},
$n^{-1/2} M_n^{(k)} (\theta) \to 0$ in $L^2$ for every $k \in [N]$.
It follows that
\[
\frac{h_n^{(1)}(\theta)}{\sqrt n}
=
\frac{1}{\sqrt n}\max_{k\in[N]} M_n^{(k)}(\theta)
\le
\sum_{k=1}^N \frac{1}{\sqrt n} M_n^{(k)}(\theta) 
\toL{n}{2}0.
\]
Using $0\le h_n^{(2)}(\theta)\le h_n^{(1)}(\theta)$, we get
$0\le r_n(\theta)\le h_n^{(1)}(\theta)/\sqrt n\to 0$ in $L^2$.

Next, if $d_*(\theta)=0$,
because we assumed $\theta\notin\Theta_{\perp}$, no non-zero drift can have zero projection.
Thus, if $d_*(\theta)=0$, the maximum drift projection can only come from walks with $\bmu_k=\0$ (i.e. truly zero drift in $\mathbb{R}^2$),
and all nonzero-drift walks satisfy $d_k(\theta)<0$.
In this case, $h_n^{(2)}(\theta)$ already includes the full trajectories of all zero-drift walks, and the negative-drift walks
do not contribute at $\sqrt n$ scale by Theorem \ref{thm:WaldCLT}\ref{thm:WaldCLT-i}. Indeed, we have
\[
\max_{k:\bmu_k=\0} M_n^{(k)} (\theta)
\le h_n^{(2)}(\theta)\le h_n^{(1)}(\theta)
\le \max\left\{\max_{k:\bmu_k=\0} M_n^{(k)} (\theta), \max_{k:\bmu_k\neq \0} M_n^{(k)} (\theta) \right\}.
\]
By Theorem \ref{thm:WaldCLT}\ref{thm:WaldCLT-i}, the second term on the right-hand side, when divided by $\sqrt{n}$, converges to zero in $L^2$. Hence $n^{-1/2} (h_n^{(1)}(\theta)-h_n^{(2)}(\theta)) \to 0$ in $L^2$ and consequently
 $r_n(\theta)\to 0$ in $L^2$.
 
Finally, assume $d_*(\theta)>0$.
We split the difference $h_n^{(1)}(\theta)-h_n^{(2)}(\theta)$ as
\begin{align*}
    h_n^{(1)}(\theta)-h_n^{(2)}(\theta)
    & = \max\left\{ \max_{k:d_k>0} M_n^{(k)} (\theta), \max_{k:d_k=0}M_n^{(k)} (\theta), \max_{k:d_k<0}M_n^{(k)} (\theta) \right\}\\
    & \qquad - \max\left\{ 0 , \max_{k:d_k>0}S_n^{(k)}(\theta), \max_{k:d_k=0}M_n^{(k)} (\theta), \max_{k:d_k<0}S_n^{(k)}(\theta) \right\} \\
    & \le  \Bigl( \max_{k:d_k>0}M_n^{(k)} (\theta) - \max_{k:d_k>0}S_n^{(k)}(\theta) \Bigr) +  
    \max_{k:d_k<0}M_n^{(k)} (\theta) .
\end{align*}
where we wrote $d_k = d_k(\theta)$ for short, and used that $h_2 (\theta) \geq 0$.
For the second term, in which $d_k(\theta)<0$, Theorem \ref{thm:WaldCLT}\ref{thm:WaldCLT-i} applies. For the first term, 
 observe the inequality
\begin{align*}
    \max_{k:d_k>0} M_n^{(k)} (\theta) -\max_{k:d_k>0}S_n^{(k)}(\theta)
    & \le
\max_{k:d_k>0}\Bigl(M_n^{(k)} (\theta) -S_n^{(k)}(\theta)\Bigr) .
\end{align*}
Theorem \ref{thm:WaldCLT}\ref{thm:WaldCLT-ii} now applies 
for each $k$ for which $d_k>0$, 
which finishes the proof of \eqref{eq:1st-appr-r_n-2}.
This verifies~\eqref{eq:first-approx-conv}.


\subsubsection*{Second approximation}\label{sec:second_aproximation}
We next show that we can ignore all the non-zero drift walks whose drift vectors are not extremal. Recall that $\cE$ is the set of extreme points of $\cC_{\bmu}$, and $\cI$ is the set of indices of random walks with drift vectors that are extreme points of $\cC_{\bmu}$. Let us define
\begin{equation}
\label{eq:H3-def}
    \cH_n^{(3)} := \hull \left( \bigcup_{k \in \cI \cup \{0\}} P_n^{(k)} \right).
\end{equation}
The difference between  $\cH_n^{(2)}$ and $\cH_n^{(3)}$ comes from  endpoints $S_n^{(k)}$ with $\bmu_k\notin \cE$.
Analogously as before, the corresponding support function is denoted by $h_n^{(3)}(\theta)$, and the perimeter of the set $\cH_n^{(3)}$ is denoted by $L_n^{(3)}$. We aim to show that
\begin{equation}\label{eq:2nd-approx-conv}
    \frac{L_n^{(2)} - L_n^{(3)}}{\sqrt{n}} \toL{n}{2} 0.
\end{equation}
Since $\cC_{\bmu}$ is the convex hull of finitely many points in $\mathbb{R}^2$, it is a (possibly degenerate) polygon. For $\theta\in[0,2\pi]$ consider the linear functional $v\mapsto \be_\theta^\tra v$ on $\cC_{\bmu}$. Either it is maximized at a unique point of $C$ (necessarily an extreme point), or it is maximized along an entire edge. The set of directions $\theta$ for which the maximizer is not unique corresponds to outward normals to edges of $\cC_{\bmu}$. Since $\cC_{\bmu}$ has finitely many edges, there are only finitely many such directions. Let $\Theta_*\subset[0,2\pi]$ denote this finite set.
Fix $\varepsilon\in(0,1)$ and set
\[
B_\varepsilon:=\bigcup_{\theta_0\in\Theta_*}\big((\theta_0-\varepsilon,\theta_0+\varepsilon)\cap[0,2\pi]\big),
\qquad
G_\varepsilon:=[0,2\pi]\setminus B_\varepsilon.
\]
Then ${\rm Leb}(B_\varepsilon)\le 2\varepsilon\,{\rm card}(\Theta_*)$, where ${\rm Leb}$  and ${\rm card}$ stand for the Lebesgue measure and cardinality, respectively.
For each $\theta\in G_\varepsilon$ the maximizer of $v\mapsto \be_\theta^\tra v $ over $v\in \cC_{\bmu}$ is unique; denote it by $v_*(\theta)$. Necessarily $v_*(\theta)\in \cE$. Moreover, for every drift vector $\bmu_k\notin \cE$ we have the strict inequality
$\be_\theta^\tra
v_*(\theta)  > \be_\theta^\tra \bmu_k
$. 
Since $\theta\mapsto \be_\theta^\tra v_*(\theta)$ and $\theta\mapsto \be_\theta^\tra \bmu_k$ are continuous
and $G_\varepsilon$ is compact, the strict inequality is uniform:
\begin{equation}\label{eq:Delta-eps-full}
\Delta_\varepsilon
:=\inf_{\theta\in G_\varepsilon}\Big( \be_\theta^\tra v_*(\theta)  - \max_{k:\,\bmu_k\notin \cE} \be_\theta^\tra \bmu_k \Big)
>0.
\end{equation}
By Cauchy's perimeter formula we have
\begin{equation}\label{eq:cauchy-perim-full}
L_n^{(m)}=\int_0^{2\pi} h_n^{(m)}(\theta)\,\ud\theta,\qquad m\in\{2,3\}.
\end{equation}
Define for each $\theta$ and $n$,
\[
g_n(\theta):=\frac{h_n^{(2)}(\theta)-h_n^{(3)}(\theta)}{\sqrt n}\ge 0.
\]
By definitions of $\cH_n^{(2)}$ and $\cH_n^{(3)}$, for every $\theta$, we have
\begin{equation}\label{eq:h2-h3-as-max}
h_n^{(2)}(\theta)=\max\Big\{h_n^{(3)}(\theta),\ \max_{k:\,\bmu_k\notin \cE}S_n^{(k)}(\theta)\Big\},
\end{equation}
so
\begin{equation}\label{eq:h2-h3-basic-full}
0\le h_n^{(2)}(\theta)-h_n^{(3)}(\theta)
\le \max_{k:\,\bmu_k\notin \cE}\big(S_n^{(k)}(\theta)-h_n^{(3)}(\theta)\big)^+.
\end{equation}
Now fix $\theta$ and choose an index $\ell(\theta)$ among extremal drifts that maximizes the projected drift:
\[
\ell(\theta)\in\argmax_{\ell \in \cI}\{\bmu_\ell\cdot \be_\theta\}.
\]
Notice first that if there are no non-zero elements of $\cE$, then $\cH_n^{(2)}$ and $\cH_n^{(3)}$ coincide and there is nothing to prove. By definition of $\cH_n^{(3)}$, we always have $h_n^{(3)}(\theta)\ge S_n^{(\ell(\theta))}(\theta)$, hence from
\eqref{eq:h2-h3-basic-full},
\begin{equation}\label{eq:h2-h3-endpoint-compare}
0\le h_n^{(2)}(\theta)-h_n^{(3)}(\theta)
\le \max_{k:\,\bmu_k\notin \cE}\big(S_n^{(k)}(\theta)-S_n^{(\ell(\theta))}(\theta)\big)^+.
\end{equation}
Next, 
fix a particular $k$ with $\bmu_k\notin \cE$ (i.e.~$k\notin \cI$) and define 
\[
W_j^{(k)}(\theta):=S_j^{(k)}(\theta)-S_j^{(\ell(\theta))}(\theta),\qquad j\ge 0.
\]
Its increment is $(Z_{k,i}-Z_{\ell(\theta),i})\cdot \be_\theta$ and its drift is
\[
\mathbb{E}[W_1^{(k)}(\theta)]=\big(\bmu_k-\bmu_{\ell(\theta)}\big)\cdot \be_\theta \le 0,
\]
because $\ell(\theta)$ attains the maximum of $v\mapsto v\cdot \be_\theta$ over $\cC_{\bmu}$ restricted to $\cE$ and a non-extremal drift cannot exceed that maximum. Since $\mathbb{E}[W_1^{(k)}(\theta)]\le 0$, then for every $j$,
\[
W_j^{(k)}(\theta) = \bar W_j^{(k)}(\theta) + j\,\mathbb{E}[W_1^{(k)}(\theta)]
\le \bar W_j^{(k)}(\theta),
\]
where
$
\bar W_j^{(k)}(\theta):=W_j^{(k)}(\theta)-j\,\mathbb{E}[W_1^{(k)}(\theta)]
$
is a mean-zero martingale.
Therefore
\[
\big(W_n^{(k)}(\theta)\big)^+ \le \max_{0\le j\le n} W_j^{(k)}(\theta)
\le \max_{0\le j\le n} \bar W_j^{(k)}(\theta)
\le \max_{0\le j\le n} \big|\bar W_j^{(k)}(\theta)\big|.
\]
Combining with \eqref{eq:h2-h3-endpoint-compare} yields the   bound
\begin{equation}\label{eq:h2-h3-maxWtilde}
h_n^{(2)}(\theta)-h_n^{(3)}(\theta)
\le \max_{k:\,\bmu_k\notin \cE}\ \max_{0\le j\le n}\big|\bar W_j^{(k)}(\theta)\big|.
\end{equation}
Hence, we have
\[
\big(h_n^{(2)}(\theta)-h_n^{(3)}(\theta)\big)^2
\le
\sum_{k:\,\bmu_k\notin \cE}\Big(\max_{0\le j\le n}\big|\bar W_j^{(k)}(\theta)\big|\Big)^2.
\]
Taking expectations and applying Doob's maximal inequality it follows that
\begin{align*}
    \mathbb{E}\Big[\max_{0\le j\le n}\big|\bar W_j^{(k)}(\theta)\big|^2\Big] 
    & \le 4\,\mathbb{E}\big[|\bar W_n^{(k)}(\theta)|^2\big] =4n\,\mathrm{Var}\big((Z_{k,1}-Z_{\ell(\theta),1})\cdot \be_\theta\big) \\
    & \le 8n\left(\mathbb{E}[\|Z_{k,1}\|^2]+  \mathbb{E}[\|Z_{\ell(\theta),1}\|^2]\right).
\end{align*}
Putting these bounds together,
\[
\mathbb{E}\big[(h_n^{(2)}(\theta)-h_n^{(3)}(\theta))^2\big]
\le
8n\sum_{k:\,\bmu_k\notin \cE}\Big(\mathbb{E}[\|Z_{k,1}\|^2]+\sup_{\ell \in \cI} \mathbb{E}[\|Z_{\ell,1}\|^2]\Big)
\le C\,n,
\]
where the constant $C$ depends only on $N$ and  $(\mathbb{E}[\|Z_{k,1}\|^2])_{k\in[N]}$.
Hence 
\begin{equation}\label{eq:unif-L2-gn-full}
\sup_{\theta\in[0,2\pi]}\
\sup_{n\in\N} \mathbb{E}\big[g_n(\theta)^2\big]
= \sup_{\theta\in[0,2\pi]}\ \sup_{n\in\N}\frac{1}{n}\mathbb{E}\big[(h_n^{(2)}(\theta)-h_n^{(3)}(\theta))^2\big]
\le C <\infty.
\end{equation}

To obtain an exponential concentration inequality for the competition between extremal and non-extremal walks, we shall reduce to a bounded-increment model via a drift-preserving truncation.
We fix  $M>0$, and for each $k$ we define the drift-preserving truncated increment as
\begin{equation}\label{eq:trunc-Zbar-full}
\bar Z_{k,i}(M):= Z_{k,i}\mathbf{1}_{\{\|Z_{k,i}\|\le M\}}
-\mathbb{E}\!\left[Z_{k,1}\mathbf{1}_{\{\|Z_{k,1}\|\le M\}}\right]+\bmu_k.
\end{equation}
The remainder is defined as $R_{k,i}(M):=Z_{k,i}-\bar Z_{k,i}$.
Notice that
\[
R_{k,i}(M)
=
Z_{k,i}\mathbf 1_{\{\|Z_{k,i}\|>M\}}
-\mathbb{E}\!\left[Z_{k,1}\mathbf 1_{\{\|Z_{k,1}\|>M\}}\right].
\]
In particular, $\mathbb{E}[R_{k,i}(M)]=0$ and
\begin{equation}\label{eq:R-second-moment-full}
\mathbb{E}[\|R_{k,1}(M)\|^2] \le  \mathbb{E}\big[\|Z_{k,1}\|^2\,\mathbf 1_{\{\|Z_{k,1}\|>M\}}\big] \to 0,
\end{equation}
as $M\to\infty$.
Further,
define truncated walks
\[
\bar S^{(k)}_n(M):=\sum_{i=1}^n \bar Z_{k,i}(M),\quad
T^{(k)}_n(M):=\sum_{i=1}^n R_{k,i}(M),\qquad n\in\N,
\]
so $S^{(k)}_n=\bar S^{(k)}_n(M)+T^{(k)}_n(M)$.
Define $\bar \cH_n^{(2)}$ and $\bar \cH_n^{(3)}$ by replacing $S$ with $\bar S$ in the definitions of $\cH_n^{(2)},\cH_n^{(3)}$.
Let $\bar h_n^{(m)}(\theta)$ and $\bar L_n^{(m)}$ denote support functions and perimeters of $\bar \cH_n^{(m)}$ for $m=2,3$.
Fix now $m\in\{2,3\}$, $n\in\N$, and $\theta \in [0, 2\pi]$.
Both $h_n^{(m)}(\theta)$ and $\bar h_n^{(m)}(\theta)$ are maxima of projections of certain points generated from the walks.
Every point in the definition of $\cH_n^{(m)}$ is of the form $S_j^{(k)}$ for some $(k,j)$, and the corresponding point in $\bar \cH_n^{(m)}$
is $\bar S_j^{(k)}(M)$. Since $S_j^{(k)}-\bar S_j^{(k)}(M)=T_j^{(k)}(M)$, we have
$S_j^{(k)}(\theta)-\bar S_j^{(k)}(M)(\theta)=\be_\theta^\tra T_j^{(k)}(M)$,
so
$$|S_j^{(k)}(\theta)-\bar S_j^{(k)}(M)(\theta)|\le \|T_j^{(k)}(M)\|.
$$
Taking maxima over all points involved in $h_n^{(m)}(\theta)$ and $\bar h_n^{(m)}(\theta)$ gives the  bound
\begin{equation}\label{eq:hsupp-lipschitz}
|h_n^{(m)}(\theta)-\bar h_n^{(m)}(\theta)|
\le \max_{1\le k\le N}\max_{0\le j\le n}\|T^{(k)}_j(M)\|.
\end{equation}
Fix now $k\in[N]$. The process $( {T^{(k)}_n(M)} )_{n\in\ZP}$ is  mean-zero $\mathbb{R}^2$-valued random walk with i.i.d.\ increments $R_{k,i}(M)$.
Write $T^{(k)}_j(M)=(T^{(k,1)}_j(M),T^{(k,2)}_j(M))$.
Then
\[
\max_{0\le j\le n}\|T^{(k)}_j(M)\|^2
\le \max_{0\le j\le n}\big(T^{(k,1)}_j(M)\big)^2 + \max_{0\le j\le n}\big(T^{(k,2)}_j(M)\big)^2.
\]
Since each coordinate $( T^{(k,r)}_n(M) )_{n\in\ZP}$ is a mean-zero martingale, by Doob's maximal inequality  
\[
\mathbb{E}\Big[\max_{0\le j\le n}\big(T^{(k,r)}_j(M)\big)^2\Big]
\le 4\,\mathbb{E}\big[(T^{(k,r)}_n(M))^2\big]
=4n\,\mathrm{Var}(R_{k,1}^{(r)}(M))\le 4n\,\mathbb{E}\big[(R_{k,1}^{(r)}(M))^2\big],
\] where $( R_{k,i}^{(r)}(M) )_{i\in\N}$ denote the increments of $( T^{(k,r)}_n(M) )_{n\in\ZP}$.
Summing over $r=1,2$ gives
\begin{equation}\label{eq:maxTk}
\mathbb{E}\Big[\max_{0\le j\le n}\|T^{(k)}_j(M)\|^2\Big]
\le 4n\,\mathbb{E}[\|R_{k,1}(M)\|^2].
\end{equation}
This implies
\begin{equation}\label{eq:maxT-allk}
\mathbb{E}\Big[\Big(\max_{1\le k\le N}\max_{0\le j\le n}\|T^{(k)}_j(M)\|\Big)^2\Big]
\le \sum_{k=1}^N \mathbb{E}\Big[\max_{0\le j\le n}\|T^{(k)}_j(M)\|^2\Big]
\le 4n\sum_{k=1}^N \mathbb{E}[\|R_{k,1}(M)\|^2].
\end{equation}
From \eqref{eq:hsupp-lipschitz} and Cauchy's formula \eqref{eq:cauchy-perim-full}, it now follows that
\begin{align*}
|L_n^{(m)}-\bar L_n^{(m)}|
&\le
\int_0^{2\pi}|h_n^{(m)}(\theta)-\bar h_n^{(m)}(\theta)|\,\ud\theta
 \le 2\pi \max_{1\le k\le N}\max_{0\le j\le n}\|T^{(k)}_j(M)\|.
\end{align*}
Therefore,
\[
\mathbb{E}\Big[\Big(\frac{L_n^{(m)}-\bar L_n^{(m)}}{\sqrt n}\Big)^2\Big]
\le
(2\pi)^2\,\frac{1}{n}\,\mathbb{E}\Big[\Big(\max_{1\le k\le N}\max_{0\le j\le n}\|T^{(k)}_j(M)\|\Big)^2\Big]
\le
16\pi^2\sum_{k=1}^N \mathbb{E}[\|R_{k,1}(M)\|^2].
\]
Hence, for $m=2,3$,
\begin{equation}\label{eq:trunc-error-perim}
\sup_{n\in\N}\mathbb{E}\Big[\Big(\frac{L_n^{(m)}-\bar L_n^{(m)}}{\sqrt n}\Big)^2\Big]
\le
16\pi^2\sum_{k=1}^N \mathbb{E}[\|R_{k,1}(M)\|^2].
\end{equation}
By \eqref{eq:R-second-moment-full}, the right-hand side of~\eqref{eq:trunc-error-perim} tends to $0$ as $M\to\infty$.
We now switch to the truncated model. Note that the truncation was chosen to preserve drifts, i.e., 
$\mathbb{E}[\bar Z_{k,1}(M)]=\bmu_k$, so the drift gap $\Delta_\varepsilon$ from \eqref{eq:Delta-eps-full} is unchanged.
From \eqref{eq:trunc-Zbar-full},
$\|\bar Z_{k,1}(M)\| \leq 2M + \|\bmu_k\|$.
In particular, for each fixed $M$, there is a finite constant $B(M)$ such that for all $k\in[N]$,
\begin{equation}\label{eq:bounded-Zbar}
\|\bar Z_{k,1}(M)\|\le B(M)\qquad\as
\end{equation}
Fix $\theta\in G_\varepsilon$.
Let $\ell_*(\theta)$ be any index with $\bmu_{\ell_*(\theta)}=v_*(\theta)$. For each non-extremal index $k$ define
\[
\bar W_n^{(k)}(M)(\theta):=\bar S_n^{(k)}(M)(\theta)-\bar S_n^{(\ell_*(\theta))}(M)(\theta).
\]
Then
\[
\mathbb{E}[\bar W_n^{(k)}(M)(\theta)]=n\bigl(d_k(\theta)-d_{\ell_*(\theta)}(\theta)\bigr)\le -n\Delta_\varepsilon.
\]
Moreover, by \eqref{eq:bounded-Zbar}, it follows that for all $i\in\N$,
\[
\big|\bar W_i^{(k)}(M)(\theta)-\bar W_{i-1}^{(k)}(M)(\theta)\big|
=
\big|\be_\theta^\tra (\bar Z_{k,i}(M)-\bar Z_{\ell_*(\theta),i}(M))  \big|
\le \|\bar Z_{k,i}(M)-\bar Z_{\ell_*(\theta),i}(M)\|
\le 2B(M).
\]
By Hoeffding's inequality (see, e.g., \cite[Theorem 2.8]{BLM-concentration}),
\[
\mathbb{P}\big(\bar W_n^{(k)}(M)(\theta)\ge 0\big)
=
\mathbb{P}\Big(\bar W_n^{(k)}(M)(\theta)-\mathbb{E}[\bar W_n^{(k)}(M)(\theta)]\ge n\Delta_\varepsilon\Big)
\le
\exp\Big(-\frac{n\Delta_\varepsilon^2}{8B(M)^2}\Big).
\]
Taking the union bound over the finitely many indices $k$ with $\bmu_k\notin \cE$ yields
\begin{equation}\label{eq:exp-rare-full}
\sup_{\theta\in G_\varepsilon}
\mathbb{P}\Big(\max_{k:\,\bmu_k\notin \cE}\bar S_n^{(k)}(M)(\theta)\ \ge\ \bar S_n^{(\ell_*(\theta))}(M)(\theta)\Big)
\le
C_1\,\exp\big(-C_2\,n\big),
\end{equation}
for  constants $C_1=C_1(N)$ and $C_2=C_2(\varepsilon,M)>0$.
We write
$$
\frac{\bar L_n^{(2)}-\bar L_n^{(3)}}{\sqrt n}=\int_0^{2\pi}\bar g_n(\theta)\,\ud\theta, \quad \text{where} \quad
\bar g_n(\theta):=\frac{\bar h_n^{(2)}(\theta)-\bar h_n^{(3)}(\theta)}{\sqrt n}\ge 0.$$
We first show that for each fixed $M$,
\begin{equation}\label{eq:trunc-L2-goal}
\frac{\bar L_n^{(2)}-\bar L_n^{(3)}}{\sqrt n}\toL{n}{2}0,
\end{equation}
and then remove truncation using \eqref{eq:trunc-error-perim}.
Write
\[
\int_0^{2\pi}\bar g_n(\theta)\,\ud\theta
=
\int_{G_\varepsilon}\bar g_n(\theta)\,\ud\theta
+
\int_{B_\varepsilon}\bar g_n(\theta)\,\ud\theta.
\]
Because truncated increments are bounded, they have finite second moments; thus, as before, 
\begin{equation}\label{eq:unif-L2-barg}
\sup_{\theta\in[0,2\pi]}\
\sup_{n\in\N}\
\mathbb{E}\big[\bar g_n(\theta)^2\big] \le C(M)<\infty.
\end{equation}
Further, the Cauchy--Schwarz inequality yields
\[
\Big(\int_{B_\varepsilon}\bar g_n(\theta)\,d\theta\Big)^2
\le {\rm Leb}(B_\varepsilon)\int_{B_\varepsilon}\bar g_n(\theta)^2\,\ud\theta.
\]
Taking expectations and using \eqref{eq:unif-L2-barg} gives
\[
\sup_{n\in\N}\mathbb{E}\Big[\Big(\int_{B_\varepsilon}\bar g_n(\theta)\,\ud\theta\Big)^2\Big]
\le {\rm Leb}(B_\varepsilon)^2\,C(M),
\] which goes to zero as $\varepsilon\to0.$
Fix next $\theta\in G_\varepsilon$.
On $G_\varepsilon$, $\bar h_n^{(2)}(\theta)>\bar h_n^{(3)}(\theta)$ can only happen if some non-extremal endpoint beats the extremal endpoint
$\bar S_n^{(\ell_*(\theta))}(M)(\theta)$; by \eqref{eq:exp-rare-full} this event has probability at most $C_1 \re^{-C_2 n}$ uniformly in $\theta\in G_\varepsilon$. Next, in the truncated model, 
\[
|\bar S_n^{(k)}(M)(\theta)| \le \|\bar S_n^{(k)}(M)\| \le \sum_{i=1}^n \|\bar Z_{k,i}(M)\| \le nB(M),
\]
so it holds that $0\le \bar h_n^{(2)}(\theta)\le nB(M)$ and $0\le \bar h_n^{(3)}(\theta)\le nB(M)$.
Hence
\begin{equation}\label{eq:barg-crude}
0\le \bar g_n(\theta)=\frac{\bar h_n^{(2)}(\theta)-\bar h_n^{(3)}(\theta)}{\sqrt n}\le \frac{nB(M)}{\sqrt n}=B(M)\sqrt n.
\end{equation}
Therefore, if $\bar A_n(\theta)$ denotes the event that a non-extremal endpoint beats the extremal endpoint in direction $\theta$,
then $\bar g_n(\theta)=0$ on $\bar A_n(\theta)^c$ and \eqref{eq:barg-crude} gives
\[
\mathbb{E}\big[\bar g_n(\theta)^2\big]
=\mathbb{E}\big[\bar g_n(\theta)^2\mathbf 1_{\bar A_n(\theta)}\big]
\le B(M)^2\,n\,\mathbb{P}(\bar A_n(\theta))
\le B(M)^2\,n\,C_1 \re^{-C_2 n},
\]
uniformly over $\theta\in G_\varepsilon$ by \eqref{eq:exp-rare-full}.
Using the Cauchy--Schwarz inequality once again, it holds that
\[
\mathbb{E}\Big[\Big(\int_{G_\varepsilon}\bar g_n(\theta)\,d\theta\Big)^2\Big]
\le {\rm Leb}(G_\varepsilon)\int_{G_\varepsilon}\mathbb{E}\big[\bar g_n(\theta)^2\big]\,\ud\theta
\le (2\pi)^2\,B(M)^2\,n\,C_1 \re^{-C_2 n}.
\]
Combining with the $B_\varepsilon$ estimate proves \eqref{eq:trunc-L2-goal}.
Finally, write the telescoping decomposition
\[
\frac{L_n^{(2)}-L_n^{(3)}}{\sqrt n}
=
\frac{L_n^{(2)}-\bar L_n^{(2)}}{\sqrt n}
+\frac{\bar L_n^{(2)}-\bar L_n^{(3)}}{\sqrt n}
+\frac{\bar L_n^{(3)}-L_n^{(3)}}{\sqrt n}.
\]
Then
\[
\mathbb{E}\Big[\Big(\frac{L_n^{(2)}-L_n^{(3)}}{\sqrt n}\Big)^2\Big]
\le
3\sum_{m=2}^3 \mathbb{E}\Big[\Big(\frac{L_n^{(m)}-\bar L_n^{(m)}}{\sqrt n}\Big)^2\Big]
+3\,\mathbb{E}\Big[\Big(\frac{\bar L_n^{(2)}-\bar L_n^{(3)}}{\sqrt n}\Big)^2\Big].
\]
Fix $\eta>0$. Choose $M$ so large that the truncation error terms  are each $\le \eta/4$ uniformly in $n$.
This is possible because the right-hand side of \eqref{eq:trunc-error-perim} tends to $0$ as $M\to\infty$ by \eqref{eq:R-second-moment-full}.
Then, for this fixed $M$, choose $n$ large enough that the truncated term is $\le \eta/2$ by \eqref{eq:trunc-L2-goal}.
This finally proves \eqref{eq:2nd-approx-conv}.
Before moving on with the proof of Theorem~\ref{thm:perim_main_theorem}, we are now in a position to give the proof of Theorem~\ref{thm:perimeter-deviation-metric}.

\begin{proof}[Proof of Theorem~\ref{thm:perimeter-deviation-metric}]
The set $\cF_n$ equals the set $\cH_n^{(3)}$ defined at~\eqref{eq:H3-def} above. 
By construction,
$\cH_n^{(3)}\subseteq \cH_n^{(2)}\subseteq \cH_n$.
If $A,B\in\cK_0$ and $A\subseteq B$, then $h_A(\be_\theta)\le h_B(\be_\theta)$ for every $\theta$, and hence Cauchy's formula gives
\[
\rhoone(B,A)=\int_0^{2\pi}\bigl(h_B(\be_\theta)-h_A(\be_\theta)\bigr)\,\ud\theta
=\perim(B)-\perim(A).
\]
It follows that
\[
\rhoone(\cH_n,\cH_n^{(2)})=L_n^{(1)}-L_n^{(2)},
\qquad
\rhoone(\cH_n^{(2)},\cH_n^{(3)})=L_n^{(2)}-L_n^{(3)}.
\]
Approximations \eqref{eq:first-approx-conv} and \eqref{eq:2nd-approx-conv} show that both quantities in the last display, divided by $\sqrt n$, converge to zero in $L^2$. The conclusion follows from the triangle inequality for $\rhoone$.
\end{proof}


\subsubsection*{Third approximation}

In this step we split the interval $[0, 2\pi]$ in two parts, depending on whether non-zero drift walks dominate, or they all have negative projections. We integrate the support function of the convex hull spanned by the end-points of non-zero drift walks only over angles where such walks dominate, and on the complement we integrate only the support function of zero drift random walks, if there are any.
Recall that $\cI = \{ k \in [N] : \bmu_k \in \cE\}$, where $\cE$ is the set of extremal points of $\cC_{\bmu}$, and that we split $\cI$ into $\cI^+$ and $\cI^0$. Recall that $\Theta_0=\{\theta\in[0,2\pi]: \max_{k \in \cI^+}\bmu_k\cdot \be_\theta<0\}$, $P_n^{(k)}=\{S^{(k)}_n\}$ if $\bmu_k\neq \0$ and
$P_n^{(k)}=\{S^{(k)}_j:0\le j\le n\}$ if $\bmu_k=\0$. Define the convex hulls
\[
\cH_n^{(4),+}:=\mathrm{hull}\Big(\{\0\}\cup\bigcup_{k\in \cI^+}P_n^{(k)}\Big),
\qquad
\cH_n^{(4),0}:=\mathrm{hull}\Big(\bigcup_{k\in \cI^0}P_n^{(k)}\Big).
\]
If $\cI^0=\emptyset$, we set $\cH_n^{(4),0}=\{\0\}$.
Let $h_n^{(4),+}(\theta)$ and $h_n^{(4),0}(\theta)$ be their support functions, respectively. 
Recall that $\cH_n^{(3)}$ is the convex hull spanned by $\{\0\}$ and by 
the reduced paths $P_n^{(k)}$ with $k\in \cI$; equivalently, it contains the endpoints of walks in $\cI^+$ and the full trajectories of walks in $\cI^0$.
As before, we denote its support function by $h_n^{(3)}(\theta)$ and its perimeter by $L_n^{(3)}$.
Define
\[
L_n^{(4)} = L_n^{(4),+} + L_n^{(4),0} :=\int_{[0,2\pi]\setminus\Theta_0} h_n^{(4),+}(\theta)\,\ud\theta
\;+\;
\int_{\Theta_0} h_n^{(4),0}(\theta)\,\ud\theta.
\]
We aim to show that
\begin{equation}
    \label{eq:third-approx}
\frac{L_n^{(3)}-L_n^{(4)}}{\sqrt n}\ \toL{n}{2}  0.
\end{equation}
Since $\cH_n^{(3)}=\mathrm{hull}(\cH_n^{(4),+}\cup \cH_n^{(4),0})$, the corresponding support functions satisfy
\begin{equation}\label{eq:h3_is_max_final}
h_n^{(3)}(\theta)=\max\big\{h_n^{(4),+}(\theta),\,h_n^{(4),0}(\theta)\big\},\qquad \theta\in[0,2\pi].
\end{equation}
Define
\[
\Delta_n(\theta):=\frac{h_n^{(3)}(\theta)-h_n^{(4),+}(\theta)}{\sqrt n},
\qquad
\Gamma_n(\theta):=\frac{h_n^{(3)}(\theta)-h_n^{(4),0}(\theta)}{\sqrt n}.
\]
Then \eqref{eq:h3_is_max_final} implies the pointwise bounds
\begin{equation}\label{eq:DeltaGamma_bounds_final}
0\le \Delta_n(\theta)\le \frac{h_n^{(4),0}(\theta)}{\sqrt n},
\qquad
0\le \Gamma_n(\theta)\le \frac{h_n^{(4),+}(\theta)}{\sqrt n}.
\end{equation}
Moreover,
\begin{equation}\label{eq:Ln3Ln4_split_final}
\frac{L_n^{(3)}-L_n^{(4)}}{\sqrt n}
=
\int_{[0,2\pi]\setminus\Theta_0} \Delta_n(\theta)\,\ud\theta
+
\int_{\Theta_0} \Gamma_n(\theta)\,\ud\theta.
\end{equation}
So it suffices to show that both integrals in \eqref{eq:Ln3Ln4_split_final} converge to $0$ in $L^2$. Let
\[
m(\theta):=\max_{k \in \cI^+} \be_\theta^\tra \bmu_k,
\]
which is continuous on $[0,2\pi]$.
Then, $\Theta_0=\{\theta:\ m(\theta)<0\}$. Fix $\eta\in(0,1)$ and define
\[
\Theta_0^{(\eta)}:=\{\theta:\ m(\theta)\le -\eta\}\subset \Theta_0,
\qquad
\Theta_+^{(\eta)}:=\{\theta:\ m(\theta)\ge \eta\}\subset [0,2\pi]\setminus\Theta_0,
\]
and
$B_\eta:=\{\theta:\ |m(\theta)|<\eta\}$.
Then, $\Theta_0=\Theta_0^{(\eta)}\cup(\Theta_0\cap B_\eta)$ and
$[0,2\pi]\setminus\Theta_0=\Theta_+^{(\eta)}\cup(([0,2\pi]\setminus\Theta_0)\cap B_\eta)$.
Since  $m(\theta)$ is the maximum of finitely many non-identically-zero sinusoidal functions,
the zero set \(\{\theta:m(\theta)=0\}\) is contained in the finite union
\[
\bigcup_{k\in\mathcal I^+}\{\theta:  \be_\theta^\tra \bmu_k =0\}.
\]
Hence \(\{\theta:m(\theta)=0\}\) is finite and has Lebesgue measure zero.
Since \(B_\eta=\{\theta:|m(\theta)|<\eta\}\) decreases to
\(\{\theta:m(\theta)=0\}\) as \(\eta\downarrow0\), 
we conclude that
\begin{equation}\label{eq:B_eta_small_final}
\lim_{\eta\to0}{\rm Leb}(B_\eta)=0.
\end{equation}
From \eqref{eq:DeltaGamma_bounds_final}, for every $\theta$ we have
$0\le \Delta_n(\theta)\le h_n^{(4),0}(\theta)/\sqrt n$.
We now show that $h_n^{(4),0}(\theta)/\sqrt n$ has uniformly bounded second moment (uniformly in $n$ and $\theta$). If $\cI^0=\emptyset$, then $h_n^{(4),0}(\theta)\equiv 0$ and $\Delta_n(\theta)\equiv 0$.
Otherwise,
$h_n^{(4),0}(\theta)= \max_{k\in \cI^0} M_n^{(k)}(\theta)$.
Hence,
\[
\big(h_n^{(4),0}(\theta)\big)^2
\le
\sum_{k\in \cI^0}\Big( M_n^{(k)}(\theta)\Big)^2.
\]
For each $k\in \cI^0$, the random walk $(S_n^{(k)}(\theta))_{n\in\ZP}$ is a mean-zero martingale,
so Doob's  maximal inequality gives $\Exp[ (M_n^{(k)}(\theta) )^2 ]
\le 4 n\,\Exp[\|Z_{k,1}\|^2]$.
Hence,
\begin{equation}\label{eq:unif_L2_h40_final}
\sup_{\theta\in[0,2\pi]}\
\sup_{n\in\N}
\mathbb{E}\Big[\Big(\frac{h_n^{(4),0}(\theta)}{\sqrt n}\Big)^2\Big]
\le
4\sum_{k\in \cI^0}\mathbb{E}[\|Z_{k,1}\|^2]
<\infty.
\end{equation}
Combining \eqref{eq:DeltaGamma_bounds_final} and \eqref{eq:unif_L2_h40_final} yields
\begin{equation}\label{eq:unif_L2_Delta_final}
\sup_{\theta\in[0,2\pi]\setminus\Theta_0}\
\sup_{n\in\N}
\mathbb{E}[\Delta_n(\theta)^2]<\infty.
\end{equation}
From \eqref{eq:DeltaGamma_bounds_final}, for $\theta\in\Theta_0$ we have
$0\le \Gamma_n(\theta)\le h_n^{(4),+}(\theta)/\sqrt n$.
We now show that $h_n^{(4),+}(\theta)/\sqrt n$ has uniformly bounded second moment on $\Theta_0$.
This uses only that all drift projections are negative on $\Theta_0$. Fix $\theta\in\Theta_0$. Then $m(\theta)<0$, hence for every $k\in \cI^+$,
\[
d_k(\theta) = \be_\theta^\tra \bmu_k\le m(\theta)<0.
\]
Write $S_n^{(k)}(\theta)=n d_k(\theta)+W_n^{(k)}(\theta)$ where
\[
W_n^{(k)}(\theta):=\sum_{i=1}^n\bigl(\be_\theta^\tra Z_{k,i}-d_k(\theta)\bigr)
\]
is mean zero. Since $d_k(\theta)\le 0$,
\[
(S_n^{(k)}(\theta))^+=(n d_k(\theta)+W_n^{(k)}(\theta))^+ \le (W_n^{(k)}(\theta))^+\le |W_n^{(k)}(\theta)|.
\]
Therefore,
\[
\mathbb{E}\Big[\Big(\frac{(S_n^{(k)}(\theta))^+}{\sqrt n}\Big)^2\Big]
\le \frac{1}{n}\mathbb{E}[W_n^{(k)}(\theta)^2]
= \Var ( \be_\theta^\tra Z_{k,1} )\le \mathbb{E}[\|Z_{k,1}\|^2],
\]
uniformly in $n$ and $\theta\in\Theta_0$.
Now
\[
h_n^{(4),+}(\theta)=\max\Big\{0,\ \max_{k\in \cI^+}S_n^{(k)}(\theta)\Big\}
=\max_{k\in \cI^+}(S_n^{(k)}(\theta))^+,
\]
so
\[
\Big(\frac{h_n^{(4),+}(\theta)}{\sqrt n}\Big)^2
\le \sum_{k\in \cI^+}\Big(\frac{(S_n^{(k)}(\theta))^+}{\sqrt n}\Big)^2.
\]
Taking expectations yields
\begin{equation}\label{eq:unif_L2_h4plus_on_theta0_final}
\sup_{\theta\in\Theta_0}\ \sup_{n\in\N}
\mathbb{E}\Big[\Big(\frac{h_n^{(4),+}(\theta)}{\sqrt n}\Big)^2\Big]
\le \sum_{k\in \cI^+}\mathbb{E}[\|Z_{k,1}\|^2]<\infty.
\end{equation}
Combining~\eqref{eq:unif_L2_h4plus_on_theta0_final} with \eqref{eq:DeltaGamma_bounds_final} yields
\begin{equation}\label{eq:unif_L2_Gamma_final}
\sup_{\theta\in\Theta_0}\ \sup_{n\in\N}\mathbb{E}[\Gamma_n(\theta)^2]<\infty.
\end{equation}
Fix now $\eta\in(0,1)$ and $\theta\in\Theta_0^{(\eta)}$, so $m(\theta)\le -\eta$.
Then for every $k\in \cI^+$,
\[  \be_\theta^\tra
\bmu_k  \le m(\theta)\le -\eta.
\]
Hence, $(S_n^{(k)}(\theta))_{n\in\ZP}$ is a one-dimensional random walk with strictly negative drift.
Thus 
$n^{-1/2} ( (S_n^{(k)}(\theta))^+ )^2 \leq n^{-1/2} ( S_n^{(k)}(\theta) - n \be_\theta^\tra \bmu_k )^2$ is uniformly integrable, by~\eqref{eq:sum-mean-ui}, and $( S_n^{(k)}(\theta) )^+ \to 0$, a.s., as $n \to \infty$, by the SLLN. It follows that $n^{-1/2} (S_n^{(k)}(\theta))^+ \to0$ in $L^2$.
Since $\cI^+$ is finite, 
\[
\frac{h_n^{(4),+}(\theta)}{\sqrt n}=\max_{k\in \cI^+}\frac{(S_n^{(k)}(\theta))^+}{\sqrt n}
\toL{n}{2}0,\qquad \theta\in\Theta_0^{(\eta)}.
\]
Using $0\le \Gamma_n(\theta)\le h_n^{(4),+}(\theta)/\sqrt n$, from \eqref{eq:DeltaGamma_bounds_final} we conclude
\begin{equation}\label{eq:Gamma_to_0_theta0eta_final}
\Gamma_n(\theta)\toL{n}{2}0,\qquad \theta\in\Theta_0^{(\eta)}.
\end{equation}
Fix next $\eta\in(0,1)$ and $\theta\in\Theta_+^{(\eta)}$, so $m(\theta)\ge \eta$.
Choose $\ell=\ell(\theta)\in \cI^+$ such that $\bmu_\ell\cdot \be_\theta=m(\theta)\ge\eta$.
Define the ``wrong winner'' event $A_n(\theta):=\{h_n^{(4),0}(\theta)>h_n^{(4),+}(\theta)\}$.
Then $\Delta_n(\theta)=0$ on $A_n(\theta)^c$, and \eqref{eq:DeltaGamma_bounds_final} gives
\begin{equation}\label{eq:Delta_indicator_final}
0\le \Delta_n(\theta)\le \frac{h_n^{(4),0}(\theta)}{\sqrt n}\,\mathbf 1_{A_n(\theta)}.
\end{equation}
We will show that $\mathbb{E}[\Delta_n(\theta)^2]\to 0$ (uniformly in $\theta\in\Theta_+^{(\eta)}$) via truncation.
Let $\bar Z_{k,i}(M)$ and $R_{k, i}(M)$ be as in the second approximation. Let $\bar S_n^{(k)}(M):=\sum_{i=1}^n\bar Z_{k,i}(M)$ and define $\bar h_n^{(3)}(\theta)$, $\bar h_n^{(4),+}(\theta)$, $\bar h_n^{(4),0}(\theta)$
from $\bar S$ exactly as $h_n^{(3)}(\theta)$, $h_n^{(4),+}(\theta)$, $h_n^{(4),0}(\theta)$ were defined from $S$.
Define
\[
\bar\Delta_n(\theta):=\frac{\bar h_n^{(3)}(\theta)-\bar h_n^{(4),+}(\theta)}{\sqrt n}.
\]
Recall that for fixed $M$, the increments $\bar Z_{k,i}(M)$ are bounded in norm, i.e.,\ there exists a constant $B(M)$ such that $\|\bar Z_{k,i}(M)\|\le B(M)$ a.s.
Hence $| \be_\theta^\tra \bar Z_{k,i}(M)|\le B(M)$ for all $\theta$. Define the truncated ``wrong winner'' event
\[
\bar A_n(\theta):=\{\bar h_n^{(4),0}(\theta)>\bar h_n^{(4),+}(\theta)\}.
\]
We claim there exist constants $C_1=C_1(\eta,M)$ and $C_2=C_2(\eta,M)$, such that
\begin{equation}\label{eq:barA_exp_final}
\sup_{\theta\in\Theta_+^{(\eta)}}\mathbb{P}(\bar A_n(\theta))\le C_1e^{-C_2n}.
\end{equation}
To prove \eqref{eq:barA_exp_final}, note that
\[
\bar h_n^{(4),+}(\theta)\ge \max\{0,\bar S_n^{(\ell)}(M)(\theta)\},
\qquad
\bar h_n^{(4),0}(\theta)=\max_{k\in \cI^0}\max_{0\le j\le n}\bar S_j^{(k)}(M)(\theta).
\]
Hence, on $\bar A_n(\theta)$ we must have
\[
\max_{k\in \cI^0}\max_{0\le j\le n}\bar S_j^{(k)}(M)(\theta)>\bar S_n^{(\ell)}(M)(\theta).
\]
Therefore,
\begin{equation}\label{eq:barAn_as_union}
    \bar A_n(\theta)\subseteq
\Big\{\max_{k\in \cI^0}\max_{0\le j\le n}\bar S_j^{(k)}(M)(\theta)\ge \tfrac12\eta n\Big\}
\ \cup\
\Big\{\bar S_n^{(\ell)}(M)(\theta)\le \tfrac12\eta n\Big\}.    
\end{equation}
We bound the  probabilities of these two events. We have $\mathbb{E}[\bar S_n^{(\ell)}(M)(\theta)]=n  \be_\theta^\tra \bmu_\ell  =n m(\theta)\ge \eta n$.
Also, $\bar S_n^{(\ell)}(M)(\theta)$ is a bounded-increment sum with increments bounded by $B(M)$ in absolute value.
Thus, Hoeffding's inequality  yields
\begin{equation}\label{eq:barAn_first_term}
    \mathbb{P}\Big(\bar S_n^{(\ell)}(M)(\theta)\le \tfrac12\eta n\Big)
\le
\mathbb{P}\Big(\bar S_n^{(\ell)}(M)(\theta)-\mathbb{E}[\bar S_n^{(\ell)}(M)(\theta)]\le -\tfrac12\eta n\Big)
\le \exp\Big(-\frac{\eta^2 n}{8B(M)^2}\Big).
\end{equation}
Fix $k\in \cI^0$. Then, $\bmu_k=0$, so $(\bar S_n^{(k)} (M) (\theta))_{n\in\ZP}$ is a  martingale with increments bounded by $B(M)$.
A standard maximal version of Hoeffding's inequality (see \cite[Theorem 3.2.1]{roch2024}) gives
\[
\mathbb{P}\Big(\max_{0\le j\le n}\bar S_j^{(k)}(M)(\theta)\ge \tfrac12\eta n\Big)
\le \exp\Big(-\frac{\eta^2 n}{8B(M)^2}\Big).
\]
Finally, we have that
\begin{equation}\label{eq:barAn_second_term}
    \mathbb{P}\Big(\max_{k\in \cI^0}\max_{0\le j\le n}\bar S_j^{(k)}(M)(\theta)\ge \tfrac12\eta n\Big)
\le |\cI^0|\exp\Big(-\frac{\eta^2 n}{8B(M)^2}\Big).
\end{equation}
Combining \eqref{eq:barAn_as_union}, \eqref{eq:barAn_first_term} and \eqref{eq:barAn_second_term} proves \eqref{eq:barA_exp_final}.
In the truncated model we have the crude  bounds $0\le \bar h_n^{(3)}(\theta)\le nB(M)$ and $0\le \bar h_n^{(4),+}(\theta)\le nB(M)$.
Hence, $0\le \bar\Delta_n(\theta)\le B(M)\sqrt n$.
Moreover, $\bar\Delta_n(\theta)=0$ on $\bar A_n(\theta)^c$, so
\[
\mathbb{E}[\bar\Delta_n(\theta)^2]
\le B(M)^2\,n\,\mathbb{P}(\bar A_n(\theta))
\le B(M)^2\,n\,C_1e^{-C_2n},
\] which converges to zero, as $n\to\infty$,
uniformly in $\theta\in\Theta_+^{(\eta)}$.
As in the previous truncation, the truncation error in support functions is controlled by the maximal remainder walk.
Let $T_j^{(k)}(M):=\sum_{i=1}^j R_{k,i}(M)$, so $S_j^{(k)}=\bar S_j^{(k)}(M)+T_j^{(k)}(M)$.
For each of the three support functions $h\in\{h_n^{(3)},h_n^{(4),+},h_n^{(4),0}\}$ and its truncated version $\bar h$,
a deterministic Lipschitz argument gives
\[
|h(\theta)-\bar h(\theta)|\le \max_{1\le k\le N}\max_{0\le j\le n}\|T_j^{(k)}(M)\|.
\]
Doob's maximal inequality applied to the coordinate martingales of $T^{(k)}(M)$ yields
\[
\mathbb{E}\Big[\Big(\max_{1\le k\le N}\max_{0\le j\le n}\|T_j^{(k)}(M)\|\Big)^2\Big]
\le C\,n\sum_{k=1}^N\mathbb{E}[\|R_{k,1}(M)\|^2]
\]
for a universal constant $C$ (depending only on $N$).
Thus
\[
\sup_{\theta\in[0,2\pi]}\
\sup_{n\in\N}\
\mathbb{E}\Big[\Big(\frac{h(\theta)-\bar h(\theta)}{\sqrt n}\Big)^2\Big]
\le C\sum_{k=1}^N\mathbb{E}[\|R_{k,1}(M)\|^2],
\] which converges to zero as $M\to\infty$.
Consequently, for fixed $\eta$, the uniform $L^2$ convergence of $\bar\Delta_n(\theta)$ to $0$ on $\Theta_+^{(\eta)}$
implies the same for $\Delta_n(\theta)$, i.e.,
\begin{equation}\label{eq:Delta_to_0_theta+eta_final}
\lim_{n\to\infty}\sup_{\theta\in\Theta_+^{(\eta)}}\mathbb{E}[\Delta_n(\theta)^2]=0.
\end{equation}
We now return to \eqref{eq:Ln3Ln4_split_final} and split both integrals into interior and boundary-band parts:
\begin{equation}\label{eq:interior+boundary_band}
    \begin{aligned}
\int_{[0,2\pi]\setminus\Theta_0}\Delta_n(\theta)\,\ud\theta
&=
\int_{\Theta_+^{(\eta)}}\Delta_n(\theta)\,\ud\theta
+\int_{([0,2\pi]\setminus\Theta_0)\cap B_\eta}\Delta_n(\theta)\,\ud\theta,\\
\int_{\Theta_0}\Gamma_n(\theta)\,\ud\theta
&=
\int_{\Theta_0^{(\eta)}}\Gamma_n(\theta)\,\ud\theta
+\int_{\Theta_0\cap B_\eta}\Gamma_n(\theta)\,\ud\theta.
    \end{aligned}    
\end{equation}
Using the Cauchy--Schwarz inequality, we have that
\[
\mathbb{E}\Big[\Big(\int_{\Theta_+^{(\eta)}}\Delta_n(\theta)\,\ud\theta\Big)^2\Big]
\le {\rm Leb}(\Theta_+^{(\eta)})\int_{\Theta_+^{(\eta)}}\mathbb{E}[\Delta_n(\theta)^2]\,\ud\theta
\le 4\pi^2 \sup_{\theta\in\Theta_+^{(\eta)}}\mathbb{E}[\Delta_n(\theta)^2],
\]
and the right-hand side tends to $0$ by \eqref{eq:Delta_to_0_theta+eta_final}.
Similarly,
\[
\mathbb{E}\Big[\Big(\int_{\Theta_0^{(\eta)}}\Gamma_n(\theta)\,\ud\theta\Big)^2\Big]
\le {\rm Leb}(\Theta_0^{(\eta)})\int_{\Theta_0^{(\eta)}}\mathbb{E}[\Gamma_n(\theta)^2]\,\ud\theta,
\]
and since $\Gamma_n(\theta)$ converges to zero in $L^2$ pointwise on $\Theta_0^{(\eta)}$ by \eqref{eq:Gamma_to_0_theta0eta_final}
and is dominated by the uniform bound \eqref{eq:unif_L2_Gamma_final}, dominated convergence gives
$$\lim_{n\to\infty}\int_{\Theta_0^{(\eta)}}\mathbb{E}[\Gamma_n(\theta)^2]\,\ud\theta= 0.$$
Hence the second interior term also converges to $0$ in $L^2$.
Again by the  Cauchy--Schwarz inequality and the pointwise bounds \eqref{eq:DeltaGamma_bounds_final},
\[
\Big(\int_{([0,2\pi]\setminus\Theta_0)\cap B_\eta}\Delta_n(\theta)\,d\theta\Big)^2
\le {\rm Leb}(B_\eta)\int_{B_\eta}\Delta_n(\theta)^2\,\ud\theta
\le {\rm Leb}(B_\eta)\int_{B_\eta}\Big(\frac{h_n^{(4),0}(\theta)}{\sqrt n}\Big)^2\,\ud\theta,
\]
so taking expectations and using \eqref{eq:unif_L2_h40_final},
\begin{equation}\label{eq:boundary_band_1}
    \begin{aligned}\sup_{n\in\N}\mathbb{E}\Big[\Big(\int_{([0,2\pi]\setminus\Theta_0)\cap B_\eta}\Delta_n(\theta)\,\ud\theta\Big)^2\Big]
&\le {\rm Leb}(B_\eta)^2 \sup_{n,\theta}\mathbb{E}\Big[\Big(\frac{h_n^{(4),0}(\theta)}{\sqrt n}\Big)^2\Big]
\\ &\le 4{\rm Leb}(B_\eta)^2 \sum_{k\in \cI^0}\mathbb{E}[\|Z_{k,1}\|^2].
\end{aligned}
\end{equation}
Similarly, using \eqref{eq:unif_L2_h4plus_on_theta0_final},
\begin{equation}\label{eq:boundary_band_2}
\begin{aligned}
\sup_{n\in\N}\mathbb{E}\Big[\Big(\int_{\Theta_0\cap B_\eta}\Gamma_n(\theta)\,\ud\theta\Big)^2\Big]
&\le {\rm Leb}(B_\eta)^2\cdot \sup_{n,\theta\in\Theta_0}\mathbb{E}\Big[\Big(\frac{h_n^{(4),+}(\theta)}{\sqrt n}\Big)^2\Big]
\\ &\le {\rm Leb}(B_\eta)^2\cdot \sum_{k\in \cI^+}\mathbb{E}[\|Z_{k,1}\|^2].
\end{aligned}
\end{equation}
By \eqref{eq:B_eta_small_final}, both bounds can be made arbitrarily small by choosing $\eta$ small, uniformly in $n$.
Finally, fix $\varepsilon>0$, and choose $\eta$ so small that the boundary-band terms in \eqref{eq:boundary_band_1} and \eqref{eq:boundary_band_2} are each smaller than or equal to $\varepsilon/4$ for all $n$. Then choose $n$ so large that the two interior terms in \eqref{eq:interior+boundary_band} are each smaller than or equal to $\varepsilon/4$. Using \eqref{eq:Ln3Ln4_split_final} we thus verify~\eqref{eq:third-approx}.

\subsubsection*{Fourth approximation}
Recall that $L_n^{(4)} = L_n^{(4),+} + L_n^{(4),0}$. In this step we split the set $[0, 2\pi] \setminus \Theta_0$ into smaller subsets, depending on the dominating drift. We approximate $L_n^{(4),+}$ with a sum of integrals, over those smaller subsets, of maximal projections of only those walks whose drifts correspond to the dominating drift in each particular subset.
For $\theta\in[0,2\pi)$ write
\[
U_n^{(k)}:=S_n^{(k)}-n\bmu_k,\qquad
U_n^{(k)}(\theta):= \be_\theta^\tra U_n^{(k)} .
\]
Recall that
$m(\theta):=\max_{k \in \cI^+} \be_\theta^\tra \bmu_k$, $\Theta_0=\{\theta:\ m(\theta)<0\}$,
and that for  $\bmu\in \cE \setminus \{ \0\}$,
\[
\Theta_\bmu=\mathrm{int}\Big\{\theta\in[0,2\pi):\ \be_\theta^\tra \bmu  =\max_{\bmu'\in \cE} \be_\theta^\tra \bmu' \Big\},
\qquad
\cI_\bmu=\{k\in \cI:\ \bmu_k=\bmu\}.
\]
The boundary angles where ties occur form a finite set and may be ignored in integrals. Then up to Lebesgue-null sets,
\[
[0,2\pi)\setminus\Theta_0=\bigcup_{\bmu\in \cE\setminus\{\0\}} \Theta_\bmu,
\qquad \Theta_\bmu\cap\Theta_{\bmu'}=\emptyset,\ \ \text{for }\bmu\neq\bmu'.
\]
Define
\[
L_n^{(5),+}
:=\sum_{\bmu\in \cE\setminus\{0\}}
\int_{\Theta_\bmu}
\Big[
n\,  \be_\theta ^\tra \bmu + \max_{k\in \cI_\bmu}U_n^{(k)}(\theta)
\Big]\,\ud\theta.
\]
Note that
\[
\sum_{\bmu\in \cE\setminus\{0\}} \int_{\Theta_\bmu}  \be_\theta^\tra \bmu\,\ud\theta
=\int_{[0,2\pi)\setminus\Theta_0} m(\theta)\,\ud\theta
=\perim \cC_\bmu ,
\]
so $L_n^{(5),+}=n\,\perim \cC_\bmu +\sum_{\bmu \in \cE \setminus \{0\}}\int_{\Theta_\bmu}\max_{k\in \cI_\bmu}U_n^{(k)}(\theta)\,\ud\theta$.
We aim to show that
\begin{equation}
    \label{eq:fourth-approx}
\frac{L_n^{(5),+}-L_n^{(4),+}}{\sqrt{n}}\toL{n}{2}0.
\end{equation}
Fix $M>0$, and let $\bar{Z}_{k, i}(M)$, $R_{k, i}(M)$, $\bar{S}_n^{(k)}(M)$ and $T_n^{(k)}(M)$ be as before. Since truncation preserves drifts, the centred versions satisfy
\[
\bar{U}_n^{(k)}(M):=\bar S_n^{(k)}(M)-n\bmu_k,
\qquad
U_n^{(k)}-\bar{U}_n^{(k)}(M)=T_n^{(k)}(M).
\]
Let $\bar h_n^{(4),+}(\theta)$ be defined from $(\bar S_n^{(k)}(M)(\theta))_{n\in\ZP}$ in the same way as $h_n^{(4),+}(\theta)$ from $(S_n^{(k)}(\theta))_{n\in\ZP}$, and let
\[
\bar L_n^{(4),+}:=\int_{[0,2\pi)\setminus\Theta_0}\bar h_n^{(4),+}(\theta)\,\ud\theta.
\]
Similarly define
\[
\bar L_n^{(5),+}
:=\sum_{\bmu\in \cE\setminus\{\0\}}
\int_{\Theta_\bmu}
\Big[
n\,\be_\theta^\tra \bmu  + \max_{k\in \cI_\bmu}\bar{U}_n^{(k)}(M)(\theta)
\Big]\,\ud\theta.
\]
We will now show that  for each fixed $M$ it holds that
\begin{equation}\label{eq:trunc_error}
    \sup_{n\in\N}\mathbb{E}\Big[\Big(\frac{L_n^{(4),+}-\bar L_n^{(4),+}}{\sqrt{n}}\Big)^2\Big]
+
\sup_{n\in\N}\mathbb{E}\Big[\Big(\frac{L_n^{(5),+}-\bar L_n^{(5),+}}{\sqrt{n}}\Big)^2\Big]
\le 8\pi^2\sum_{k=1}^N\mathbb{E}[\|R_{k,1}(M)\|^2].
\end{equation}
Consequently, by \eqref{eq:R-second-moment-full}, the left-hand side converges to $0$ as $M\to\infty$. We use only simple Lipschitz facts. In particular, for each $\theta$,
\[
|h_n^{(4),+}(\theta)-\bar h_n^{(4),+}(\theta)|
=
\Big|\max\{0,\max_{k\in \cI^+}S_n^{(k)}(\theta)\}-\max\{0,\max_{k\in \cI^+}\bar S_n^{(k)}(M)(\theta)\}\Big|
\le \max_{k\in \cI^+}|T_n^{(k)}(M)(\theta)|.
\]
Since $|T_n^{(k)}(M)(\theta)|\le \|T_n^{(k)}(M)\|$, we get
\begin{equation}\label{eq:L4_pointwise_trunc}
|h_n^{(4),+}(\theta)-\bar h_n^{(4),+}(\theta)|
\le \max_{1\le k\le N}\|T_n^{(k)}(M)\|.
\end{equation}
Integrating \eqref{eq:L4_pointwise_trunc} over $[0,2\pi)\setminus\Theta_0\subset[0,2\pi)$ gives
\[
|L_n^{(4),+}-\bar L_n^{(4),+}|
\le 2\pi\,\max_{1\le k\le N}\|T_n^{(k)}(M)\|.
\]
Thus
\begin{equation}\label{eq:L4_trunc_L2}
\mathbb{E}\Big[\Big(\frac{L_n^{(4),+}-\bar L_n^{(4),+}}{\sqrt{n}}\Big)^2\Big]
\le \frac{(2\pi)^2}{n}\mathbb{E}[\max_{1\le k\le N}\|T_n^{(k)}(M)\|^2]
\le (2\pi)^2\sum_{k=1}^N \frac{\mathbb{E}[\|T_n^{(k)}(M)\|^2]}{n}.
\end{equation}
Since $(R_{k,i}(M))_{i\in\N}$ are i.i.d.\ with mean $\0$ and finite second moment,
\[
\mathbb{E}[\|T_n^{(k)}(M)\|^2]
=\mathbb{E}\left[\Big\|\sum_{i=1}^n R_{k,i}(M)\Big\|^2\right]
=\sum_{i=1}^n \mathbb{E}[\|R_{k,i}(M)\|^2]
=n\,\mathbb{E}[\|R_{k,1}(M)\|^2].
\]
Substituting into \eqref{eq:L4_trunc_L2} yields
\[
\sup_{n\in\N}\mathbb{E}\Big[\Big(\frac{L_n^{(4),+}-\bar L_n^{(4),+}}{\sqrt{n}}\Big)^2\Big]
\le (2\pi)^2\sum_{k=1}^N\mathbb{E}[\|R_{k,1}(M)\|^2].
\]
For each $\bmu$ and each $\theta\in\Theta_\bmu$,
\begin{align*}
\Big|\max_{k\in \cI_\bmu}U_n^{(k)}(\theta) - \max_{k\in \cI_\bmu}\bar{U}_n^{(k)}(M)(\theta)\Big|
&\le \max_{k\in \cI_\bmu}|U_n^{(k)}(\theta)-\bar{U}_n^{(k)}(M)(\theta)|
\\&\le \max_{1\le k\le N}\|T_n^{(k)}(M)\|.
\end{align*}
Hence
\[
|L_n^{(5),+}-\bar L_n^{(5),+}|
\le \sum_{\bmu\neq 0}\int_{\Theta_\bmu}\max_{1\le k\le N}\|T_n^{(k)}(M)\|\,\ud\theta
\le 2\pi\,\max_{1\le k\le N}\|T_n^{(k)}(M)\|.
\]
The same computation as above gives
\[
\sup_{n\in\N}\mathbb{E}\Big[\Big(\frac{L_n^{(5),+}-\bar L_n^{(5),+}}{\sqrt{n}}\Big)^2\Big]
\le (2\pi)^2\sum_{k=1}^N\mathbb{E}[\|R_{k,1}(M)\|^2],
\]
which proves the claim~\eqref{eq:trunc_error}. By equation~\eqref{eq:trunc_error}, it suffices to prove the desired $L^2$ convergence for the truncated quantities
$\bar L_n^{(4),+}$ and $\bar L_n^{(5),+}$ for each fixed $M$, and then let $M\to\infty$. Fix now $M>0$ and work with the truncated walks $\bar S^{(k)}(M)$.
Let $\bar h_n^{(4),+}$ and $\bar L_n^{(4),+}$ be as above. For each $\bmu\neq0$ and $\theta\in\Theta_\bmu$, define
\[
D_{n,\bmu}(\theta):=\bar h_n^{(4),+}(\theta) - \left(n\,\be_\theta^\tra \bmu +\max_{k\in \cI_\bmu}\bar U_n^{(k)}(M)(\theta)\right)\ge 0,\qquad \theta\in\Theta_\bmu.
\]
Since $[0,2\pi)\setminus\Theta_0$ is (up to null sets) the disjoint union of the $\Theta_\bmu$'s, we have
\begin{equation}\label{eq:trunc_diff_as_sum_mu}
\bar L_n^{(4),+}-\bar L_n^{(5),+}
=\sum_{\bmu\in \cE\setminus\{0\}}\int_{\Theta_\bmu} D_{n,\bmu}(\theta)\,\ud\theta.
\end{equation}
Fix $\bmu\neq0$ and $\theta\in\Theta_\bmu$. Recall that for each $k\in \cI^+$ we have
\[
\bar S_n^{(k)}(M)(\theta)=n \be_\theta^\tra \bmu_k +\bar U_n^{(k)}(M)(\theta).
\]
Since $\theta\in\Theta_\bmu$, $\bmu$ is the unique maximizer among $\cE$ in direction $\theta$, hence for each $\bmu'\in \cE\setminus\{\bmu\}$,
$\be_\theta^\tra 
(\bmu-\bmu') >0$.
Since
\[
\bar h_n^{(4),+}(\theta)
=
\max\Big\{
0,\
\max_{\bmu'\in\cE\setminus\{0\}}
\max_{k\in\cI_{\bmu'}}
\big(n\be_\theta^\tra \bmu' +\bar U_n^{(k)}(M)(\theta)\big)
\Big\},
\]
the quantity \(D_{n,\bmu}(\theta)\) can be positive only if either some walk corresponding to another non-zero extreme drift
\(\bmu'\neq\bmu\) exceeds the best walk with drift \(\bmu\), or if the deterministic point \(0\) exceeds the latter.
Set
$d_{\bmu,\bmu'}(\theta):=\be_\theta^\tra (\bmu-\bmu') $.
We thus have
\begin{align}
	D_{n,\bmu}(\theta)
	\le&
	\max_{\bmu'\in \cE\setminus\{0,\bmu\}}
	\max_{k\in \cI_{\bmu'}}
	\max_{\ell\in \cI_\bmu}
	\bigl(
	\bar U_n^{(k)}(M)(\theta)
	-\bar U_n^{(\ell)}(M)(\theta)
	-n d_{\bmu,\bmu'}(\theta)
	\bigr)^+
	\nonumber\\
	&+
	\max_{\ell\in \cI_\bmu}
	\bigl(
	-\bar U_n^{(\ell)}(M)(\theta)
	-n\be_\theta^\tra\bmu
	\bigr)^+ .
	\label{eq:D_bound_by_pairwise}
\end{align}
Consequently,
\begin{align}
	D_{n,\bmu}(\theta)^2
	\le&
	2\sum_{\bmu'\in \cE\setminus\{\0,\bmu\}}
	\sum_{k\in \cI_{\bmu'}}
	\sum_{\ell\in \cI_\bmu}
	\Bigl( \bigl(
	\bar U_n^{(k)}(M)(\theta)
	-\bar U_n^{(\ell)}(M)(\theta)
	-n d_{\bmu,\bmu'}(\theta)
	\bigr)^+ \Bigr)^2
	\nonumber\\
	&+
	2\sum_{\ell\in \cI_\bmu}
	\Bigl( \bigl(
	-\bar U_n^{(\ell)}(M)(\theta)
	-n\be_\theta^\tra \bmu
	\bigr)^+ \Bigr)^2 .
	\label{eq:D2_sum_bound}
\end{align}

We now use that in the truncated model, the increments are uniformly bounded.
Recall that for fixed $M>0$ there exists a constant $B(M)$ such that for all $k,i,\theta$,
$\|\bar Z_{k,i}(M)\|\le B(M)$ and $|\bar \be_\theta^\tra Z_{k,i}(M)  |\le B(M)$, a.s.
Consequently, the centred projected increments
\[
\xi_{k,i}(M)(\theta):=\be_\theta^\tra \bar Z_{k,i}(M) t -\be_\theta^\tra \bmu_k 
\]
satisfy $|\xi_{k,i}(\theta)|\le 2B(M)$ a.s. Fix $\bmu'\neq\bmu$, $k\in \cI_{\bmu'}$ and $\ell\in \cI_\bmu$. For fixed $\theta$, define
\[
X_n(M)(\theta):=\bar U_n^{(k)}(M)(\theta)-\bar U_n^{(\ell)}(M)(\theta).
\]
Then $X_n(M)(\theta)=\sum_{i=1}^n (\xi_{k,i}(M)(\theta)-\xi_{\ell,i}(M)(\theta))$ is a sum of independent mean-zero increments.
Notice that each increment satisfies
\[
|\xi_{k,i}(M)(\theta)-\xi_{\ell,i}(M)(\theta)|\le 4B(M) \quad\text{a.s.}
\]
By Hoeffding's lemma \cite[Lemma 2.2]{BLM-concentration} applied for bounded mean-zero increments, $X_n(\theta)$ is sub-Gaussian, i.e.,\ for all $\lambda\in\mathbb{R}$,
\begin{equation}\label{eq:subg_Xn}
\mathbb{E}\exp(\lambda X_n(\theta))\le \exp\big(8nB(M)^2\lambda^2\big).
\end{equation}
Applying Lemma~\ref{lem:overshoot} with $a=n\,d_{\bmu,\bmu'}(\theta)>0$ and $\sigma^2=16nB(M)^2$ yields
\begin{equation}\label{eq:overshoot_bound_applied}
\begin{aligned}
\mathbb{E}\Big[ \Bigl( \bigl(X_n(\theta)-n\,d_{\bmu,\bmu'}(\theta)\bigr)^+ \Bigr)^2\Big]
&\le 
32B(M)^2\,n\,\exp\Big(-\frac{n\,d_{\bmu,\bmu'}(\theta)^2}{32B(M)^2}\Big).
\end{aligned}
\end{equation}
The second term in \eqref{eq:D2_sum_bound} is handled in the same way. Indeed, for fixed
\(\ell\in\cI_\bmu\), the random variable
\(-\bar U_n^{(\ell)}(M)(\theta)\) is a centred sum of independent bounded increments. Applying Lemma~\ref{lem:overshoot} with
\(a=n\be_\theta^\tra \bmu>0\) gives
\begin{equation}\label{eq:zero_competitor_bound}
	\mathbb{E}\Big[
	\Bigl( \bigl(
	-\bar U_n^{(\ell)}(M)(\theta)
	-n\be_\theta^\tra \bmu
	\bigr)^+ \Bigr)^2
	\Big]
	\le
	32B(M)^2\,n
	\exp\Big(
	-\frac{n(\be_\theta^\tra \bmu )^2}{32B(M)^2}
	\Big).
\end{equation}
Taking expectations in \eqref{eq:D2_sum_bound} and using
\eqref{eq:overshoot_bound_applied} and \eqref{eq:zero_competitor_bound}, we obtain
\begin{align}
	\mathbb{E}[D_{n,\bmu}(\theta)^2]
	\le&
	64B(M)^2\,n
	\sum_{\bmu'\in \cE\setminus\{\0,\bmu\}}
	{\rm card}(\cI_{\bmu'})\,{\rm card}(\cI_\bmu)
	\exp\Big(
	-\frac{n\,d_{\bmu,\bmu'}(\theta)^2}{32B(M)^2}
	\Big)
	\nonumber\\
	&+
	64B(M)^2\,n\,{\rm card}(\cI_\bmu)
	\exp\Big(
	-\frac{n(\be_\theta^\tra \bmu)^2}{32B(M)^2}
	\Big),
	\qquad \theta\in\Theta_\bmu .
	\label{eq:ED2_pointwise}
\end{align}

Now define the $\bmu$-piece of the scaled perimeter difference
\[
\Delta_{n,\bmu}:=\int_{\Theta_\bmu}\frac{D_{n,\bmu}(\theta)}{\sqrt{n}}\,\ud\theta.
\]
By the Cauchy--Schwarz inequality,
\[
\mathbb{E}[\Delta_{n,\bmu}^2]
\le {\rm Leb}(\Theta_\bmu) \int_{\Theta_\bmu}\mathbb{E}\Big[\frac{D_{n,\bmu}(\theta)^2}{n}\Big]\,\ud\theta.
\]
Using \eqref{eq:ED2_pointwise},
\begin{align}
	\mathbb{E}[\Delta_{n,\bmu}^2]
	\le&
	64{\rm Leb}(\Theta_\bmu) B(M)^2
	\sum_{\bmu'\in \cE\setminus\{\0,\bmu\}}
	{\rm card}(\cI_{\bmu'})\,{\rm card}(\cI_\bmu)
	\int_{\Theta_\bmu}
	\exp\Big(
	-\frac{n\,d_{\bmu,\bmu'}(\theta)^2}{32B(M)^2}
	\Big)\,\ud\theta
	\nonumber\\
	&+
	64{\rm Leb}(\Theta_\bmu) B(M)^2\,{\rm card}(\cI_\bmu)
	\int_{\Theta_\bmu}
	\exp\Big(
	-\frac{n(\be_\theta^\tra \bmu)^2}{32B(M)^2}
	\Big)\,\ud\theta .
	\label{eq:Delta_mu_bound_before_geom}
\end{align}
Apply Lemma~\ref{lem:geom_int} first with \(v=\bmu-\bmu'\neq\0\), and then with \(v=\bmu\neq\0\), in both cases with \(c=\frac{1}{32B(M)^2}\). Hence all integrals appearing in \eqref{eq:Delta_mu_bound_before_geom} are bounded by a constant times \(n^{-1/2}\). Therefore
$\mathbb{E}[\Delta_{n,\bmu}^2]
\le n^{-1/2} {C(M)}$,
where the constant \(C(M)\) does not depend on \(n\).
In particular, $\mathbb{E}[\Delta_{n,\bmu}^2]$ converges to zero as $n\to\infty$.
Finally, by \eqref{eq:trunc_diff_as_sum_mu},
\[
\frac{\bar L_n^{(4),+}-\bar L_n^{(5),+}}{\sqrt{n}}
=\sum_{\bmu \in \cE \setminus \{0\}}\Delta_{n,\bmu}.
\]
Since there are only finitely many $\bmu\in \cE\setminus\{\0\}$, the Cauchy--Schwarz inequality gives
\[
\mathbb{E}\Big[\Big(\frac{\bar L_n^{(4),+}-\bar L_n^{(5),+}}{\sqrt{n}}\Big)^2\Big]
\le {\rm card}(\cE\setminus\{\0\})\sum_{\bmu \in \cE\setminus\{0\}}\mathbb{E}[\Delta_{n,\bmu}^2],
\] which converges to zero as $n\to\infty.$
Thus, for each fixed $M$,
\begin{equation}\label{eq:trunc_L2_convergence}
\frac{\bar L_n^{(5),+}-\bar L_n^{(4),+}}{\sqrt{n}}
\toL{n}{2}0.
\end{equation}
Write the decomposition
\[
\frac{L_n^{(5),+}-L_n^{(4),+}}{\sqrt{n}}
=
\frac{\bar L_n^{(5),+}-\bar L_n^{(4),+}}{\sqrt{n}}
+
\frac{(L_n^{(5),+}-\bar L_n^{(5),+})-(L_n^{(4),+}-\bar L_n^{(4),+})}{\sqrt{n}}.
\]
Then
\[
\mathbb{E}\Big[\Big(\frac{L_n^{(5),+}-L_n^{(4),+}}{\sqrt{n}}\Big)^2\Big]
\le
2\,\mathbb{E}\Big[\Big(\frac{\bar L_n^{(5),+}-\bar L_n^{(4),+}}{\sqrt{n}}\Big)^2\Big]
+
2\,\mathbb{E}\Big[\Big(\frac{(L_n^{(5),+}-\bar L_n^{(5),+})-(L_n^{(4),+}-\bar L_n^{(4),+})}{\sqrt{n}}\Big)^2\Big].
\]
From \eqref{eq:trunc_error}, 
\[
\sup_{n\in\N}\mathbb{E}\Big[\Big(\frac{(L_n^{(5),+}-\bar L_n^{(5),+})-(L_n^{(4),+}-\bar L_n^{(4),+})}{\sqrt{n}}\Big)^2\Big]
\le
16\pi^2\sum_{k=1}^N\mathbb{E}[\|R_{k,1}\|^2].
\]
Fix $\varepsilon>0$. Choose $M$ so large that $16\pi^2 \sum_k\mathbb{E}[\|R_{k,1}\|^2]\le \varepsilon/2$
(using \eqref{eq:R-second-moment-full}). Then for this $M$, by \eqref{eq:trunc_L2_convergence} choose $n$ so large that
$2 n^{-1} \mathbb{E} [ ( \bar L_n^{(5),+}-\bar L_n^{(4),+} )^2 ]\le \varepsilon/2$.
Hence for all such $n$,
$n^{-1} \mathbb{E} [ ( L_n^{(5),+}-L_n^{(4),+} )^2]\le \varepsilon$.
This proves~\eqref{eq:fourth-approx}.

\subsubsection*{CLT for the drift--dominated approximation $L_n^{(5),+}$}
In this step we show that, in the limit, we can replace the $n$-th steps of random walks appearing under the integrals in the definition of $L_n^{(5),+}$ with the corresponding Gaussian limiting variables.
Recall that
\begin{equation}\label{eq:L5plus_centered}
L_n^{(5),+} - n\,\perim \cC_\bmu = \sum_{\bmu \in \cE \setminus \{0\}}\int_{\Theta_\bmu}\max_{k\in \cI_\bmu}U_n^{(k)}(\theta)\,\ud\theta,
\end{equation}
and $\Sigma_k=\mathrm{Cov}(Z_{k,1})$, and $(X^{(k)})_{k=1}^N$ are independent centred Gaussian vectors with covariance matrices $\Sigma_k$.
Define the limiting Gaussian fields
$G_k(\theta):=\be_\theta^\tra X^{(k)}$, $\theta\in[0,2\pi)$.
Under the standing assumptions, we aim to show that
\begin{equation}\label{eq:lim_for_L_n5+}
    \frac{L_n^{(5),+}-n\,\perim \cC_\bmu}{\sqrt{n}}
\ \tod{n}
\ \sum_{\bmu\in \cE\setminus\{\0\}}\int_{\Theta_\bmu}\max_{k\in \cI_\bmu} G_k(\theta)\,\ud\theta.    
\end{equation}
  For each $n$ define the $\mathbb{R}^{|\cI^+|}$-valued process on $[0,2\pi]$ (identify $0$ and $2\pi$) by
\[
\mathbb{X}_n(\theta):=\big(\mathbb{X}_{n,k}(\theta)\big)_{k\in \cI^+},
\qquad
\mathbb{X}_{n,k}(\theta):=\frac{U_n^{(k)}(\theta)}{\sqrt{n}}.
\]
Also define the $\mathbb{R}^{|\cI^+|}$-valued Gaussian limit process
$\mathbb{G}(\theta):=\big(G_k(\theta)\big)_{k\in \cI^+}$.
Both $\mathbb{X}_n$ and $\mathbb{G}$ have continuous sample paths in $\theta$ (a.s.). Define the functional $\Phi:C([0,2\pi];\mathbb{R}^{|\cI^+|})\to\mathbb{R}$ by
\begin{equation}\label{eq:Phi_def}
\Phi(f)
:=
\sum_{\bmu\in \cE\setminus\{\0\}} \int_{\Theta_\bmu} \max_{k\in \cI_\bmu} f_k(\theta)\,\ud\theta,
\qquad f=(f_k)_{k\in \cI^+}.
\end{equation}
Then \eqref{eq:L5plus_centered} can be rewritten as
\begin{equation}\label{eq:Phi_representation}
\frac{L_n^{(5),+}-n\,\perim \cC_\bmu}{\sqrt{n}}
=
\Phi(\mathbb{X}_n).
\end{equation}
Similarly, the limit in~\eqref{eq:lim_for_L_n5+} equals $\Phi(\mathbb{G})$. Thus~\eqref{eq:lim_for_L_n5+} will follow once we prove:
\begin{itemize}
    \item[(a)] $\mathbb{X}_n \tod{n} \mathbb{G}$ in $C([0,2\pi];\mathbb{R}^{|\cI^+|})$ (in uniform topology);
    \item[(b)] $\Phi$ is continuous on $C([0,2\pi];\mathbb{R}^{|\cI^+|})$.
\end{itemize}
Then the continuous mapping theorem yields $\Phi(\mathbb{X}_n)\tod{n} \Phi(\mathbb{G})$, which gives~\eqref{eq:lim_for_L_n5+}.

We first show that the functional $\Phi$ defined in \eqref{eq:Phi_def} is Lipschitz with respect to the uniform norm
$\|f\|_\infty:=\sup_{\theta\in[0,2\pi]}\max_{k\in \cI^+}|f_k(\theta)|$, i.e.,
\begin{equation}\label{eq:Phi_cont}
    |\Phi(f)-\Phi(g)|
\le 2\pi\,\|f-g\|_\infty,
\qquad f,g\in C([0,2\pi];\mathbb{R}^{|\cI^+|}).
\end{equation}
In particular, $\Phi$ is continuous. Fix $f,g\in C([0,2\pi];\mathbb{R}^{|\cI^+|})$. For each $\bmu$ and each $\theta$,
\[
\Big|\max_{k\in \cI_\bmu} f_k(\theta)-\max_{k\in \cI_\bmu} g_k(\theta)\Big|
\le \max_{k\in \cI_\bmu}|f_k(\theta)-g_k(\theta)|
\le \|f-g\|_\infty.
\]
Integrating over $\Theta_\bmu$ and summing over $\bmu\in \cE\setminus\{\0\}$ gives us
\[
|\Phi(f)-\Phi(g)|
\le \sum_{\bmu \in \cE \setminus \{\0\}} \int_{\Theta_\bmu}\|f-g\|_\infty\,\ud\theta
\le \|f-g\|_\infty \cdot \sum_{\bmu \in \cE \setminus \{0\}}{\rm Leb}(\Theta_\bmu)
\le 2\pi\,\|f-g\|_\infty,
\]
since the disjoint union of the $\Theta_\bmu$ lies in $[0,2\pi)$.

We next prove $\mathbb{X}_n \tod{n} \mathbb{G}$ in $C([0,2\pi];\mathbb{R}^{|\cI^+|})$ by combining finite-dimensional convergence with tightness in $C([0,2\pi];\mathbb{R}^{|\cI^+|})$. For arbitrary $r \in \N$ fix $\theta_1,\dots,\theta_r\in[0,2\pi]$. The $\mathbb{R}^{r\cdot |\cI^+|}$-valued vector
$(\mathbb{X}_n(\theta_1),\dots,\mathbb{X}_n(\theta_r) )$  is a linear transformation of the vector of sums
\[
\Big(\frac{1}{\sqrt{n}}\sum_{i=1}^n (Z_{k,i}-\bmu_k)\Big)_{k\in \cI^+}\in(\mathbb{R}^2)^{|\cI^+|}.
\]
By the multivariate CLT,
\[
\Big(\frac{1}{\sqrt{n}}\sum_{i=1}^n (Z_{k,i}-\bmu_k)\Big)_{k\in \cI^+}
 \tod{n}
\big(X^{(k)}\big)_{k\in \cI^+}.
\]
Applying the continuous mapping theorem for linear maps yields
\[
\big(\mathbb{X}_n(\theta_1),\dots,\mathbb{X}_n(\theta_r)\big)
\ \tod{n}\
\big(\mathbb{G}(\theta_1),\dots,\mathbb{G}(\theta_r)\big).
\]
Thus the finite-dimensional distributions of $\mathbb{X}_n$ converge to those of $\mathbb{G}$. 

To show tightness, we use a standard modulus-of-continuity tightness criterion for processes with continuous sample paths, based on a uniform second-moment increment bound (a Kolmogorov--Chentsov type criterion, see \cite[Chapter VI, Theorem 4.1]{Jacod-Shr}). Since we are in one-dimensional parameter space, it suffices to obtain an estimate of the form
\[
\sup_{n\in\N}\mathbb{E}\big[\|\mathbb{X}_n(\theta)-\mathbb{X}_n(\theta')\|_{\infty}^2\big]
\le C |\theta-\theta'|^2,
\] 
for some constant $C$.
Here $\|x\|_\infty=\max_{k\in \cI^+}|x_k|$ on $\mathbb{R}^{|\cI^+|}$. Fix $k\in \cI^+$. Note that
\[
\mathbb{X}_{n,k}(\theta)-\mathbb{X}_{n,k}(\theta')
=\frac{1}{\sqrt{n}} (\be_\theta-\be_{\theta'})^\tra U_n^{(k)}.
\]
Since $U_n^{(k)}=\sum_{i=1}^n (Z_{k,i}-\bmu_k)$ is a sum of i.i.d.\ centred vectors with covariance $\Sigma_k$, $n^{-1} \Exp [ U_n^{(k)} (U_n^{(k)})^\tra ] = \Sigma_k$. Therefore,
\[
\mathbb{E}[|\mathbb{X}_{n,k}(\theta)-\mathbb{X}_{n,k}(\theta')|^2]
=
\mathbb{E}\Big[\Big(\frac{1}{\sqrt{n}} (\be_\theta-\be_{\theta'})^\tra U_n^{(k)} \Big)^2\Big]
=(\be_\theta-\be_{\theta'})^\top \Sigma_k (\be_\theta-\be_{\theta'}).
\]
Let $\|\Sigma_k\|_{\mathrm{op}}$ denote the operator norm. Then
\[
(\be_\theta-\be_{\theta'})^\top \Sigma_k (\be_\theta-\be_{\theta'})
\le \|\Sigma_k\|_{\mathrm{op}}\ \|\be_\theta-\be_{\theta'}\|^2.
\]
Since $\theta\mapsto \be_\theta$ is $1$-Lipschitz as a map from $[0,2\pi]$ to $\mathbb{R}^2$,
\[
\mathbb{E}[|\mathbb{X}_{n,k}(\theta)-\mathbb{X}_{n,k}(\theta')|^2]
\le \|\Sigma_k\|_{\mathrm{op}}\,|\theta-\theta'|^2.
\]
Therefore
\begin{align*}
\mathbb{E}\Big[\max_{k\in \cI^+}\big|\mathbb{X}_{n,k}(\theta)-\mathbb{X}_{n,k}(\theta')\big|^2\Big]
&\le \sum_{k\in \cI^+} \mathbb{E}\big|\mathbb{X}_{n,k}(\theta)-\mathbb{X}_{n,k}(\theta')\big|^2 
 \le \Big(\sum_{k\in \cI^+}\|\Sigma_k\|_{\mathrm{op}}\Big)\,|\theta-\theta'|^2.
\end{align*}
We infer that
\begin{equation}\label{eq:functional_CLT_conclusion}
\mathbb{X}_n \tod{n} \mathbb{G}\quad\text{in } C([0,2\pi];\mathbb{R}^{|\cI^+|}).
\end{equation}
By equation~\eqref{eq:Phi_cont}, $\Phi$ is continuous on $C([0,2\pi];\mathbb{R}^{|\cI^+|})$.
Thus the continuous mapping theorem applied to \eqref{eq:functional_CLT_conclusion} yields
$\Phi(\mathbb{X}_n)\tod{n} \Phi(\mathbb{G})$.
Using \eqref{eq:Phi_representation}, we finally obtain \eqref{eq:lim_for_L_n5+}.

\subsubsection*{Invariance principle for the zero--drift perimeter contribution}
Here we show that on the set $\Theta_0$, in the limit, the zero-drift random walks can be replaced by the limiting Brownian motions.
Recall that
\[
L_n^{(4),0}=\int_{\Theta_0} h_n^{(4),0}(\theta)\,\ud\theta,
\]
where $\Theta_0=\{\theta\in[0,2\pi):\ \max_{\bmu\in \cE}\be_\theta^\tra \bmu<0\}$, $h_n^{(4),0}(\theta) = \max_{k\in \cI^0}\max_{0\le j\le n} S_j^{(k)}(\theta)$, $\theta\in\Theta_0$, and $\cI^0=\{k:\ \bmu_k=0\}$. Let $(B^{(k)})_{k \in \cI^0}$ be independent planar Brownian motions with covariance matrices $\Sigma_k$. Define the limiting support function on $\Theta_0$ by
\[
h^{0}(\theta)
:= \max_{k\in \cI^0}\sup_{0\le t\le 1}\big(B^{(k)}(t)\cdot \be_\theta\big),
\qquad \theta\in\Theta_0
\]
so that $\Pi^0$ as defined at~\eqref{eq:Pi-defs} satisfies 
\begin{equation}\label{eq:def-Pi-0}
	\Pi^0=\int_{\Theta_0} h^{0}(\theta)\,\ud\theta.
\end{equation}
We now aim to show the following convergence
\begin{equation}
    \label{eq:Pi-0-convergence}
\frac{L_n^{(4),0}}{\sqrt{n}}\ \tod{n}\ \Pi^0.
\end{equation}
We proceed in three steps. First we embed the discrete zero--drift walks into continuous--time processes, next we identify $L_n^{(4),0}/\sqrt{n}$ as a continuous functional of these processes, and finally we apply the functional central limit theorem and the continuous mapping theorem.
We fix $k\in \cI^0$ and define the rescaled, piecewise--linear process
\[
W_n^{(k)}(t)
:=
\frac{1}{\sqrt{n}}\,\left(S_{\lfloor nt\rfloor}^{(k)} +(nt - \lfloor nt\rfloor) (S_{\lfloor nt\rfloor + 1}^{(k)} - S_{\lfloor nt\rfloor}^{(k)}) \right),
\qquad t\in[0,1].
\]
By the multidimensional Donsker invariance principle 
\[
W_n^{(k)} \ \tod{n} B^{(k)}
\quad\text{in } C([0,1];\mathbb{R}^2).
\]
Since the walks $(S^{(k)})_{k\in \cI^0}$ are independent, the joint convergence holds
\begin{equation}\label{eq:fclt}
    \big(W_n^{(k)}\big)_{k\in \cI^0}
\ \tod{n}
\big(B^{(k)}\big)_{k\in \cI^0}
\quad\text{in } C([0,1];(\mathbb{R}^2)^{|\cI^0|}).
\end{equation}
For any collection of continuous paths
$w=(w^{(k)})_{k\in \cI^0}\in C([0,1];(\mathbb{R}^2)^{|\cI^0|})$,
define the functional
\[
\Psi(w)
:=
\int_{\Theta_0}
\max\Big\{0,\ \max_{k\in \cI^0}\sup_{0\le t\le 1}\big(w^{(k)}(t)\cdot \be_\theta\big)\Big\}\,\ud\theta.
\]
By construction,
$
\sup_{0\le t\le 1} \be_\theta^\tra W_n^{(k)}(t)
=
n^{-1/2} \max_{0\le j\le n}{S_j^{(k)}(\theta)}$.
Hence,
\[
\frac{L_n^{(4),0}}{\sqrt{n}}
=
\Psi\big((W_n^{(k)})_{k\in \cI^0}\big),
\qquad
\Pi^0=\Psi\big((B^{(k)})_{k\in \cI^0}\big).
\]
We now show that,  with respect to the uniform norm
$\|w\|_\infty:=\max_{k\in \cI^0}\sup_{t\in[0,1]}\|w^{(k)}(t)\|$,
 $\Psi$ is continuous.
For $w,u\in C([0,1];(\mathbb{R}^2)^{|\cI^0|})$ and each $\theta$,
\begin{align*}
\Big|
\max_{k\in \cI^0}\sup_{t\in[0,1]} (\be_\theta^\tra w^{(k)}(t))
-
\max_{k\in \cI^0}\sup_{t\in[0,1]} (\be_\theta^\tra u^{(k)}(t) )
\Big|
&\le
\max_{k\in \cI^0}\sup_{t\in[0,1]} |\be_\theta^\tra (w^{(k)}(t)-u^{(k)}(t)) |\\
&\le \|w-u\|_\infty.
\end{align*}
Taking positive parts does not increase differences, hence integrating over $\Theta_0$ yields
\[
|\Psi(w)-\Psi(u)|
\le {\rm Leb}(\Theta_0)\,\|w-u\|_\infty
\le 2\pi\,\|w-u\|_\infty.
\]
Thus $\Psi$ is continuous.
By equation \eqref{eq:fclt} and the continuous mapping theorem,
\[
\Psi\big((W_n^{(k)})_{k\in I^0}\big)
\ \tod{n}\
\Psi\big((B^{(k)})_{k\in I^0}\big),
\]
which proves~\eqref{eq:Pi-0-convergence}.

\subsubsection*{The main result}
We are now ready for the proof of our main result. 
\begin{proof}[Proof of Theorem \ref{thm:perim_main_theorem}]
From the approximation steps established earlier,
\[
\frac{L_n^{(1)}-L_n^{(4)}}{\sqrt{n}}\toL{n}{2}0,
\qquad
\frac{L_n^{(4),+}-L_n^{(5),+}}{\sqrt{n}}\toL{n}{2}0,
\]
and
\[
\frac{L_n^{(5),+}-n\,\perim(\cC_\bmu)}{\sqrt{n}}\tod{n} \Pi^+,\qquad
\frac{L_n^{(4),0}}{\sqrt{n}}\tod{n} \Pi^0.
\]
Since $L_n^{(4)}=L_n^{(4),+}+L_n^{(4),0}$,
Slutsky's theorem yields~\eqref{eq:perim-main-limit}, with
$\Pi^+$ and $\Pi^0$   as defined at~\eqref{eq:Pi-defs}.
Finally,
the convergence in distribution combined with the fact that $n^{-1} (L_n - \Exp L_n)^2$ is uniformly integrable, by Corollary~\ref{cor:perim-ui}, implies that $n^{-1} \Var L_n$ converges to $\Var \Pi^0+\Var \Pi^+$.
\end{proof}

Finally, we give the proof of Corollary~\ref{cor:zero-in-interior-clt}.

\begin{proof}[Proof of Corollary~\ref{cor:zero-in-interior-clt}]
Under the hypotheses in the corollary, we can apply Theorem~\ref{thm:perim_main_theorem} with $\Pi^0$ and $\Pi^+$ given by~\eqref{eq:Pi-defs} evaluating to $\Pi^0 = 0$ and
\[ \Pi^+ = \sum_{k=1}^m \int_{\Theta_{\bmu_k}} \be_\theta^\tra X^{(k)} \ud \theta .\]
Thus $\Pi^+$ is a sum of independent Gaussians, and its variance is identified by computing
\[ \Exp \int_{\Theta_{\bmu_k}} \int_{\Theta_{\bmu_k}} \be_\theta^\tra X^{(k)} (X^{(k)})^\tra \be_\phi \ud \theta \ud \phi = \int_{\Theta_{\bmu_k}} \int_{\Theta_{\bmu_k}} \be_\theta^\tra X^{(k)} \Sigma_k \be_\phi  \ud \theta \ud \phi .\]
Now observe that
\[  \int_{\Theta_{\bmu_k}} \be_\theta \ud \theta  = \hbmu_{k+1,k} - \hbmu_{k,k-1}. \]
    This completes the proof.
\end{proof}


\begin{appendix}

\section{Geometric interpretation of Theorem \ref{thm:perim_main_theorem}}
\label{sec:semi-cauchy}

We  give a geometric interpretation of the limit from Theorem \ref{thm:perim_main_theorem}. We first show a general result, and then apply it to our setting. The general result is shown for strictly convex sets, but is easily transferred to our setting (by an approximating sequence, cf.\ Remark \ref{rem:nonsmooth}). Strictly convex set is a convex set whose boundary contains no line segments, i.e.\ we say that a convex set $K\subset\R^2$ is strictly convex if for every two distinct points $x, y \in K$, $x \neq y$, we have
\begin{equation*}
    tx + (1-t)y \in \mathrm{int}(K), \qquad  t \in (0, 1).
\end{equation*}

\begin{lemma}\label{lem:partial_cauchy}
Let $K\subset\mathbb{R}^2$ be a compact, strictly convex set with $C^2$ boundary.
For $\theta\in\mathbb{R}$, denote
$\be_\theta=(\cos\theta,\sin\theta)$, $\be_\theta^\perp=(-\sin\theta,\cos\theta)$,
and the support function of $K$ by $h_K(\theta)=\sup\{\be_\theta^\tra x:\ x\in K\}$.
Fix an interval $[\alpha,\beta]\subset\mathbb{R}$ with $\alpha<\beta<\alpha+2\pi$. Then the following hold.
\begin{thmenumi}[label=(\roman*)]
\item\label{lem:partial_cauchy-i}
For each $\theta\in[\alpha,\beta]$ there is a unique supporting point $x(\theta)\in\partial K$
such that $\be_\theta^\tra x(\theta)=h_K(\theta)$ and the outer unit normal at $x(\theta)$ equals $\be_\theta$.
The map $\theta\mapsto x(\theta)$ is $C^1$ and parametrizes the boundary arc
\[
\Gamma_{[\alpha,\beta]}:=\{x(\theta):\theta\in[\alpha,\beta]\}\subset\partial K
\]
in the anticlockwise direction.
\item\label{lem:partial_cauchy-ii}
If $\rho(\theta)$ denotes the radius of curvature of $\partial K$ at $x(\theta)$, then
$x'(\theta)=\rho(\theta)\,\be_\theta^\perp$,
and hence the length of the arc is
\[
\mathrm{length}(\Gamma_{[\alpha,\beta]})=\int_\alpha^\beta \rho(\theta)\,\ud\theta.
\]
\item\label{lem:partial_cauchy-iii}
One has the identity
\begin{equation}\label{eq:partial_cauchy_identity}
\int_\alpha^\beta h_K(\theta)\,\ud\theta
=
\mathrm{length}(\Gamma_{[\alpha,\beta]})
+
x(\alpha)\cdot \be_\alpha^\perp
-
x(\beta)\cdot \be_\beta^\perp.
\end{equation}
\end{thmenumi}
\end{lemma}

\begin{proof}
Since $K$ is compact and convex, for each $\theta$ the supremum defining $h_K(\theta)$ is attained.
Strict convexity implies that the supporting line with outer normal $\be_\theta$ touches $K$ at a unique point we denote by $x(\theta)$. The assumption $\partial K\in C^2$ implies that the Gauss map
(point $\mapsto$ outer unit normal) is a $C^1$ diffeomorphism from $\partial K$ onto the unit circle,
so $\theta\mapsto x(\theta)$ is $C^1$ and its image is precisely the boundary arc with outer normal angles in $[\alpha,\beta]$.
This proves~\ref{lem:partial_cauchy-i}.

To prove~\ref{lem:partial_cauchy-ii}, first note that a standard identity for $C^2$ strictly convex bodies relates the support function and the boundary point:
\begin{equation}\label{eq:x_of_theta}
x(\theta)=h_K(\theta)\,\be_\theta + h_K'(\theta)\,\be_\theta^\perp.
\end{equation}
Differentiating \eqref{eq:x_of_theta} and using $(\be_\theta)'=\be_\theta^\perp$ and $(\be_\theta^\perp)'=-\be_\theta$ we get
\[
x'(\theta)
=
h_K'(\theta)\be_\theta + h_K(\theta)\be_\theta^\perp + h_K''(\theta)\be_\theta^\perp - h_K'(\theta)\be_\theta
=
\big(h_K(\theta)+h_K''(\theta)\big)\be_\theta^\perp.
\]
Set $\rho(\theta):=h_K(\theta)+h_K''(\theta)$,
which is the radius of curvature (positive by strict convexity). Then $x'(\theta)=\rho(\theta)\be_\theta^\perp$
and $\|x'(\theta)\|=\rho(\theta)$, which gives (ii).

Finally, consider the scalar function
$F(\theta):=x(\theta)\cdot \be_\theta^\perp$.
It holds that
\[
F'(\theta)
=
x'(\theta)\cdot \be_\theta^\perp + x(\theta)\cdot (\be_\theta^\perp)'
=
x'(\theta)\cdot \be_\theta^\perp - x(\theta)\cdot \be_\theta.
\]
Now $x(\theta)\cdot \be_\theta=h_K(\theta)$ by definition of $x(\theta)$, and
$x'(\theta)\cdot \be_\theta^\perp=\rho(\theta)$ because $x'(\theta)=\rho(\theta)\be_\theta^\perp$.
Therefore $F'(\theta)=\rho(\theta)-h_K(\theta)$
and
\[
\int_\alpha^\beta h_K(\theta)\,\ud\theta
=
\int_\alpha^\beta \rho(\theta)\,\ud\theta - (F(\beta)-F(\alpha)).
\]
By (ii), $\int_\alpha^\beta\rho(\theta)\,\ud\theta=\mathrm{length}(\Gamma_{[\alpha,\beta]})$,
and $F(\theta)=x(\theta)\cdot \be_\theta^\perp$, so we obtain \eqref{eq:partial_cauchy_identity}.
\end{proof}

\begin{remark}\label{rem:nonsmooth}
Lemma~\ref{lem:partial_cauchy} was proved under $C^2$ strict convexity to make every step explicit.
However, the identity \eqref{eq:partial_cauchy_identity} has a natural extension to general compact convex sets $K$.
\begin{itemize}
\item For almost every $\theta$, the supporting face of $K$ with outer normal $\be_\theta$ is a single point;
call any measurable selection of such points $x(\theta)$.
\item The set of $\theta$ where the supporting face is an edge is finite for polygons, and negligible for integration.
\item One may approximate any convex $K$ in the Hausdorff metric by a sequence of smooth strictly convex bodies $(K_n)_{n \in \N}$ and pass to the limit in \eqref{eq:partial_cauchy_identity}.
\end{itemize}
In this way, formula \eqref{eq:partial_cauchy_identity} remains valid with $\Gamma_{[\alpha,\beta]}$ interpreted as
the (counterclockwise) boundary chain whose outer normal angles lie in $[\alpha,\beta]$,
and with $x(\alpha),x(\beta)$ interpreted as any supporting points corresponding to the endpoint normals.
\end{remark}

\begin{lemma}[Geometric interpretation of Theorem \ref{thm:perim_main_theorem}]
\label{lem:geometric-perim}
For each vertex $v\in \cE$ let $\Theta_v\subset[0,2\pi)$ denote the interior of the set of directions $\theta$ for which $v$ uniquely maximizes $u\mapsto u\cdot \be_\theta$ over $u\in \cC_\bmu$. Represent each $\Theta_v$ as an interval
\[
\Theta_v=(\alpha_v,\beta_v),
\qquad \alpha_v<\beta_v<\alpha_v+2\pi.
\]

\begin{thmenumi}[label=(\roman*)]
\item\label{lem:geometric-perim-i}
Recall that $\cI^0=\{k:\bmu_k=0\}$ and let $(B^{(k)})_{k\in \cI^0}$ be independent planar Brownian motions with covariances $\Sigma_k$.
Define the random convex body
\[
\mathcal{B}^0:=\hull\Big(\bigcup_{k\in \cI^0}\{B^{(k)}(t):0\le t\le1\}\Big).
\]
Then the zero-drift contribution in the limit can be written as
\[
\Pi^0=\int_{\Theta_0} h_{\mathcal{B}^0}(\theta)\,\ud\theta.
\]
Moreover, by Lemma~\ref{lem:partial_cauchy} and Remark~\ref{rem:nonsmooth},
\begin{equation}\label{eq:Y0_arc}
\Pi^0
=
\mathrm{length}\!\big(\Gamma^0_{[\alpha_0,\beta_0]}\big)
+
x^0(\alpha_0)\cdot \be_{\alpha_0}^\perp
-
x^0(\beta_0)\cdot \be_{\beta_0}^\perp,
\end{equation}
where $\Gamma^0_{[\alpha_0,\beta_0]}$ is the boundary arc of $\partial\mathcal{B}^0$ whose outer normal angles run from
$\alpha_0$ to $\beta_0$, and $x^0(\alpha_0),x^0(\beta_0)$ are supporting points of $\mathcal{B}^0$ at the endpoint normals.
\item
\label{lem:geometric-perim-ii}
Fix $\bmu\in \cE\setminus\{0\}$. Recall that $\cI_\bmu=\{k:\bmu_k=\bmu\}$ and $(X^{(k)})_{k\in \cI_\bmu}$ are independent Gaussian vectors $X^{(k)}\sim\mathcal{N}(0,\Sigma_k)$.
Define the (random) Gaussian polygon
\[
\mathcal{B}_\bmu:=\hull \Big(\{\0\}\cup\{X^{(k)}:k\in \cI_\bmu\}\Big).
\]
Then the corresponding drift-vertex contribution can be written as
\[
\Pi^+_\bmu=\int_{\Theta_\bmu} h_{\mathcal{B}_\bmu}(\theta)\,\ud\theta,
\]
and Lemma~\ref{lem:partial_cauchy} together with Remark~\ref{rem:nonsmooth} give the boundary-arc representation
\begin{equation}\label{eq:Ymu_arc}
\Pi^+_\bmu
=
\mathrm{length}\!\big(\Gamma^\bmu_{[\alpha_\bmu,\beta_\bmu]}\big)
+
x^\bmu(\alpha_\bmu)\cdot \be_{\alpha_\bmu}^\perp
-
x^\bmu(\beta_\bmu)\cdot \be_{\beta_\bmu}^\perp,
\end{equation}
where $\Gamma^\bmu_{[\alpha_\bmu,\beta_\bmu]}$ is the boundary arc of $\partial\mathcal{B}_\bmu$ with outer normal angles in
$[\alpha_\bmu,\beta_\bmu]$, and $x^\bmu(\alpha_\bmu),x^\bmu(\beta_\bmu)$ are corresponding supporting points.
\item
\label{lem:geometric-perim-iii}
The total limiting fluctuation is
\[
\Pi^+ + \Pi^0
=
\sum_{\bmu\in \cE\setminus\{\0\}} \Pi^+_\bmu + \Pi^0,
\]
so the full limit decomposes into a sum of boundary-arc functionals indexed by the vertices of the drift polygon $\cC_\bmu$.
\end{thmenumi}
\end{lemma}

\section{Martingale uniform square-integrability}
 \label{sec:square-ui}
 
For many of our distributional limit results, we  establish associated
$L^2$-approximation statements that yield variance asymptotics directly via the simpler approximant object. In the case of the variance convergence in Theorem~\ref{thm:perim_main_theorem} we use a more abstract uniform square-integrability argument, based on  the following martingale generalization of the uniform square-integrability property of random walk from~\eqref{eq:sum-mean-ui}, which is proved in a similar way.

 \begin{lemma}
 \label{lem:square-ui}
Let $(W_n)_{n \in \N}$ be a sequence of $\R$-valued stochastic processes $W_n := (W_{n,0}, \ldots, W_{n,n})$, with $W_{n,0} =0$. Write $\xi_{n,i} := W_{n,i+1} - W_{n,i}$, $0 \leq i \leq n-1$.
Suppose also:
\begin{thmenumi}[label=(\roman*)]
    \item
    \label{lem:square-ui-i}
For each $n\in\N$ there are nested $\sigma$-algebras  $\cF_{n,0}  \subseteq \cF_{n,1}  \subseteq \cdots \subseteq \cF_{n,n}$ for which $W_{n,i}$ is $\cF_{n,i}$-measurable for all $i \in \{0,\ldots,n\}$.
    \item
    \label{lem:square-ui-ii}
For all $n \in \N$, $\Exp [ \xi_{n,i}  \mid \cF_{n,i} ] =0$ for all $i \in \{0,\ldots, n-1\}$.
    \item
    \label{lem:square-ui-iii}
    It holds that   $\lim_{B \to \infty} \sup_{n \in \N} \sup_{0 \leq i \leq n-1} \Exp [ \xi^2_{n,i} \mathbbm{1}_{\{ | \xi_{n,i} | > B \}}  ] = 0$.
\end{thmenumi}
Then $\Exp [ W_{n,n}^2 ] < \infty$, and $(n^{-1} W_{n,n}^2 )_{n \in \N}$ is uniformly integrable.
 \end{lemma}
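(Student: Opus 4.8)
The plan is to mimic the standard proof that $(n^{-1}(S_n - n\mu)^2)_{n\in\N}$ is uniformly integrable for a finite-variance random walk (see~\cite[p.~20]{gut-stopped}), adapted to the triangular-array martingale setting of the lemma. The key structural fact is that, by hypotheses~\ref{lem:square-ui-i} and~\ref{lem:square-ui-ii}, for each fixed $n$ the sequence $(W_{n,j})_{0\le j\le n}$ is a martingale with respect to $(\cF_{n,j})_{0\le j\le n}$, so the increments $\xi_{n,i}$ are orthogonal in $L^2$; combined with the uniform tail bound~\ref{lem:square-ui-iii} (which in particular gives $\sup_{n,i}\Exp[\xi_{n,i}^2] < \infty$), this yields $\Exp[W_{n,n}^2] = \sum_{i=0}^{n-1}\Exp[\xi_{n,i}^2] = O(n)$, so $(n^{-1}W_{n,n}^2)_{n\in\N}$ is at least bounded in $L^1$, which is the first thing to record.

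For uniform integrability, first I would fix $B$ and perform a truncation of the increments: write $\xi_{n,i} = \xi_{n,i}' + \xi_{n,i}''$ where $\xi_{n,i}' := \xi_{n,i}\ind{|\xi_{n,i}|\le B} - \Exp[\xi_{n,i}\ind{|\xi_{n,i}|\le B}\mid \cF_{n,i}]$ and $\xi_{n,i}'' := \xi_{n,i}\ind{|\xi_{n,i}|> B} - \Exp[\xi_{n,i}\ind{|\xi_{n,i}|> B}\mid \cF_{n,i}]$, so that both are martingale-difference sequences with respect to $(\cF_{n,j})_j$. Denote the corresponding martingale partial sums by $W_{n,n}'$ and $W_{n,n}''$. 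By orthogonality, $\Exp[(W_{n,n}'')^2] = \sum_{i=0}^{n-1}\Exp[(\xi_{n,i}'')^2] \le \sum_{i=0}^{n-1}\Exp[\xi_{n,i}^2\ind{|\xi_{n,i}|>B}] \le n\eps$ when $B$ is chosen as in~\ref{lem:square-ui-iii}. Hence $\sup_n n^{-1}\Exp[(W_{n,n}'')^2]\le \eps$, so the ``bad'' part contributes an $L^2$-bounded (in the $n^{-1/2}$ scaling) family with arbitrarily small bound; such a family is trivially uniformly integrable-controllable, since for any family $(V_n)$ with $\sup_n\Exp[V_n^2]\le\eps$ one has $\sup_n\Exp[|V_n|\ind{|V_n|>K}]\le \eps/K + $ (a term handled via Cauchy--Schwarz), which one pushes below any target by first taking $\eps$ small. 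The ``good'' part $W_{n,n}'$ has bounded increments: $|\xi_{n,i}'|\le 2B$ a.s. For it I would invoke the $L^2$ maximal inequality (Doob) together with the bounded-increment structure to get a Bernstein- or Hoeffding-type control, or more simply note $\Exp[(W_{n,n}')^4]\le C (B^2 n)^2$ by the Burkholder--Davis--Gundy inequality (the quadratic variation is at most $4B^2 n$), whence $(n^{-1}(W_{n,n}')^2)_{n\in\N}$ is bounded in $L^2$ and therefore uniformly integrable. Putting the two pieces together: given $\eps>0$, choose $B$ as in~\ref{lem:square-ui-iii}; then $n^{-1}W_{n,n}^2 \le 2n^{-1}(W_{n,n}')^2 + 2n^{-1}(W_{n,n}'')^2$, the first family uniformly integrable and the second with $L^1$-norm $\le 2\eps$ uniformly in $n$; a standard $\eps/2$-argument shows the sum is uniformly integrable.

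The main obstacle I anticipate is purely bookkeeping rather than conceptual: the truncation $\xi_{n,i}\mapsto \xi_{n,i}\ind{|\xi_{n,i}|\le B}$ is not by itself a martingale difference, so one must subtract its conditional expectation, and then one has to check that the subtracted conditional-mean terms do not spoil the bounds --- they are bounded by $\Exp[|\xi_{n,i}|\ind{|\xi_{n,i}|>B}\mid\cF_{n,i}]$, whose second moment is again controlled by~\ref{lem:square-ui-iii} via conditional Jensen, so this is routine. A secondary point of care is that all constants must be uniform in $n$ \emph{and} in $i$, which is exactly what hypothesis~\ref{lem:square-ui-iii} is designed to provide; one just has to be disciplined about only ever using $\sup_{n}\sup_{i}$-type bounds. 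I would close by remarking that taking $W_{n,i} := n^{-1/2}$-free $S_i - i\mu$ (constant array) recovers~\eqref{eq:sum-mean-ui}, confirming the lemma is the natural generalization invoked in the proofs of Theorems~\ref{thm:ice-cream} and~\ref{thm:two-the-same}.
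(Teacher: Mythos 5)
Your proposal is correct and follows essentially the same route as the paper: the same truncation-and-recentering of the increments at level $B$ from hypothesis~\ref{lem:square-ui-iii}, the same orthogonality bound showing the ``bad'' martingale is small in $L^2$ uniformly in $n$, the same Burkholder fourth-moment bound (via the crude quadratic-variation estimate $\leq 4B^2 n$) giving uniform integrability for the bounded-increment part, and the same final $\eps$-splitting. The only differences are cosmetic (e.g.\ your passing mention of Hoeffding-type alternatives), so there is nothing substantive to add.
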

 \begin{proof}
It follows from~\ref{lem:square-ui-iii} that
  $\Exp [ \xi^2_{n,i} ] < \infty$ for all $n \in \N$ and all $i \in \{0,\ldots, n-1\}$.
  We adapt the proof for the case of i.i.d.~sums (see e.g.~\cite[pp.~20--21]{gut-stopped}), replacing the classical
Marcinkiewicz--Zygmund inequality by its martingale analogue, due to Burkholder. 
First, fix $\eps>0$ and fix (large enough) $B \in \RP$ such that $\Exp [ \xi_{n,i}^2 \1 { | \xi_{n,i} | > B } ] \leq \eps$ for all $n, i$ (which is possible by condition~\ref{lem:square-ui-iii}). Then define
\begin{align*}
\xi_{n,i}' & := \xi_{n,i} \mathbbm{1}_{\{ | \xi_{n,i} | \leq B \}} - 
\Exp [ \xi_{n,i} \mathbbm{1}_{\{ | \xi_{n,i} | \leq B \}} \mid \cF_{n,i} ], \\
\xi_{n,i}'' & := \xi_{n,i} \mathbbm{1}_{\{ | \xi_{n,i} | > B \}} - 
\Exp [ \xi_{n,i} \mathbbm{1}_{\{ | \xi_{n,i} | > B \}} \mid \cF_{n,i} ] .
\end{align*}
Consider the decomposition $W_{n,n} = W'_{n,n} + W''_{n,n}$, where
\[ W_{n,n}' := \sum_{i=0}^{n-1} \xi_{n,i}', \quad\text{and}\quad
W_{n,n}'' := \sum_{i=0}^{n-1} \xi_{n,i}'' .\]
By construction,  $\Exp [ \xi'_{n,i} \mid \cF_{n,i} ] = \Exp [ \xi''_{n,i} \mid \cF_{n,i} ] = 0$, a.s., $| \xi'_{n,i} | \leq 2B$, a.s.,    and $(\xi'_{n,i})_{0 \leq i \leq n-1}$ and $(\xi''_{n,i})_{0 \leq i \leq n-1}$ 
are martingale difference sequences. A calculation shows that
\[ \Exp [ ( \xi_{n,i}'' )^2 \mid \cF_{n,i} ] = \Exp [ \xi_{n,i}^2 \1 { | \xi_{n,i} | > B} \mid \cF_{n,i} ] - (  \Exp [ \xi_{n,i} \1 { | \xi_{n,i} | > B} ] \mid \cF_{n,i} )^2,\]
which, on taking expectations, yields $\Exp [ ( \xi_{n,i}'' )^2] \leq \eps$.

Observe that,  by orthogonality of martingale differences,
 \[ n^{-1} \Exp \bigl[ ( W''_{n,n})^2 \bigr] = n^{-1} \sum_{i=0}^{n-1} \Exp \bigl[  ( \xi''_{n,i} )^2   \bigr] 
 \leq  \eps   . \]
 Next, recall that Burkholder's inequality (e.g.~\cite[p.~506]{gut-graduate} or~\cite[p.~414]{chow-teicher}) says that, for every $p>1$ there is a constant $C_p < \infty$ such that
 $\Exp [ (W_{n,n}' )^ p] \leq C_p   \Exp [ Q_{n}^{p/2} ] $, 
 where $Q_n := (\xi'_{n,0})^2 + \cdots + (\xi'_{n,n-1})^2 \geq 0$.
 Since $| \xi'_{n,i} | \leq 2B$, we have the crude bound $Q_n \leq 4 B^2 n$.
 In particular, applying the Burkholder inequality for $p=4$ we 
 have
 \begin{align*}
 n^{-1} \Exp \bigl[  ( W_{n,n}')^2 \mathbbm{1}_{\{ | W_{n,n}' | \geq \alpha \sqrt{n} \}} \bigr] 
 & \leq \alpha^{-2} n^{-2} \Exp \bigl[  ( W_{n,n}')^4 \mathbbm{1}_{\{ | W_{n,n}' | \geq \alpha \sqrt{n} \}} \bigr] \\
 & \leq C_4 ( 16 B^4 n^2 ) n^{-2} \alpha ^{-2} = 16 C_4 B^4 \alpha^{-2}.  
 \end{align*}
 Now observe the inequality for $a, b \in \R$ and $y >0$ (proved by considering cases $a\geq b$, $a\leq b$)
 \[ (a + b)^2 \1 { a +b \geq 2y} \leq 4 a^2 \1{ a \geq y} + 4 b^2 \1 { b \geq y }.\]
Since $W_{n,n} = W_{n,n} - \Exp [ W_{n,n} ] = W_{n,n}' + W_{n,n}''$, applying the preceding inequality gives
 \begin{align*}
     \Exp \bigl[  W_{n,n}^2 \mathbbm{1}_{\{ |W_{n,n}| \geq 2\alpha \sqrt{n}\}}  \bigr]
     & \leq 4 \Exp \bigl[  (W'_{n,n} )^2 \mathbbm{1}_{\{ |W'_{n,n}| \geq \alpha \sqrt{n} \}} \bigr] + 4 \Exp \bigl[  (W''_{n,n})^2 \mathbbm{1}_{\{ |W''_{n,n}| \geq \alpha \sqrt{n} \}} \bigr] ,
 \end{align*}
 so combining the above bounds we get
 \[ \limsup_{\alpha \to \infty} \sup_{n \in \N} n^{-1} 
 \Exp [  W_{n,n}^2 \mathbbm{1}_{\{ |W_{n,n}| \geq 2\alpha \sqrt{n} \}} ]
 \leq \lim_{\alpha \to \infty} 64 C_4 B^4 \alpha^{-2} + 4 \eps = 4 \eps.
 \]
 Since $\eps>0$ was arbitrary, the result follows.
 \end{proof}

We will apply the following consequence of Lemma~\ref{lem:square-ui}  to the geometrical random walk functionals of interest.
One views these geometrical functionals (perimeter, diameter) as functions of (multiple) random walk increments, and it is required that they satisfy a uniform smoothness condition with respect to changes in every argument, expressed in~\eqref{eq:smoothness}.

 \begin{corollary}
     \label{cor:lipschitz-ui} Suppose $d \in \N$.
     Let $(F_n)_{n \in \N}$ be a sequence of functions   $F_n : (\R^d)^n \to \R$. Suppose  that there exists a constant $C <\infty$ such that, for all $n \in \N$ and all $i \in \{1,\ldots,n\}$,
     \begin{equation}
     \label{eq:smoothness} \left| F_n ( z_1, \ldots, z_i , \ldots , z_n ) - F_n ( z_1, \ldots, z_i' ,\ldots, z_n ) \right| \leq C \bigl( \| z_i \| + \| z'_i \| \bigr) .\end{equation}
 Suppose that $\theta_1, \theta_2, \ldots$ are i.i.d.~in $\R^d$ with $\Exp [ \| \theta_1 \|^2 ] < \infty$, and let $\zeta_n := F_n (\theta_1, \ldots, \theta_n )$. Then $\Exp [ \zeta_n^2 ] < \infty$, and $( n^{-1} ( \zeta_n - \Exp  \zeta_n  )^2 )_{n \in \N}$ is uniformly integrable.
 \end{corollary}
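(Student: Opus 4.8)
The plan is to deduce Corollary~\ref{cor:lipschitz-ui} directly from Lemma~\ref{lem:square-ui} by constructing the appropriate martingale. First I would set, for each fixed $n \in \N$, the $\sigma$-algebras $\cF_{n,i} := \sigma(\theta_1, \ldots, \theta_i)$ for $i \in \{0, 1, \ldots, n\}$ (with $\cF_{n,0}$ trivial), and define the Doob martingale $W_{n,i} := \Exp[\zeta_n \mid \cF_{n,i}] - \Exp[\zeta_n]$, so that $W_{n,0} = 0$ and $W_{n,n} = \zeta_n - \Exp[\zeta_n]$. By construction this satisfies hypotheses~\ref{lem:square-ui-i} and~\ref{lem:square-ui-ii} of Lemma~\ref{lem:square-ui}, provided that the conditional expectations are well-defined (i.e., $\Exp[|\zeta_n|] < \infty$), which follows from the smoothness condition~\eqref{eq:smoothness} telescoped from a fixed reference point, giving $|\zeta_n| \leq |F_n(\0, \ldots, \0)| + C\sum_{i=1}^n \|\theta_i\|$, hence $\Exp[|\zeta_n|] < \infty$.

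The heart of the matter is to verify the uniform-smallness condition~\ref{lem:square-ui-iii} for the martingale differences $\xi_{n,i} := W_{n,i+1} - W_{n,i}$. Here I would use the standard resampling trick: let $\theta_i'$ be an independent copy of $\theta_i$ (independent of everything else), and write $\resample{\zeta_n}{i} := F_n(\theta_1, \ldots, \theta_{i-1}, \theta_i', \theta_{i+1}, \ldots, \theta_n)$. Since $\zeta_n$ and $\resample{\zeta_n}{i}$ have the same conditional law given $\cF_{n,i-1}$, and $\theta_i'$ is independent of $\cF_{n,i}$, one has $\Exp[\resample{\zeta_n}{i} \mid \cF_{n,i}] = \Exp[\resample{\zeta_n}{i} \mid \cF_{n,i-1}] = \Exp[\zeta_n \mid \cF_{n,i-1}]$, and therefore
\[ \xi_{n,i} = \Exp[\zeta_n \mid \cF_{n,i}] - \Exp[\zeta_n \mid \cF_{n,i-1}] = \Exp[\zeta_n - \resample{\zeta_n}{i} \mid \cF_{n,i}]. \]
Wait---I must be careful with the index shift: with $\xi_{n,i} := W_{n,i+1}-W_{n,i}$ as in Lemma~\ref{lem:square-ui}, the relevant resampled coordinate is the $(i+1)$st, so $\xi_{n,i} = \Exp[\zeta_n - \resample{\zeta_n}{i+1} \mid \cF_{n,i+1}]$. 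By the smoothness hypothesis~\eqref{eq:smoothness}, $|\zeta_n - \resample{\zeta_n}{i+1}| \leq C(\|\theta_{i+1}\| + \|\theta_{i+1}'\|) =: C Y_{i+1}$, where $Y_{i+1}$ has finite second moment and a distribution not depending on $n$ or $i$. By the conditional Jensen inequality, $|\xi_{n,i}| \leq \Exp[C Y_{i+1} \mid \cF_{n,i+1}]$, and hence $\Exp[\xi_{n,i}^2 \ind{|\xi_{n,i}|>B}] \leq \Exp[\Exp[C Y_{i+1}\mid \cF_{n,i+1}]^2 \ind{\Exp[CY_{i+1}\mid\cF_{n,i+1}]>B}]$. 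A further application of conditional Jensen and the tower property bounds this by $C^2 \Exp[Y_1^2 \ind{C\Exp[Y_1\mid\cdots]>B}]$; more simply, one can observe $\Exp[C Y_{i+1}\mid\cF_{n,i+1}]$ is $\cF_{n,i+1}$-measurable and dominated in $L^2$ uniformly, so by a routine uniform-integrability argument (using that $(Y_j^2)_j$ are i.i.d.\ integrable, hence the family $\{\Exp[CY_{i+1}\mid\cF_{n,i+1}]^2\}$ is uniformly integrable with a single dominating integrable bound independent of $n,i$) one gets, for every $\eps>0$, a $B$ with $\sup_{n}\sup_i \Exp[\xi_{n,i}^2\ind{|\xi_{n,i}|>B}] \leq \eps$.

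The main obstacle, and the only point requiring genuine care, is this last uniform-integrability step: one needs the truncated second moments of the $\xi_{n,i}$ to be small \emph{uniformly in both $n$ and $i$}, whereas a priori the conditional expectation $\Exp[CY_{i+1}\mid\cF_{n,i+1}]$ could in principle concentrate. The clean way around this is to avoid conditioning before truncating: since $|\xi_{n,i}| \leq \Exp[CY_{i+1}\mid\cF_{n,i+1}]$ pointwise, for any convex increasing $\phi$ one has $\Exp[\phi(|\xi_{n,i}|)]\leq\Exp[\phi(CY_{i+1})]$ by conditional Jensen; taking $\phi(x)=x^2\wedge(\text{a smoothing of }x^2\ind{x>B})$---or more robustly, invoking de la Vallée-Poussin via a superlinear $\varphi$ with $\Exp[\varphi(C^2Y_1^2)]<\infty$ (which exists since $\Exp[Y_1^2]<\infty$, cf.\ \cite[Theorem 22]{Dellacherie-Meyer}) so that $\sup_{n,i}\Exp[\varphi(\xi_{n,i}^2)]\leq\Exp[\varphi(C^2Y_1^2)]<\infty$---gives uniform integrability of $\{\xi_{n,i}^2 : n\in\N, 0\le i\le n-1\}$ directly, which is exactly condition~\ref{lem:square-ui-iii}. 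With all three hypotheses verified, Lemma~\ref{lem:square-ui} yields $\Exp[W_{n,n}^2]<\infty$ (i.e.\ $\Exp[\zeta_n^2]<\infty$, which also follows independently from the telescoped bound) and uniform integrability of $(n^{-1}(\zeta_n-\Exp[\zeta_n])^2)_{n\in\N}$, completing the proof.
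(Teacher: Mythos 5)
Your proposal is correct and follows essentially the same route as the paper: the Doob martingale with the resampling representation $\xi_{n,i}=\Exp[\zeta_n-\resample{\zeta_n}{i+1}\mid\cF_{n,i+1}]$, the smoothness bound $|\zeta_n-\resample{\zeta_n}{i+1}|\le C(\|\theta_{i+1}\|+\|\theta'_{i+1}\|)$, and an appeal to Lemma~\ref{lem:square-ui}. The only divergence is in verifying condition~\ref{lem:square-ui-iii}, where your concentration worry is vacuous: since $\|\theta_{i+1}\|$ is $\cF_{n,i+1}$-measurable and $\theta'_{i+1}$ is independent of $\cF_{n,i+1}$, the conditional expectation is explicit and gives $|\xi_{n,i}|\le C'(1+\|\theta_{i+1}\|)$ for a constant $C'$ not depending on $n$ or $i$, so uniform smallness of the truncated second moments follows by dominated convergence from a single dominating variable with fixed law (this is how the paper argues), making your de la Vall\'ee--Poussin detour correct but unnecessary.
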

 \begin{proof}
 First note that by~\eqref{eq:smoothness}, $| F_n (z_1, \ldots, z_n ) | \leq | F_n (0,\ldots, 0) | + C \sum_{i=1}^n \| z_i \|$,
  which, together with the hypothesis $\Exp [ \| \theta_1\|^2 ] < \infty$, implies that  $\Exp [ \zeta_n^2 ] < \infty$.
     Denote by $\theta_1', \theta_2', \ldots$ an independent copy of the sequence $\theta_1, \theta_2, \ldots$, and let $\cF_n := \sigma ( \theta_1, \ldots, \theta_n)$. 
     We use a standard resampling construction of martingale differences to write 
     \[ \zeta_n - \Exp [ \zeta_n] = \sum_{i=1}^{n} \bigl( \Exp[ \zeta_{n} \mid \cF_{i} ] - \Exp [ \zeta_n \mid  \cF_{i-1} ]\bigr)
     = \sum_{i=1}^{n} \xi_{n,i}, \text{ where } \xi_{n,i} := \Exp[ \zeta_n - \resample{\zeta_n}{i} \mid \cF_{i} ] ,
     \]
and $\resample{\zeta_n}{i}$ is defined as $\zeta_n$ but using $\theta_i'$ in place of $\theta_i$. Writing $W_{n,i} := \sum_{j=0}^{i-1} \xi_{n,j}$, then $\xi_{n,i} = W_{n,i+1} - W_{n,i}$ (as in Lemma~\ref{lem:square-ui}) and setting $\cF_{n,i} = \cF_{i-1}$ for $i \geq 1$, conditions~\ref{lem:square-ui-i} and~\ref{lem:square-ui-ii} of Lemma~\ref{lem:square-ui} are readily checked (for the latter, note that $\Exp [ \zeta_n \mid \cF_{i-1} ] = \Exp [\resample{\zeta_n}{i} \mid \cF_{i-1}]$). For condition~\ref{lem:square-ui-iii}, note that the smoothness property~\eqref{eq:smoothness} implies that $| \zeta_n - \resample{\zeta_n}{i} | \leq C ( \| \theta_i \| + \| \theta'_i \| )$, a.s. It follows that, for some constant $C' < \infty$ (not depending on $n$ or $i$), 
\[ \| \xi_{n,i} \| \leq C \Exp [ \| \theta_i \| + \| \theta'_i \| \mid \cF_i ]
\leq C' ( 1 + \| \theta_i \| ) . \]
 Since the $\theta_i$ are i.i.d., and $\Exp [ \| \theta_1 \|^2] < \infty$, condition~\ref{lem:square-ui-iii} is verified by dominated convergence. Hence we can apply Lemma~\ref{lem:square-ui} to obtain the result, noting that $W_{n,n} = \zeta_{n-1} - \Exp [ \zeta_{n-1}]$.
 \end{proof}

Here is the result applied to perimeter; there is a corresponding statement for diameter.
 
 \begin{corollary}
     \label{cor:perim-ui} 
If~\eqref{ass:many-walks} holds then $(n^{-1} (L_n - \Exp L_n)^2 )_{n \in \N}$
is uniformly integrable.
\end{corollary}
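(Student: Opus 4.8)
The plan is to recognize Corollary~\ref{cor:perim-ui} as an immediate application of Corollary~\ref{cor:lipschitz-ui} (hence of Lemma~\ref{lem:square-ui}), once we express the perimeter as a suitably smooth function of the walk increments. Concretely, for each $n$, view $L_n$ as a function $F_n$ of the $Nn$ increment vectors $(Z_{k,i})_{1\le k\le N, 1\le i\le n}$; after flattening these into a single i.i.d.\ sequence in $\R^d$ with $d=2N$ (say $\theta_i := (Z_{1,i},\ldots,Z_{N,i})\in\R^{2N}$, which under \eqref{ass:many-walks} satisfies $\Exp[\|\theta_1\|^2]<\infty$), the claim reduces to checking the uniform smoothness bound~\eqref{eq:smoothness}. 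That is, changing the $i$th joint increment $\theta_i$ to some $\theta_i'$ changes $L_n$ by at most $C(\|\theta_i\| + \|\theta_i'\|)$ for a constant $C$ depending only on $N$.

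First I would establish~\eqref{eq:smoothness}. This is exactly the smoothness property already used internally (cf.~\eqref{eq:perimeter-smoothness} and \cite[Lemma 3.1]{wx-drift}): via Cauchy's formula $L_n = \int_0^{2\pi} M_n(\theta)\,\ud\theta$ where $M_n(\theta) = \max_{1\le k\le N}\max_{0\le j\le n}\be_\theta^\tra S_j^{(k)}$, and resampling the $i$th increment of walk $k$ shifts each relevant partial sum $\be_\theta^\tra S_j^{(k)}$ (for $j\ge i$) by $\be_\theta^\tra(Z_{k,i}'-Z_{k,i})$, so $|\resample{M_n^{(k)}}{i}(\theta) - M_n^{(k)}(\theta)| \le \|Z_{k,i}\| + \|Z_{k,i}'\|$, and taking the max over $k$ and integrating over $\theta\in[0,2\pi]$ gives $|\resample{L_n}{i} - L_n| \le 2\pi\sum_{k=1}^N(\|Z_{k,i}\| + \|Z_{k,i}'\|) \le 2\pi\sqrt{2N}\,(\|\theta_i\| + \|\theta_i'\|)$. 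When one changes only $\theta_i$ (all $N$ coordinates at once), the same argument applies coordinatewise, giving~\eqref{eq:smoothness} with $C = 2\pi\sqrt{2N}$ (or any comparable constant); a convenient way to phrase this is to change the increments of walks $1,\ldots,N$ one at a time and use the triangle inequality $N$ times.

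Given~\eqref{eq:smoothness}, Corollary~\ref{cor:lipschitz-ui} applied with $\zeta_n := L_n = F_n(\theta_1,\ldots,\theta_n)$ yields directly that $\Exp[L_n^2]<\infty$ and $(n^{-1}(L_n - \Exp[L_n])^2)_{n\in\N}$ is uniformly integrable. Since uniform integrability of $(\eta_n^2)$ implies uniform integrability of $(\eta_n)$ for any sequence $\eta_n$ (because $|\eta_n|\ind{|\eta_n|>B} \le \eta_n^2/B \cdot \ind{|\eta_n|>\sqrt{B}} + \sqrt{B}\cdot\ind{|\eta_n|>\sqrt{B}}$, or more simply $x \le 1 + x^2$), taking $\eta_n := n^{-1/2}(L_n - \Exp[L_n])$ shows that $(n^{-1/2}(L_n-\Exp[L_n]))_{n\in\N}$ is uniformly integrable; but the statement asks for $(n^{-1}(L_n-\Exp[L_n]))_{n\in\N}$, which is a fortiori uniformly integrable (indeed it converges to $0$ in $L^1$, since $\Exp[n^{-1}|L_n-\Exp[L_n]|] \le n^{-1}\Exp[(L_n-\Exp[L_n])^2]^{1/2}\cdot n^{1/2} \cdot n^{-1/2}$... more cleanly: $n^{-1}|L_n-\Exp[L_n]| = n^{-1/2}\cdot|\eta_n|$ and $(\eta_n)$ being $L^2$-bounded forces $n^{-1}(L_n-\Exp[L_n])\to 0$ in $L^1$, which trivially gives uniform integrability). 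I would spell out whichever of these reductions reads most cleanly.

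I do not anticipate a genuine obstacle here: the real content is bundled into Corollary~\ref{cor:lipschitz-ui}, and the only thing to verify is the smoothness estimate~\eqref{eq:smoothness}, which is routine given the Cauchy-formula representation already in hand from \S\ref{sec:ice-cream}. The one point requiring a sentence of care is the bookkeeping that the hypothesis $\sup_{1\le k\le N}\Exp[\|Z_{k,1}\|^2]<\infty$ in~\eqref{ass:many-walks} indeed gives $\Exp[\|\theta_1\|^2] = \sum_{k=1}^N\Exp[\|Z_{k,1}\|^2]<\infty$, so that Corollary~\ref{cor:lipschitz-ui} applies. Everything else is a direct invocation.
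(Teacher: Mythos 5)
Your proposal is correct and follows essentially the same route as the paper: the paper's proof is precisely an application of Corollary~\ref{cor:lipschitz-ui} with $F_n$ the perimeter of the joint hull, $\theta_i = (Z_{1,i},\ldots,Z_{N,i})$ the joint increments, and the smoothness hypothesis~\eqref{eq:smoothness} verified via the Cauchy-formula bound~\eqref{eq:perimeter-smoothness} (stated there for $N=2$, with the general case identical). Your extra remarks on passing from uniform square-integrability of $n^{-1/2}(L_n-\Exp[L_n])$ to the stated family and on checking $\Exp[\|\theta_1\|^2]<\infty$ are fine but not treated as issues in the paper.
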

\begin{proof}
 The result follows from Corollary~\ref{cor:lipschitz-ui} applied with 
 \[ F_n ( z_1, \ldots, z_n ) = \perim \hull \biggl\{  \sum_{j=1}^i z_{k,j} : k \in \{1,\ldots , N\}, 0 \leq i \leq n \biggr\}, \]
 where $z_i = (z_{1,i}, \ldots, z_{N,i}) \in \R^{2N}$ and $\theta_i = ( Z_{1,i}, \ldots , Z_{N,i} )$ are the joint random walk increments, so that
 $L_n = F_n (\theta_1, \ldots, \theta_n) = \zeta_n$, with the notation of Corollary~\ref{cor:lipschitz-ui}. For the smoothness hypotheses~\eqref{eq:smoothness},
 note that, by Cauchy's formula for perimeter
 \begin{align*}
     | F_n (z_1,   \ldots , z_n ) - F_n (w_1, \ldots, w_n) | & = \int_0^{2\pi} \Bigl( \max_{1 \leq j \leq n} \be_\theta^\tra z_j  - \max_{1 \leq j \leq n} \be_\theta^\tra w_j \Bigr) \ud \theta,
 \end{align*}
 where $w_j = z_j$ for $j \neq i$ and $w_i = z_i'$. Here
 $| \max_{1 \leq j \leq n} \be_\theta^\tra z_j - \max_{1 \leq j \leq n} \be_\theta^\tra w_j | \leq \| z_i - z_i' \|$, so we verify~\eqref{eq:smoothness} with $C = 2\pi$.
\end{proof}

\section{Multiple zero-drift walks: Weak convergence}
\label{sec:donsker}

Consider the case of  $N \in \N$ random walks all with zero drift, so that~\eqref{ass:many-walks} holds with  $\bmu_1 = \cdots = \bmu_N = \0$. Since the walks are independent, Donsker's theorem implies that, in the sense of weak convergence on the space $\cD^{2N}_0$ of c\`adl\`ag functions  $f : [0,1] \to \R^{2N}$, with $f(0) = \0$, endowed with the Skorokhod metric, 
\[ \frac{ \bigl( S^{(1)}_{\lfloor n t\rfloor} , \ldots, S^{(N)}_{\lfloor nt \rfloor} \bigr)_{t \in [0,1]}}{\sqrt{n}} \tod{n} \bigl(\Sigma_1^{1/2} B^{(1)}_t, \ldots, \Sigma_N^{1/2} B^{(N)}_t \bigr)_{t \in [0,1]} ,\]
where $\Sigma_k^{1/2}$ is the symmetric square root of~$\Sigma_k$ in~\eqref{ass:many-walks},
and $(B^{(1)}_t)_{t\in\RP}, \ldots, (B^{(N)}_t)_{t\in\RP}$ are independent standard Brownian motions on $\R^2$. The functional
from $\cD_0^2$ to $\cK_0$ defined by
$f \mapsto \cl \hull f [0,1]$ (`$\cl$' denotes closure) is continuous  (see e.g.~Lemma~6.2 of~\cite{lmw}), and hence, by continuous mapping, it is an immediate extension of e.g.~Theorem 6.4 of~\cite{lmw} that
\[ \frac{ \bigl( \cH^{(1)}_n, \ldots, \cH^{(N)}_n \bigr) }{\sqrt{n}} 
\tod{n} \bigl( \Sigma_1^{1/2} \hull B^{(1)} [0,1] , \ldots,  \Sigma_N^{1/2} \hull B^{(N)} [0,1] \bigr), \]
in the sense of weak convergence on $\cK_0^{\otimes N}$. On $\cK_0^{\otimes N}$, with the product-Hausdorff topology,  $(A_1, \ldots, A_N) \mapsto \hull ( A_1 \cup \cdots \cup A_N )$ is continuous, and since $\cH_n = \hull ( \cH_n^{(1)} \cup \cdots \cup \cH_n^{(N)} )$, another application of continuous mapping shows,
in the sense of weak convergence on $\cK_0$, 
\begin{equation}
    \label{eq:N-donsker}
\frac{ \cH_n}{\sqrt{n}}  \tod{n} \hull \bigl( \Sigma^{1/2}_{k} B^{(k)} [0,1] : 1 \leq k \leq N \bigr),
\end{equation}
the joint convex hull of $N$ Brownian motions, with appropriate linear transformations.  Now~\eqref{eq:N-donsker} together with continuity   of the diameter and perimeter functionals yield
\begin{align*}
    \frac{ D_n}{\sqrt{n}}  & \tod{n} \diam 
    \hull \bigl( \Sigma^{1/2}_{k} B^{(k)} [0,1] : 1 \leq k \leq N \bigr); \\
     \frac{ L_n}{\sqrt{n}}  & \tod{n} \perim 
    \hull \bigl( \Sigma^{1/2}_{k} B^{(k)} [0,1] : 1 \leq k \leq N \bigr).
\end{align*}
This is the natural extension of the case of $N=1$ walk examined in~\cite{wx-scaling}; the limit random variables in the last display are non-Gaussian, since they are a.s.~positive, and variations on the arguments in~\cite{wx-scaling} can be used to show they have positive variances.

\section{Two auxiliary lemmas}\label{sec:two_aux_lemmas}
In this part of the appendix we state and prove two auxiliary lemmas used in the proof of Theorem \ref{thm:perim_main_theorem}. The first gives Gaussian-type tail bounds.
\begin{lemma}\label{lem:overshoot}
Let $X$ be a  random variable such that for some $\sigma^2>0$, $\mathbb{E}[\re^{\lambda X}]\le \re^{\lambda^2\sigma^2/2}$ for all $\lambda>0$. Then for every $a>0$,
\[
\mathbb{E}\big[ ((X-a)^+)^2\big]\le 2\sigma^2\exp\Big(-\frac{a^2}{2\sigma^2}\Big).
\]
\end{lemma}

\begin{proof}
By Chernoff's bound \cite[page 21]{BLM-concentration}, for all $t>0$,
\[
\mathbb{P}(X\ge t)\le \inf_{\lambda>0}\exp\Big(-\lambda t+\frac{\lambda^2\sigma^2}{2}\Big)
=\exp\Big(-\frac{t^2}{2\sigma^2}\Big),
\]
with the infimum attained at $\lambda=t/\sigma^2$.
Recall that for any  random variable $Y \geq 0$ we have
\[
\mathbb{E}[Y^2]=\int_0^\infty 2u\,\mathbb{P}(Y\ge u)\,\ud u.
\]
With $Y=(X-a)^+$, we have $\{(X-a)^+\ge u\}=\{X\ge a+u\}$. Therefore
\[
\mathbb{E}\big[ ((X-a)^+)^2\big]
=\int_0^\infty 2u\,\mathbb{P}(X\ge a+u)\,\ud u
\le \int_0^\infty 2u \exp\Big(-\frac{(a+u)^2}{2\sigma^2}\Big)\,\ud u.
\]
Let $t=a+u$ so $u=t-a$ and $t\in[a,\infty)$. Since $t-a\le t$ for $t\ge0$,
\[
\mathbb{E}\big[ ((X-a)^+)^2\big]
\le \int_a^\infty 2t \exp\Big(-\frac{t^2}{2\sigma^2}\Big)\,\ud t
= 2\sigma^2\exp\Big(-\frac{a^2}{2\sigma^2}\Big),
\]
where the last equality is obtained by differentiating $\exp(-t^2/(2\sigma^2))$.
\end{proof}

\begin{lemma}\label{lem:geom_int}
Let $v\in\mathbb{R}^2\setminus\{0\}$ and $c>0$. Recall that $\be_\theta = (\cos \theta, \sin \theta)$. Then for all $n\ge 1$, and for any measurable set $A\subseteq[0,2\pi)$,
\begin{equation}
    \label{eq:geom-int-bound}
\int_A\exp\big(-c\,n\,(v\cdot \be_\theta)^2\big)\,\ud\theta
\le \frac{\pi^{3/2}}{\|v\|\sqrt{c\,n}}.
\end{equation}
\end{lemma}

\begin{proof}
Write $v=\|v\|(\cos\phi,\sin\phi)$ for some $\phi\in[0,2\pi)$. Then
$
v\cdot \be_\theta=\|v\|\cos(\theta-\phi)
$.
By the change of variables $u=\theta-\phi$ and $2\pi$-periodicity,
\[
\int_0^{2\pi}\exp\big(-c\,n\,(v\cdot \be_\theta)^2\big)\,\ud\theta
=
\int_0^{2\pi}\exp\big(-c\,n\,\|v\|^2\cos^2 u\big)\,\ud u.
\]
Now split the integral into four intervals of length $\pi/2$:
\[
\int_0^{2\pi} \re^{-c n\|v\|^2\cos^2 u}\,\ud u
=4\int_0^{\pi/2} \re^{-c n\|v\|^2\cos^2 u}\,\ud u.
\]
On $[0,\pi/2]$ we have $\cos u \ge 2(\frac{\pi}{2}-u)/\pi$, hence
$\cos^2 u \ge \frac{4}{\pi^2} (\frac{\pi}{2}-u )^2$.
Let $x=\frac{\pi}{2}-u$. Then
\[
\int_0^{\pi/2} \re^{-c n\|v\|^2\cos^2 u}\,\ud u
\le \int_0^{\pi/2}\exp\Big(-c n\|v\|^2\cdot \frac{4x^2}{\pi^2}\Big)\,\ud x
\le \int_0^{\infty}\exp\Big(-\frac{4c}{\pi^2} n\|v\|^2 x^2\Big)\,\ud x.
\]
Using $\int_0^\infty \re^{-ax^2}\,\ud x=\frac{1}{2}\sqrt{\pi/a}$, we obtain
\[
\int_0^{\pi/2} \re^{-c n\|v\|^2\cos^2 u}\,\ud u
\le 
 \frac{\pi^{3/2}}{4\|v\|\sqrt{c\,n}},
\]
from which we get the $A = [0,2\pi)$ case of~\eqref{eq:geom-int-bound},
and the case of arbitrary $A \subseteq [0,2\pi)$ follows immediately.
\end{proof}

\end{appendix}

\section*{Acknowledgments}
TK was a visiting PhD student at Durham University, April--June 2024, with support of a  British Scholarship Trust award.
NS and S\v S were supported by  the European Union – NextGenerationEU through the National Recovery and Resilience Plan 2021-2026 Institutional grant of University of Zagreb Faculty of Science (IK IA 1.1.3. Impact4Math), and Institutional grant of University of Zagreb Faculty of Electrical Engineering and Computing (VALOR). NS and S\v{S} also received support from grant DIGIT.2.1.02.016 funded by the Digital, Innovation, and Green Technology Project – DIGIT Project (IBRD Loan No. 9558‑HR).
TK, NS and S\v S were supported by  Croatian Science Foundation grant no.~2277.
AW was supported by EPSRC grant EP/W00657X/1,
and by the IDUB Visiting Professors Programme at University of Wroc\l aw. 
Some of this work was carried
 out during the programme ``Stochastic systems for anomalous diffusion'' (July--December 2024) hosted by the  Isaac Newton Institute, under EPSRC grant EP/Z000580/1. 
An early version of the $L^2$ approximation for the Wald theorem (\S\ref{sec:wald}), 
 based on the argument in~\cite{wx-drift}, 
 was worked out in discussions with 
Jianshan Zeng during an undergraduate summer project supported by the Department of Mathematical Sciences, Durham University, in June--July 2017.  
 
\printbibliography

\end{document}